\newtheorem{theorem}{Theorem}
\newtheorem{prop}[theorem]{Proposition}
\newtheorem{lem}{Lemma}
\theoremstyle{definition}
\newtheorem{definition}{Definition}
\theoremstyle{definition}
\newcommand{\R}{\mathbb{R}}
\newcommand{\N}{\mathbb{N}}
\newcommand{\e}{\varepsilon}
\newcommand{\divv}{\text{div }}
\newcommand{\rv}{\rvert}
\newcommand{\lv}{\lvert}
\newcommand{\lV}{\lVert}
\newcommand{\rV}{\rVert}
\newlength{\xywd}
\newcommand{\xyrightarrow}[2][]{%
  \sbox{0}{$\scriptstyle#1$}%
  \xywd=\wd0
  \sbox{0}{$\scriptstyle#2$}%
  \ifdim\wd0>\xywd \xywd=\wd0 \fi
  \xymatrix@C\dimexpr\xywd+1em\relax{{}\ar[r]^{#2}_{#1}&{}}%
}
\newcommand{\xuparrow}[1]{%
  {\left\uparrow\vbox to #1{}\right.\kern-\nulldelimiterspace}
}
\providecommand{\keywords}[1]
{
  \small	
  \textbf{\textit{Keywords---}} #1
}
\title{A particle method for non-local advection-selection-mutation equations}
\author{Frank Ernesto Alvarez\thanks{ CEREMADE (CNRS UMR no. 7534), PSL University, Université Paris - Dauphine, Place du Maréchal De Lattre De Tassigny, Paris CEDEX 16, 75775, France. {\tt\small alvarez-borges@ceremade.dauphine.fr}} \space \thanks{ GMM, INSA Toulouse, 135 Avenue de Rangueil, Toulouse, 31000, France. {\tt\small alvarez-borg@insa-toulouse.fr}}
\space and 
Jules Guilberteau 
\thanks{ 
       Sorbonne Université, CNRS, Université Paris Cité, Inria, Laboratoire Jacques-Louis Lions (LJLL), F-75005 Paris, France. {\tt\small jules.guilberteau@sorbonne-universite.fr}}
}
\date{} 
\begin{document}
\maketitle

\begin{abstract}
The well-posedness of a non-local advection-selection-mutation problem deriving from adaptive dynamics models is shown for a wide family of initial data. A particle method is then developed, in order to approximate the solution of such problem by a regularised sum of weighted Dirac masses whose characteristics solve a suitably defined ODE system. The convergence of the particle method over any finite interval is shown and an explicit rate of convergence is given. Furthermore, we investigate the asymptotic-preserving properties of the method in large times, providing sufficient conditions for it to hold true as well as examples and counter-examples. Finally, we illustrate the method in two cases taken from the literature. 
\end{abstract} \hspace{10pt}

\keywords{Adaptive dynamics, non-local advection, Particle methods, Asymptotic preservation}

\section{Introduction}

\subsection*{Presentation of the model}
The goal of this paper is to develop a numerical method allowing to approximate the solutions of equations of the form 
\begin{align}
\begin{cases}
\partial_t v(t,x)+\nabla_x \cdot (a(t,x,I_av(t,x))v(t,x))=R(t,x,I_gv(t,x))v(t,x)+\displaystyle{\int\limits_{\R^d}}{m(t,x,y,I_dv(t,x))v(t,y)dy},\\
v\in \mathcal{C}([0,T],L^1(\mathbb{R}^d)),\\
v(0,\cdot)=v^0(\cdot)\in W^{1,1}(\R^d),
\end{cases}
\label{eq intro}
\end{align}
where  
\[ (I_lu)(t,x)=\int\limits_{\mathbb{R}^d}\psi_l(t,x,y)u(t,y)dy, \quad l=a,g,d \]
 are non-local terms and $a$, $R$, $m$ and $\psi_l$ are smooth functions.  

 This general formulation aims to bring together a wide family of PDE models typically used in the field of \textit{adaptive dynamics}. In this context, $x$ represents a phenotypic trait (usually simply called `phenotype' or `trait') which is a characteristic inherent to individuals, and  $x\mapsto v(t,x)$ represents the density of the studied population at time $t\geq 0$. One purpose of adaptive dynamics is to understand the combined effect of selection and mutations (which are usually assumed to be rare and small \cite{dieckmann1996dynamical, geritz1998evolutionarily, metz1995adaptive}) on living populations \cite{burger2000mathematical}. The literature concerning phenotype-structured equations is abundant \cite{barles2009concentration, coville2013convergence, perthame2006transport, diekmann2005dynamics, lorz2013populational, perthame2008dirac, jabin2011selection}.
 The model proposed in this paper (which includes, among others, the equations studied in \cite{lorenzi2020asymptotic, calsina2013asymptotics, bonnefon2015concentration, guilberteau2023long, friedman2009asymptotic, desvillettes2008selection}) takes into account 
 \begin{itemize}
 \item Selection and growth, via the term `$R(t,x,I_gv(t,x))v(t,x)$', where $R$ can be interpreted as the instantaneous growth rate, which depends on the trait $x$ and the whole population.  
 \item Mutation, via the term `$\int_{\R^d}{m(t,x,y,I_dv(t,x))v(t,y)dy}$', where the function $m$ can be seen as the probability density for a cell of trait $y$ to mutate into a cell of trait $x$.
 \item Advection, via the term `$\nabla_x \cdot (a(t,x,I_av(t,x))v(t,x))$'. This term models how the environment drives the individuals towards specific regions, as opposed to random mutations. Among others, this term can be used in order to model a cell differentiation phenomenon. 
 \end{itemize}
 Mutations can also be modelled through a second order differential operator such as in \cite{Tom} and  \cite{FAJCJC}. Laplacian-like terms can be approximated by integral operators, as shown in \cite{degond1989weighted}, which means that, after choosing an appropriate integral approximation, our analysis could be extended to deal with second order equations. The other two non local-terms ($I_a$ and $I_g$) allow to take into account the influence of the environment, created by the whole population, over the behaviour of the individuals \cite{yates2004combining, friedman2009asymptotic}, or competition between individuals \cite{perthame2006transport}. 

 The long time behaviour of models considering only one phenomenon among selection, advection and mutation is well-known: broadly speaking, it has been shown that considering selection alone, or advection alone, leads to concentration phenomena (towards a finite number of traits) \cite{lorenzi2020asymptotic, perthame2006transport, diperna1989ordinary}, meaning that the density converges to a sum of Dirac masses, while mutations by themselves have a smoothing effect \cite{gyllenberg2005impossibility}. Nevertheless, the combined effects of these terms remains unclear, and may lead to different and non-intuitive behaviours. As an example, considering both selection and advection can lead to convergence either to a Dirac mass or to a continuous function \cite{guilberteau2023long, cooney2022long}, and considering mutation and selection leads either to convergence to a non-smooth measure or to a continuous function \cite{bonnefon2015concentration}. Note that this model also includes the equation studied in \cite{barbarroux2016multi}, which was also approximated with a particle method.
  
Upon establishing the well-posedness of \eqref{eq intro}, this paper is concerned with the derivation of a particle method inspired by \cite{degond1989weighted}, the analysis of its convergence and asymptotic-preserving properties. However, we must emphasise the two main novelties with respect to that work: First, the use of non local terms, which as we will show, poses technical difficulties and affects the existence of smooth solutions in certain cases. Secondly, the study of the asymptotic preserving property, which guarantees that, under certain hypotheses, the long time behaviour of the solution is conserved. As will be seen, these equations are naturally posed in the space of Radon measures, making particle methods a natural tool to approximate then. Compared to finite volume or finite element methods, they are more easily implemented. Furthermore, a change in model leads to very few changes in the corresponding code, a clear advantage over other methods.  

\subsection*{Particle method}
Particle methods use ODE resolution in order to approximate the solution of PDEs. This makes them particularly easy to implement, as they only require a classical ODE solver. The main idea is to seek a sum of weighted Dirac masses, called particle solution,  which is denoted 
\begin{equation}
v^N(t)=\sum\limits_{i=1}^N{\alpha_i(t)\delta_{x_i(t)}},
\label{vN intro}
\end{equation}
where
the weights $\alpha_i$ and the points $x_i$ are solutions of a suitable ODE system.

In order to recover a smooth function close to the solution of the studied PDE, the particle solution needs to be regularised: this is usually done by means of a convolution with a so-called `cut-off function' $\varphi_{\e}$ which must satisfy some specific properties.  We denote this regular solution 
\begin{equation}
v^N_\e(t,x)=\sum\limits_{i=1}^N{\alpha_i(t)\varphi_\e(x-x_i(t))},
\label{vNeps intro}
\end{equation}
where the scaling parameter $\e$ is a function of $N$. 

This method is especially adapted for the linear advection equation `$\partial_t v(t,x)+\nabla \cdot \left(a(x)v(t,x)\right)=0$', 
but has been generalised to many other kinds of equations which  mostly come from physics \cite{hockney2021computer}, such as diffuson equations \cite{degond1990deterministic, russo1990deterministic, carrillo2019blob, leonard1980vortex, leonard1985computing}, advection-diffusion equations \cite{lucquin1992conservative, hermeline1989deterministic},  convection-diffusion equations \cite{degond1989weighted}, the Navier-Stokes equation \cite{cottet1990particle, choquin1989particles} or the Vlasov-Poisson equation \cite{raviart1985analysis, cottet1986particle}.

 We apply the particle method by following its main three steps, as described in \cite{chertock2017practical}:
\begin{enumerate}[(i)]
    \item \textbf{Particle approximation of the initial data.} This first step consists of approaching the initial condition of $v^0$ with a sum of weighted Dirac masses, $i.e.$ choosing $N\in \mathbb{N}$, $x_1^0, ..., x_N^0 \in \R^d$, $\alpha_1^0,..., \alpha_N^0\in \R$ such that 
    \[v^N_0:=\sum\limits_{i=1}^N{\alpha_i^0 \delta_{x_i^0} }\sim v^0, \]
    in the sense of Radon measures, which means that, for any $\phi \in \mathcal{C}^0_c(\R^d)$,
    \[\sum\limits_{i=1}^N{\alpha_i^0 \phi(x_i^0)}\underset{N\to +\infty}{\longrightarrow} \int_{\R^d}{\phi(x)v^0(x)dx}.\]
    Assuming that $v^0$ has a compact support, a canonical way of choosing these values is to choose a finite collection of subsets $\Omega^0_i\subset \mathrm{supp}\left( v^0 \right)$ satisfying
\begin{equation*}
    \Omega^0_i\cap\Omega^0_j=\emptyset,\mbox{ if }i\neq j,\mbox{ and }\bigcup\limits_{i\in \{1,..., N\}}\Omega^0_i=\mathrm{supp}\left( v^0 \right), 
\end{equation*}
and to take, for any $i\in \{1,..., N\}$
\[x_i^0\in \Omega^0_i,\quad w_i^0=\lvert  \Omega^0_i \rvert, \quad \nu_i^0=v^0(x_i^0)\quad \alpha_i=\nu_i w_i.\]

\item \textbf{Time-evolution of the particles.} By using a weak formulation of the PDE, we determine the ODE satisfied by the positions (denoted $x_i$), the volumes (denoted $w_i$) and the weights (denoted $\nu_i$) associated to each of the $N$ particles, with initial conditions ($x_i^0$, $w_i^0$, $\nu_i^0$) given at the previous step. The exact ODE, and the way the particles are correlated with each other depends on the complexity of the PDE. In the case where the advection term is local ($a(t,x,I)=a(t,x)$), the positions and the volumes satisfy
\begin{equation}
\dot x_i(t)=a(t,x_i(t)), \quad \dot w_i(t)=\nabla_x \cdot(a(t,x_i(t)))w_i(t)
\label{dot x dot w}
\end{equation}
and the formula for the $\nu_i$ depends on the selection and the mutation terms. The method described in the core of this article also allows to use this method in the case of a non-local advection, which modifies the ODE satisfied by the positions of the particles. Full formulas are given in Section \ref{TPM}.

Rewriting $\alpha_i=\nu_i w_i$, with the volumes $w_i$ which satisfy \eqref{dot x dot w} is required for the approximation of the different integral terms. Indeed, by Liouville's formula \cite{hartman2002ordinary}, for any $f$ smooth enough, 
\[\int_{\R^d}{f(x)dx}\sim \sum\limits_{i=1}^N{f(x_i(t))w_i(t)}. \]

Formally, $v$ and $v^N$ (as defined by \eqref{vN intro}) are both solution of \eqref{eq intro} in the weak sense, with $v^N(0)\sim v^0$, which implies that, assuming that the parameters of the PDE are smooth enough, for any given time $T>0$, $v^N(T)\sim v(T, \cdot)$. 

\item \textbf{Regularisation.} In order to transform the discrete measure $v^N$ into a smooth function, we use a regularisation process based on convolution, which writes as a sum, shown in \eqref{vNeps intro}, since the convoluted measure is a sum of Dirac masses. The function 
$$\varphi_\varepsilon:=\frac{1}{\varepsilon}\varphi\left(\frac{\cdot}{\varepsilon}\right),$$
depending on a parameter $\varepsilon>0$, is a scaling of the so-called \textit{cut-off function} $\varphi$, which must satisfy some regularity and symmetry properties (which we specify in Section \ref{ConvFT}). The choice of $\varepsilon$, which intimately depends on the choice of $N$, is intricate: if $\e$ is too large, then the solution is `over-regularised', and the scheme loses its accuracy. Conversely, if $\varepsilon$ is to small, then some of the particles will be neglected, and the scheme does not converge towards the solution. Choosing the optimal $\e$ as a function of $N$ is thus not a trivial question, and it is possible in some cases to optimise the convergence rate by improving this regularisation step \cite{cohen2000optimal, hou1990convergence}.
\end{enumerate}

\subsection*{Main results}
\paragraph{Well-posedness.} We first prove that problem \eqref{eq intro} is well-posed, \textit{i.e.} that for any family of parameters satisfying some regularity properties, defined in  $\mathcal{C}([0,+\infty),L^1(\mathbb{R}^d))$. The proof heavily relies on the use of the characteristic curves $X_u(t,y)$, solution to the equation
\begin{align*}
\begin{cases}
\dot X_u(t,y)=a(t,X_u(t,y),(I_au)(t,X_u(t,y)))=:\mathcal{A}_u(t,X_u(t,y)) , \quad t\in [0,T],\\
X_u(0,y)=y, 
\end{cases}
\end{align*}
for $y\in\R^d$ and $u\in \mathcal{C}([0,T],L^1(\R^d))$. The main difference with respect to the approach taken for similar problems, as in \cite{degond1989weighted}, is the need for continuity results for $X_u(t,y)$, not only with respect to the trait variable $y$, but with respect to $u$ as well. The required results are stated in Section \ref{CharRes} and proved in Appendix \ref{AnnA}.\\  

The well posedness of the problem for smooth initial data is proved in Section \ref{SmoothData} using a standard fixed point argument. Moreover, in Section \ref{GenIniCond}, we consider a more general family of initial conditions and we prove that the regularity of $v(t, \cdot)$ is linked to that of $v^0$: more precisely, if $v^0\in W^{k, \infty}(\R^d)$ with compact support,  then $v\in \mathcal{C}([0,+\infty), W^{k-1,1}(\R^d))$. This result can be improved if the advection is local, \textit{i.e.} $a(t,x,I)=a(t,x)$: In this case, for any initial condition in $W^{k, 1}(\R^d)$, the solution $v$ is in $\mathcal{C}([0,+\infty), W^{k,1}(\R^d))$.\\

\paragraph{Particle method: definition and well-posedness.} In Section \ref{TPM}, we define the particle method corresponding to this PDE, by deriving the ODE system satisfied by the particles ($x_i, w_i$ and $\nu_i$). For the non-local case, the equation we obtain is a coupled system with infinitely many equations and unknowns. We generalise some classical results from the Cauchy-Lipschitz theory in order to deal with this problem, and prove that the ODE system is well posed.\\

\paragraph{Convergence of the particle method.} Section \ref{Cvgn} is structured as follows: in Section \ref{ConvFT} we prove the following estimate, detailed in Theorem \ref{Theo6}:
\[\lVert v-v_\e^h \rVert_{L^1(\R^d)}\leqslant C \bigl(\e^r+\big(\frac{h}{\e}\big)^{\kappa}+h^{\kappa}\bigr)\lVert v^0 \rVert_{W^{\mu,1}(\R^d)},\quad \mbox{ for all } \; 0\leqslant t\leqslant T,\]
with $h=\frac{1}{N}$, $C>0$, $r\in \mathbb{N}^*$ (which depends on the regularity of $\varphi$), $\kappa\in \mathbb{N}^*$ (which depends on $k$ and the local or non-local nature of $a$) and $\mu\in \mathbb{N}^* $ (which depends on $\kappa$ and $r$). This result ensures the convergence of the particle method in finite time, upon choosing $\varepsilon$ as an suitable function of $h$ which allows to obtain a convergence in $(\e(h)^r+\big(\frac{h}{\e(h)}\big)^{\kappa}+h^{\kappa})$ which yields a convergence rate of $h^{\frac{\kappa r}{\kappa+r}}$ after choosing the optima value of $\varepsilon(h)\sim h^{\frac{\kappa }{\kappa+r}}$.  Nevertheless, in general this scheme is not asymptotic preserving. Therefore, in subsection \ref{conv infinite time}, we show examples for which the scheme is asymptotic preserving, and others for which it is not. In general, the asymptotic behaviour of a solution is preserved when it converges to a sum of Dirac masses, and is not when it converges to a smooth solution. 

\subsection*{Perspectives and open problems}
Although a loss of regularity appears to be taking place when advection is non local, we do not know whether such a loss of regularity does happen in certain cases. The construction of such an example or, on the contrary, the improvement of our results in order to prove that, in fact, no regularity is lost could be a first way to extend our work.\\
Another open problem is the optimisation of the order of convergence for the numerical solution, improving upon the order $\frac{\kappa r}{\kappa +r}$ obtained in the present work. The approach from \cite{cohen2000optimal}, where the local averages are viewed as point values of an approximation of the solution, and the regularisation of the solution at time $t >0$ is performed by interpolation rather than convolution, could be a suitable choice.\\
Lastly, as mentioned before, another direction could be the extension of our results in order to deal with second order equations, as done in \cite{degond1989weighted}, where the Laplacian operator is approximated by an appropriate sequence of mutation kernels.

\section{The problem}\label{PDE}
For $T>0$ and $k\in\mathbb{N}$ we consider the functions
\begin{align}
    (t,x,I)\mapsto a(t,x,I)&\in W^{1,\infty}\left([0,T],(W^{k+1,\infty}(\mathbb{R}^{d+1}))^d\right),\label{aetR1}\\
    (t,x,I)\mapsto R(t,x,I)&\in \mathcal{C}\left([0,T]\times \R^d_x, W^{k+1,\infty}_{loc}(\R_I) \right) \cap \mathcal{C}\left([0,T]\times \R_I, W^{k+1,\infty}(\R^d_x)\right).\label{aetR2}
\end{align}
We consider as well $(t,x,y,I)\mapsto m(t,x,y,I)$ such that 
\[
0\leqslant m\in\mathcal{C}\left([0,T]\times \R_x^d \times \R_y^d, W^{k+1,\infty}(\R_I) \right) \cap \mathcal{C}\left([0, T]\times \R_x^d \times \R_I, L^{\infty}(\R^d_y) \right) \cap \mathcal{C}\left([0, T] \times \R_y^d\times \R_I, \mathcal{C}^{k}_c(\R^d_x)\right),
\]
which is globally Lipschitz with respect to the non local variables and uniformly compactly supported with respect to the $x$ variable. That is, we suppose $m$ to satisfy the following hypotheses:
\begin{itemize}
    \item There exists $\mu>0$ such that
    \begin{equation}
        \sup\limits_{t,x,y}\sum\limits_{i=1}^k\sum\limits_{|\alpha|= i}\sum\limits_{j=1}^k|\partial^{\alpha}_x\partial^j_Im(t,x,y,I)-\partial^{\alpha}_x\partial^j_Im(t,x,y,J)|\leqslant \mu|I-J|.\label{mLip}
    \end{equation}
    \item There exists a compact set $\mathcal{K}$ such that the function
    \begin{equation}
        M(x):=\sup\limits_{t,y,I}\sum\limits_{i=1}^k\sum\limits_{|\alpha|= i}\sum\limits_{j=1}^k|\partial^{\alpha}_x\partial^j_Im(t,x,y,I)|,\label{bigMdef}
    \end{equation}
    satisfies
    \begin{equation}
        \mbox{supp } M(x)\subset \mathcal{K}.\label{suppM}
    \end{equation}
    Furthermore, we assume that
    \begin{align}
        &\|M\|_{L^{\infty}(\R^d)}\leqslant \overline{M}<\infty\label{bigMM}.
    \end{align}
    From a modelling point of view, assuming $m$ to be uniformly compactly supported reflects the fact that some traits are realistically out of reach for a given population, and that, in general, mutations are rare and small.
\end{itemize}
We remark that hypotheses \eqref{suppM} and \eqref{bigMM} imply that
\begin{equation}
    \|M\|_{L^{1}(\R^d)}\leqslant |\mathcal{K}|\overline{M}=:K<\infty.\label{bigMK}
\end{equation}
For all functions $u\in \mathcal{C}([0,T],L^1(\mathbb{R}^d))$ we consider the linear mappings $I_a$, $I_g$ and $I_d$ which satisfy, for all $t\in [0, T]$, $x\in \R^d$,
\begin{align}
    (I_au)(t,x)&:=\int\limits_{\mathbb{R}^d}\psi_a(t,x,y)u(t,y)dy,\label{Ia}\\
    (I_gu)(t,x)&:=\int\limits_{\mathbb{R}^d}\psi_g(t,x,y)u(t,y)dy,\label{Ig}\\
     (I_du)(t,x)&:=\int\limits_{\mathbb{R}^d}\psi_d(t,x,y)u(t,y)dy,\label{Id}
\end{align}
where
\begin{align}
    \psi_a&\in W^{1,\infty}\left([0,T]\times \R^d_x, L^\infty(\R^d_y)\right) \cap \mathcal{C}\left([0,T]\times \R^d_y, W^{k+1,\infty}(\R^d_x)\right) ,\label{psia}\\
   0<\underline{\psi_g}\leqslant \psi_g&\in\mathcal{C}\left([0,T]\times \R^d_x, L^\infty(\R^d_y)\right) \cap \mathcal{C}\left([0,T]\times \R^d_y,  W^{k+1,\infty}(\R^d)\right),\label{psig}\\
    \psi_d&\in\mathcal{C}\left([0,T]\times \R^d_x, L^\infty(\R^d_y)\right) \cap \mathcal{C}\left([0,T]\times \R^d_y,  W^{k+1,\infty}(\R^d)\right),\label{psid}
\end{align}
for a certain $\underline{\psi_g}>0$. We remark that $I_au,I_gu,I_du\in\mathcal{C}([0,T],L^\infty(\mathbb{R}^d))$. The functions $\psi_a$ and $\psi_d$ do not need to be positive, reflecting this way how different traits have different impacts (which are not always beneficial) on the environment, and ultimately on the population itself. On the other hand, $\psi_g$ has to be bounded away from zero. This hypothesis reflects the fact that, at least for the growth term, all interactions between individuals are of the same type. These interactions may be interpreted as either strictly competitive or strictly cooperative. In particular, this means that only ``very'' non-local dependence with respect to u are allowed. This excludes partial densities, for instance on part of the traits.\\
Lastly, we assume that there exist non-negative constants $I^*$ and $r^*$ such that, for all $t\in [0, T]$, $x\in \R^d$, and $I\geqslant I^*$, 
\begin{equation}
    R(t,x,I)+K<-r^*\label{GvsM},
\end{equation}
uniformly on $t$ and $x$. It is somewhat natural to assume that $R$ is negative for a large population: for example, if a carrying capacity is assumed to exist, and the population size is approaching such value, then the growth rate will inevitably drop to levels where no amount of mutations will be able to compensate for it.  \\
For a given function $v^0\in L^1(\mathbb{R}^d)$, we will study the existence and uniqueness of solution for the problem
\begin{align}
\begin{cases}
\partial_t v(t,x)+\nabla_x \cdot (a(t,x,I_av(t,x))v(t,x))=R(t,x,I_gv(t,x))v(t,x)+\displaystyle{\int\limits_{\R^d}}{m(t,x,y,I_dv(t,x))v(t,y)dy},\\
v\in \mathcal{C}([0,T),L^1(\mathbb{R}^d)),\\
v(0,\cdot)=v^0(\cdot).
\end{cases}\label{Pv}
\end{align}
We will show that, under additional hypotheses, either over $a$ or $v^0$, we can guarantee the well-posedness of this problem. In particular, we provide the results regarding the cases of local advection ($\partial_Ia=0$) and non-local advection ($\partial_Ia\neq 0$). We will see that this distinction directly affects the set of initial data $v^0$ for which the existence of solutions is guaranteed.
\subsection{Some bounds over the characteristics}\label{CharRes}
Consider $a$ satisfying \eqref{aetR1} and $\psi_a$ satisfying \eqref{psia} for some $k\geqslant 1$.
For all $y\in\R^d$ and $u\in \mathcal{C}([0,T],L^1(\R^d))$ we define the characteristic curve $ t\mapsto X_u(t,y)$ as the unique solution the following ODE
\begin{align}
\begin{cases}
\dot X_u(t,y)=a(t,X_u(t,y),(I_au)(t,X_u(t,y)))=:\mathcal{A}_u(t,X_u(t,y)) , \quad t\in [0,T],\\
X_u(0,y)=y,
\end{cases}\label{CharL}
\end{align}
where $(I_au)(t,x)$ is defined in \eqref{Ia}. Since the function $\psi_a$ belongs to $W^{1,\infty}\left([0,T]\times \R^d_x, L^\infty(\R^d_y)\right)$, then $(I_au)(t,x)$ belongs to $W^{1,\infty}\left([0,T]\times \R^d_x \right)\subset \mathcal{C}^{0,1}([0,T]\times \R^d_x)$. The regularity of $a$ then implies that $\mathcal{A}_u(t,x)$ is a Lipschitz function with respect to the $x$ variable, uniformly with respect to $t$, guaranteeing this way the global existence of solution for \eqref{CharL}.

For all $u\in \mathcal{C}([0,T],L^1(\R^d))$, we define the norms
\[
\|u\|_1:=\|u\|_{L^1([0,T]\times \R^d)}=\int\limits_0^T\int\limits_{\R^d}|u(t,x)|dxdt\mbox{ and }
\|u\|:=\sup\limits_{t\in[0,T]}\|u(t,\cdot)\|_{L^1(\R^d)}.
\]
We present some results involving the characteristics. The proofs for such results are given in Appendix \ref{AnnA}.\\

The first property we describe is the continuity of the family of characteristics with respect to the spatial variables and the function $u$.
\begin{lem}\label{ChardX2}
Let $\psi_a$ satisfy \eqref{psia} and $a$ satisfy \eqref{aetR1} for $k= 0$. Consider $y_1,y_2\in\R^d$ and $u_1,u_2\in \mathcal{C}([0,T],L^1(\R^d))$. Then there exists a positive constant $C(T,\|u_1\|,\|u_2\|)$, such that the solutions $X_{u_1}$ and $X_{u_2}$ of \eqref{CharL} satisfy for any $t\in [0,T]$,
\[
 \sum\limits_{j=1}^{d}|X^j_{u_1}(t,y_1)-X^j_{u_2}(t,y_2)|\leqslant  C(T,\|u_1\|,\|u_2\|)\left(|y_1-y_2|+\|u_1-u_2\|_1\right).
\]
\end{lem}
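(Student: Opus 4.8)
The plan is to use a Grönwall-type argument on the quantity $\delta(t) := \sum_{j=1}^d |X^j_{u_1}(t,y_1) - X^j_{u_2}(t,y_2)|$, which is Lipschitz in $t$ and hence differentiable almost everywhere. First I would write the integral form of the difference of the two ODEs:
\[
X_{u_1}(t,y_1) - X_{u_2}(t,y_2) = (y_1 - y_2) + \int_0^t \big( \mathcal{A}_{u_1}(s, X_{u_1}(s,y_1)) - \mathcal{A}_{u_2}(s, X_{u_2}(s,y_2)) \big)\, ds.
\]
Then I would split the integrand into two pieces by inserting the intermediate term $\mathcal{A}_{u_1}(s, X_{u_2}(s,y_2))$: a "same-vector-field" part $\mathcal{A}_{u_1}(s, X_{u_1}(s,y_1)) - \mathcal{A}_{u_1}(s, X_{u_2}(s,y_2))$ and a "different-data" part $\mathcal{A}_{u_1}(s, X_{u_2}(s,y_2)) - \mathcal{A}_{u_2}(s, X_{u_2}(s,y_2))$.

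The first piece is controlled by the spatial Lipschitz continuity of $\mathcal{A}_{u_1}$: recall $\mathcal{A}_u(t,x) = a(t,x,(I_au)(t,x))$, and by \eqref{aetR1} (with $k=0$) together with \eqref{psia}, the map $x \mapsto \mathcal{A}_{u_1}(t,x)$ is Lipschitz uniformly in $t$, with a constant depending only on $\|a\|_{W^{1,\infty}}$, $\|\psi_a\|$ and (through $I_{a}u_1$) on $\|u_1\|$. This bounds the first piece by $L_1(\|u_1\|)\,\delta(s)$. For the second piece, the only dependence on the data is through $I_a$, so $\mathcal{A}_{u_1}(s,z) - \mathcal{A}_{u_2}(s,z) = a(s,z,(I_au_1)(s,z)) - a(s,z,(I_au_2)(s,z))$, which is bounded by $\|\partial_I a\|_\infty \, |(I_au_1)(s,z) - (I_au_2)(s,z)| \leqslant \|\partial_I a\|_\infty \|\psi_a\|_{L^\infty_{x,y,t}} \int_{\R^d} |u_1(s,y) - u_2(s,y)|\, dy = \|\partial_I a\|_\infty \|\psi_a\|_\infty \, \|(u_1-u_2)(s,\cdot)\|_{L^1}$. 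Integrating in $s$ over $[0,T]$ turns this into $C\,\|u_1 - u_2\|_1$.

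Putting the two estimates together yields
\[
\delta(t) \leqslant |y_1 - y_2| + C\,\|u_1-u_2\|_1 + L_1 \int_0^t \delta(s)\, ds,
\]
and Grönwall's inequality gives $\delta(t) \leqslant \big(|y_1-y_2| + C\|u_1-u_2\|_1\big) e^{L_1 t}$, which is the claimed bound with $C(T,\|u_1\|,\|u_2\|) = \max(1,C)\, e^{L_1(\|u_1\|) T}$. The main obstacle—really a bookkeeping point rather than a deep one—is to verify that all the Lipschitz constants that appear are genuinely uniform in $t$ and depend on the data $u_i$ only through $\|u_i\|$ (not through stronger norms); this is where the hypothesis $\psi_a \in W^{1,\infty}([0,T]\times\R^d_x, L^\infty(\R^d_y))$ is used, since it makes $(I_a u)(t,x)$ bounded in $W^{1,\infty}([0,T]\times\R^d)$ with a bound linear in $\|u\|$, and hence keeps $\mathcal{A}_u$ in a fixed Lipschitz class as $u$ ranges over any bounded set. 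One should also note that the constant $L_1$ in fact only needs an upper bound on $\|u_1\|$ for the first piece, while the second piece needs no smallness at all, so the final constant depends symmetrically on $\|u_1\|$ and $\|u_2\|$ as stated (one could equally run the splitting through $X_{u_1}(s,y_1)$ in the other order).
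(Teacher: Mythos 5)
Your proof is correct and follows essentially the same route as the paper: the same splitting of $\mathcal{A}_{u_1}(s,X_{u_1})-\mathcal{A}_{u_2}(s,X_{u_2})$ into a spatial-Lipschitz part (with constant depending on $\|u_1\|$ through $I_au_1$) and an $I$-dependence part controlled by $\|\psi_a\|_{L^\infty}\|u_1-u_2\|_1$, followed by Gr\"onwall. The only cosmetic difference is that the paper differentiates a smooth regularisation $\beta_\varepsilon$ of the absolute value instead of working with the integral form of $\sum_j|\Delta X_j|$ directly; both devices are valid here.
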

Secondly, we claim that the spatial derivatives of the characteristics remain bounded by a constant only depending on $T$ and $\|u\|$. We also claim that the spatial derivatives are continuous with respect to the spatial variables and the function $u$.
\begin{lem}\label{ChardX}
Let $a$ satisfy \eqref{aetR1} and $\psi_a$ satisfy \eqref{psia} for some $k\geqslant 1$. Consider $u\in \mathcal{C}([0,T],L^1(\R^d))$. Then there exists a positive constant $C(T,\|u\|)$, such that the solution $X_u(t,y)$ of \eqref{CharL} satisfies, for all $t\in [0,T]$ and $y\in \R^d$, 
\begin{equation}
     \sum\limits_{i=1}^{k}\sum\limits_{|\alpha|\leqslant i}\sum\limits_{j=1}^{d}|\partial^{\alpha}_{y}X^{j}_u(t,y)|\leqslant C(T,\|u\|)\label{bpX} .
\end{equation}
 Furthermore, for any two points $y_1,y_2\in\R^d$, and any two functions $u_1,u_2\in \mathcal{C}([0,T],L^1(\R^d))$, there exists a positive constant $C_2(T,\|u_1\|,\|u_2\|)$, such that the solutions $X_{u_1}$ and $X_{u_2}$ of \eqref{CharL} satisfy for any $t\in [0,T]$
\begin{equation}
    \sum\limits_{i=0}^{k}\sum\limits_{|\alpha|\leqslant i}\sum\limits_{j=1}^{d}|\partial^{\alpha}_{y}X^j_{u_1}(t,y_1)-\partial^{\alpha}_{y}X^j_{u_2}(t,y_2)|\leqslant C(T,\|u_1\|,\|u_2\|)(|y_1-y_2|+\|u_1-u_2\|_1).\label{ContpX}
\end{equation}

\end{lem}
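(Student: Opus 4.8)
The plan is to prove \eqref{bpX} and \eqref{ContpX} by induction on the order of differentiation, using the integral (Duhamel) form of \eqref{CharL} and Gr\"onwall's inequality at each stage, treating the continuity with respect to $u$ in the $\|\cdot\|_1$ norm exactly as Lemma~\ref{ChardX2} does for the characteristics themselves (which is in fact the base case $i=0$). First I would write $X_u(t,y)=y+\int_0^t \mathcal{A}_u(s,X_u(s,y))\,ds$ and differentiate under the integral sign with respect to $y_\ell$. Since $\mathcal{A}_u(t,x)=a(t,x,(I_au)(t,x))$ and $a\in W^{1,\infty}([0,T],(W^{k+1,\infty}(\R^{d+1}))^d)$ while $\psi_a$ gives $I_au\in W^{1,\infty}([0,T]\times\R^d_x)\cap\mathcal{C}([0,T]\times\R^d_y,W^{k+1,\infty}(\R^d_x))$, the map $x\mapsto\mathcal{A}_u(t,x)$ is $C^k$ in $x$ with derivatives bounded uniformly in $t$ (the bound depending only on $\|\psi_a\|$-type quantities, hence on nothing dynamical). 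Then $\partial_{y_\ell}X_u$ solves a linear ODE $\frac{d}{dt}\partial_{y_\ell}X_u = D_x\mathcal{A}_u(t,X_u)\,\partial_{y_\ell}X_u$ with $\partial_{y_\ell}X_u(0,y)=e_\ell$, and since $\|D_x\mathcal{A}_u\|_\infty$ is bounded by a constant independent of $u$, Gr\"onwall yields $|\partial_{y_\ell}X_u(t,y)|\le e^{CT}$, giving the $i=1$ part of \eqref{bpX}. For higher $|\alpha|$, Fa\`a di Bruno applied to $\partial^\alpha_y[\mathcal{A}_u(t,X_u(t,y))]$ produces a sum of terms, each a product of a derivative of $\mathcal{A}_u$ (of order $\le k$, hence bounded) against lower-order derivatives $\partial^\beta_y X_u$ with $|\beta|<|\alpha|$ (bounded by the inductive hypothesis) times one factor $\partial^\alpha_y X_u$ at top order; thus $\partial^\alpha_y X_u$ again solves a linear ODE whose inhomogeneity is controlled by the inductive hypothesis and whose coefficient is $D_x\mathcal{A}_u(t,X_u)$, so Gr\"onwall closes the induction and proves \eqref{bpX}.

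For \eqref{ContpX} I would proceed by the same induction, this time estimating the difference $\partial^\alpha_y X_{u_1}(t,y_1)-\partial^\alpha_y X_{u_2}(t,y_2)$. The case $|\alpha|=0$ is precisely Lemma~\ref{ChardX2}. For the inductive step, subtract the two linear ODEs satisfied by $\partial^\alpha_y X_{u_1}(\cdot,y_1)$ and $\partial^\alpha_y X_{u_2}(\cdot,y_2)$; the difference of the right-hand sides splits, by adding and subtracting cross terms, into (a) $\big(D_x\mathcal{A}_{u_1}(t,X_{u_1})-D_x\mathcal{A}_{u_2}(t,X_{u_2})\big)\partial^\alpha_y X_{u_1}$, (b) $D_x\mathcal{A}_{u_2}(t,X_{u_2})\big(\partial^\alpha_y X_{u_1}-\partial^\alpha_y X_{u_2}\big)$, and (c) a collection of lower-order Fa\`a di Bruno terms each of which is a difference of products of (bounded) $\mathcal{A}_u$-derivatives and (bounded) lower-order $\partial^\beta_y X_u$, to which the inductive hypothesis and \eqref{bpX} apply via the elementary identity $\prod a_i-\prod b_i=\sum_j (a_j-b_j)\prod_{i<j}a_i\prod_{i>j}b_i$. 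Term (b) is the Gr\"onwall term; terms (a) and (c) must be bounded by $C\big(|y_1-y_2|+\|u_1-u_2\|_1\big)$.

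The main obstacle, and the one deserving the most care, is controlling the terms of type (a): the difference $D_x^\gamma\mathcal{A}_{u_1}(t,X_{u_1}(t,y_1))-D_x^\gamma\mathcal{A}_{u_2}(t,X_{u_2}(t,y_2))$ for $1\le|\gamma|\le k$. Writing $\mathcal{A}_u(t,x)=a(t,x,(I_au)(t,x))$ and expanding $D_x^\gamma$ via the chain rule, one gets terms involving $D^{j}_I a$ evaluated at the two points composed with spatial derivatives of $I_{u_i}$ up to order $k$; one then adds and subtracts to isolate (i) differences of the smooth coefficients $a$ and its derivatives, handled by their Lipschitz/$W^{k+1,\infty}$ regularity together with $|X_{u_1}(t,y_1)-X_{u_2}(t,y_2)|\le C(|y_1-y_2|+\|u_1-u_2\|_1)$ from Lemma~\ref{ChardX2} and $|(I_{u_1}u_1)(t,X_{u_1})-(I_{u_2}u_2)(t,X_{u_2})|$, and (ii) differences of spatial derivatives of the non-local terms, namely $\partial^\delta_x\big((I_{u_1}u_1)(t,X_{u_1}(t,y_1))-(I_{u_2}u_2)(t,X_{u_2}(t,y_2))\big)$. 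The latter is the genuinely new ingredient compared to \cite{degond1989weighted}: using $\psi_a\in\mathcal{C}([0,T]\times\R^d_y,W^{k+1,\infty}(\R^d_x))$ one bounds $\partial^\delta_x I_a u_1(t,\xi_1)-\partial^\delta_x I_a u_2(t,\xi_2)$ by $C\|\psi_a\|(|\xi_1-\xi_2|+\text{something}\cdot\|u_1-u_2\|)$, and crucially the $\|u_1-u_2\|_1$ (rather than $\|u_1-u_2\|$) on the right-hand side is recovered because, as in Lemma~\ref{ChardX2}, the relevant quantity enters the ODE for $X$ only after time-integration, so one integrates in $t$ before applying the $L^\infty_x$ bound on $\psi_a$ to turn $\int_0^t\|u_1(s)-u_2(s)\|_{L^1}\,ds$ into $\|u_1-u_2\|_1$. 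Once these pointwise-in-$t$ bounds are assembled, a final Gr\"onwall in $t$ (with the constant from term (b), which is uniform in $u$) delivers \eqref{ContpX} with a constant of the claimed form $C(T,\|u_1\|,\|u_2\|)$, completing the induction.
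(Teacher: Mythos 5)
Your proposal is correct and follows essentially the same route as the paper: differentiate \eqref{CharL} to obtain linear ODEs for $\partial^{\alpha}_y X_u$, apply Gr\"onwall for \eqref{bpX}, and for \eqref{ContpX} split the difference of the right-hand sides into a Gr\"onwall term plus a perturbation $\bigl(J_a(t,X_{u_1}(t,y_1))-J_a(t,X_{u_2}(t,y_2))\bigr)\partial_y X_{u_2}$ controlled via Lemma \ref{ChardX2} and the $W^{k+1,\infty}$ regularity of $a$ and $\psi_a$, the only cosmetic difference being that the paper regularises the absolute values through the sequence $\beta_{\varepsilon}$ before passing to the limit. One small slip: $\|D_x\mathcal{A}_u\|_{L^\infty}$ is not independent of $u$ — it is bounded by $\|a\|_{W^{1,\infty}_{x,I}}(1+\|\psi_a\|_{W^{1,\infty}_xL^{\infty}_y}\|u\|)$ because of the non-local term — but since the constant in the statement is allowed to depend on $\|u\|$, this does not affect the argument.
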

Since for all $t\in [0,T]$, $y\mapsto X_u(t,y)$ is a $\mathcal{C}^1$-diffeomorphism from $\R^d$ onto itself, we may define its inverse as the function satisfying $X_u(t,X^{-1}_u(t,x))=x$ for all $(t,x)\in [0,T]\times \R^d$. We have the following results for $X^{-1}_u$.
\begin{lem}\label{ChardXinv}
Let $a$ satisfy \eqref{aetR1} and $\psi_a$ satisfy \eqref{psia} for some $k\geqslant 1$. Consider $u\in \mathcal{C}([0,T],L^1(\R^d))$. Then there exists a positive constant $\widetilde{C}(T,\|u\|)$, such that the inverse of the solution $X_u(t,y)$ of \eqref{CharL} satisfies, for all $t\in [0,T]$, $x\in \R^d$
\begin{equation}
    \sum\limits_{i=1}^{k}\sum\limits_{|\alpha|\leqslant i}\sum\limits_{j=1}^{d}|\partial^{\alpha}_{x}\left(X^{-1}_{u}\right)^j(t,x)|\leqslant \widetilde{C}(T,\|u\|). \label{bpXinv}
\end{equation}

\end{lem}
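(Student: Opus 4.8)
The plan is to reduce everything to the defining identity $X_u(t,X_u^{-1}(t,x))=x$ and to the bounds already available for the forward flow. Lemma~\ref{ChardX} provides uniform control of $\partial_y^\alpha X_u$ for $|\alpha|\le k$, so by the chain rule the only genuinely new ingredient needed is a uniform bound on the inverse Jacobian $\big(\nabla_y X_u(t,y)\big)^{-1}$; once this is in hand, all higher derivatives of $X_u^{-1}$ will follow by induction on the order of differentiation.

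First I would establish the control of the Jacobian. Writing $J(t,y):=\nabla_y X_u(t,y)$ and differentiating \eqref{CharL} in $y$, one finds that $J$ solves the linear variational equation $\dot J(t,y)=(D_x\mathcal A_u)(t,X_u(t,y))\,J(t,y)$, $J(0,y)=\mathrm{Id}$, and hence $J^{-1}$ solves $\dot{(J^{-1})}=-J^{-1}\,(D_x\mathcal A_u)(t,X_u(t,y))$, $J^{-1}(0,y)=\mathrm{Id}$. The key point is that $D_x\mathcal A_u$ is bounded by a constant depending only on $T$ and $\|u\|$: since $\mathcal A_u(t,x)=a(t,x,(I_au)(t,x))$, the chain rule produces the $x$- and $I$-derivatives of $a$, which are bounded by \eqref{aetR1}, multiplied by $\nabla_x(I_au)(t,x)=\int_{\R^d}\nabla_x\psi_a(t,x,y)u(t,y)\,dy$, which satisfies $|\nabla_x(I_au)(t,x)|\le C\|u(t,\cdot)\|_{L^1(\R^d)}\le C\|u\|$ because $\psi_a(t,\cdot,y)\in W^{1,\infty}(\R^d_x)$ uniformly in $(t,y)$. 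Grönwall's inequality then gives $\|J^{-1}(t,y)\|\le e^{C(T,\|u\|)T}$ uniformly in $y\in\R^d$ (alternatively, Liouville's formula yields $\det J(t,y)=\exp\!\big(\int_0^t(\divv\mathcal A_u)(s,X_u(s,y))\,ds\big)\ge e^{-C(T,\|u\|)T}>0$, and one inverts through the adjugate, whose entries are controlled by Lemma~\ref{ChardX}). Since $\nabla_x X_u^{-1}(t,x)=J^{-1}\big(t,X_u^{-1}(t,x)\big)$, this proves \eqref{bpXinv} for $|\alpha|=1$.

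For $2\le|\alpha|\le k$ I would argue by induction on $|\alpha|$, differentiating either $X_u(t,X_u^{-1}(t,x))=x$ or the ODE for $J^{-1}$ repeatedly and invoking Faà di Bruno's formula. In both cases $\partial_x^\alpha X_u^{-1}(t,x)$ is a universal polynomial in $J^{-1}(t,y)$, in the derivatives $\partial_y^\beta X_u(t,y)$ with $|\beta|\le|\alpha|$, and in the lower-order derivatives $\partial_x^\gamma X_u^{-1}(t,x)$ with $|\gamma|<|\alpha|$, all evaluated at $y=X_u^{-1}(t,x)$; carrying this out requires the $x$-derivatives of $\mathcal A_u$ up to order $k$ to be bounded, which holds for the same reason as above, now using $\psi_a\in\mathcal C([0,T]\times\R^d_y,W^{k+1,\infty}(\R^d_x))$ and $a\in W^{1,\infty}([0,T],(W^{k+1,\infty}(\R^{d+1}))^d)$. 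By Lemma~\ref{ChardX} the $\partial_y^\beta X_u$ terms are bounded by $C(T,\|u\|)$, by the previous step so is $J^{-1}$, and by the induction hypothesis so are the $\partial_x^\gamma X_u^{-1}$ terms; hence $|\partial_x^\alpha X_u^{-1}(t,x)|\le \widetilde C(T,\|u\|)$ uniformly in $t\in[0,T]$, $x\in\R^d$. Summing over the finitely many indices $i,\alpha,j$ then gives \eqref{bpXinv}.

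The main obstacle is not the induction, which is purely combinatorial, but the uniform-in-$x$ control of $D_x\mathcal A_u$ and of its higher $x$-derivatives: this is exactly where the non-local term $I_au$ enters, and it forces one to check that $\partial_x^\beta(I_au)(t,\cdot)$ remains bounded by $\|u\|$ for $|\beta|\le k$, which hinges on $\psi_a$ being $W^{k+1,\infty}$ in $x$ uniformly with respect to $y$. Everything else reduces to Grönwall's inequality and the chain rule.
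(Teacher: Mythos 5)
Your proposal is correct and follows essentially the same route as the paper: both invert the identity $X_u(t,X_u^{-1}(t,x))=x$ to get $J_{X_u^{-1}}=J_{X_u}^{-1}$, control the inverse Jacobian via the lower bound on $\det J_{X_u}$ from Liouville's formula (which you give as your "alternative"; your primary Gr\"onwall argument for the ODE satisfied by $J^{-1}$ is an equivalent variant), and handle $2\le|\alpha|\le k$ by the same induction the paper only sketches. No gap.
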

\begin{lem}\label{ChardXinv2}
Let $a$ satisfy \eqref{aetR1} and  $\psi_a$ satisfy \eqref{psia} for some $k\geqslant 1$. Consider any two functions $u_1,u_2\in \mathcal{C}([0,T],L^1(\R^d))$. Then there exists a positive constant $\widetilde{C}(T,\|u_1\|,\|u_2\|)$, which satisfies $\lim\limits_{T\rightarrow 0}\widetilde{C}(T,\|u_1\|,\|u_2\|)=0$ and such that the inverses of the solutions $X_{u_1}$ and $X_{u_2}$ of \eqref{CharL} satisfy, for all $t\in [0,T]$ and $x\in \R^d$
\begin{equation}
    \sum\limits_{i=0}^{k-1}\sum\limits_{|\alpha|\leqslant i}\sum\limits_{j=1}^{d}|\left(\partial^{\alpha}_{x}X^{-1}_{u_1}\right)^j(t,x)-\partial^{\alpha}_{x}\left(X^{-1}_{u_2}\right)^j(t,x)|\leqslant \widetilde{C}(T,\|u_1\|,\|u_2\|)\|u_1-u_2\|_1.\label{ContXinv}
\end{equation}

\end{lem}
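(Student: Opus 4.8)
The plan is to reduce the statement about $X^{-1}_{u_1}-X^{-1}_{u_2}$ to the already-established continuity estimates for the forward flow $X_u$ in Lemma \ref{ChardX}, together with the uniform bounds on the derivatives of $X^{-1}_u$ from Lemma \ref{ChardXinv}. The starting point is the identity $X_{u_i}(t,X^{-1}_{u_i}(t,x))=x$. Subtracting the two instances and writing $z_i(t,x):=X^{-1}_{u_i}(t,x)$, one gets
\[
X_{u_1}(t,z_1)-X_{u_1}(t,z_2)=X_{u_2}(t,z_2)-X_{u_1}(t,z_2).
\]
The right-hand side is controlled by \eqref{ContpX} applied with $y_1=y_2=z_2(t,x)$, giving a bound of order $\|u_1-u_2\|_1$ times $C(T,\|u_1\|,\|u_2\|)$; the $|y_1-y_2|$ term vanishes. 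The left-hand side, by the mean value theorem and the lower bound on $\det DX_{u_1}$ (equivalently the upper bound \eqref{bpXinv} on $DX^{-1}_{u_1}$), is bounded below by $c(T,\|u_1\|)|z_1-z_2|$. This yields the order-zero estimate ($i=0$) for $|X^{-1}_{u_1}(t,x)-X^{-1}_{u_2}(t,x)|$, and one should check that the constant tends to $0$ as $T\to 0$: this comes from the fact that $X_{u_1}$ and $X_{u_2}$ coincide at $t=0$, so the constant in \eqref{ContpX} — or rather a refined Gr\"onwall estimate behind it — carries a factor that vanishes with $T$; alternatively one integrates the ODE difference directly from $0$.

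For the higher-order terms ($1\le i\le k-1$) I would differentiate the identity $X_{u_i}(t,z_i(t,x))=x$ in $x$ repeatedly (Faà di Bruno), expressing $\partial^\alpha_x z_i$ in terms of the derivatives $\partial^\beta_y X_{u_i}$ evaluated at $z_i$ and of lower-order derivatives of $z_i$, the top-order piece being linear in $(DX_{u_i}(t,z_i))^{-1}$. Then I would argue by induction on $|\alpha|$: the difference $\partial^\alpha_x z_1-\partial^\alpha_x z_2$ splits into (a) terms where a factor $\partial^\beta_y X_{u_1}(t,z_1)-\partial^\beta_y X_{u_2}(t,z_2)$ appears — controlled by \eqref{ContpX} (note this is why we only reach order $k-1$: the Faà di Bruno formula for $\partial^\alpha_x z_i$ involves $\partial^\beta_y X_{u_i}$ with $|\beta|$ up to $|\alpha|+1\le k$, exactly the range covered by Lemmas \ref{ChardX}–\ref{ChardXinv}); (b) terms where only lower-order derivatives of $z_i$ differ — handled by the induction hypothesis; and (c) the difference of two inverse matrices $(DX_{u_1}(t,z_1))^{-1}-(DX_{u_2}(t,z_2))^{-1}$, which via $A^{-1}-B^{-1}=A^{-1}(B-A)B^{-1}$ reduces to case (a) again, using the uniform bounds \eqref{bpX}, \eqref{bpXinv}. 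Collecting, every piece is bounded by $\widetilde C(T,\|u_1\|,\|u_2\|)\|u_1-u_2\|_1$ with a constant built from the (finitely many) uniform bounds of the preceding lemmas, and the $T\to 0$ decay propagates through the induction since the base case has it and the multiplicative constants from \eqref{bpX}, \eqref{bpXinv} stay bounded.

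The main obstacle I anticipate is bookkeeping rather than conceptual: organizing the Faà di Bruno expansion so that the induction closes cleanly, in particular tracking exactly which orders of $\partial^\beta_y X_u$ are needed and verifying they never exceed $k$ (hence the loss of one derivative in the conclusion), and making sure the matrix-inverse differences are always reduced to quantities already estimated. A secondary technical point is the $\lim_{T\to 0}\widetilde C=0$ claim: the cleanest route is probably to note that $z_1(0,x)=z_2(0,x)=x$ and that $t\mapsto (z_1-z_2)(t,x)$ solves (after differentiating the defining relations) a linear ODE system with zero initial data and a source of size $O(\|u_1-u_2\|_1)$ with coefficients bounded uniformly in $T$ on $[0,T]$, so Gr\"onwall gives a bound proportional to $T\,e^{CT}$; the same reasoning applied to the differentiated system handles the higher-order components. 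I would present the order-zero case in full and then indicate the inductive step, deferring the heaviest multi-index computations to the appendix where the companion lemmas are proved.
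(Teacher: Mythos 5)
Your proposal is correct in substance but takes a genuinely different route from the paper for the main estimate. The paper differentiates the identity $X_{u_i}(t,X^{-1}_{u_i}(t,x))=x$ in \emph{time} to obtain the ODE $\dot X^{-1}_{u_i}=-J^{-1}_{X_{u_i}}(t,X^{-1}_{u_i})\,a(t,x,(I_au_i)(t,x))$, and then runs a Gr\"onwall argument on the difference $X^{-1}_{u_1}-X^{-1}_{u_2}$ (regularised by the smooth absolute value $\beta_\varepsilon$), feeding in Lemmas \ref{ChardX} and \ref{ChardXinv} to control the coefficients; the property $\lim_{T\to 0}\widetilde C=0$ then falls out automatically because the difference vanishes at $t=0$ and one integrates from $0$. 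Your primary argument is instead static: you manipulate the identity and its \emph{spatial} derivatives, reducing everything to the forward-flow difference estimate \eqref{ContpX} (applied at the point $z_2$) together with the uniform bounds \eqref{bpX} and \eqref{bpXinv} and the resolvent identity $A^{-1}-B^{-1}=A^{-1}(B-A)B^{-1}$, closing the higher orders by induction. This is a valid and arguably more economical derivation of the inequality itself — it consumes one fewer derivative of $a$ per order, and you correctly flag that its one weakness is precisely the $T\to 0$ decay of the constant, for which your fallback (integrating the ODE for the difference from $t=0$) is exactly the paper's method. Two small imprecisions: $\partial^\alpha_x X^{-1}_u$ involves $\partial^\beta_y X_u$ only for $|\beta|\leqslant|\alpha|$, not $|\alpha|+1$, so your explanation of why the sum stops at $k-1$ is off by one (in the paper the loss of a derivative comes from the time-ODE route, where estimating $\dot X^{-1}_{u_1}-\dot X^{-1}_{u_2}$ at order $|\alpha|$ requires \eqref{ContpX} at order $|\alpha|+1$); and the static order-zero bound, as you note, needs the refined constant tracking to yield the vanishing as $T\to 0$. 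Neither affects the correctness of the estimate \eqref{ContXinv}.
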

We remark that thanks to the relation $\|u\|_1\leqslant T\|u\|$, the relations \eqref{ContpX} and \eqref{ContXinv} also hold true when replacing $\|u_1-u_2\|_1$ by $\|u_1-u_2\|$.  
Lastly, we give a result regarding the regularity of $X_u(t,x)$ with respect to $t$.
\begin{lem}\label{CompClos}
    Let $a$ satisfy \eqref{aetR1} and  $\psi_a$ satisfy \eqref{psia} for some $k\geqslant 1$. Consider  $u\in \mathcal{C}^1([0,T_1)\times \R^d)$ such that $\sup\limits_{t\in [0,T_1)}(\|u(t,\cdot)\|_{L^1(R^d)}+\|\partial_tu(t,\cdot)\|_{L^1(R^d)})<+\infty$. Then $X_u(t,y)\in \mathcal{C}^1([0,T_1],\mathcal{C}^{k}(\R^d))$. As a consequence, $X^{-1}_u(t,x)\in \mathcal{C}^1([0,T_1],\mathcal{C}^{k}(\R^d))$.
\end{lem}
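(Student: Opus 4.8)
The plan is to establish the $t$-regularity of $X_u$ by differentiating the characteristic ODE \eqref{CharL} with respect to $t$ and treating the resulting system, together with the spatial derivatives up to order $k$, as a coupled ODE system to which a Cauchy–Lipschitz-type argument applies. First I would record that, under the hypothesis $u\in\mathcal{C}^1([0,T_1)\times\R^d)$ with $\sup_{t}(\|u(t,\cdot)\|_{L^1}+\|\partial_tu(t,\cdot)\|_{L^1})<\infty$, the non-local term $(I_au)(t,x)=\int\psi_a(t,x,y)u(t,y)\,dy$ inherits $\mathcal{C}^1$ regularity in $t$: indeed $\partial_t(I_au)(t,x)=\int\partial_t\psi_a(t,x,y)u(t,y)\,dy+\int\psi_a(t,x,y)\partial_tu(t,y)\,dy$, which is finite and continuous by the bounds on $\psi_a$ in \eqref{psia} and the uniform $L^1$ control of $u$ and $\partial_tu$. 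Combined with $a\in W^{1,\infty}([0,T],(W^{k+1,\infty})^d)$, this makes $\mathcal{A}_u(t,x)=a(t,x,(I_au)(t,x))$ a function that is $\mathcal{C}^1$ in $t$ and $\mathcal{C}^{k+1}$ in $x$ (at least $W^{k+1,\infty}$, hence $\mathcal{C}^k$), with all these derivatives bounded uniformly on $[0,T_1)$.

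Next, since $t\mapsto X_u(t,y)$ solves $\dot X_u=\mathcal{A}_u(t,X_u)$ and the right-hand side is continuous in $(t,X)$, $X_u(\cdot,y)$ is automatically $\mathcal{C}^1$ in $t$ for each fixed $y$. To upgrade this to $\mathcal{C}^1([0,T_1],\mathcal{C}^k(\R^d))$, I would differentiate the ODE in $y$: for a multi-index $\alpha$ with $|\alpha|\leq k$, $\partial^\alpha_yX_u$ solves a linear ODE whose coefficients are polynomial expressions in the derivatives $\partial^\beta_x\mathcal{A}_u(t,X_u)$ (for $|\beta|\leq|\alpha|$) and in lower-order $\partial^\gamma_yX_u$ — the standard Faà di Bruno structure. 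By Lemma \ref{ChardX} these spatial derivatives are bounded uniformly on $[0,T_1)$; since $\mathcal{A}_u$ and its $x$-derivatives up to order $k$ are continuous in $(t,x)$ and $X_u$ is continuous in $(t,y)$, the right-hand sides of these ODEs are continuous in $(t,y)$, so each $\partial^\alpha_yX_u$ is $\mathcal{C}^1$ in $t$ with derivative jointly continuous in $(t,y)$. Uniform continuity on compact sets then gives $X_u\in\mathcal{C}^1([0,T_1],\mathcal{C}^k(\R^d))$, where I would use $[0,T_1]$ (closure) since all the bounds are uniform up to $T_1$, so $X_u$ and its relevant derivatives extend continuously to $t=T_1$.

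For the consequence on $X_u^{-1}$, I would use the identity $X_u(t,X_u^{-1}(t,x))=x$ and the implicit function theorem. Differentiating in $t$ gives $\partial_tX_u(t,X_u^{-1}(t,x))+D_yX_u(t,X_u^{-1}(t,x))\,\partial_tX_u^{-1}(t,x)=0$, so $\partial_tX_u^{-1}=-\big(D_yX_u\big)^{-1}\partial_tX_u$ evaluated at $(t,X_u^{-1}(t,x))$; the matrix $D_yX_u$ is invertible (Lemma \ref{ChardX} and Liouville's formula keep its determinant bounded away from $0$ on $[0,T_1]$), and Lemma \ref{ChardXinv} controls the spatial derivatives of $X_u^{-1}$. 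A further round of $y$-differentiation of this relation, using Lemma \ref{ChardXinv} and the just-proved regularity of $X_u$, yields that $\partial^\alpha_xX_u^{-1}$ for $|\alpha|\leq k$ is $\mathcal{C}^1$ in $t$ with continuous derivative, i.e. $X_u^{-1}\in\mathcal{C}^1([0,T_1],\mathcal{C}^k(\R^d))$.

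The main obstacle I anticipate is purely bookkeeping rather than conceptual: carefully setting up the coupled linear ODE system for $(\partial^\alpha_yX_u)_{|\alpha|\leq k}$ and verifying that all coefficient functions are (uniformly bounded and) continuous in $(t,y)$ up to $t=T_1$, so that one may legitimately pass to the closed interval and conclude joint continuity of the $t$-derivatives — this is where the hypotheses on $u$, $\partial_tu$, $\psi_a$ and $a$ all have to be invoked simultaneously, and where one must be careful that the $\mathcal{C}^1$-in-$t$ regularity of $\mathcal{A}_u$ genuinely survives composition with $X_u$ and differentiation in $y$.
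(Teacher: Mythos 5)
Your proposal is correct and follows essentially the same route as the paper: both arguments combine the uniform bounds of Lemma \ref{ChardX} with the $W^{1,\infty}$-in-$t$ regularity of $a$ and $\psi_a$ and the $L^1$ control of $\partial_t u$ to show that $\partial^\alpha_y X_u$ and $\partial^\alpha_y \dot X_u$ are Lipschitz in $t$ uniformly on $[0,T_1)$, and then extend to the closed interval. The only stylistic difference is that the paper phrases the extension as a Cauchy-sequence argument in $\mathcal{C}^k(\R^d)$ as $t\to T_1$, whereas you invoke uniform bounds plus uniform continuity — the same mechanism — and you spell out the implicit-function-theorem step for $X_u^{-1}$, which the paper leaves implicit.
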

\begin{proof}
    Thanks to Lemma \ref{ChardX}, we know that under these hypotheses, for all $T<T_1$, $X_u(t,y)$ exists and belongs to $\mathcal{C}^1([0,T],\mathcal{C}^{k}(\R^d))$. Consider $0<t_1,t_2<T_1$, then
    \begin{align*}
        \sum\limits_{i=0}^{k}\sum\limits_{|\alpha|\leqslant i}\sum\limits_{j=1}^{d}|\partial^{\alpha}_yX^j_u(t_1,y)-\partial^{\alpha}_yX^j_u(t_2,y)|&=\sum\limits_{i=0}^{k}\sum\limits_{|\alpha|\leqslant i}\sum\limits_{j=1}^{d}|\int_{t_1}^{t_2}\partial^{\alpha}_y\dot{X}^j_u(s,y)ds|\\
        &=\sum\limits_{i=0}^{k}\sum\limits_{|\alpha|\leqslant i}\sum\limits_{j=1}^{d}|\int_{t_1}^{t_2}\partial^{\alpha}_ya_j(s,X_u(s,y),(I_au)(s,X_u(s,y)))ds|.
    \end{align*}
    Thanks to the regularity of $a$ and $\psi_a$, the bounds given in Lemma \ref{ChardX} for the derivatives of $X_u(t,y)$ and the uniform bound for $\|u\|_{L^1(\R^d)}$ we conclude that there exists a positive constant such that
\[
\sum\limits_{i=0}^{k}\sum\limits_{|\alpha|\leqslant i}\sum\limits_{j=1}^{d}|\partial^{\alpha}_yX^j_u(t_1,y)-\partial^{\alpha}_yX^j_u(t_2,y)|\leqslant C|t_1-t_2|.
\]
Similarly, we have
\begin{align*}
        \sum\limits_{i=0}^{k}\sum\limits_{|\alpha|\leqslant i}\sum\limits_{j=1}^{d}|\partial^{\alpha}_y\dot{X}^j_u(t_1,y)-\partial^{\alpha}_y\dot{X}^j_u(t_2,y)|&=\sum\limits_{i=0}^{k}\sum\limits_{|\alpha|\leqslant i}\sum\limits_{j=1}^{d}|\partial^{\alpha}_y(\mathcal{A}_u)_j(t_1,y)-\partial^{\alpha}_y(\mathcal{A}_u)_j(t_2,y)|,
    \end{align*}
    where 
\[
(\mathcal{A}_u)_j(t,y):=a_j(t,X_u(t,y),(I_au)(t_1,X_u(t,y))).
\]

Again, the regularity up to order $k+1$ of the involved coefficients allow us to conclude that
\[
\sum\limits_{i=0}^{k}\sum\limits_{|\alpha|\leqslant i}\sum\limits_{j=1}^{d}|\partial^{\alpha}_y\dot{X}^j_u(t_1,y)-\dot{X}^j_u(t_1,y)|\leqslant  C|t_1-t_2|.
\]
We have shown that $X_u(t,\cdot)$ is a Cauchy sequence in $\mathcal{C}^{k}(\R^d)$ when $t$ goes to $T_M$, therefore, there exists $X^*(x)\in\mathcal{C}^{k}(\R^d)$ and $Y^*(x)\in\mathcal{C}^{k}(\R^d)$ such that
\[
\lim\limits_{t\rightarrow T_M}\|X_u(t,\cdot)-X^*\|_{\mathcal{C}^{k}(\R^d)}+\|\dot{X}_u(t,\cdot)-Y^*\|_{\mathcal{C}^{k}(\R^d)},
\]
which is the desired result.
\end{proof}

\subsection{Existence of solution for smooth initial data}\label{SmoothData}

We first provide the proof of existence and uniqueness of solution for problem \eqref{Pv} when the initial condition $v^0$ is a smooth enough function. We still assume hypotheses \eqref{aetR1} through \eqref{GvsM} hold. For a smooth initial condition $v^0$ we denote by \emph{classical solution} any function $v\in\mathcal{C}^1([0,T]\times\R^d)$ which satisfies problem \eqref{Pv}.\\
The following \emph{a priori} estimate will allow us to guarantee the global existence of a classical solution given that such solution exists over a certain interval $[0,T_1]$.
\begin{lem}\label{Priori}
    Let $v^0\in\mathcal{C}^1_c(\R^d)$ and $T_1>0$ be such that a classical solution $v\in \mathcal{C}^1([0,T]\times\R^d)$ exists for problem \eqref{Pv} for all $T<T_1$, which is positive and has compact support with respect to the $x$ variable. Then, such solution satisfies the estimate
    \begin{equation}
        \sup\limits_{t\in[0,T_1]}\|v(t, \cdot)\|_{L^1(\R^d)}\leqslant \max\{ \|v^0\|_{L^1(\R^d)},\frac{I^*}{\underline{\psi_g}}\}.\label{BL1}
    \end{equation}
\end{lem}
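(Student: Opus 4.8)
The plan is to integrate the PDE in $x$ over $\R^d$ and derive a differential inequality for the quantity $N(t):=\|v(t,\cdot)\|_{L^1(\R^d)}=\int_{\R^d}v(t,x)\,dx$ (using positivity of $v$), then apply a Gronwall-type comparison argument. First I would justify that, since $v(t,\cdot)$ has compact support in $x$ for each $t\in[0,T]$ and $v\in\mathcal{C}^1$, the divergence term $\nabla_x\cdot(a(t,x,I_av)v)$ integrates to zero: integrating over a ball containing the support and using the divergence theorem, the boundary term vanishes. Hence, differentiating under the integral sign,
\[
\frac{d}{dt}N(t)=\int_{\R^d}R(t,x,I_gv(t,x))\,v(t,x)\,dx+\int_{\R^d}\int_{\R^d}m(t,x,y,I_dv(t,x))\,v(t,y)\,dy\,dx.
\]

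Next I would bound the mutation double integral. Applying Fubini (legitimate since $m\geqslant 0$, $v\geqslant 0$, everything compactly supported), the inner integral in $x$ of $m(t,x,y,I_dv(t,x))$ is controlled by $\|M\|_{L^1(\R^d)}\leqslant K$ from \eqref{bigMK} — though here one should be slightly careful, since $M$ as defined in \eqref{bigMdef} bounds the derivatives $\partial_x^\alpha\partial_I^j m$ for $|\alpha|,j\geqslant 1$ rather than $m$ itself; I would either assume the analogous zeroth-order bound $\int_{\R^d}\sup_{t,y,I}m(t,x,y,I)\,dx\leqslant K$ is part of the hypotheses on $m$ (the phrase "uniformly compactly supported with respect to the $x$ variable" together with $m\in\mathcal{C}([0,T]\times\R_y^d\times\R_I,\mathcal{C}^k_c(\R^d_x))$ is meant to give exactly this), or absorb the bound into the constant $K$. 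This yields
\[
\int_{\R^d}\int_{\R^d}m(t,x,y,I_dv(t,x))\,v(t,y)\,dy\,dx\leqslant K\int_{\R^d}v(t,y)\,dy=K\,N(t).
\]

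The key step is then to split according to the sign condition \eqref{GvsM}. Set $A_t:=\{x:I_gv(t,x)\geqslant I^*\}$ and $A_t^c$ its complement. On $A_t$, hypothesis \eqref{GvsM} gives $R(t,x,I_gv(t,x))+K<-r^*\leqslant 0$, so the combined selection-plus-mutation contribution from $A_t$ is nonpositive. On $A_t^c$ we have $I_gv(t,x)<I^*$; since $\psi_g\geqslant\underline{\psi_g}>0$ and $v\geqslant 0$, $I_gv(t,x)=\int\psi_g(t,x,y)v(t,y)\,dy\geqslant\underline{\psi_g}\,N(t)$, so $x\in A_t^c$ forces $N(t)<I^*/\underline{\psi_g}$. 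Therefore, whenever $N(t)\geqslant I^*/\underline{\psi_g}$, the set $A_t^c$ is empty and $\frac{d}{dt}N(t)\leqslant -r^*N(t)\leqslant 0$. This gives the differential inequality: $N$ cannot increase past the threshold $I^*/\underline{\psi_g}$, and if it starts above it, it is nonincreasing until it reaches the threshold. A standard argument (e.g. considering $\sup\{t:N(s)\leqslant\max\{N(0),I^*/\underline{\psi_g}\}\ \forall s\leqslant t\}$ and showing it equals $T_1$, or a direct comparison) then yields $N(t)\leqslant\max\{\|v^0\|_{L^1},\,I^*/\underline{\psi_g}\}$ for all $t\in[0,T_1]$, which is \eqref{BL1}. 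The main obstacle I anticipate is not analytical depth but care with the hypotheses on $m$ — specifically ensuring the zeroth-order integral bound on $m(\cdot,\cdot,y,\cdot)$ in $x$ is the constant $K$ appearing in \eqref{GvsM}; once that bookkeeping is pinned down the rest is routine Gronwall/comparison reasoning, with the supremum/continuity argument to pass from "the inequality holds at points where $N$ is large" to the stated uniform bound being the one place to write carefully.
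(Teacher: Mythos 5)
Your proposal is correct and follows essentially the same route as the paper: compute $\dot\rho(t)$ for $\rho(t)=\|v(t,\cdot)\|_{L^1}$, bound the mutation contribution by $K\rho(t)$ via $\|M\|_{L^1}\leqslant K$, and observe that $\rho(t)\geqslant I^*/\underline{\psi_g}$ forces $I_gv\geqslant I^*$ everywhere, so hypothesis \eqref{GvsM} gives $\dot\rho<-r^*\rho<0$, whence $\rho$ is either below the threshold or decreasing. Your side remark that the sums in \eqref{bigMdef} start at $i=1$, $j=1$ and so do not literally bound $m$ itself is a fair reading of a minor imprecision in the hypotheses that the paper's own proof also glosses over; the intended zeroth-order bound is clearly what is used.
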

\begin{proof}
    Let $v$ be the aforementioned positive solution. Denoting $\rho(t):=\|v(t, \cdot)\|_{L^1(\R^d)}$, we see directly from equation \eqref{Pv} that
    \[
\dot{\rho}(t)=\int\limits_{\R^d}\left(R(t,y,(I_gv)(t,y))+\int_{\R^d}m(t,x,y,(I_dv)(t,x))dx\right)v(t,y)dy.
\]
If there exists $t$ such that $\rho(t)>\frac{I^*}{\underline{\psi_g}}$, then $(I_gv)(t,y)\geqslant \underline{\psi_g}\rho(t)> I^*$, which allows us to use hypothesis \eqref{GvsM} in order to conclude
\[
\dot{\rho}(t)<-r^*\rho(t)<0.
\]
This way, we see that either $\rho(t)$ is smaller than $\frac{I^*}{\underline{\psi_g}}$ or $\rho(t)$ is decreasing, which in turn implies the bound \eqref{BL1}.
\end{proof}
A fixed point argument together with estimate \eqref{BL1} will allow us to conclude the existence of solution for problem \eqref{Pv} for smooth initial conditions.
\begin{theorem}\label{T1}
Consider $k\geqslant 1$ and $T>0$. Under hypotheses \eqref{psia} through \eqref{GvsM}, for all non-negative functions $v^0\in \mathcal{C}^k_c(\R^d)$, there exists a unique non-negative classical solution $v\in \mathcal{C}^1([0,T],\mathcal{C}^{k}_c(\R^d))$ to problem \eqref{Pv}. Furthermore, such solution satisfies
\begin{align}
    \sup\limits_{t\in[0,T]}\|v(t, \cdot)\|_{L^1(\R^d)}\leqslant \max\{ \|v^0\|_{L^1(\R^d)},\frac{I^*}{\underline{\psi_g}}\},\label{BL12}
    \\
   \sup\limits_{t\in[0,T]}\|v(t,\cdot)\|_{W^{k,1}(\R^d)}\leqslant C_T\|v^0\|_{W^{k,1}(\R^d)}.\label{BWm1}
\end{align}
\end{theorem}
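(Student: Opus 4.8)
The plan is to set up a Banach fixed-point argument on a suitable closed subset of $\mathcal{C}([0,T],L^1(\R^d))$, using the characteristic curves of the previous subsection to solve the transport part explicitly. Given $u\in\mathcal{C}([0,T],L^1(\R^d))$ (nonnegative, compactly supported, with $\|u\|$ controlled), one freezes the nonlocal arguments and considers the linear equation $\partial_t v + \nabla_x\cdot(a(t,x,(I_au)(t,x))v) = R(t,x,(I_gu)(t,x))v + \int m(t,x,y,(I_du)(t,x))u(t,y)\,dy$; its solution is given by the method of characteristics via $X_u$, namely $v(t,x)=\big[\,J_u(t,x)\,\big]$ where one integrates the growth factor $R$ along $X_u^{-1}$ and the Jacobian $\det D_x X_u^{-1}$, plus a Duhamel term carrying the mutation source. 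This defines the map $\Gamma: u\mapsto v$, and a fixed point of $\Gamma$ is exactly a classical solution of \eqref{Pv}. The regularity $v\in\mathcal{C}^1([0,T],\mathcal{C}^k_c(\R^d))$ then follows from Lemmas \ref{ChardX}, \ref{ChardXinv} and \ref{CompClos} applied to the fixed point (which, being continuous in time with controlled $L^1$ norm, has the requisite regularity of its characteristics), together with the smoothness of $a,R,m,\psi_l$ and the compact support of $m$ in $x$ forcing $\mathrm{supp}\,v(t,\cdot)$ to stay compact.

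First I would fix a convenient invariant set: let $B_\rho := \{u\in\mathcal{C}([0,T],L^1): u\geqslant 0,\ \|u\|\leqslant\rho,\ \mathrm{supp}\,u(t,\cdot)\subset \mathcal{K}_T\}$ for $\rho=\max\{\|v^0\|_{L^1},I^*/\underline{\psi_g}\}$ and a fixed compact $\mathcal{K}_T$ obtained by flowing $\mathrm{supp}\,v^0\cup\mathcal{K}$ along the (uniformly bounded) velocity field. I would check $\Gamma(B_\rho)\subset B_\rho$: nonnegativity is immediate since $v$ is built from positive ingredients ($\varphi_\e$ not involved here; $R$ enters only through an exponential, $m\geqslant0$); the $L^1$ bound is exactly the \emph{a priori} estimate of Lemma \ref{Priori}, which applies because a genuine solution satisfies that ODE for $\rho(t)$; the support property follows from finite propagation speed and the uniform compact support \eqref{suppM} of $m$. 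Next I would establish the contraction: for $u_1,u_2\in B_\rho$, estimate $\|\Gamma u_1 - \Gamma u_2\|$ on a short interval $[0,\tau]$ by comparing $X_{u_1}$ with $X_{u_2}$, their inverses, and Jacobians using Lemmas \ref{ChardX2}, \ref{ChardXinv2} (note the crucial $\lim_{T\to0}\widetilde C(T,\cdot,\cdot)=0$), and by controlling the differences $(I_l u_1)-(I_l u_2)$, $m(\cdot,(I_du_1))-m(\cdot,(I_du_2))$, $R(\cdot,(I_gu_1))-R(\cdot,(I_gu_2))$ through the Lipschitz hypothesis \eqref{mLip} and \eqref{aetR1}–\eqref{psid}; all these differences are $\lesssim \|u_1-u_2\|_1\leqslant \tau\|u_1-u_2\|$, so for $\tau$ small $\Gamma$ is a contraction, yielding a unique fixed point on $[0,\tau]$, which one then extends to $[0,T]$ by iterating on successive intervals of uniform length (the $L^1$ bound \eqref{BL1} being global prevents blow-up).

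Finally, for the $W^{k,1}$ bound \eqref{BWm1}: differentiate the representation formula $v(t,x)=v^0(X_u^{-1}(t,x))\exp(\int_0^t[\ldots])\det D X_u^{-1}(t,x) + (\text{Duhamel mutation term})$ up to order $k$ in $x$, using the Faà di Bruno / Leibniz rule; each derivative of $v$ is a sum of products of derivatives (up to order $k$) of $X_u^{-1}$, of $v^0$, of the exponential growth factor, and of the integrated coefficients, all bounded by Lemmas \ref{ChardX}, \ref{ChardXinv} and the assumed $W^{k+1,\infty}$ regularity of $a,R,m,\psi_l$; the mutation term contributes $\int_0^t \int m(\cdots) v(s,y)\,dy\,ds$-type terms whose $x$-derivatives land on $m$ (which has bounded $W^{k+1,\infty}$ norm in $x$) times $\rho(s)$, so a Grönwall argument in $\|v(t,\cdot)\|_{W^{k,1}}$ closes the estimate with $C_T$ depending only on $T$, the coefficient norms, and $\rho$. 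I expect the contraction step — specifically, bookkeeping the $k$-th order chain-rule estimates for $\|X_{u_1}^{-1}-X_{u_2}^{-1}\|_{\mathcal{C}^{k-1}}$ together with the nonlocal Lipschitz dependence, and checking that every term is genuinely $O(\|u_1-u_2\|_1)$ with constants uniform over $B_\rho$ — to be the main obstacle; this is precisely where the novel continuity-in-$u$ results of Section \ref{CharRes} (as opposed to the classical, purely spatial estimates of \cite{degond1989weighted}) are indispensable.
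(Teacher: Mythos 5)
Your overall architecture coincides with the paper's: freeze the non-local arguments at $u$, solve the resulting linear transport equation along the characteristics $X_u$ via a Duhamel formula, run a Banach fixed-point argument on a set of non-negative, compactly supported, $L^1$-bounded functions, continue the local solution globally using the \emph{a priori} bound and Lemma \ref{CompClos}, and obtain \eqref{BWm1} by differentiating the representation formula and applying Gr\"onwall. The only structural difference (contracting in $\mathcal{C}([0,T],L^1)$ rather than in $\mathcal{C}([0,T]\times\R^d)$ as the paper does) is immaterial, since the compact supports make the two norms interchangeable up to a factor $|O_t|$, and the continuity-in-$u$ Lemmas \ref{ChardX2} and \ref{ChardXinv2} are stated in terms of $\|u_1-u_2\|_1$ anyway.

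There is, however, one concrete flaw in your invariance step. You take $\rho=\max\{\|v^0\|_{L^1},I^*/\underline{\psi_g}\}$ and claim $\Gamma(B_\rho)\subset B_\rho$ because of Lemma \ref{Priori}. But Lemma \ref{Priori} is an estimate for solutions of the \emph{nonlinear} problem, whereas $\Gamma u$ solves a linear problem with frozen coefficients: the growth rate of $\|\Gamma u(t)\|_{L^1}$ is governed by $R(t,y,(I_gu)(t,y))$ and by a source proportional to $\|u(t)\|_{L^1}$, not by quantities evaluated at $\Gamma u$ itself. In particular hypothesis \eqref{GvsM} gives no damping when $\|\Gamma u(t)\|_{L^1}$ exceeds $\rho$ unless $(I_gu)$ is itself large, so if $\|v^0\|_{L^1}\geqslant I^*/\underline{\psi_g}$ the ball $B_\rho$ is not invariant even for arbitrarily small times. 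The paper circumvents this by working with the inflated bound $\overline{\rho}_\alpha=\max\{\alpha\|v^0\|_{L^1},I^*/\underline{\psi_g}\}$ for some $\alpha>1$, proving invariance of $D^{T_\alpha}_\alpha$ directly from the explicit estimate $\|\Phi u(t,\cdot)\|_{L^1}\leqslant e^{(\gamma+2\tilde a)T}(\tfrac{1}{\alpha}+KT)\overline{\rho}_\alpha$ for $T$ small, and only invoking Lemma \ref{Priori} on the actual fixed point to recover the sharp bound \eqref{BL12} and to rule out blow-up in the continuation argument. Your plan needs the same (or an equivalent) modification; once that is made, the rest goes through as you describe.
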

\begin{proof}
Consider $v^0\in \mathcal{C}^k_c(\R^d)$. For $t\geq 0$ and the function $M$ introduced in \eqref{bigMdef} we define $r_t:=2\|a\|_{L^{\infty}}t$, $B_{r_t}$ the open ball centred at $0$ and of radius $r_t$,
\[
O_t:=\mbox{supp }(v^0)\cup\mbox{supp }(M)+B_{r_t},
\]
and for $\alpha>1$ we define $\overline{\rho}_{\alpha}:=\max\{ \alpha\|v^0\|_{L^1(\R^d)},\frac{I^*}{\underline{\psi_g}}\}$. We consider
\[
u\in D^T_{\alpha}:=\left\{u\in \mathcal{C}([0,T]\times \R^d):u\geqslant 0,\mbox{supp }(u(t,\cdot))\subset O_t,\int_{\R^d}u(t,x)dx\leqslant \overline{\rho}_{\alpha},\ \forall t\in[0,T]\right\}.
\]
We denote as $\Phi u$ the mapping defined by $v=\Phi u$, where $v$ is the solution of
\begin{align*}
\begin{cases}
\partial_t v(t,x)+\nabla_x \cdot (a(t,x,(I_au)(t,x))v(t,x))-R(t,x,(I_gu)(t,x))v(t,x)=\int_{\R^d}{m(t,x,y,(I_du)(t,x))u(t,y)dy}\\
v(0,\cdot)=v^0(\cdot).
\end{cases}
\end{align*}
Let us denote, for any $y\in \R^d$, as $X_u(\cdot, y)$ the unique solution of 
\begin{align*}
\begin{cases}
\dot X_u(t,y)=a(t,X_u(t,y),(I_au)(t,X_u(t,y)))=:\mathcal{A}_u(t,X_u(t,y)) \quad t\geq 0,\\
X(0,y)=y.
\end{cases}
\end{align*}
As stated before, for any $t\in [0,T]$,  $x\mapsto X_u(t,x)$ is a $\mathcal{C}^1$-diffeomorphism, therefore, for all $t\geq 0$ and all $x\in\R^d$ there exists a unique $y\in\R^d$ such that $x=X_u(t,y)$, which we denote $y=X^{-1}_u(t,x)$.\\
We see that, 
\begin{align*}
\frac{d}{dt}v(t,X_u(t,y))=&\big[ R(t, X_u(t,y), (I_gu)(t, X_u(t,y)))-\divv \mathcal{A}_u(t,X_u(t,y))\big] v(t,X_u(t,y))\\ 
&+\int_{\R^d}{m(t, X_u(t,y), z, (I_du)(t, X_u(t,y)))u(t, z)dz},
\end{align*}
and thus, denoting
\[
\mathcal{G}_u(t,y):=R(t, X_u(t,y), (I_gu)(t, X_u(t,y)))-\divv \mathcal{A}_u(t,X_u(t,y)),
\]
we get
\begin{align*}
\Phi u (t, X_u(t,y))=& v(t,X_u(t,y))\\
=&v^0(y)\exp\biggl(\int_0^t\mathcal{G}_u(s,y)ds\biggr)\\
&+\int_0^t\int_{\R^d}m(s, X_u(s,y), z, (I_du)(s, X_u(s,y)))u(s, z)dz\exp\left(\int_s^t\mathcal{G}_u(\tau,y)d\tau\right)ds
\end{align*}
or, equivalently
\begin{align*}
\Phi u (t, x)=& v(t,x)\\
=&v^0(X^{-1}_u(t,x))\exp\biggl(\int_0^t\mathcal{G}_u(s,X^{-1}_u(t,x))ds\biggr)\\
&+\int_0^t\int_{\R^d}m(s, X_u(s,X^{-1}_u(t,x)), z, (I_du)(s, X_u(s,X^{-1}_u(t,x))))u(s, z)dz\exp\left(\int_s^t\mathcal{G}_u(\tau,X^{-1}_u(t,x))d\tau\right)ds.
\end{align*}
The solution $v$ is thus non-negative, according to the non-negativity of $v^0$, $u$ and $m$. Thanks to Lemma \ref{ChardXinv}, $v$ belongs to $\mathcal{C}^1([0,T],\mathcal{C}^{\kappa}(\R^d))\subset \mathcal{C}([0,T]\times \R^d)$. Furthermore, the fact that for all $t\in [0,T]$, $|X(t,y)-y|\leqslant \|a\|_{L^{\infty}}t$ implies that $\mbox{supp } (v(t,\cdot))\subset O_t$.\\
Additionally, directly from its definition, we see that
\[
\mathcal{G}_u(s,X^{-1}_u(t,x))\leqslant \gamma:=\|R\|_{L^{\infty}_{t,x,I}}+\|a\|_{W^{1,\infty}_xL^{\infty}_{t,I}}+\|a\|_{W^{1,\infty}_IL^{\infty}_{t,x}}\|\psi_a\|_{W^{1,\infty}_xL^{\infty}_y}\overline{\rho}_{\alpha},
\]
and consequently, for all $u\in D^T_{\alpha}$, we have
\begin{align*}
    \|\Phi u(t,\cdot))\|_{L^1(\R^d)}\leqslant& e^{\gamma T}\left(\int\limits_{\R^d}v^0(X^{-1}_u(t,x))dx\right.\\
    &\left.+\int_0^t\int_{\R^d}\int_{\R^d}m(s, X_u(s,X^{-1}_u(t,x)), z, (I_du)(s, X_u(s,X^{-1}_u(t,x))))u(s, z)dzdxds\right)\\
    \leqslant& e^{\gamma T}\left(\int\limits_{\R^d}v^0(X^{-1}_u(t,x))dx+\int_0^t\int_{\R^d}M(s, X_u(s,X^{-1}_u(t,x)))dx\int_{\R^d}u(s, z)dzds\right).
\end{align*}
Making the changes of variables $y=X^{-1}_u(t,x)$ and $y=X_u(s,X^{-1}_u(t,x))$ respectively on each of the integrals on the last expression, recalling that, according to Liouville's formula
\begin{align*}
    |J_{X_u(t,y)}|&=e^{\int_0^t\divv \mathcal{A}_u(s,X_u(t,y))ds},\\
    |J_{X_u(t,X^{-1}_u(s,y))}|&=e^{\int_0^t\divv \mathcal{A}_u(\tau,X_u(t,X^{-1}_u(s,y)))d\tau-\int_0^s\divv \mathcal{A}_u(\tau,y)d\tau},
\end{align*} 
and using the hypotheses over $a$ and $m$ we obtain that for all $t\in [0,T]$, 
\begin{align*}
    \|\Phi u(t, \cdot)\|_{L^1(\R^d)}
    &\leqslant e^{(\gamma+2\tilde{a}) T}\left(\|v^0\|_{L^1(\R^d)}+KT\|u\|_{L^1(\R^d)}\right),
\end{align*}
where $\tilde{a}:=\|a\|_{W^{1,\infty}_xL^{\infty}_{t,I}}+\|a\|_{W^{1,\infty}_IL^{\infty}_{t,x}}\|\psi_a\|_{W^{1,\infty}_xL^{\infty}_y}\overline{\rho}_{\alpha}$. Finally, using the hypothesis over $v^0$ and $u$ we see that
\[
\|\Phi u(t, \cdot )\|_{L^1(\R^d)}\leqslant e^{(\gamma+2\tilde{a}) T}\left(\frac{1}{\alpha} +KT\right)\overline{\rho}_{\alpha}.
\]
Thanks to the condition $\alpha>1$ there exists $T_{\alpha}$ (only depending on $\alpha$ and on the coefficients of the problem) such that $\|\Phi u\|_{L^1(\R^d)}\leqslant \overline{\rho}_{\alpha}$ for all $t\in[0,T_{\alpha}]$. In other words, $\Phi :D^{T_{\alpha}}_{\alpha}\rightarrow D^{T_{\alpha}}_{\alpha}$.\\

We now claim that the mapping $\Phi u$ is a contraction on some $D^{T_1}_{\alpha}$, $0<T_1\leqslant T_{\alpha}$, with respect to the usual norm in $\mathcal{C}([0,T]\times \R^d)$. For two functions $u_1,u_2\in D^{T_{\alpha}}_{\alpha}$ and any $t\in[0,T_{\alpha}]$, thanks to Lemma \ref{ChardX2} and Lemma \ref{ChardXinv2}, we have
\begin{align*}
    |\Phi u_1-\Phi u_2|\leqslant& Ce^{\gamma t}|X^{-1}_{u_1}(t,x)-X^{-1}_{u_2}(t,x)|\\
    \leqslant& Ce^{\gamma t}\|u_1-u_2\|_1\\
    \leqslant& Ce^{\gamma t}|O_t|t\|u_1-u_2\|_{\mathcal{C}([0,t]\times \R^d)}.
\end{align*}
Clearly, for $t=T_1$ small enough, $\Phi u$ is a contraction, and therefore, thanks to the Banach fixed point theorem, there exists a unique $v\in D^{T_1}_{\alpha}$ such that $\Phi v=v$. Such $v$ is a solution of \eqref{Pv} over $[0,T_1]$. Furthermore, directly from the relation $v=\Phi v$ we see that $v\in\mathcal{C}^1([0,T],\mathcal{C}^{\kappa}_c(\R^d))$.\\ 

Let us now assume that there exists $T_M$, a finite maximal time such that a solution exists in $B^{T}_{\alpha}$ for all $T<T_M$, and let $v$ be such solution. Directly from Lemma \ref{Priori}, the solution $v$ satisfies estimate \eqref{BL1} over $[0,T_M)$. Furthermore, thanks to the relation $v=\Phi v$, we are able to show that 
\[\sup\limits_{[0,T_M)}(\|v(t,\cdot)\|_{L^1(\R^d)}+\|\dot{v}(t,\cdot)\|_{L^1(\R^d)})<+\infty.
\]
From Lemma \ref{CompClos} we get then that $X^{-1}_v(t,x)\in\mathcal{C}^1([0,T_M],\mathcal{C}^k(\R^d))$, and by composition of functions, so is $v=\Phi v$. We can then iterate the previous ideas using $v(T_M)\in\mathcal{C}^k(\R^d)$ as a starting point in order to obtain the existence of solution over a certain interval $[T_M,T_M+\delta)$, contradicting this way the maximal character of $T_M$. Hence, there exists a classical solution of \eqref{Pv}
 for all $t>0$.\\

The uniqueness on $\mathcal{C}([0,T],\mathcal{C}^{\kappa}_c(\R^d))$ comes from the fact that every other solution on $D^{\infty}_{\alpha}$ will coincide with $v$ at least over a small interval $(0,t_0)$ and then, by continuity, the same would hold for all $t$.\\

To obtain the $W^{k,1}(\R^d)$ estimates, we differentiate the relation $v=\Phi v$ and notice that for all multi-index $\beta$ such that $|\beta|\leqslant k$
\begin{align*}
    \partial^{\beta}_xv=&\exp\biggl(\int_0^t\mathcal{G}_v(s,X^{-1}_v(t,x))ds\biggr)\sum\limits_{|\gamma|\leqslant|\beta|}\partial^{\gamma}_xv^0(X^{-1}_v(t,x))F^{\gamma}_1(t,x)\\
    &+\int_0^t\int_{\R^d}F_2(s,x,y)v(s,y)dy\exp\left(\int_s^t\mathcal{G}_v(\tau,x)d\tau\right)ds
\end{align*}
where the functions $F^{\gamma}_1$ and $F_2$ are combinations of sums and multiplications of the derivatives up to order $|\beta|$ of $X^{-1}_v$, $R$, $m$, $I_g$ and $I_d$. Taking absolute values, integrating over $\R^d$ and using the boundedness of all the involved coefficients we arrive at
\[
\|v(t,\cdot)\|_{W^{k,1}(\R^d)}\leqslant C^1_T\|v^0\|_{W^{k,1}(\R^d)}+C^2_T\int_0^t\|v(s, \cdot)\|_{W^{k,1}(\R^d)}ds
\]
and thanks to Gr\"onwall's lemma we get \eqref{BWm1}.
\end{proof}
\subsection{Existence of solution for more general initial data}\label{GenIniCond}
Depending on whether $\partial_Ia=0$ or $\partial_Ia\neq 0$, we will have a different class of initial data for which we are able to guarantee existence of solution for problem \eqref{Pv}. Furthermore, the regularity of such solution might also be affected.\\
We first prove that,  when $\partial_Ia\neq 0$, a solution exists (in a sense that will be defined below) for any initial condition $v^0\in  W^{k,\infty}(\R^d)$, with compact support. However, we do not prove that the regularity of the solution is preserved over time, even if we did not manage to highlight  the existence of cases where a loss of regularity is observed. Secondly, we will show that, when $\partial_Ia= 0$, not only the set of initial conditions for which we can claim existence of solution is more general ($v^0\in  W^{k,1}(\R^d)$), but the regularity of such solution is preserved for all $t>0$.\\ 
We introduce the definition of \emph{weak solution} for problem \eqref{Pv}. We say that $v$ is a \emph{weak solution} of problem \eqref{Pv} associated to $v^0\in L^p(\R^d)$ if
\[
v\in L^{\infty}([0,T],L^p(\R^d)),
\]
and it satisfies the equation in the following weak sense
\[
\int_0^T\int_{\R^d}vL^*_v\varphi dxdt=\int_{\R^d}v^0\varphi dx,
\]
for any $\varphi\in \mathcal{C}^1_c([0,T)\times \R^d)$, where we define the operator $L^*_v$ by
\[
L^*_v\varphi(t,x)=-\partial_t\varphi(t,x)-a(t,x,(I_av)(t,x))\cdot\nabla \varphi(t,x)-R(t,x,(I_gv)(t,x))\varphi(t,x)-\int_{\R^d}m(t,y,x,(I_dv)(t,y))\varphi(t,y)dy, 
\]
for all $t\in [0,T]$, $y\in \R^d$. 
We remark that a \emph{classical solution} is always a \emph{weak solution}.

\begin{theorem}\label{T2}
Under hypotheses \eqref{aetR1} through \eqref{GvsM}, for all $k\geqslant 1$  and any non-negative functions $v^0\in W^{k,\infty}(\R^d)$ with compact support, there exists a unique non-negative weak solution $v\in \mathcal{C}([0,T],\mathcal{C}^{k-1}_c(\R^d))$ to problem \eqref{Pv}. Furthermore, such solution satisfies
\begin{align}
    \sup\limits_{t\in[0,T]}\|v(t,\cdot)\|_{L^1(\R^d)}\leqslant \max\{ \|v^0\|_{L^1(\R^d)},\frac{I^*}{\underline{\psi_g}}\},\label{BL1g}\\
    \sup\limits_{t\in[0,T]}\|v(t, \cdot)\|_{W^{k-1,1}(\R^d)}\leqslant C_T\|v^0\|_{W^{k-1,1}(\R^d)},\label{BWm1g}
\end{align}
and, for $k\geqslant 2$, $v\in \mathcal{C}^1([0,T],\mathcal{C}^{k-1}_c(\R^d))$.
\end{theorem}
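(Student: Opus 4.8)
The plan is to construct $v$ as a limit of the classical solutions given by Theorem \ref{T1} for smooth data, extracting the required stability and compactness from the Duhamel representation along the characteristics together with the continuity estimates of Section \ref{CharRes}. First I fix a non-negative mollifier $(\eta_\delta)_{\delta>0}$ and set $v^0_n:=v^0*\eta_{1/n}\in\mathcal{C}^\infty_c(\R^d)$; then $v^0_n\geqslant 0$, all $v^0_n$ are supported in one fixed compact neighbourhood of $\mathrm{supp}(v^0)$, $\|v^0_n\|_{W^{k,\infty}(\R^d)}\leqslant\|v^0\|_{W^{k,\infty}(\R^d)}$, and $v^0_n\to v^0$ in $W^{k-1,1}(\R^d)$, hence in $L^1(\R^d)$. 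By Theorem \ref{T1} there exist non-negative classical solutions $v_n\in\mathcal{C}^1([0,T],\mathcal{C}^k_c(\R^d))$ of \eqref{Pv} with datum $v^0_n$, and the key point is that their bounds are uniform in $n$: the support of $v_n(t,\cdot)$ lies in the set $O_t$ of the proof of Theorem \ref{T1}, which depends only on $\mathrm{supp}(v^0)$, $\mathrm{supp}(M)$, $\|a\|_{L^\infty}$ and $t$; estimate \eqref{BL12} gives $\sup_{t\in[0,T]}\|v_n(t,\cdot)\|_{L^1(\R^d)}\leqslant\max\{\|v^0\|_{L^1(\R^d)}+1,\,I^*/\underline{\psi_g}\}$; and, writing $v_n$ through the fixed-point identity $v_n=\Phi v_n$ of that proof, the derivative bounds of Lemmas \ref{ChardX} and \ref{ChardXinv} for $X_{v_n}$ and $X^{-1}_{v_n}$ up to order $k$ (depending only on $T$ and the uniformly bounded $\|v_n\|$), together with $\|v^0_n\|_{W^{k,\infty}}\leqslant\|v^0\|_{W^{k,\infty}}$, yield $\sup_{t\in[0,T]}\|v_n(t,\cdot)\|_{W^{k,\infty}(\R^d)}\leqslant C$ uniformly in $n$, and similarly the uniform version of \eqref{BWm1}.

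Next I would show that $(v_n)$ is Cauchy in $\mathcal{C}([0,T],L^1(\R^d))$. Subtracting the Duhamel formulas for $v_n$ and $v_m$, the delicate term is $v^0_n(X^{-1}_{v_n}(t,\cdot))-v^0_m(X^{-1}_{v_m}(t,\cdot))$, which I split as
\[
\bigl(v^0_n\circ X^{-1}_{v_n}-v^0_n\circ X^{-1}_{v_m}\bigr)+\bigl(v^0_n-v^0_m\bigr)\circ X^{-1}_{v_m}.
\]
Integrating the second summand in $x$ and using the change of variables $y=X^{-1}_{v_m}(t,x)$ (Jacobian controlled by Liouville's formula) bounds it by $\|v^0_n-v^0_m\|_{L^1(\R^d)}$. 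The first summand is bounded pointwise by $\|\nabla v^0_n\|_{L^\infty}\,|X^{-1}_{v_n}(t,x)-X^{-1}_{v_m}(t,x)|$: the prefactor is uniformly bounded precisely because $v^0\in W^{1,\infty}(\R^d)$, and $|X^{-1}_{v_n}-X^{-1}_{v_m}|\leqslant\widetilde{C}(T)\|v_n-v_m\|_1$ by Lemma \ref{ChardXinv2}, with $\widetilde{C}(T)\to 0$ as $T\to 0$. This is exactly the step that would fail for merely $W^{k,1}$ data and is responsible for the loss of one derivative in the non-local case; when $\partial_I a=0$ the characteristics do not depend on the solution and this summand vanishes, which is why the full regularity of Theorem \ref{T1} is retained there. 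The exponential factors and the mutation integral are estimated in the same spirit, using the Lipschitz dependence of $a$, $R$, $m$, $\psi_l$ on their arguments and Lemmas \ref{ChardX2} and \ref{ChardXinv2} to compare the characteristics. Using $\|v_n-v_m\|_1\leqslant T_1\sup_{t\in[0,T_1]}\|v_n(t,\cdot)-v_m(t,\cdot)\|_{L^1}$, on a short interval $[0,T_1]$ every term other than $\|v^0_n-v^0_m\|_{L^1}$ carries a small coefficient; absorbing it and invoking Grönwall gives $\sup_{t\in[0,T_1]}\|v_n(t,\cdot)-v_m(t,\cdot)\|_{L^1(\R^d)}\leqslant C\|v^0_n-v^0_m\|_{L^1(\R^d)}\to 0$, and since the uniform a priori bounds keep all constants independent of $n$ and of the initial time, the estimate iterates over $[T_1,2T_1],\dots$ up to $T$. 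Hence $v_n\to v$ in $\mathcal{C}([0,T],L^1(\R^d))$ with $v\geqslant 0$ and $\mathrm{supp}\,v(t,\cdot)\subset O_t$.

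Interpolating this $L^1$ convergence against the uniform $W^{k,\infty}$ bound on the fixed compact set (a Gagliardo--Nirenberg inequality, valid since $k-1<k$) upgrades the convergence to $\mathcal{C}([0,T],\mathcal{C}^{k-1}_c(\R^d))$, so that $v\in\mathcal{C}([0,T],\mathcal{C}^{k-1}_c(\R^d))$; passing to the limit in the estimates of Theorem \ref{T1} yields \eqref{BL1g} and \eqref{BWm1g}. Passing to the limit in the weak formulation --- every term, including the three non-local ones since $I_lv_n\to I_lv$ and $a,R,m$ are continuous in $I$, converges by the $L^1$ convergence and the uniform bounds --- shows that $v$ is a weak solution of \eqref{Pv}. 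Finally, for $k\geqslant 2$ one differentiates the equation: $\partial_tv$ is expressed through $v$, its first spatial derivatives and the smooth coefficients, so $\partial_tv$ is continuous, which gives the $\mathcal{C}^1$-in-time regularity claimed for $k\geqslant 2$.

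For uniqueness, a weak solution $v$ in the stated class solves the \emph{linear} transport-type problem obtained by freezing the coefficients along $v$; by the method of characteristics such a solution must coincide with its own Duhamel formula, and comparing two such solutions exactly as above, now with identical data, forces them to agree on a short interval and, by iteration, on all of $[0,T]$. The hard part will be the Cauchy estimate of the second paragraph: the self-reference of the Duhamel formula --- the characteristics depending on the very solution being estimated --- forces the short-interval-then-iterate scheme, and one must keep careful track of where the $W^{1,\infty}$-regularity of $v^0$ is used and of how the non-locality of $a$ costs precisely one derivative.
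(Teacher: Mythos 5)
Your proposal is correct and follows essentially the same route as the paper: regularise the initial datum, invoke Theorem \ref{T1} for the smooth approximations, prove the Cauchy property in $\mathcal{C}([0,T],L^1(\R^d))$ via the fixed-point (Duhamel) identity with exactly the same splitting of the initial-data term — one piece controlled by $\|v^0_n-v^0_m\|_{L^1}$ after the change of variables along the characteristics, the other by the $W^{1,\infty}$ bound on $v^0$ times $|X^{-1}_{v_n}-X^{-1}_{v_m}|$ from Lemma \ref{ChardXinv2} — then Grönwall and passage to the limit in the weak formulation. The only cosmetic difference is that the paper dispatches the case $k\geqslant 2$ immediately via Morrey's embedding $W^{k,\infty}\subset\mathcal{C}^{k-1,1}$ and Theorem \ref{T1}, running the approximation argument only for $k=1$, whereas you run it uniformly in $k$.
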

\begin{proof}
     Directly from Morrey's inequality, we get the relation $v^0\in W^{k,\infty}(\R^d)\subset \mathcal{C}^{k-1,1}(\R^d)$. If $k\geqslant 2$, we are able to apply Theorem \ref{T1} in order to get the desired result.\\
     Consider now $k=1$. The compact support of $v^0$ and the $W^{1,\infty}(\R^d)$ regularity imply that $v^0\in W^{k,1}(\R^d)$. This means that, there exists a sequence of compactly supported functions $v^0_{\varepsilon}\in \mathcal{C}^1_c(\R^d)$ such that 
\begin{gather*}
    \mbox{supp }(v^0)_{\varepsilon}\subset \mbox{ supp }(v^0),\\
    \|v^0_{\varepsilon}\|_{W^{1,\infty}(\R^d)}\leqslant \|v^0\|_{W^{1,\infty}(\R^d)},\\
    \lim\limits_{\varepsilon\rightarrow 0}\|v^0-v^0_{\varepsilon} \|_{W^{1,1}(\R^d)}=0.
\end{gather*}
We denote as $v_{\varepsilon}$ the solution of problem \eqref{Pv} associated to $v^0_{\varepsilon}$, and we claim that $v_{\varepsilon}$ is a Cauchy sequence in $\mathcal{C}([0,T],L^1(\R^d))$.\\
We recall that for all $\varepsilon$,
\[
\underset{t\in [0,T]}{\sup}\;\|v_{\varepsilon}\|_{L^1(\R^d)}\leqslant \overline{\rho}_{\varepsilon}:= \max\{ \|v^0_{\varepsilon}\|_{L^1(\R^d)},\frac{I^*}{\underline{\psi_g}}\}.
\]
Furthermore, the equality $v_{\varepsilon}=\Phi v_{\varepsilon}$ holds true, where $\Phi$ was defined on the proof of Theorem \ref{T1}. Consequently, for $\varepsilon_1,\varepsilon_2>0$ we have the relation
\begin{align*}
    \Delta V_{\varepsilon_1\varepsilon_2}:=&v_{\varepsilon_1}-v_{\varepsilon_2}\\
    =&\Phi v_{\varepsilon_1}-\Phi v_{\varepsilon_2}\\
    =&\biggl( v^0_{\varepsilon_1}(X^{-1}_{v_{\varepsilon_1}}(t,x))-v^0_{\varepsilon_2}(X^{-1}_{v_{\varepsilon_2}}(t,x))\biggr)\exp\biggl(\int_0^t\mathcal{G}_1(\tau,t,x)d\tau\biggr)-v^0_{\varepsilon_2}(X^{-1}_{v_{\varepsilon_2}}(t,x))\biggl(\Delta E(0,t,x)\biggr)\\
    &+\int_0^t\int_{\R^d}\biggl(\mathcal{M}_{1}(s,t,x,z)-\mathcal{M}_{2}(s,t,x,z)\biggr)v_{\varepsilon_1}(s, z)dz\exp\left(\int_s^t\mathcal{G}_1(\tau,t,x)d\tau\right)ds\\
    &+\int_0^t\int_{\R^d}\mathcal{M}_{2}(s,t,x,z)\biggl(\Delta V_{\varepsilon_1\varepsilon_2}(s, z)\biggr)dz\exp\left(\int_s^t\mathcal{G}_1(\tau,t,x)d\tau\right)ds\\
    &+\int_0^t\int_{\R^d}\mathcal{M}_{2}(s,t,x,z)v_{\varepsilon_j}(s, z)dz\biggl(\Delta E(s,t,x)\biggr)ds,
\end{align*}
where
\begin{align*}
\mathcal{G}_i(\tau,t,x)&=\mathcal{G}_{v_{\varepsilon_i}}(\tau,X^{-1}_{v_{\varepsilon_i}}(t,x)),\\
\Delta E(s,t,x)&:=\exp\left(\int_s^t\mathcal{G}_1(\tau,t,x)d\tau\right)-\exp\left(\int_s^t\mathcal{G}_2(\tau,t,x)d\tau\right),\\
\mathcal{M}_{i}(s,t,x,z)&:=m(s, X_{v_{\varepsilon_i}}(s,X^{-1}_{v_{\varepsilon_i}}(t,x)), z, (I_dv_{\varepsilon_i})(s, X_{v_{\varepsilon_i}}(s,X^{-1}_{v_{\varepsilon_i}}(t,x)))).
\end{align*}
We write
\begin{equation}
   \begin{matrix}
    v^0_{\varepsilon_1}(X^{-1}_{v_{\varepsilon_1}}(t,x))-v^0_{\varepsilon_2}(X^{-1}_{v_{\varepsilon_2}}(t,x))&=&
        v^0_{\varepsilon_1}(X^{-1}_{v_{\varepsilon_1}}(t,x))-v^0_{\varepsilon_2}(X^{-1}_{v_{\varepsilon_1}}(t,x))\\
        &&\\
        &&+v^0_{\varepsilon_2}(X^{-1}_{v_{\varepsilon_1}}(t,x))-v^0_{\varepsilon_2}(X^{-1}_{v_{\varepsilon_2}}(t,x)).
    \end{matrix}\label{Deltau0}
\end{equation}
Thanks to the change of variables $y=X^{-1}_{v_{\varepsilon_1}}(t,x)$ and the relations
\begin{align*}
    |J_{X_{v_{\varepsilon_1}}(t,y)}|=&e^{\int_0^t\divv \mathcal{A}_{v_{\varepsilon_1}}(s,X_{v_{\varepsilon_1}}(t,y))ds},\\
    \mathcal{G}(s,X^{-1}_{v_{\varepsilon_1}}(t,x))\leqslant &\gamma:=\|R\|_{L^{\infty}_{t,x,I}}+\|a\|_{W^{1,\infty}_xL^{\infty}_{t,I}}+\|a\|_{W^{1,\infty}_IL^{\infty}_{t,x}}\|\psi_a\|_{W^{1,\infty}_xL^{\infty}_y}\overline{\rho},\\
    \divv \mathcal{A}_{v_{\varepsilon_1}}(s,X_{v_{\varepsilon_1}}(t,y))\leqslant &\tilde{a}:=\|a\|_{W^{1,\infty}_xL^{\infty}_{t,I}}+\|a\|_{W^{1,\infty}_IL^{\infty}_{t,x}}\|\psi_a\|_{W^{1,\infty}_xL^{\infty}_y}\overline{\rho},
\end{align*}
we conclude that 
\begin{align}
    &\int\limits_{\R^d}\biggl(v^0_{\varepsilon_1}(X^{-1}_{v_{\varepsilon_1}}(t,x))-v^0_{\varepsilon_2}(X^{-1}_{v_{\varepsilon_1}}(t,x))\biggr)\exp\biggl(\int_0^t\mathcal{G}_1(\tau,t,x)d\tau\biggr)dx\nonumber\\
    =&\int\limits_{\R^d}\biggl(v^0_{\varepsilon_1}(y)-v^0_{\varepsilon_2}(y)\biggr)\exp\biggl(\int_0^t\mathcal{G}_{v_{\varepsilon_i}}(\tau,y)d\tau\biggr)|J^{-1}_{X_{v_{\varepsilon_1}}(t,y)}|dy\nonumber\\
    \leqslant &e^{(\gamma+\tilde{\alpha})T}\|v^0_{\varepsilon_1}-v^0_{\varepsilon_2}\|_{L^1(\R)}.\label{CS1}
\end{align}
On the other hand, the compactness of the support of $v^0_{\varepsilon_2}$, together with the relation 
\[
|X_{v_{\varepsilon}}(t,y)-y|\leqslant \|a\|_{L^{\infty}}t,
\]
implies that
\[
v^0_{\varepsilon_2}(X^{-1}_{v_{\varepsilon_1}}(t,x))-v^0_{\varepsilon_2}(X^{-1}_{v_{\varepsilon_2}}(t,x))=0, \mbox{ for all }x\not\in O_t:=\mbox{supp }v^0+B_{r_t},
\]
where $B_{r_t}$ is the ball of radius $\|a\|_{L^{\infty}}t$. Hence, we obtain
\begin{align}
    &\int\limits_{\R^d}\biggl(v^0_{\varepsilon_2}(X^{-1}_{v_{\varepsilon_1}}(t,x))-v^0_{\varepsilon_2}(X^{-1}_{v_{\varepsilon_2}}(t,x))\biggr)\exp\biggl(\int_0^t\mathcal{G}_1(\tau,t,x)d\tau\biggr)dx\nonumber\\
    \leqslant &|O_t|\|v^0_{\varepsilon_2}\|_{W^{1,\infty}(\R^d)}e^{\gamma T}|X^{-1}_{v_{\varepsilon_1}}(t,x)-X^{-1}_{v_{\varepsilon_2}}(t,x)|\nonumber\\
    \leqslant& \widetilde{C}_T\|v_{\varepsilon_1}-v_{\varepsilon_2}\|_1,\label{CS2}
\end{align}
where we have used \eqref{ContXinv} on the second line\footnote{This term is responsible for the possible loss of regularity for $t>0$: In order to prove that $v_{\varepsilon}$ is a Cauchy sequence in $\mathcal{C}([0,T],W^{1,1}(\R^d))$, we would need a $W^{2,\infty}(\R^d)$ estimate over $v^0_{\varepsilon}$, which we do not have.}.\\
From the definition of $\mathcal{G}_u(t,x)$, we observe that
\begin{align}
    |\Delta E(s,t,x)|\leqslant &e^{\gamma T}\int_s^t |\mathcal{G}_1(\tau,t,x)-\mathcal{G}_2(\tau,t,x)|d\tau\nonumber\\
    \leqslant & \widetilde{C}_T\|v_{\varepsilon_1}-v_{\varepsilon_2}\|_1,\label{CS3}
\end{align}
where $\widetilde{C}_T$ depends on $\overline{\rho}$, $T$, the derivatives of $R$, $a$ and $\psi_a$, and on the constant appearing in \eqref{ContXinv}. \\
Using the change of variables $y=X_{v_{\varepsilon_2}}(s,X^{-1}_{v_{\varepsilon_2}}(t,x))$ and recalling that 
\begin{align*}
    |J_{X_{v_{\varepsilon_2}}(t,X^{-1}_{v_{\varepsilon_2}}(s,x))}|&=e^{\int_0^t\divv \mathcal{A}_u(\tau,X_{v_{\varepsilon_2}}(t,X^{-1}_{v_{\varepsilon_2}}(s,x)))d\tau-\int_0^s\divv \mathcal{A}_{v_{\varepsilon_2}}(\tau,x)d\tau},
\end{align*} 
we see that
\begin{align*}
    \int\limits_{\R^d}\mathcal{M}_{2}(s,t,x,z)dx&=\int\limits_{\R^d}m(s, y, z, (I_dv_{\varepsilon_2})(s, y))|J_{X_{v_{\varepsilon_2}}(t,X^{-1}_{v_{\varepsilon_2}}(s,y))}|^{-1}dy\\
    &\leqslant e^{2\tilde{a}T}\int\limits_{\R^d}\sup\limits_{s,z,I} m_{\varepsilon_i}(s, y, z, I)dy\leqslant e^{2\tilde{a}T}|\mathcal{K}|\overline{M}.
\end{align*}
Therefore, we have the bounds
\begin{align}
    \int\limits_{\R^d}v^0(X^{-1}_{v_{\varepsilon_2}}(t,x))|\Delta E(0,t,x)|dx&\leqslant e^{\tilde{\alpha}T}\|v^0\|_{L^{1}(\R^d)}\widetilde{C}_T\|v_{\varepsilon_1}-v_{\varepsilon_2}\|_1,\label{CS4}\\
    \int\limits_0^t\int\limits_{\R^d}\int\limits_{\R^d}\mathcal{M}_{2}(s,t,x,z)\biggl(\Delta V_{\varepsilon_1\varepsilon_2}(s, z)\biggr)dz\exp\left(\int\limits_s^t\mathcal{G}_1(\tau,t,x)d\tau\right)dxds&\leqslant e^{(\gamma+2\tilde{a})T}|\mathcal{K}|\overline{M}\|v_{\varepsilon_1}-v_{\varepsilon_2}\|_1,\label{CS5}\\
    \int\limits_0^t\int\limits_{\R^d}\int\limits_{\R^d}\mathcal{M}_{2}(t,x,z)v_{\varepsilon_j}(s, z)dz\biggl(|\Delta E(s,t,x)|\biggr)dxds&\leqslant e^{2\tilde{a}T}|\mathcal{K}|\overline{M} \overline{\rho}T\widetilde{C}_T\|v_{\varepsilon_1}-v_{\varepsilon_2}\|_1.\label{CS6}
\end{align}
The function $m(t,x,z,I)$ having a compact support on the $x$ variable, leads to
\[
\mathcal{M}_{1}(s,t,x,z)-\mathcal{M}_{2}(s,t,x,z)=0, \mbox{ for all }x\not\in \mathcal{K}+B_{2r_t}.
\]
On the other hand, the function $m$ being differentiable and Lipschitz, implies that, for all $x\in \mathcal{K}+B_{2r_t}$
\begin{align*}
   | \mathcal{M}_{1}(s,t,x,z)-\mathcal{M}_{2}(s,t,x,z)|\leqslant & \|m\|_{W^{1,\infty}_x}|X_{v_{\varepsilon_1}}(s,X^{-1}_{v_{\varepsilon_1}}(t,x))-X_{v_{\varepsilon_2}}(s,X^{-1}_{v_{\varepsilon_2}}(t,x))|\\
   &+\mu|(I_dv_{\varepsilon_1})(s, X_{v_{\varepsilon_1}}(s,X^{-1}_{v_{\varepsilon_1}}(t,x)))-(I_dv_{\varepsilon_2})(s, X_{v_{\varepsilon_2}}(s,X^{-1}_{v_{\varepsilon_2}}(t,x)))|\\
   \leqslant& (\|m\|_{W^{1,\infty}_x}+\|\psi_d\|_{W^{1,\infty}_x}\overline{\rho})|X_{v_{\varepsilon_1}}(s,X^{-1}_{v_{\varepsilon_1}}(t,x))-X_{v_{\varepsilon_2}}(s,X^{-1}_{v_{\varepsilon_2}}(t,x))|\\
   &+\|\psi_d\|_{L^{\infty}}\|v_{\varepsilon_1}-v_{\varepsilon_2}\|_{L^1(\R^d)}.
\end{align*}
Using first Lemma \ref{ChardX2} and then Lemma \ref{ChardXinv2}, we conclude that there exists a constant $\widetilde{C}_T$ such that
\begin{align*}
   | \mathcal{M}_{1}(s,t,x,z)-\mathcal{M}_{2}(s,t,x,z)|\leqslant & \widetilde{C}_T\biggl(|X^{-1}_{v_{\varepsilon_1}}(t,x)-X^{-1}_{v_{\varepsilon_2}}(t,x)|+\|v_{\varepsilon_1}-v_{\varepsilon_2}\|_{1}+\|v_{\varepsilon_1}-v_{\varepsilon_2}\|_{L^1(\R^d)}\biggr)\\
   \leqslant &\widetilde{C}_T\biggl(\|v_{\varepsilon_1}-v_{\varepsilon_2}\|_{1}+\|v_{\varepsilon_1}-v_{\varepsilon_2}\|_{L^1(\R^d)}\biggr).
\end{align*}
Therefore, we have the bound
\begin{align}
    &\int\limits_0^t\int\limits_{\R^d}\int\limits_{\R^d}\biggl(\mathcal{M}_{1}(s,t,x,z)-\mathcal{M}_{2}(s,t,x,z)\biggr)v_{\varepsilon_1}(s, z)dz\exp\left(\int_s^t\mathcal{G}_1(\tau,t,x)d\tau\right)dxds\nonumber\\
    \leqslant &|\mathcal{K}+B_{2r_T}|\overline{\rho}\widetilde{C}_Te^{\gamma T}\|v_{\varepsilon_1}-v_{\varepsilon_2}\|_{1}.\label{CS7}
\end{align}
Putting together the bounds \eqref{CS1} through \eqref{CS7}, we get
\begin{align*}
    \|v_{\varepsilon_1}-v_{\varepsilon_2}\|_{L^1(\R^d)}\leqslant \widetilde{C}_T\left( \|v^0_{\varepsilon_1}-v^0_{\varepsilon_2}\|_{L^1(\R)}+\int_0^T\|v_{\varepsilon_1}-v_{\varepsilon_2}\|_{L^1(\R^d)}ds\right).
\end{align*}
Thanks to Gr\"onwall's lemma, we have then the relation
\[
\sup\limits_{t\in[0,T]}\|v_{\varepsilon_1}-v_{\varepsilon_2}\|_{L^1(\R^d)}\leqslant \widetilde{C}_T\|v^0_{\varepsilon_1}-v^0_{\varepsilon_2}\|_{L^1(\R^d)}
\]
for some $\widetilde{C}_T$ independent of $\varepsilon_1$ and $\varepsilon_2$, which proves that, up to the extraction of a sub-sequence, $v_{\varepsilon}$ is a Cauchy sequence in $\mathcal{C}([0,T],L^{1}(\R^d))$. Therefore, there exists $v\in \mathcal{C}([0,T],L^1(\R^d))$ such that 
\[
\lim\limits_{\varepsilon\rightarrow 0}\sup\limits_{t\in [0,T]}\|v_{\varepsilon}-v\|_{L^{1}(\R^d)}=0.
\]
Furthermore, such function satisfies the bounds \eqref{BL1} and \eqref{BWm1}.\\
We claim now that the sequence $L^*_{v_{\varepsilon}}\varphi$ converges to $L^*_{v}\varphi$ in $L^{\infty}([0,T]\times\R^d)$ for all $\varphi\in\mathcal{C}^1_c([0,T)\times \R^d) $. This is a direct consequence of the relation
\[
|L^*v_{\varepsilon}\varphi-L^*v_{\varepsilon}\varphi|\leqslant( L_r\|\psi_g\|_{L^{\infty}}+\mu\|\psi_d\|_{L^{\infty}})\|\varphi\|_{L^{\infty}(\R^d)}\|v^{\varepsilon}-v\|_{L^1(\R^d)}.
\]
In order to conclude, we recall that all classical solutions are weak solutions, and therefore, for all $\varepsilon>0$
\[
\int_0^T\int_{\R^d}v_{\varepsilon}L^*_{v_{\varepsilon}}\varphi dxdt=\int_{\R^d}v^0_{\varepsilon}\varphi dx,
\]
and taking the limit when $\varepsilon$ goes to $0$ we see that $v$ is a weak solution of problem \eqref{Pv}.
\end{proof}
Considering initial data with compact support might be enough in order to model most of the biological scenarios found in nature. However, the hypothesis $v^0\in W^{k,\infty}(\R^d)$ might be too restrictive for some real life scenarios. Furthermore, the study of the problem when more general initial conditions are present, is of theoretical interest. We show below that, when $\partial_I a=0$, a solution exists for any initial data $v^0\in W^{k,1}(\R^d)$, $k\geqslant 1$.
\begin{theorem}\label{T3}
Under hypothesis \eqref{psia} through \eqref{GvsM}, if $\partial_Ia=0$, for all non-negative functions $v^0\in W^{k,1}(\R^d)$, there exists a unique non-negative weak solution $v\in \mathcal{C}([0,T],W^{k,1}(\R^d))$ of problem \eqref{Pv}. Furthermore, such a solution satisfies
\begin{align}
    \sup\limits_{t\in[0,T]}\|v\|_{L^1(\R^d)}\leqslant \max\{ \|v^0\|_{L^1(\R^d)},\frac{I^*}{\underline{\psi_g}}\},\label{BL1g2}\\
    \sup\limits_{t\in[0,T]}\|v\|_{W^{k,\infty}(\R^d)}\leqslant C_T\|v^0\|_{W^{k,1}(\R^d)}.\label{BWm1g2}
\end{align}
\end{theorem}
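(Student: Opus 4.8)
The plan is to obtain $v$ as a limit of the classical solutions furnished by Theorem \ref{T1} for mollified initial data. The key simplification when $\partial_I a=0$ is that the characteristic system \eqref{CharL} becomes $\dot X(t,y)=a(t,X(t,y))$, which does not involve the unknown at all: the diffeomorphisms $X(t,\cdot)$ and $X^{-1}(t,\cdot)$ are fixed once and for all, with the bounds of Lemmas \ref{ChardX}--\ref{ChardXinv}. This is precisely what kills the term singled out in the footnote of the proof of Theorem \ref{T2} as responsible for the loss of one derivative, and it allows the whole Cauchy/contraction argument to be run in $W^{k,1}$ rather than in $L^1$.

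First I would approximate: given a non-negative $v^0\in W^{k,1}(\R^d)$, take $v^0_\varepsilon:=\chi_{R(\varepsilon)}\,(v^0*\eta_\varepsilon)\in \mathcal{C}^k_c(\R^d)$, with $\eta_\varepsilon$ a standard non-negative mollifier and $\chi_R$ a non-negative smooth cut-off equal to $1$ on the ball of radius $R$; by Young's inequality and a suitable choice $R(\varepsilon)\to\infty$ one gets $v^0_\varepsilon\geqslant 0$, $\sup_\varepsilon\|v^0_\varepsilon\|_{W^{k,1}(\R^d)}\leqslant C\|v^0\|_{W^{k,1}(\R^d)}$ and $\|v^0-v^0_\varepsilon\|_{W^{k,1}(\R^d)}\to 0$. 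Applying Theorem \ref{T1} to each $v^0_\varepsilon$ yields non-negative classical (hence weak) solutions $v_\varepsilon\in\mathcal{C}^1([0,T],\mathcal{C}^k_c(\R^d))$ of \eqref{Pv} satisfying the uniform bounds \eqref{BL12}--\eqref{BWm1}; in particular $I_g v_\varepsilon$ and $I_d v_\varepsilon$ stay in a fixed compact subset of $\R_I$, so all the derivatives $\partial^j_I R$ and $\partial^\alpha_x\partial^j_I m$ with $j\leqslant k$ that will appear are uniformly bounded and uniformly Lipschitz in the nonlocal variable, by \eqref{aetR2} and \eqref{mLip}.

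Next I would show that $(v_\varepsilon)$ is Cauchy in $\mathcal{C}([0,T],W^{k,1}(\R^d))$. Each $v_\varepsilon$ solves $v_\varepsilon=\Phi v_\varepsilon$ with $\Phi$ as in the proof of Theorem \ref{T1}; since the characteristics are now frozen, $\Phi u$ depends on $u$ only through the fixed compositions with $X$ and $X^{-1}$, through $I_g u$ and $I_d u$, and through the linear source $\int m(\dots)u\,dz$ — in all of which $u$ enters \emph{undifferentiated}. Differentiating $v_\varepsilon=\Phi v_\varepsilon$ up to order $k$, as in the last display of that proof, and subtracting for $\varepsilon_1,\varepsilon_2$, the term $v^0_{\varepsilon_2}(X^{-1}_{v_{\varepsilon_1}}(t,x))-v^0_{\varepsilon_2}(X^{-1}_{v_{\varepsilon_2}}(t,x))$ appearing in \eqref{Deltau0} now vanishes identically because the two characteristics coincide; every remaining difference is bounded, using the uniform estimates above and the Lipschitz bounds on $R$ and $m$ in the nonlocal variable, by a constant times either $\|v^0_{\varepsilon_1}-v^0_{\varepsilon_2}\|_{W^{k,1}(\R^d)}$ (for the contribution $\partial^\gamma v^0_{\varepsilon_1}(X^{-1}(t,x))-\partial^\gamma v^0_{\varepsilon_2}(X^{-1}(t,x))$, after the change of variables $y=X^{-1}(t,x)$ and Liouville's formula) or $\int_0^t\|v_{\varepsilon_1}(s,\cdot)-v_{\varepsilon_2}(s,\cdot)\|_{W^{k,1}(\R^d)}\,ds$. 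Grönwall's lemma then gives $\sup_{[0,T]}\|v_{\varepsilon_1}-v_{\varepsilon_2}\|_{W^{k,1}(\R^d)}\leqslant C_T\|v^0_{\varepsilon_1}-v^0_{\varepsilon_2}\|_{W^{k,1}(\R^d)}$, which tends to $0$ as $\varepsilon_1,\varepsilon_2\to 0$; hence $v_\varepsilon\to v$ in $\mathcal{C}([0,T],W^{k,1}(\R^d))$ for some non-negative $v$, which inherits \eqref{BL1g2}--\eqref{BWm1g2} by passing to the limit in the bounds of Step 2.

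Finally, for the weak formulation I would argue exactly as at the end of the proof of Theorem \ref{T2}: $L^*_{v_\varepsilon}\varphi\to L^*_v\varphi$ in $L^\infty([0,T]\times\R^d)$ for every $\varphi\in\mathcal{C}^1_c([0,T)\times\R^d)$ (using $\|I_g v_\varepsilon-I_g v\|_{L^\infty}+\|I_d v_\varepsilon-I_d v\|_{L^\infty}\lesssim \|v_\varepsilon-v\|_{L^1}$ and the local Lipschitz continuity of $R$ and $m$), so passing to the limit in $\int_0^T\int_{\R^d}v_\varepsilon L^*_{v_\varepsilon}\varphi\,dx\,dt=\int_{\R^d}v^0_\varepsilon\varphi\,dx$ shows that $v$ is a weak solution. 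For uniqueness, any weak solution $v\in\mathcal{C}([0,T],W^{k,1}(\R^d))\subset\mathcal{C}([0,T],W^{1,1}(\R^d))$ solves, with $I_g v$ and $I_d v$ frozen as coefficients, a linear transport equation whose drift is $W^{1,\infty}$ in $x$; by the DiPerna--Lions theory (or directly along the fixed characteristics) it is the renormalised solution and therefore satisfies $v=\Phi v$, and comparing two solutions with the same datum all the $v^0$-terms cancel, leaving $\|v_1(t,\cdot)-v_2(t,\cdot)\|_{L^1(\R^d)}\leqslant C_T\int_0^t\|v_1(s,\cdot)-v_2(s,\cdot)\|_{L^1(\R^d)}\,ds$, hence $v_1=v_2$ by Grönwall. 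The main obstacle is the bookkeeping in the Cauchy step: one must verify that after differentiating $\Phi v_\varepsilon$ the unknown never occurs under a derivative composed with a solution-dependent object — which is exactly what $\partial_I a=0$ guarantees — and it is there that the order-$(k+1)$ regularity of $a$, $R$, $m$, $\psi_g$, $\psi_d$ is needed, so that the order-$k$ coefficient derivatives entering $\partial^\beta_x(\Phi v_\varepsilon)$ are simultaneously bounded and Lipschitz in the nonlocal variable.
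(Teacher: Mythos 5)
Your proposal is correct and follows essentially the same route as the paper: mollify $v^0$ into compactly supported smooth data, invoke Theorem \ref{T1}, show the approximating solutions are Cauchy in $\mathcal{C}([0,T],W^{k,1}(\R^d))$ precisely because the second term in \eqref{Deltau0} — the one responsible for the loss of a derivative — vanishes when the characteristics no longer depend on the unknown, and pass to the limit in the weak formulation as in Theorem \ref{T2}. The paper's own proof is a three-sentence sketch of exactly this argument; your version merely fills in the details.
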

\begin{proof}
    As in the proof for $k=1$ when $\partial_I a\neq 0$, we can approximate any function $v^0\in W^{k,1}$ by a smooth, compactly supported sequence of functions $v^0_{\varepsilon}$. The same arguments as in the previous proof will show that $v_{\varepsilon}$, the sequence of solutions associated to $v^0_{\varepsilon}$, is a Cauchy sequence in $\mathcal{C}([0,T],L^1(\R^d))$. Furthermore, given that the second term in \eqref{Deltau0}, which is responsible for the possible loss of regularity in the previous case, is equal $0$ when $\partial_I a=0$, we show that $v_{\varepsilon}$ is a Cauchy sequence in $\mathcal{C}([0,T],W^{k,1}(\R^d))$ as well. We prove as in Theorem \ref{T2} that the limit of $v_{\varepsilon}$ is the required weak solution.
\end{proof}
Given that the regularity of the solution varies depending on whether $\partial_I a=0$ or $\partial_I a\neq 0$, and that such regularity will be of importance in the upcoming sections, we define the parameter
\begin{equation*}
    \kappa:=\left\{
    \begin{matrix}
    k-1,&\mbox{ if }\partial_Ia\neq 0,\\
    &\\
    k,&\mbox{ if }\partial_Ia= 0,
    \end{matrix}
    \right.
\end{equation*}
which encompasses the information over said regularity.\\

We remark that if we had $\partial_I m=0$, we might obtain existence for a larger class of mutation functions $m$. For conciseness, we will however not consider such cases in the present work.

\section{Particle Method}\label{TPM}
The particle method basically consists in searching for an approximate solution of problem \eqref{Pv} which is a sum of weighted Dirac masses.\\
Throughout the following section we suppose
\begin{align}
    \psi_a&\in \mathcal{C}\left([0,T] \times \R_x^d, W^{1,\infty}(\R^d) \right) \cap \mathcal{C} \left( [0,T] \times \R_y, \mathcal{C}^{2}(\R^d_x)\cap W^{2,\infty}(\R^d_x) \right)                     \label{psiasmoothW},\\
   0<\underline{\psi_g}\leqslant \psi_g&\in \mathcal{C}\left([0,T]\times \R_x^d, W^{1,\infty}(\R_y^d)\right)\cap \mathcal{C}\left( [0,T]\times \R_y^d, \mathcal{C}^1(\R_x^d)\cap W^{1,\infty}(\R_x^d)  \right)     \label{psigsmoothW}\\
    \psi_d&\in \mathcal{C}\left([0,T]\times \R^d_x, W^{1,\infty}(\R_y^d)\right)\cap \mathcal{C}\left([0,T]\times \R^d_y, \mathcal{C}^1(\R_x^d)\cap W^{1,\infty}(\R_x^d). \right)\label{psidsmoothW}
\end{align}
Notice that, unlike the set of hypotheses \eqref{psia}-\eqref{psid}, we have imposed $W^{1,\infty}(\R^d)$ regularity for the $y$ variable, which is needed in order to approximate the integral terms by sums over a countable set.\\
Consider as well 
\begin{equation}
0 \leqslant m \in \mathcal{C}\left([0,T]\times \R^d_x \times \R^d_y, W^{1,\infty}(\R_I) \right) \cap \mathcal{C}\left([0,T]\times \R^d_x \times \R_I, W^{1,\infty}(\R^d_y) \right) \cap \mathcal{C}\left([0,T] \times \R^d_y \times \R_I, \mathcal{C}^{1}_c(\R_x^d)\right) 
\label{mPM}    
\end{equation}
satisfying hypotheses \eqref{mLip} through \eqref{GvsM}.\\
For $h>0$, consider a countable set of indices $\mathcal{J}_h\in\mathbb{Z}^d$, points $x^0_i\in\R^d$ and weights $w^0_i$ for $i\in \mathcal{J}_h$. The weights $w^0_i$ can be regarded as the respective masses of a collection of subsets $\Omega^0_i\subset\R^d$ satisfying
\begin{equation}
    \Omega^0_i\cap\Omega^0_j=\emptyset,\mbox{ if }i\neq j,\mbox{ and }\bigcup\limits_{i\in\mathcal{J}_h}\Omega^0_i=\R^d.\label{omega0k}
\end{equation}
For example, we may choose the $\Omega^0_i$ as the set of all non intersecting cubes of side length equal $h$ having the points $hi$ as centers , $i\in \mathbb{Z}^d$. This way, $w^0_i=h^d$, with each of the $x^0_i$ being a point in $\Omega^0_j$. In general we assume that there exist positive constants $c$ and $C$ such that
\begin{align}
    &ch\leqslant|x^0_i-x^0_j|\leqslant Ch,\ \forall i\neq j,\label{x0}\\
    &ch^d\leqslant w^0_i\leqslant Ch^d,\ \forall i\in\mathcal{J}_h .\label{w0}
\end{align}
Following \cite{degond1989weighted}, the particle method then consists in looking for a measure $\nu_h$ of the form
\[
\nu_h(t)=\sum\limits_{i\in\mathcal{J}_h}\nu_i(t)w_i(t)\delta_{x_i(t)},
\]
where $(\nu:=\{v_i(t)\}_{i\in\mathcal{J}_h},w:=\{w_i(t)\}_{i\in\mathcal{J}_h},\overline{x}:=\{x_i(t)\}_{i\in\mathcal{J}_h})$, is the solution of the following system
\begin{equation}
    \left\{
    \begin{matrix*}[l]
    \dot{x}_i(t) &=& A_{\nu,w}(t,x_i),\\
    &&\\
    \dot w_i(t)&=&\divv A_{\nu,w}(t,x_i(t))w_i(t),\\
    &&\\
    \dot \nu_i(t)&=&\big(-\divv A_{\nu,w}(t,x_i(t))+R(t,x_i(t),I_g(t,x_i(t),\nu,w))\big)\nu_i(t)\\
    &&\\
    &&+\sum\limits_{j\in\mathcal{J}_h}w_j(t)\nu_j(t)m(t,x_i(t),x_j(t),I_d(t,x_i(t),\nu,w)),\\
    &&\\
    x_i(0)&=&x^0_i,\ w_i(0)\ =\ w^0_i,\ \nu_i(0)\ =\ v^0(x^0_i),
    \end{matrix*}
    \right.\label{PM}
\end{equation}
where
\[
A_{\nu,w}(t,x)=a(t,x,I_a(t,x,\nu,w)),
\]
and
\begin{align*}
    I_l(t,x,\nu,w)&:=\sum\limits_{j\in\mathcal{J}_h}\nu_j(t)w_j(t)\psi_l(t,x,x_j(t)),
\end{align*}
with $l\in\{a,g,d\}$.\\
In what follows we assume that $h$ and $x^0_k$ are chosen in such a way that
\begin{equation}
    \|v^0\|_{1,h}:=\sum\limits_{i\in\mathcal{J}_h}v^0(x^0_i)w^0_i<\infty.\label{u0h}
\end{equation}
We define the subset of indices
\[
\mathcal{J}^m_h:=\{i\in \mathcal{J}_h:x^0_i\in \mbox{supp }m+B_{\|a\|_{L^{\infty}}T}\},
\]
where $B_{\|a\|_{L^{\infty}}T}$ is the ball of radius $\|a\|_{L^{\infty}}T$. The compact support of $m$ implies that $|\mathcal{J}^m_h|<\infty$.\\

For a positive value of $h$ and a set of indexes $\mathcal{J}_h$ we define the functional spaces
\begin{align*}
    \mathcal{l}^1(\mathcal{J}_h)&:=\big\{u=\{u_i\}_{i\in\mathcal{J}_h}:\sum\limits_{i\in\mathcal{J}_h}|u_i|< +\infty\big\},\\
    \mathcal{l}^{\infty}(\mathcal{J}_h)&:=\big\{w=\{w_i\}_{i\in\mathcal{J}_h}:\sup\limits_{i\in \mathcal{J}_h}|w_i|< +\infty\big\}.
\end{align*}
We equip these spaces with the norms
\[
\|u\|_{\mathcal{l}^1}:=\sum\limits_{i\in\mathcal{J}_h}|u_i|\mbox{ and }\|w\|_{\mathcal{l}^{\infty}}:=\sup\limits_{i\in \mathcal{J}_h}|w_i|
\]
respectively. It is clear that for all $u\in \mathcal{l}^1(\mathcal{J}_h)$ and $v\in \mathcal{l}^{\infty}(\mathcal{J}_h)$, then $uv\in \mathcal{l}^1(\mathcal{J}_h)$.
For $T>0$ we define as well the spaces
\[
X^T_{h}:=\mathcal{C}([0,T],\mathcal{l}^1(\mathcal{J}_h)) \mbox{ and }Y^T_{h}:=\mathcal{C}([0,T],\mathcal{l}^{\infty}(\mathcal{J}_h)),
\]
equipped with the norms
\[
\|u\|_{1,h}:=\sup\limits_{t\in[0,T]}\|u(t)\|_{\mathcal{l}^1}\mbox{ and }\|w\|_{\infty,h}:=\sup\limits_{t\in[0,T]}\|w(t)\|_{\mathcal{l}^{\infty}}.
\]
Problem \eqref{PM} is a strongly coupled system of ODEs, with an infinite number of unknowns and equations. In some cases the system becomes uncoupled (for example if $\partial_I a=0$) or with a finite number of equations and unknowns (for example if $v^0$, $a$ and $m$ have compact support), however, for the sake of generality, we present below the proof of existence of solution in the general case, and later discuss briefly these particular scenarios.\\
We start by giving two results that will be of great use for the proof of existence  for problem \eqref{PM}. First, we deal with the existence of solution for a simpler system of infinite equations with infinitely many unknowns:
 \begin{lem}\label{CLinf}
 Consider $a\in\mathcal{C}([0,T],(W^{1,\infty}(\R^{d+1}) ))^d$, $u\in X^T_h$, $w\in Y^T_h$ and $\psi_a$ satisfying hypothesis \eqref{psiasmoothW}. Then there exists a unique family of functions $x:=\{x_i\}_{i\in\mathcal{J}_h}$, $x_i\in\mathcal{C}^1([0,T])$ for all $i\in\mathcal{J}_h $ which is solution of the system of equations
 \begin{equation}
     \dot{x_i}(t)=A_{u,w}(t,x_i), \quad t\in [0,T], \ x_i(0)=x^0_i. \label{inftot}
 \end{equation}
 \end{lem}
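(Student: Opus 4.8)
The plan is to recast the infinite ODE system \eqref{inftot} as a single ODE in the Banach space $\mathcal{l}^{\infty}(\mathcal{J}_h)$ and apply a Cauchy--Lipschitz (Picard--Lindelöf) argument there. Write $\overline{x}=\{x_i\}_{i\in\mathcal{J}_h}$ and define the vector field $F:[0,T]\times\mathcal{l}^{\infty}(\mathcal{J}_h)\to\mathcal{l}^{\infty}(\mathcal{J}_h)$ by $F(t,\overline{x})_i:=A_{u,w}(t,x_i)=a(t,x_i,I_a(t,x_i,u,w))$. Here $u\in X^T_h$ and $w\in Y^T_h$ are \emph{fixed} data, so the only genuine unknown is $\overline{x}$, and the system is no longer coupled through the weights in a way that obstructs a fixed-point scheme — the coupling enters only through the scalar nonlocal term $I_a(t,x,u,w)=\sum_{j}u_j(t)w_j(t)\psi_a(t,x,x_j(t))$, which for fixed $(u,w)$ is, by \eqref{psiasmoothW} and the bound $\sum_j|u_j(t)w_j(t)|\leqslant\|u\|_{1,h}\|w\|_{\infty,h}$, a function of $(t,x)$ that is bounded and Lipschitz in $x$ uniformly in $t$.

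The key steps, in order: (i) Check that $F$ maps into $\mathcal{l}^{\infty}(\mathcal{J}_h)$: since $a\in\mathcal{C}([0,T],(W^{1,\infty}(\R^{d+1}))^d)$ is bounded, $\sup_i|F(t,\overline{x})_i|\leqslant\|a\|_{L^\infty}<\infty$, so in fact $F$ is bounded on all of $[0,T]\times\mathcal{l}^{\infty}(\mathcal{J}_h)$. (ii) Establish a global Lipschitz estimate for $F$ in the $\mathcal{l}^{\infty}$-norm: for $\overline{x},\overline{y}\in\mathcal{l}^{\infty}(\mathcal{J}_h)$,
\[
|F(t,\overline{x})_i-F(t,\overline{y})_i|\leqslant \|\nabla a\|_{L^\infty}\bigl(|x_i-y_i|+|I_a(t,x_i,u,w)-I_a(t,y_i,u,w)|\bigr),
\]
and bound the nonlocal increment by $\|\psi_a\|_{W^{1,\infty}_x L^\infty_y}\|u\|_{1,h}\|w\|_{\infty,h}\,|x_i-y_i|$ using the Lipschitz regularity of $\psi_a$ in $x$; taking the supremum over $i$ gives $\|F(t,\overline{x})-F(t,\overline{y})\|_{\mathcal{l}^\infty}\leqslant L\|\overline{x}-\overline{y}\|_{\mathcal{l}^\infty}$ with $L$ depending only on $\|a\|_{W^{1,\infty}}$, $\|\psi_a\|$, $\|u\|_{1,h}$, $\|w\|_{\infty,h}$. (iii) Verify the Carathéodory/continuity-in-$t$ hypotheses: $t\mapsto F(t,\overline{x})$ is continuous into $\mathcal{l}^{\infty}(\mathcal{J}_h)$, using continuity of $a$, of $\psi_a$ in $t$, of $t\mapsto u(t)\in\mathcal{l}^1$ and $t\mapsto w(t)\in\mathcal{l}^\infty$, together with the uniform bound $\|a\|_{L^\infty}$ to pass the supremum over $i$ through the limit. (iv) Note the initial datum $\overline{x}^0=\{x^0_i\}$ lies in $\mathcal{l}^{\infty}(\mathcal{J}_h)$ by \eqref{x0}. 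Then the abstract Picard--Lindelöf theorem in the Banach space $\mathcal{l}^{\infty}(\mathcal{J}_h)$ — or equivalently the Banach fixed-point theorem applied to the integral operator $(\mathcal{T}\overline{x})_i(t)=x^0_i+\int_0^t F(s,\overline{x}(s))_i\,ds$ on $Y^{T}_h$ with a weighted sup-norm $\sup_t e^{-2Lt}\|\overline{x}(t)\|_{\mathcal{l}^\infty}$ to make it a contraction — yields a unique global solution $\overline{x}\in\mathcal{C}^1([0,T],\mathcal{l}^\infty(\mathcal{J}_h))$; global existence on all of $[0,T]$ is immediate from the a priori bound $\|\overline{x}(t)\|_{\mathcal{l}^\infty}\leqslant\|\overline{x}^0\|_{\mathcal{l}^\infty}+\|a\|_{L^\infty}T$, which precludes blow-up. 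Each $x_i$ being $\mathcal{C}^1$ on $[0,T]$ follows coordinatewise from the integral equation and continuity of $s\mapsto F(s,\overline{x}(s))_i$.

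The main obstacle is purely the functional-analytic bookkeeping of working in $\mathcal{l}^{\infty}(\mathcal{J}_h)$ rather than the technical content: one must be careful that the Lipschitz constant $L$ is \emph{uniform in $i$} (so that it survives the supremum), which is exactly why the Lipschitz regularity of $\psi_a$ in $x$ in \eqref{psiasmoothW} is imposed, and that the nonlocal sum $\sum_j u_j w_j\psi_a(\cdot,\cdot,x_j)$ is controlled by $\|u\|_{1,h}\|w\|_{\infty,h}$ independently of the particle positions — this is where the $\mathcal{l}^1\times\mathcal{l}^\infty$ pairing does the work. A minor subtlety is that the standard Cauchy--Lipschitz theorem is usually quoted in finite dimensions or in separable Banach spaces; since $\mathcal{l}^{\infty}(\mathcal{J}_h)$ is non-separable I would avoid invoking a black-box theorem and instead carry out the contraction-mapping argument directly on $Y^{T}_h$ with the exponentially weighted norm, for which completeness of $\mathcal{l}^{\infty}(\mathcal{J}_h)$ is all that is needed.
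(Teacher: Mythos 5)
There is a genuine gap, and it sits exactly at the point the lemma exists to handle. You assert that, for fixed $(u,w)$, the nonlocal term $I_a(t,x,u,w)=\sum_{j}u_j(t)w_j(t)\psi_a(t,x,x_j(t))$ is ``a function of $(t,x)$'' and that the system ``is no longer coupled through the weights in a way that obstructs a fixed-point scheme''. But the $x_j(t)$ appearing inside $I_a$ are the unknowns themselves: the right-hand side of the equation for $x_i$ depends on the positions of \emph{all} the particles, so the system is fully coupled even for fixed $(u,w)$. (The paper is explicit that when $\partial_I a=0$ the system decouples and classical Cauchy--Lipschitz suffices; the content of the lemma is precisely the coupled case.) Consequently your Lipschitz estimate in step (ii) is wrong as derived: $|I_a(t,x_i,\overline{x})-I_a(t,y_i,\overline{y})|$ must be bounded not only by a constant times $|x_i-y_i|$ but also by $\sum_j |u_jw_j|\,\|\psi_a\|_{W^{1,\infty}_y}|x_j-y_j|\leqslant \|u\|_{1,h}\|w\|_{\infty,h}\|\psi_a\|_{W^{1,\infty}_y}\|\overline{x}-\overline{y}\|_{\mathcal{l}^\infty}$, which requires the $W^{1,\infty}$ regularity of $\psi_a$ in its \emph{second} spatial argument --- the hypothesis the paper's own proof flags as essential for exactly this estimate. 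Without that term your argument only proves the uncoupled case. The final Lipschitz bound you state happens to have the correct form, so the contraction scheme is repairable, but the step that makes the lemma nontrivial is missing.

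A secondary error: the initial datum $\overline{x}^0=\{x^0_i\}$ does \emph{not} lie in $\mathcal{l}^{\infty}(\mathcal{J}_h)$, since the $x^0_i$ are spread over cells covering all of $\R^d$ and are therefore unbounded as a family. The fixed point must be run on the affine set $\{\overline{x}:\sup_i|x_i(t)-x^0_i|<\infty\}$ (the paper uses the a priori bound $|x_i(t)-x^0_i|\leqslant\|a\|_{L^\infty}T$ for exactly this purpose); this is a routine fix but your step (iv) is false as written. For comparison, the paper takes a different route: it truncates $u$ to finitely supported $u^{\delta}$, solves the resulting finite coupled system by classical Cauchy--Lipschitz, shows the solutions form a Cauchy sequence as $\delta\to 0$ via a Gr\"onwall argument (which is where the $y$-Lipschitz bound on $\psi_a$ enters), and passes to the limit. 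Your direct contraction in $\mathcal{l}^\infty$ would be a cleaner alternative once the two issues above are fixed, but as written it does not prove the statement.
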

 When $\partial_I a=0$, system \eqref{inftot} becomes uncoupled, each individual equation has a solution, thanks to the classic Cauchy-Lipschitz theory. The proof of the general case is given in Appendix \ref{AnnB}.\\
 The second auxiliary result comes from approximation theory, and it will also be of great use in Section \ref{Cvgn}:
 \begin{lem}\label{AT}
\begin{align*}
\forall \varphi \in W^{k, 1}(\R^d), \quad 
    \bigg\lvert \int_{\R^d}{\varphi(x)dx}-\underset{i\in\mathcal{J}_h}{\sum}{w_i(t)\varphi(x_i(t))}\bigg\rvert \leqslant Ch^k\lVert \varphi \rVert_{k,1}, 
\end{align*}
where $C$ is a constant which depends on $a$, $\psi_a$, $\|vw\|_{1,h}$ and $T$. 
\label{error integral-sum}
\end{lem}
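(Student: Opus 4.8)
The plan is to reduce the statement to a quadrature estimate on the \emph{initial} cloud $(x^0_i,w^0_i)_{i\in\mathcal{J}_h}$ by pushing everything through the characteristic flow of the particle velocity field — this is exactly the role the volumes $w_i$ were introduced to play. Write $A:=A_{\nu,w}$ for the velocity field driving \eqref{PM} (globally Lipschitz in $x$, uniformly in $t$) and let $Y_t\colon\R^d\to\R^d$ be its flow, $\partial_t Y_t(y)=A(t,Y_t(y))$, $Y_0=\mathrm{id}$. By uniqueness $x_i(t)=Y_t(x^0_i)$ for every $i$, and if $J_t(y):=\det D_yY_t(y)$, Liouville's formula gives $\partial_tJ_t=(\divv A)(t,Y_t)J_t$ with $J_0\equiv 1$; comparing with the equation for $\dot w_i$ in \eqref{PM} and evaluating at $y=x^0_i$ yields $w_i(t)=w^0_iJ_t(x^0_i)$. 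Since $Y_t$ is an orientation-preserving $\mathcal{C}^1$-diffeomorphism of $\R^d$, the change of variables $x=Y_t(y)$ gives, with $G(y):=\varphi(Y_t(y))J_t(y)$,
\[
\int_{\R^d}\varphi(x)\,dx=\int_{\R^d}G(y)\,dy,\qquad \sum_{i\in\mathcal{J}_h}w_i(t)\varphi(x_i(t))=\sum_{i\in\mathcal{J}_h}w^0_iG(x^0_i),
\]
so the quantity to estimate equals $\bigl\lvert\int_{\R^d}G-\sum_{i\in\mathcal{J}_h}w^0_iG(x^0_i)\bigr\rvert$.

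Next I would invoke the classical particle-quadrature estimate for the initial cloud: under \eqref{x0}--\eqref{w0},
\[
\Bigl\lvert\int_{\R^d}G(y)\,dy-\sum_{i\in\mathcal{J}_h}w^0_iG(x^0_i)\Bigr\rvert\leqslant C_0\,h^{k}\,\lVert G\rVert_{W^{k,1}(\R^d)},
\]
with $C_0$ depending only on $d$, $k$ and the grid constants $c,C$. For the model grid $x^0_i=hi$, $w^0_i=h^d$ this follows from Poisson summation: $\int_{\R^d}G-h^d\sum_iG(hi)=\sum_{\xi\in(2\pi/h)\mathbb{Z}^d\setminus\{0\}}\widehat{G}(\xi)$, and the bound $\lvert\widehat{G}(\xi)\rvert\leqslant C\lvert\xi\rvert^{-k}\sum_{\lvert\alpha\rvert=k}\lVert\partial^{\alpha}G\rVert_{L^1}$ together with the convergence of $\sum_{m\in\mathbb{Z}^d\setminus\{0\}}\lvert m\rvert^{-k}$ (for $k$ large enough relative to $d$, which is the only regime in which the lemma is used) gives the claim; the general quasi-uniform case is handled as in \cite{degond1989weighted,raviart1985analysis}, decomposing the error cell by cell and using a Bramble--Hilbert argument.

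It then remains to bound $\lVert G\rVert_{W^{k,1}(\R^d)}$ by $\lVert\varphi\rVert_{W^{k,1}(\R^d)}$ with a constant of the advertised form. The field $A(t,x)=a(t,x,I_a(t,x,\nu,w))$ has exactly the structure of the field $\mathcal{A}_u$ of Section \ref{CharRes}, with $\lVert u\rVert$ replaced by $\lVert\nu w\rVert_{1,h}$, since $\lvert\partial^{\gamma}_xI_a(t,x,\nu,w)\rvert=\bigl\lvert\sum_j\nu_jw_j\,\partial^{\gamma}_x\psi_a(t,x,x_j(t))\bigr\rvert\leqslant\lVert\partial^{\gamma}_x\psi_a\rVert_{L^\infty}\lVert\nu w\rVert_{1,h}$ for $\lvert\gamma\rvert\leqslant k$; consequently the proofs of Lemma \ref{ChardX} and Lemma \ref{ChardXinv} apply verbatim and show that the space derivatives of $Y_t$ and of $Y_t^{-1}$ up to order $k$ are bounded on $[0,T]$ by some $C(a,\psi_a,\lVert\nu w\rVert_{1,h},T)$, and hence so are $J_t$, $J_t^{-1}$ and their derivatives (with $J_t$ bounded below by a positive constant as well). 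By the Leibniz rule and the multivariate chain rule, for $\lvert\alpha\rvert\leqslant k$ the derivative $\partial^{\alpha}_yG$ is a finite sum of terms $(\partial^{\beta}\varphi)(Y_t(y))$ with $\lvert\beta\rvert\leqslant k$ multiplied by products of derivatives of $Y_t$ and $J_t$; taking absolute values, integrating over $\R^d$, pulling out the bounded flow factors, and undoing the change of variables $x=Y_t(y)$ in each $\int_{\R^d}\lvert(\partial^{\beta}\varphi)(Y_t(y))\rvert\,dy=\int_{\R^d}\lvert\partial^{\beta}\varphi(x)\rvert\,\lvert\det D_xY_t^{-1}(x)\rvert\,dx\leqslant C\lVert\partial^{\beta}\varphi\rVert_{L^1}$ yields $\lVert G\rVert_{W^{k,1}}\leqslant C_1\lVert\varphi\rVert_{W^{k,1}}$ with $C_1=C_1(a,\psi_a,\lVert\nu w\rVert_{1,h},T)$. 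Combining the three displays gives the lemma with $C=C_0C_1$.

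The bulk of the work — and the only genuinely new point — is the last step: one must first record the discrete counterpart of the characteristic bounds of Section \ref{CharRes} for the field $A_{\nu,w}$ (this is where the dependence on $a$, $\psi_a$, $\lVert\nu w\rVert_{1,h}$ and $T$ enters), and then propagate it through the composition and product defining $G$, which is a somewhat delicate multivariate-chain-rule bookkeeping. The initial quadrature estimate is classical, but getting the full order $h^{k}$ rather than the naive $h^{2}$ does require exploiting the (near-)regularity of the initial grid.
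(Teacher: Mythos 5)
Your argument is correct and, at bottom, follows the same route as the paper: the paper's proof simply cites Lemma 8 of \cite{mas1987particle} (restated as Lemma \ref{L8fromRav}), checks that the discrete velocity field $A_{\nu,w}$ belongs to $(L^\infty(0,T;W^{k+1,\infty}(\R^d)))^d$ --- exactly your observation that $\lvert\partial_x^\gamma I_a(t,x,\nu,w)\rvert\leqslant\lVert\partial_x^\gamma\psi_a\rVert_{L^\infty}\lVert\nu w\rVert_{1,h}$ --- and then dualises the resulting $W^{-k,1}$ estimate against the constant function $1\in W^{k,\infty}(\R^d)$. What you have done is unpack the content of that cited lemma: the flow reduction $x_i(t)=Y_t(x^0_i)$, $w_i(t)=w^0_iJ_t(x^0_i)$, the quadrature estimate on the initial cloud, and the stability of the $W^{k,1}$ norm under composition with the flow are precisely the ingredients of the Raviart--Mas-Gallic proof. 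The paper's version is shorter; yours is self-contained and makes explicit where the dependence of the constant on $a$, $\psi_a$, $\lVert\nu w\rVert_{1,h}$ and $T$ enters. Two caveats apply equally to both proofs and are worth recording: the estimate requires $k>d$ (a hypothesis of Lemma \ref{L8fromRav} that is silently dropped in the statement of Lemma \ref{AT}), and the full order $h^k$ on the initial cloud holds for the uniform grid $x^0_i=hi$, $w^0_i=h^d$ but not for an arbitrary quasi-uniform cloud satisfying only \eqref{x0}--\eqref{w0} --- a limitation you correctly flag and the paper leaves implicit.
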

This result is a direct corollary of Lemma 8 in \cite{mas1987particle}. More details regarding its proof are given in Appendix \ref{AppTheo}.\\
From now on, we suppose $h$ to be small enough so that for any $t\in [0, T]$, 
\begin{equation}
    \underset{i\in\mathcal{J}_h}{\sum}{w_i(t)m(t,x_i(t),y,I_d(t,x_i(t),\nu,w))}<K+\frac{r^*}{2},\label{mdisc}
\end{equation}
where the values of $K$ and $r^*$ are given in \eqref{bigMK} and \eqref{GvsM} respectively. Such a choice is always possible thanks to Lemma \ref{AT}.
 \begin{theorem}
Under hypothesis \eqref{aetR1} through \eqref{GvsM} and \eqref{psiasmoothW} through \eqref{w0}, for all $T>0$ and all non-negative initial data $v^0\in \mathcal{l}^1(\mathcal{J}_h,\Omega^0)$ there exists a unique solution $x_i\in\mathcal{C}^1([0,T])$, for all $ i\in\mathcal{J}_h$, $w:=\{w_i(\cdot)\}_{i\in\mathcal{J}_h}\in \mathcal{C}([0,T],\mathcal{l}^{\infty}(\mathcal{J}_h))$ and $0\leqslant \nu:=\{\nu_i(\cdot)\}_{i\in\mathcal{J}_h}\in \mathcal{C}([0,T],\mathcal{l}^{1}(\mathcal{J}_h))$ of problem \eqref{PM}. Furthermore, there exist positive constants $c_T$ and $C_T$ such that the solution satisfies, for all $t\in [0,T]$
\begin{gather}
    c_Th\leqslant|x_i(t)-x_j(t)|\leqslant C_Th,\ \forall i,j\in \mathcal{J}_h,\ i\neq j,\label{xt}\\
    c_Th^d\leqslant w_i(t)\leqslant C_Th^d,\ \forall i\in \mathcal{J}_h,\label{wt}\\
    \|\nu w\|_{1,h}\leqslant\max\{\|v^0h^d\|_{\mathcal{l}^1},\frac{I^*}{\underline{\psi_g}}\} \label{vhbound}.
\end{gather}

\label{theo bounds particles}
\end{theorem}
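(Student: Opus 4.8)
\emph{Plan.} The proof mirrors the scheme of Theorem~\ref{T1}: a Banach fixed point yields a local-in-time solution, an \emph{a priori} bound analogous to Lemma~\ref{Priori} promotes it to a global one, and the estimates \eqref{xt}--\eqref{wt} are read off afterwards from the flow structure of the first two equations of \eqref{PM}. Fix $\alpha>1$, set $\overline{\rho}_{\alpha}:=\max\{\alpha\|v^{0}\|_{1,h},\,I^{*}/\underline{\psi_g}\}$ (finite by \eqref{u0h}), and let $E^{T}_{\alpha}$ be the closed subset of $X^{T}_{h}\times Y^{T}_{h}$ of pairs $(\nu,w)$ with $\nu,w\geqslant 0$, $\|\nu w\|_{1,h}\leqslant\overline{\rho}_{\alpha}$ and $c_{T}h^{d}\leqslant w_{i}(t)\leqslant C_{T}h^{d}$ for all $i,t$. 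On $E^{T}_{\alpha}$ define $\Phi(\nu,w)=(\tilde\nu,\tilde w)$ by: (i) let $\overline x=\{x_{i}\}$ be the characteristics associated with $(\nu,w)$ by Lemma~\ref{CLinf}; (ii) let $\tilde w_{i}(t)=w^{0}_{i}\exp(\int_{0}^{t}\divv A_{\nu,w}(s,x_{i}(s))\,ds)$, i.e. the solution of $\dot{\tilde w}_{i}=\divv A_{\nu,w}(t,x_{i}(t))\,\tilde w_{i}$; (iii) let $\tilde\nu=\{\tilde\nu_{i}\}$ solve the last equation of \eqref{PM} with $A_{\nu,w}$, $I_{g}$, $I_{d}$ evaluated at the input $(\nu,w)$ and with $x_{i}$, $\tilde w_{j}$ as above — this is a system \emph{linear} in $\tilde\nu$. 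A fixed point of $\Phi$ is exactly a solution of \eqref{PM}.

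\emph{Well-posedness of $\Phi$ and the local solution.} Step (ii) is explicit; since $|\nabla_{x}I_{a}(t,\cdot,\nu,w)|\leqslant\|\psi_{a}\|_{W^{1,\infty}_{x}}\|\nu w\|_{1,h}$ is bounded on $E^{T}_{\alpha}$, the divergence $\divv A_{\nu,w}$ is bounded by some $\tilde a=\tilde a(\overline{\rho}_{\alpha})$, so $c h^{d}e^{-\tilde a t}\leqslant\tilde w_{i}(t)\leqslant C h^{d}e^{\tilde a t}$, which gives the volume constraint of $E^{T}_{\alpha}$ for small $T$ and later yields \eqref{wt}. Step (iii) is a linear ODE in the Banach space $\ell^{1}(\mathcal J_{h})$: the coupling operator $\tilde\nu\mapsto\{\sum_{j}\tilde w_{j}\tilde\nu_{j}\,m(t,x_{i},x_{j},\cdot)\}_{i}$ is bounded on $\ell^{1}(\mathcal J_{h})$ because $m$ is compactly supported in $x$, so its only nonzero components are the finitely many $i$ with $x_{i}(t)\in\mathcal K$, while $\sum_{j}\tilde w_{j}|\tilde\nu_{j}|<\infty$; the diagonal coefficient $-\divv A_{\nu,w}+R(\cdot,\cdot,I_{g})$ is bounded (here $I_{g}$ ranges over the fixed compact interval $[-\|\psi_{g}\|_{\infty}\overline{\rho}_{\alpha},\|\psi_{g}\|_{\infty}\overline{\rho}_{\alpha}]$, where the $W^{k+1,\infty}_{loc}$ regularity of $R$ applies) and continuous in $t$; Picard iteration then gives a unique $\tilde\nu\in X^{T}_{h}$, non-negative by the Duhamel formula since $v^{0}\geqslant 0$ and the off-diagonal couplings $\tilde w_{j}m\geqslant 0$. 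That $\Phi$ maps $E^{T_{\alpha}}_{\alpha}$ into itself for $T_{\alpha}$ small follows as in Theorem~\ref{T1}: differentiating $\sum_{i}\tilde\nu_{i}\tilde w_{i}$, the $\pm\divv A_{\nu,w}$ terms cancel and \eqref{mdisc} gives $\tfrac{d}{dt}\sum_{i}\tilde\nu_{i}\tilde w_{i}\leqslant C\sum_{i}\tilde\nu_{i}\tilde w_{i}$, whence $\|\tilde\nu\tilde w\|_{1,h}\leqslant e^{CT}\|v^{0}\|_{1,h}\leqslant\overline{\rho}_{\alpha}$ once $e^{CT_{\alpha}}\leqslant\alpha$. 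Finally $\Phi$ is a contraction on $E^{T_{1}}_{\alpha}$ for $T_{1}$ small: the Lipschitz dependence of $\overline x$ and of $X^{-1}$ on the data $(\nu,w)$ — the countable analogue of Lemmas~\ref{ChardX2} and \ref{ChardXinv2}, contained in the proof of Lemma~\ref{CLinf} — together with a Grönwall argument bounds $\|\Phi(\nu_{1},w_{1})-\Phi(\nu_{2},w_{2})\|$ by $CT_{1}\|(\nu_{1},w_{1})-(\nu_{2},w_{2})\|$. Banach's theorem then produces a unique local solution of \eqref{PM}, with $x_{i}\in\mathcal C^{1}$; uniqueness on $\mathcal C([0,T],\cdot)$ follows by the usual continuation argument.

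\emph{A priori bound, globalisation, and \eqref{xt}--\eqref{wt}.} At a fixed point the coefficients are those of \eqref{PM}, so for $\rho_{h}(t):=\sum_{i}\nu_{i}(t)w_{i}(t)$ the same cancellation yields
\[
\dot\rho_{h}(t)=\sum_{i}R(t,x_{i},I_{g}(t,x_{i},\nu,w))\,\nu_{i}w_{i}+\sum_{j}\nu_{j}w_{j}\Big(\sum_{i}w_{i}\,m(t,x_{i},x_{j},I_{d}(t,x_{i},\nu,w))\Big).
\]
If $\rho_{h}(t)>I^{*}/\underline{\psi_g}$, then $I_{g}(t,x_{i},\nu,w)\geqslant\underline{\psi_g}\,\rho_{h}(t)>I^{*}$ for every $i$ (using $\psi_{g}\geqslant\underline{\psi_g}$ and $\nu_{j}w_{j}\geqslant 0$), so \eqref{GvsM} gives $R(t,x_{i},I_{g})<-K-r^{*}$, which combined with \eqref{mdisc} gives $\dot\rho_{h}(t)<-\tfrac{r^{*}}{2}\rho_{h}(t)<0$; hence $\rho_{h}$ never exceeds $\max\{\rho_{h}(0),I^{*}/\underline{\psi_g}\}=\max\{\|v^{0}\|_{1,h},I^{*}/\underline{\psi_g}\}$, proving \eqref{vhbound}. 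This bound does not depend on the length of the interval of existence, so on any maximal interval $[0,T_{M})$ the constants $\tilde a$, the Lipschitz constant of $A_{\nu,w}(t,\cdot)$ and the range of $I_{g}$ stay bounded; consequently $\overline x,w,\nu$ and their first derivatives remain bounded on $[0,T_{M})$, extend continuously to $T_{M}$, and the construction restarts there, forcing $T_{M}=+\infty$. With global existence in hand, writing $x_{i}(t)=X_{\nu,w}(t,x^{0}_{i})$ for the flow of the Lipschitz field $A_{\nu,w}(t,\cdot)$ and using $e^{-Lt}|x^{0}_{i}-x^{0}_{j}|\leqslant|x_{i}(t)-x_{j}(t)|\leqslant e^{Lt}|x^{0}_{i}-x^{0}_{j}|$ with \eqref{x0} gives \eqref{xt}, while $w_{i}(t)=w^{0}_{i}\,|J_{X_{\nu,w}}(t,x^{0}_{i})|$ (Liouville's formula) together with \eqref{w0} and the bound on $\divv A_{\nu,w}$ gives \eqref{wt}.

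\emph{Main difficulty.} The only genuinely new point relative to the classical finite-dimensional particle method is that \eqref{PM} is a system with countably many unknowns: one must verify that the $\tilde\nu$-block is a well-posed linear ODE in $\ell^{1}(\mathcal J_{h})$ — which rests on the compact support of $m$ in the $x$-variable (only finitely many equations receive the coupling term) and on $\sum_{j}w_{j}\nu_{j}<\infty$ — and that the characteristic map $(\nu,w)\mapsto\{x_{i}\}$ furnished by Lemma~\ref{CLinf} is Lipschitz in the data so that the contraction closes. Everything else is a transcription of the arguments in the proofs of Lemma~\ref{Priori} and Theorem~\ref{T1}, the key identity being the cancellation of the $\pm\divv A_{\nu,w}$ terms when one differentiates $\sum_{i}\nu_{i}w_{i}$.
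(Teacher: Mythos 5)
Your proposal is correct and follows essentially the same route as the paper: a Banach fixed point on an invariant set $D^T_\alpha$ built from the bound $\overline{\rho}_\alpha$, explicit integration of the volume equation, Lemma \ref{CLinf} for the characteristics, and the cancellation of the $\pm\divv A_{\nu,w}$ terms in $\frac{d}{dt}\sum_i\nu_iw_i$ combined with \eqref{GvsM} and \eqref{mdisc} to obtain the global a priori bound \eqref{vhbound}. The only (harmless) variation is that you keep the unknown $\tilde\nu_j$ inside the mutation sum, so your $\nu$-block is a coupled linear system in $\ell^1(\mathcal{J}_h)$ solved by Picard iteration, whereas the paper substitutes the input $u_j$ there and obtains a decoupled scalar Duhamel formula \eqref{vk}.
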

\begin{proof} 
Consider $v^0\in \mathcal{l}^1(\mathcal{J}_h,\Omega^0)$, satisfying $v^0\geqslant0$. Consider as well $\alpha>1$, and define 
\begin{align*}
     \overline{\rho}_{\alpha}&:=\max\{\alpha h^d \|v^0\|_{\mathcal{l}^1},\frac{I^*}{\underline{\psi_g}}\},\\
     \tilde{a}&:=\|a\|_{W^{1,\infty}_{x}L^{\infty}_{I,t}}+\|a\|_{W^{1,\infty}_{I}L^{\infty}_{t,x}}\|\psi_a\|_{W^{1,\infty}_{x}}C_{\alpha}.
\end{align*}

For $T>0$ we define the set
\[
D^T_{\alpha}:=\{(u, w) \in X^{T}_h\times Y^{T}_h: \|u w\|_{1,h}\leqslant \overline{\rho}_{\alpha}, \forall t\in [0, T], u(t)\geqslant 0, w(t)\geqslant 0,  h^de^{-\tilde{a}t}\leqslant w_k(t)\leqslant h^de^{\tilde{a}t}\}.
\]
For any $(u,w)\in D^T_{\alpha}$ we introduce the problem, for $t\in [0,T]$, 
\begin{equation}
    \left\{
    \begin{matrix*}[l]
    \dot x_i(t)&=&A_{u,w}(t,x_i),\\
    &&\\
    \dot \omega_i(t)&=&\divv A_{u,w}(t,x_i(t))\omega_i(t),\\
    &&\\
    \dot \nu_i(t)&=&\big(-\divv A_{u,w}(t,x_i(t))+R(t,x_i(t),I_g(t,x_i(t),u,w))\big)\nu_i(t)\\
    &&\\
    &&+\sum\limits_{j\in\mathcal{J}_h}\omega_j(t)u_j(t)m(t,x_i(t),x_j(t),I_d(t,x_i(t),u,w)),   \\
    &&\\
    x_i(0)&=&x^0_i,\ \omega_i(0)\ =\ w^0_i,\ \nu_i(0)\ =\ v^0(x^0_i).
    \end{matrix*}
    \right.\label{Oh}
\end{equation}
We denote $(\nu,\omega)=\Phi( u,w)$.

For each pair $(u,w)$, the existence and uniqueness of $x_i$ is immediate from Lemma \ref{CLinf}. Furthermore, for all values of $i$, we have the following explicit expression for $\omega_i$
\[
\omega_{i}(t)=w^0_ie^{\int_0^t\divv A_{u,w}(s,x_i(s))ds},
\]
which satisfies, for any $t\in [0,T]$
\[
h^d e^{-\tilde{a}t}\leqslant \omega_{i}(t)\leqslant h^d e^{\tilde{a}t}.
\]
On the other hand, for all $(u,w)\in D^{T}_{\alpha}$ and all values of $i$, the right-hand side of the differential equation in \eqref{Oh} is well defined, as we have for all $t\in [0,T]$
\begin{align*}
  \sum\limits_{j\in\mathcal{J}_h}w_j(t)u_j(t)m(t,x_i(t),x_j(t),I_d(t,x_i(t),u,w))&\leqslant\overline{M}\sum\limits_{j\in\mathcal{J}_h}w_j(t)u_j(t)\leqslant \overline{M} \overline{\rho}_{\alpha}.
\end{align*}
Therefore, the expression for $\nu_i$ is given by
\begin{equation}
    \nu_i(t)=v^0(x^0_i)e^{ \int_0^t\mathcal{G}_i(s)ds}+\int\limits_0^t\sum\limits_{j\in\mathcal{J}_h}w_j(s)u_j(s)m(s,x_i(s),x_j(s),I_d(s,x_i(s),u,w))e^{\int_s^t\mathcal{G}_i(\tau)d\tau}ds,\label{vk}
\end{equation}
where
\[
\mathcal{G}_i(t):=-\divv A_{u,w}(s,x_i(s))+R(s,x_i(s),I_g(s,x_i(s),u(s),w(s))),
\]
satisfies
\[
\underset{t\in [0,T]}{\sup}\,|\mathcal{G}_i(t)|\leqslant \gamma:=\|R\|_{L^{\infty}_{t,x,I}}+\|a\|_{W^{1,\infty}_xL^{\infty}_{t,I}}+\|a\|_{W^{1,\infty}_IL^{\infty}_{t,x}}\|\psi_a\|_{W^{1,\infty}_xL^{\infty}_y}\overline{\rho}_{\alpha}.
\]
The positiveness of $\nu$ is immediate from the positiveness of $v^0$ and $m$. 

Furthermore, given that $|x_i(t)-x^0_i|\leqslant \|a\|_{L^{\infty}}T$, for all $k\in \mathcal{J}_h$ and $t\in[0,T]$, we have 
\[
m(t,x_i(s),y,I)=0,
\]
for all $i\not\in \mathcal{J}^m_h$, $t\in[0,T]$, $y\in \R^d$ and $I\in \R$. As a result, multiplying \eqref{vk} by $\omega_i(t)$ for each $i$ and adding for all values of $i$, we obtain
\begin{align*}
    \sum\limits_{i\in\mathcal{J}_h}\nu_i(t)\omega_i(t)&\leqslant e^{(\gamma+\tilde{\alpha})T}\left(\sum\limits_{i\in\mathcal{J}_h}v^0(x^0_i)h^d+h^d\int\limits_0^t\sum\limits_{i\in\mathcal{J}_h}\sum\limits_{j\in\mathcal{J}_h}w_j(s)u_j(s)m(s,x_i(s),x_j(s),I_d(s,x_i(s),u,w))ds\right)\\
    &=e^{(\gamma+\tilde{\alpha})T}\left(\sum\limits_{i\in\mathcal{J}_h}v^0(x^0_i)h^d+h^d\int\limits_0^t\sum\limits_{i\in\mathcal{J}^m_h}\sum\limits_{j\in\mathcal{J}_h}w_j(s)u_j(s)m(s,x_i(s),x_j(s),I_d(s,x_i(s),u,w))ds\right)\\
    &\leqslant e^{(\gamma+\tilde{\alpha})T}\left(\sum\limits_{i\in\mathcal{J}_h}v^0(x^0_i)h^d+h^d|\mathcal{J}^m_h|\overline{M}\int\limits_0^t\sum\limits_{j\in\mathcal{J}_h}w_j(s)u_j(s)ds\right)\\
    &\leqslant e^{(\gamma+\tilde{\alpha})T}(\frac{1}{\alpha}+TK_h)\overline{\rho}_{\alpha},
\end{align*}
where $K_h:=h^d|\mathcal{J}^m_h|\overline{M}$\footnote{Notice that $h^d|\mathcal{J}^m_h|\approx |\mbox{supp }m+B_{\|a\|_{L^{\infty}}T}|$}. Thanks to the condition $\alpha>1$ there exists $T_{\alpha}$ (only depending on $\alpha$ and on the coefficients of the problem) such that $\Phi:D^{T}_{\alpha}\rightarrow D^{T}_{\alpha}$, for all $T\leqslant T_{\alpha}$.\\
We now prove that there exists $T\in(0,T_{\alpha})$ such that $\Phi$ is a contraction over $D^{T}_{\alpha}$.\\
{\it Step 1: Bounds over $x=\{x_i\}_{i\in\mathcal{J}_h}$}\\
Let $(u^1,w^1)$ and $(u^2,w^2)$ be two pairs in $D^{T}_{\alpha}$, and let $x^1,x^2$ be the respective solutions of
\[
\dot{x^j_i}=A_{u^j,w^j}(t,x^j_i).
\]
By following the same ideas as in the proof of Lemma \ref{ChardX2} (see Appendix \ref{AnnA}), we obtain that for all $t\in [0,T]$, 
\[
\|x^1(t)-x^2(t)\|_{\infty,h}\leqslant C(T,h)\left(\|w^1-w^2\|_{\infty,h}+\|u^1-u^2\|_{1,h}\right),
\]
where the constant $C(T,h)$ satisfies $\lim\limits_{T\rightarrow 0}C(T,h)=0$.
\\ 
{\it Step 2: Bounds over $\omega=\{\omega_i\}_{i\in\mathcal{J}_h}$}\\
From the expression for $\omega$, we get, for all $t\in [0,T]$
\begin{align*}
    |\omega^1_i(t)-\omega^2_i(t)|\leqslant& h^dTe^{\tilde{a}T}\left(\|a\|_{W^{2,\infty}_{x,I}}(1+|\partial_xI_a(t,x^2_i,u^2,w^2)|)\|x^1-x^2\|_{\infty,h}\right.\\
    &+\|a\|_{W^{2,\infty}_{x,I}}(1+|\partial_xI_a(t,x^2_i,u^2,w^2)|)|I_a(t,x^1_i,u^1,w^1)-I_a(t,x^2_i,u^2,w^2)|\\
    &\left.+\|a\|_{W^{1,\infty}_I}|\partial_xI_a(t,x^1_i,u^1,w^1)-\partial_xI_a(t,x^2_i,u^2,w^2)|\right).
\end{align*}
On the other hand we have
\begin{align*}
    |\partial_xI_a(t,x^2_i,u^2,w^2)| \leqslant& \|\psi_a\|_{W^{1,\infty}_x}\overline{\rho}_{\alpha},\\
    |I_a(t,x^1_i,u^1,w^1)-I_a(t,x^2_i,u^2,w^2)|\leqslant & \overline{\rho}_{\alpha}\|\psi_a\|_{W^{1,\infty}_{x,y}}\|x^1-x^2\|_{\infty,h},\\
    &+\|\psi_a\|_{L^{\infty}_{x,y}}e^{\tilde{a}T}(h^d\|u^1-u^2\|_{1,\infty}+\frac{\overline{\rho}_{\alpha}}{h^d}\|w^1-w^2\|_{\infty,h}),
\end{align*}
and
\begin{align*}
    |\partial_xI_a(t,x^1_i,u^1,w^1)-\partial_xI_a(t,x^2_i,u^2,w^2)|\leqslant &\overline{\rho}_{\alpha}\|\psi_a\|_{W^{2,\infty}_{x,y}}\|x^1-x^2\|_{\infty,h}\\
    &+\|\psi_a\|_{W^{1,\infty}_{x}}e^{\tilde{a}T}(h^d\|u^1-u^2\|_{1,\infty}+\frac{\overline{\rho}_{\alpha}}{h^d}\|w^1-w^2\|_{\infty,h}).
\end{align*}
In conclusion, there exists a constant $C(T,h)$, satisfying $\lim\limits_{T\rightarrow 0}C(T,h)=0$ such that
\[
\|\omega^1-\omega^2\|_{\infty,h}\leqslant C(T,h)\left(\|w^1-w^2\|_{\infty,h}+\|u^1-u^2\|_{1,h}\right).
\]
{\it Step 3: Bounds over $\nu=\{\nu_i\}_{i\in\mathcal{J}_h}$}\\
Using the expression for $\nu$, the regularity of $m$ and bounds similar to those used for $\omega$, we see that there exists a constant $C(T,h)$ satisfying $\lim\limits_{T\rightarrow 0}C(T,h)=0$ such that
\begin{align*}
    \|\nu^1-\nu^2\|_{1,h}\leqslant C(T,h)\left(\|w^1-w^2\|_{\infty,h}+\|u^1-u^2\|_{1,h}\right).
\end{align*}
Consequently, there exists a constant $C(T,h)$ satisfying $\lim\limits_{T\rightarrow 0}C(T,h)=0$, such that
\[
\|\omega^1-\omega^2\|_{\infty,h}+\|\nu^1-\nu^2\|_{1,h}\leqslant C(T,h)\left(\|w^1-w^2\|_{\infty,h}+\|u^1-u^2\|_{1,h}\right),
\]
which implies that, for $0<T_1\leqslant T_{\alpha}$ small enough, $\Phi$ is a contraction over $D^{T_1}_{\alpha}$, and therefore it has a unique fixed point. Such fixed point is a solution of problem \eqref{PM} over $[0,T_1)$.\\
We now claim that the solution exists for $T$ arbitrary, and furthermore, it satisfies the relation \eqref{vhbound}. Let $T_f$ be the maximal time of existence of solution. Suppose that there exists $t_0\in(0,T_f]$ such that $\|\nu w\|_{1,h}> \overline{\rho}_{\alpha}$. This implies that there exist $\delta\geqslant 0$ and $t^*>0$ such that for a certain finite subset of $\mathcal{J}_h$, that we denote as $\mathcal{K}_h$, the following statements are true:
\begin{align*}
    \sum\limits_{i\in \mathcal{K}_h}\nu_i(t)w_i(t)\leqslant \overline{\rho}_{\alpha},\ \forall\  t\in[t^*-\delta,t^*],\\
    \sum\limits_{i\in \mathcal{K}_h}\nu_i(t)w_i(t)> \overline{\rho}_{\alpha},\ \forall\ t\in(t^*,t^*+\delta].
\end{align*}
This implies the existence of $t_1\in [t^*,t^*+\delta]$ such that the following properties are satisfied simultaneously
\begin{equation}
   \sum\limits_{i\in \mathcal{K}_h}\nu_i(t_1)w_i(t_1)\geqslant \overline{\rho}_{\alpha}\mbox{ and }\left(\sum\limits_{i\in \mathcal{K}_h}\nu_iw_i\right)'(t_1)\geqslant 0.\label{Piht1} 
\end{equation}
Multiplying the equation satisfied by $\nu_i(t)$ by $w_i(t)$ we get the relation
\begin{align*}
    \dot \nu_i(t)w_i(t)=& \big(-\divv A_{\nu,w}(t,x_i(t))+R(t,x_i(t),I_g(t,x_i(t),\nu,w))\big) \nu_i(t)w_i(t)\\
    &+w_i(t)\sum\limits_{j\in\mathcal{J}_h}w_j(t)\nu_j(t)m(t,x_i(t),x_j(t),I_g(t,x_i(t),\nu,w)),
\end{align*}
while, directly from the equation for $w_i(t)$ we deduce
\[
\nu_i(t)\dot w_i(t)= \divv A_{\nu,w}(t,x_i(t)) \nu_i(t)w_i(t).
\]
Therefore, adding both relations for $i\in \mathcal{K}_h$ and using \eqref{mdisc}, we get
\begin{align*}
    \left(\sum\limits_{i\in \mathcal{K}_h}\nu_i w_i\right)'(t)=& \left(\sum\limits_{i\in \mathcal{K}_h}R(t,x_i(t),I_g(t,x_i(t),\nu,w)) \nu_i(t)w_i(t)\right)\\
    &+\sum\limits_{j\in\mathcal{J}}w_j(t)\nu_j(t)\sum_{i\in \mathcal{K}_h\cap \mathcal{J}^m_h }m(t,x_i(t),x_j(t),I_d(t,x_i(t),\nu,w))w_i(t)\\
    \leqslant&\left(\sum\limits_{i\in \mathcal{K}_h}R(t,x_i(t),I_g(t,x_i(t))) \nu_i(t)w_i(t)\right)+(K+\frac{r^*}{2})\sum\limits_{j\in\mathcal{K}_h}w_j(t)\nu_j(t).
\end{align*}
Given that $\|\nu(t_1)w(t_1)\|_{\mathcal{l}^1}\geqslant \overline{\rho}_{\alpha}\geqslant \frac{I^*}{\underline{\psi}_g}$, then $I_g(t_1,x_i(t_1),\nu(t_1),w(t_1))\geqslant I^*$, and consequently
\[R(t_1,x_i(t_1),I_g(t_1,x_i(t_1),\nu(t_1),w(t_1)))<-r^*-K,\nonumber
\]
for all values of $i$, which in turn implies that
\begin{align*}
    \left(\sum\limits_{i\in \mathcal{K}_h}\nu_iw_i\right)'(t_1)\leqslant -\frac{r^2}{2}\sum\limits_{i\in \mathcal{K}_h}\nu_i(t_1)w_i(t_1)<0,
\end{align*}
which contradicts \eqref{Piht1}. Therefore, $\|\nu w\|_{1,h}\leqslant  \overline{\rho}_{\alpha}$ for all values of $\alpha>1$ and for all $t\in (0, T_f)$. We can then iterate the arguments used to prove existence of a solution, and conclude that the solution can be extended to any interval $[0,T]$. As $\|\nu w\|_{1,h}\leqslant \overline{\rho}_{\alpha}$ for all $t$, independently of $\alpha$, taking the limit when $\alpha$ goes to $1$, we obtain \eqref{vhbound}.
\end{proof}

\section{Convergence of the numerical solution towards a weak solution}\label{Cvgn}
We study now the conditions under which a solution of problem \eqref{Oh} converges towards a solution of problem \eqref{Pv}, in a certain sense that will be defined later. We split our analysis in two cases: first, the study of convergence on a finite interval of time $[0,T]$. We will see that for any $T>0$, the solution obtained through the particle method converges towards the solution of the PDE \eqref{Pv}. However, the speed of convergence might be affected by the value of $T$. The second case we study is the asymptotic  proximity of both solutions when $t$ goes to $\infty$. This is a far more complex and interesting issue, and we show different examples exposing some of the behaviours that can be observed.\\ 

Directly from the study of existence of solutions for each problem, we notice that the sets of hypotheses we have used, do not coincide. We give a set of hypotheses which simultaneously guarantees the existence of solution for both problems, while taking into account the distinction of cases involved in the definition of $\kappa$.\\
For a certain $T>0$ and $k>0$, we consider the functions
\begin{align}
    \psi_a&\in\mathcal{C}([0,T]\times \R_x^d, W^{1,\infty}(\R_y^d)) \cap  \mathcal{C}\left([0,T]\times \R_y^d, \mathcal{C}^{k+1}(\R^d_x)\cap W^{k+1,\infty}(\R^d_x) \right),\label{psiaf}\\
   0<\underline{\psi_g}\leqslant \psi_g&\in \mathcal{C}\left([0,T]\times \R_x, W^{1,\infty}(\R_y^d)\right) \cap \mathcal{C}\left([0,T]\times \R_y, \mathcal{C}^\kappa(\R^d_x)\cap W^{\kappa,\infty}(\R^d_x)\right),\label{psigf}\\
    \psi_d&\in \mathcal{C}\left([0,T]\times \R^d_x, W^{1,\infty}(\R^d_y)\right) \cap   \mathcal{C}\left([0,T]\times \R^d_y, \mathcal{C}^\kappa(\R^d_x)\cap W^{\kappa,\infty}(\R^d_x) \right).\label{psidf}
\end{align}
As in Section \ref{PDE} we introduce
\begin{align}
    a&\in \mathcal{C}([0,T],W^{k+1,\infty}(\mathbb{R}^{d+1})),\label{aetR1f}\\
    R&\in \mathcal{C}\left([0,T]\times \R_x, W^{\kappa,\infty}_{loc}(\R^d_y)\right) \cap  \mathcal{C}\left([0,T]\times \R_y, \mathcal{C}^\kappa(\R^d_x)\cap W^{\kappa,\infty}(\R^d_x))\right) \label{aetR2f}
\end{align}
We consider as well 
\begin{equation}
    0\leqslant m\in \mathcal{C}\left([0,T]\times \R^d_x \times \R^d_y, W^{\kappa,\infty}(\R_I)\right) \cap \mathcal{C}\left([0,T]\times \R^d_x \times \R^d_I, W^{\kappa,\infty}(\R_y^d)\right) \cap \mathcal{C}\left([0,T]\times \R^d_y \times \R^d_I, \mathcal{C}^{\kappa}_c(\R^d_x) \right).\label{mf}
\end{equation}
satisfying hypothesis \eqref{mLip} through \eqref{GvsM}.\\
Finally, we consider $v^0\in W^{k,1}(\R^d)$ if $\partial_Ia=0$ and $v^0\in W^{k,1}(\R^d)\cap W^{k,\infty}(\R^d)$ with compact support otherwise.
\subsection{Convergence on a finite time interval}\label{ConvFT}
We recall that the function $v$ represents the solution of problem \eqref{Pv} while $x_i$, $w_i$ and $\nu_i$, $i\in \mathcal{J}_h$ represents that of problem \eqref{Oh}. We recall as well that
\[\max\{\|v\|,\|\nu w\|_{1,h}\}\leqslant\max\{\|v^0\|_{L^1(\R^d)},\|v^0h^d\|_{\mathcal{l}^1},\frac{I^*}{\underline{\psi_g}}\}=:\overline{\rho}.\nonumber\]

Let $\e>0$, $r\in\R$ and $\varphi\in \mathcal{C}_c(\R^d)$ satisfy the following conditions
\begin{align}
\int_{\R^d}{\varphi(x)dx}&=1, \label{cut off function  integrable}\\
\int_{\R^d}{x^\alpha \varphi(x)dx}&=0, \quad \forall \; \alpha \in \N^n, \lv \alpha \rv \leqslant r-1. \label{cut off function moment}
\end{align}
. We define, for all $t\in [0, T]$, $x\in \R^d$
\begin{enumerate}
    \item[{\it i)}] 
    \begin{equation}
    v^h(t,x)=\sum\limits_{i\in\mathcal{J}_h}{\nu_i(t)w_i(t)\delta(x-x_i(t))},\label{vh}
    \end{equation}
    a time dependent measure obtained as a sum of weighted Dirac deltas at $x_i(t)$,
    \item[{\it ii)}] 
    \begin{equation}
        v_\e^h(t,x)=(v^h(t)*\varphi_\e)(x)=\sum\limits_{i\in\mathcal{J}_h}{\nu_i(t)w_i(t)\varphi_\varepsilon(x-x_i(t))},\label{veh}
    \end{equation} 
    a regular function obtained as the space convolution of  $v^h(t,x)$ and $\varphi_\e(x)$, where
    \[\varphi_\e:=\frac{1}{\e^d}\varphi(\frac{\cdot }{\e}).\]
\end{enumerate}
We also introduce the following operator, for any function $v\in L^\infty(0,T; L^1(\R^d))$: 
\[
\biggl(\Pi_\e^h(t)v\biggr)(x)=\sum\limits_{i\in\mathcal{J}_h}{w_i(t)v(t,x_i(t))\varphi_\e(x-x_i(t))}\nonumber.
\]
We recall a direct corollary of the \textbf{Theorem 3} in \cite{mas1987particle}:
\begin{prop}\label{varphi}
Let $k$, $r$ be two integers, with $k>d$,  and let us assume that $a\in L^{\infty}\left(0,T;  W^{k+1, \infty}(\R^d)^d\right)$, and that $\varphi \in \mathcal{C}_c^1(\R^d)\cap W^{k+1, 1}(\R^d)$ satisfies conditions \eqref{cut off function  integrable} and \eqref{cut off function moment}. Then, for any $p\in [1, +\infty]$, there exists $C=C(T)>0$ such that, for any $u\in W^{\mu, 1}(\R^d)$ ($\mu= \max(r,k)$), 
\begin{align*}
    \lV u-{\Pi}_\e^h(t)u\rV_{L^p(\R^d)}\leqslant C \big(\e^r\lV u\rV_{W^{r,p}(\R^d)}+\big(\frac{\e}{h}\big)^k \lV u\rV_{W^{k,p}(\R^d)}\big).
\end{align*}
\label{u-pi u}
\end{prop}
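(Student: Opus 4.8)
The plan is to split the error into a \emph{regularisation} contribution and a \emph{quadrature} contribution, writing
\[
u-\Pi_\e^h(t)u=\bigl(u-u*\varphi_\e\bigr)+\bigl(u*\varphi_\e-\Pi_\e^h(t)u\bigr),
\]
and to bound each summand in $L^p(\R^d)$, uniformly in $t\in[0,T]$. The first term is a classical mollification estimate driven by the moment conditions on $\varphi$; the second is a quadrature error to which Lemma \ref{AT} (itself a corollary of Lemma 8 of \cite{mas1987particle}) applies once the integrand is viewed as a test function depending on the parameter $x$.

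\emph{Regularisation error.} Since $\int_{\R^d}\varphi_\e=1$ by \eqref{cut off function  integrable}, one has $u(x)-(u*\varphi_\e)(x)=\int_{\R^d}\varphi_\e(z)\bigl(u(x)-u(x-z)\bigr)dz$. Taylor-expanding $z\mapsto u(x-z)$ about $z=0$ to order $r$ and using that $\int_{\R^d}z^\alpha\varphi_\e(z)dz=\e^{|\alpha|}\int_{\R^d}z^\alpha\varphi(z)dz$ vanishes for $1\leqslant|\alpha|\leqslant r-1$ by \eqref{cut off function moment}, all the polynomial terms drop out and only the order-$r$ integral remainder survives. Bounding it with the generalised Minkowski inequality, using translation invariance of $\lV\partial^\alpha u\rV_{L^p(\R^d)}$ and the rescaling $z\mapsto\e z$, one obtains
\[
\lV u-u*\varphi_\e\rV_{L^p(\R^d)}\leqslant C\,\e^{r}\lV u\rV_{W^{r,p}(\R^d)},
\]
with $C$ depending only on $\varphi$ and $r$ (the endpoint $p=\infty$ is identical).

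\emph{Quadrature error.} Fix $t\in[0,T]$ and $x\in\R^d$ and put $g_{x,t}(y):=u(y)\,\varphi_\e(x-y)$, so that $(u*\varphi_\e)(x)=\int_{\R^d}g_{x,t}$ and $(\Pi_\e^h(t)u)(x)=\sum_{i\in\mathcal{J}_h}w_i(t)\,g_{x,t}(x_i(t))$. Lemma \ref{AT} then gives
\[
\bigl\lvert(u*\varphi_\e)(x)-(\Pi_\e^h(t)u)(x)\bigr\rvert\leqslant C\,h^{k}\lV g_{x,t}\rV_{W^{k,1}(\R^d)},
\]
with $C=C(T)$, the dependence on $a$, $\psi_a$ and on the uniform particle bounds \eqref{xt}--\eqref{wt} together with $\lV\nu w\rV_{1,h}\leqslant\overline{\rho}$ being absorbed into $C$. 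By the Leibniz rule $\partial^{\beta}_y g_{x,t}$, for $|\beta|\leqslant k$, is a finite combination of terms $\partial^\gamma u(y)\,(\partial^{\beta-\gamma}\varphi_\e)(x-y)$ with $\gamma\leqslant\beta$; taking the $L^p$ norm in $x$, applying the Minkowski inequality to the $y$-integration and then Young's convolution inequality $\lV f*g\rV_{L^p}\leqslant\lV f\rV_{L^p}\lV g\rV_{L^1}$, and recalling that $\lV\partial^\sigma\varphi_\e\rV_{L^1(\R^d)}=\e^{-|\sigma|}\lV\partial^\sigma\varphi\rV_{L^1(\R^d)}$, one arrives (for $\e\leqslant 1$, say) at
\[
\lV u*\varphi_\e-\Pi_\e^h(t)u\rV_{L^p(\R^d)}\leqslant C\,h^{k}\e^{-k}\lV u\rV_{W^{k,p}(\R^d)}=C\Bigl(\tfrac{h}{\e}\Bigr)^{k}\lV u\rV_{W^{k,p}(\R^d)},
\]
uniformly in $t\in[0,T]$. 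Adding the two estimates yields the claim, the index $\mu=\max(r,k)$ being precisely the Sobolev order of $u$ entering the right-hand side.

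\emph{Main obstacle.} The delicate point is the quadrature step: Lemma \ref{AT} must be applied to the $x$-parametrised, $\e$-dependent test function $g_{x,t}$, and the gain $h^k$ has to be combined with the $L^p_x$ norm while tracking the negative powers of $\e$ created by the derivatives of $\varphi_\e$ --- this bookkeeping is exactly what turns the quadrature constant into the factor $\bigl(\tfrac{h}{\e}\bigr)^{k}$. One must also check that the constant produced by Lemma \ref{AT} is genuinely uniform over $t\in[0,T]$, so that $C=C(T)$ rather than $C(t)$; this is where the time-uniform configuration bounds \eqref{xt}--\eqref{wt} are used. The remaining ingredients --- the Taylor/moment cancellation and Young's inequality --- are routine, so that the proposition amounts to the specialisation to the present nodes $x_i(t)$ and weights $w_i(t)$ of Theorem 3 of \cite{mas1987particle}; the detailed computation is carried out in Appendix \ref{AppTheo}.
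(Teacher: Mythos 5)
The paper does not actually prove Proposition \ref{varphi}: it is stated as ``a direct corollary of Theorem 3 in \cite{mas1987particle}'' and no argument is given in the text (Appendix \ref{AppTheo} proves Lemma \ref{AT}, i.e.\ the quadrature estimate against the constant test function $1$, not the Proposition). Your proposal therefore supplies a proof where the paper supplies a citation, and the proof you give is correct: the splitting into a mollification error $u-u*\varphi_\e$ (controlled by the moment conditions \eqref{cut off function  integrable}--\eqref{cut off function moment} via Taylor expansion, yielding the $\e^r$ term) plus a quadrature error $u*\varphi_\e-\Pi_\e^h(t)u$ (controlled by applying Lemma \ref{AT} to the $x$-parametrised integrand $g_{x,t}=u\,\varphi_\e(x-\cdot)$ and then Minkowski/Young, yielding the $(h/\e)^k$ term) is exactly the mechanism behind the Mas-Gallic--Raviart theorem being cited, and your bookkeeping of the negative powers of $\e$ coming from $\lVert\partial^\sigma\varphi_\e\rVert_{L^1}=\e^{-|\sigma|}\lVert\partial^\sigma\varphi\rVert_{L^1}$ is the right way to produce the factor $(h/\e)^k$. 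Two minor points, neither of which is a gap: the uniformity of the constant in $t\in[0,T]$ is indeed inherited from Lemma \ref{AT}, whose constant depends on $T$, $a$, $\psi_a$ and the particle bounds but not on $t$, so your $C=C(T)$ is justified; and the application of Lemma \ref{AT} to $g_{x,t}$ tacitly uses that $g_{x,t}\in W^{k,1}(\R^d)$ with well-defined point values $g_{x,t}(x_i(t))$, which holds because $k>d$ gives $W^{k,1}(\R^d)\hookrightarrow\mathcal{C}^0(\R^d)$ and because $\varphi$ is compactly supported with $\varphi\in W^{k+1,1}(\R^d)$ --- this is worth one explicit line, since it is where the hypotheses $k>d$ and the regularity of $\varphi$ are actually consumed.
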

We seek to prove the following approximation result between $v_\e^h$ and $v$, the solution of problem \eqref{Pv}.
\begin{theorem}\label{Theo6}
 Assume that hypotheses \eqref{psiaf} through \eqref{mf} are satisfied, and that $\varphi \in \mathcal{C}_c^1(\R^d)\cap W^{k+1, 1}(\R^d)$ satisfies \eqref{cut off function  integrable} and \eqref{cut off function moment}. 
Then, there exists $C=C(T, a, R, m,\overline{\rho})>0$, a positive constant which depends on $T$, $a$, $R$, $m$ and $\overline{\rho}$ such that 
\[\lVert v-v_\e^h \rVert_{L^1(\R^d)}\leqslant C \bigl(\e^r+\big(\frac{h}{\e}\big)^{\kappa}+h^{\kappa}\bigr)\lV v^0 \rV_{W^{\mu,1}(\R^d)},\quad \forall \; 0\leqslant t\leqslant T,   \]
where $\mu= \max(r, \kappa)$. 
\label{theo estimation }
\end{theorem}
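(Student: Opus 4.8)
The plan is to run the classical consistency-plus-stability argument for weighted particle methods, adapted to the non-local coupling. By density, together with the continuous dependence of $v$ and $v^h_\e$ on $v^0$ and the linearity of the asserted bound in $\|v^0\|_{W^{\mu,1}(\R^d)}$, it is enough to treat $v^0\in\mathcal{C}^k_c(\R^d)$, so that Theorem \ref{T1} gives $v\in\mathcal{C}^1([0,T],\mathcal{C}^k_c(\R^d))$ and all the computations below are licit. I would then write
\[
v-v^h_\e=\bigl(v-\Pi^h_\e(t)v\bigr)+\bigl(\Pi^h_\e(t)v-v^h_\e\bigr).
\]
The \emph{consistency} term $v-\Pi^h_\e(t)v$ is handled by Proposition \ref{varphi}: the velocity $A_{\nu,w}(t,\cdot)=a(t,\cdot,I_a(t,\cdot,\nu,w))$ that generates the particle flow lies in $W^{k+1,\infty}(\R^d)$ with norm controlled by $\overline{\rho}$ (via \eqref{aetR1f}, \eqref{psiaf} and \eqref{vhbound}), and the propagation-of-regularity bounds \eqref{BWm1}, \eqref{BWm1g}, \eqref{BWm1g2} control $\|v(t,\cdot)\|_{W^{\kappa,1}}$ (and, in the local case, $\|v(t,\cdot)\|_{W^{k,1}}$) by $C_T\|v^0\|_{W^{\mu,1}}$; this yields $\|v-\Pi^h_\e(t)v\|_{L^1(\R^d)}\leqslant C(\e^r+(h/\e)^\kappa)\|v^0\|_{W^{\mu,1}}$.

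For the \emph{stability} term, since
\[
\Pi^h_\e(t)v-v^h_\e=\sum_{i\in\mathcal{J}_h}w_i(t)\bigl(v(t,x_i(t))-\nu_i(t)\bigr)\varphi_\e(\,\cdot-x_i(t)),
\]
we get $\|\Pi^h_\e(t)v-v^h_\e\|_{L^1(\R^d)}\leqslant\|\varphi\|_{L^1}E(t)$ with $E(t):=\sum_{i\in\mathcal{J}_h}w_i(t)|e_i(t)|$, $e_i:=\nu_i-v(\,\cdot,x_i(\cdot))$, and $E(0)=0$ because $\nu_i(0)=v^0(x^0_i)=v(0,x^0_i)$. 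Differentiating $v(t,x_i(t))$ along the particle trajectory through \eqref{Pv} and $\dot x_i=A_{\nu,w}(t,x_i)$, and subtracting the $\nu_i$-equation of \eqref{PM}, gives
\[
\dot e_i=\bigl(-\divv A_{\nu,w}(t,x_i)+R(t,x_i,I^h_g)\bigr)e_i+\sum_{j\in\mathcal{J}_h}w_j\,m(t,x_i,x_j,I^h_d)\,e_j+\rho_i,
\]
where $I^h_l(t,x_i):=I_l(t,x_i,\nu,w)$ and the defect $\rho_i$ collects the mismatches between the non-local coefficients of the PDE and of the scheme — factors $a(\cdot,I^h_a)-a(\cdot,I_av)$, $\divv a(\cdot,I^h_a)-\divv a(\cdot,I_av)$, $R(\cdot,I^h_g)-R(\cdot,I_gv)$ and $m(\cdot,\cdot,I^h_d)-m(\cdot,\cdot,I_dv)$ at $x_i$, each multiplied by a factor built from $v$ and $\nabla v$ at $x_i$ — together with the mutation quadrature defect $\sum_j w_j v(x_j)m(x_i,x_j,\cdot)-\int m(x_i,y,\cdot)v(y)\,dy$.

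Then I would close a Gr\"onwall inequality for $E$: multiplying by $w_i$, summing over $i$ and using $\dot w_i=\divv A_{\nu,w}(t,x_i)w_i$ together with the discrete smallness condition \eqref{mdisc}, the linear-in-$e$ part contributes at most $CE(t)$. For $\sum_i w_i|\rho_i|$, each non-local mismatch is split as
\[
I^h_l(t,x_i)-I_lv(t,x_i)=\sum_j\bigl(\nu_j-v(x_j)\bigr)w_j\psi_l(x_i,x_j)+\Bigl(\sum_j v(x_j)w_j\psi_l(x_i,x_j)-\int\psi_l(x_i,y)v(y)\,dy\Bigr),
\]
the first sum being $\leqslant\|\psi_l\|_\infty E(t)$ and the second, like the mutation quadrature defect, being $\leqslant Ch^\kappa\|v(t,\cdot)\|_{W^{\kappa,1}}$ by Lemma \ref{AT} (for the mutation defects only finitely many indices matter, since $m$ is compactly supported in $x$); the accompanying factors $\sum_i w_i|v(x_i)|$ and $\sum_i w_i|\nabla v(x_i)|$ are bounded by $C_T\|v^0\|_{W^{\mu,1}}$ via the same regularity bounds and the compact support of $v$. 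Altogether $\dot E\leqslant CE+Ch^\kappa\|v^0\|_{W^{\mu,1}}$, so Gr\"onwall and $E(0)=0$ give $E(t)\leqslant Ch^\kappa\|v^0\|_{W^{\mu,1}}$ on $[0,T]$; adding this to the consistency bound proves the theorem.

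The main obstacle is the stability step, specifically the non-local defects: because $A_{\nu,w}$, $I^h_g$ and $I^h_d$ all depend on the unknowns $(\nu,w)$, the value error $E$ and the induced perturbation of the characteristics must be propagated simultaneously, which is exactly where the continuous-dependence estimates for the flow (the discrete analogues of Lemmas \ref{ChardX2}--\ref{ChardXinv2} already used in Theorem \ref{theo bounds particles}) and the uniform bounds \eqref{xt}--\eqref{vhbound} intervene, and where the local/non-local dichotomy — hence the precise value of $\kappa$ — enters through the regularity genuinely available for $v(t,\cdot)$. A secondary but real technical point is the bookkeeping of Sobolev orders: Proposition \ref{varphi} and Lemma \ref{AT} must be invoked at the order $\kappa$ (respectively $k$ when $\partial_I a=0$) that the propagation theorems actually deliver for $v$, which is what forces the exponent $\mu=\max(r,\kappa)$ in the final estimate.
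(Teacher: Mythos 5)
Your proposal is correct and follows essentially the same route as the paper: the triangle-inequality split into $v-\Pi^h_\e(t)v$ (handled by Proposition \ref{varphi} together with the propagation bounds on $\|v(t,\cdot)\|_{W^{\kappa,1}}$) and $\Pi^h_\e(t)v-v^h_\e$ (reduced to the weighted value error $\sum_i w_i|v(t,x_i(t))-\nu_i(t)|$), with the latter controlled by exactly the Gr\"onwall argument of Proposition \ref{Prox} — differentiating $e_i$ along the particle trajectories, splitting each non-local mismatch into a quadrature defect of order $h^\kappa$ via Lemma \ref{AT} plus a term proportional to the error itself, and using the compact support of $m$ to keep only finitely many mutation indices. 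The only cosmetic differences are your initial density reduction to $v^0\in\mathcal{C}^k_c(\R^d)$ and the paper's use of the smoothed absolute value $\beta_\varepsilon$ to justify the differential inequality for $E(t)$ rigorously.
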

The proof of Theorem \ref{Theo6} strongly relies on Proposition \ref{varphi} and the following result:
\begin{prop}\label{Prox}
Under hypotheses \eqref{psiaf} through \eqref{mf}, there exists a constant $C_T>0$, depending only on $T$ and on the parameters of problems \eqref{Pv} and \eqref{Oh}, such that their respective solutions satisfy, for all $t\in [0, T]$, 
\begin{equation}
    \sum\limits_{i\in\mathcal{J}_h}|v(t,x_i(t))-\nu_i(t)|w_i(t)\leqslant C_Th^{k-1}\lV v^0 \rV_{W^{\mu,1}(\R^d)}.\label{ConvRes}
\end{equation}
\label{v-v_k}
\end{prop}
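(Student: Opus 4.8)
The plan is to control the discrete quantity $\sum_{i} |v(t,x_i(t)) - \nu_i(t)| w_i(t)$ via a Gr\"onwall argument in the time variable, exploiting the fact that both $v(t,x_i(t))$ and $\nu_i(t)$ satisfy closely related (but not identical) evolution equations along the characteristics. First, I would introduce the exact characteristics $X_v(t,y)$ of the PDE (driven by $I_a v$) and observe that, writing $\overline{v}_i(t) := v(t, X_v(t, x_i^0))$, the function $\overline{v}_i$ solves an ODE of the same shape as the one for $\nu_i$ in \eqref{PM}, except that (a) the transport coefficient is $\mathcal{A}_v$ evaluated at the \emph{true} characteristic $X_v(t,x_i^0)$ rather than at the particle position $x_i(t)$, (b) the non-local terms $I_g v, I_d v$ are genuine integrals rather than the discrete sums $I_g(t,\cdot,\nu,w)$, $I_d(t,\cdot,\nu,w)$, and (c) the mutation source term is $\int m(\cdot,\cdot,z,\cdot) v(t,z)\,dz$ rather than $\sum_j w_j \nu_j m(\cdot, x_j, \cdot)$. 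The key point is that the discrepancy at $t=0$ is small (indeed $\overline{v}_i(0) = v^0(x_i^0) = \nu_i(0)$, so it is zero) and each of the three mismatches (a)--(c) can be bounded, after weighting by $w_i(t)$ and summing over $i\in\mathcal{J}_h$, by a constant times $h^{k-1}\|v^0\|_{W^{\mu,1}}$ plus a constant times the running error $\sum_i |\overline{v}_i(t)-\nu_i(t)| w_i(t)$ itself.

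The heart of the estimate is the analysis of the three mismatches. For (c), the difference between $\int_{\R^d} m(t,x,z,I)v(t,z)\,dz$ and $\sum_{j\in\mathcal{J}_h} w_j(t)\nu_j(t) m(t,x,x_j(t),I)$ is split into a quadrature error — controlled by Lemma \ref{AT} applied to $z\mapsto m(t,x,z,I) v(t,z)$, which gives a bound of order $h^k \|m v\|_{k,1} \lesssim h^k \|v\|_{W^{k,1}}$, and here the regularity hypotheses on $v$ from Theorems \ref{T2}--\ref{T3} (namely $v\in\mathcal{C}([0,T],W^{\kappa,1})$ with $\kappa\ge k-1$) together with \eqref{BWm1},\eqref{BWm1g},\eqref{BWm1g2} turn this into $h^{k-1}\|v^0\|_{W^{\mu,1}}$ — plus a term $\sum_j w_j |v(t,x_j(t)) - \nu_j(t)| \|m\|_{L^\infty}$, which is exactly the running error. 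For the non-local arguments appearing inside $m$, $R$ and $a$, i.e. comparing $I_\ell v(t, \cdot)$ with $I_\ell(t,\cdot,\nu,w)$, one writes $I_\ell v(t,x) = \int \psi_\ell(t,x,y) v(t,y)\,dy$ and again invokes Lemma \ref{AT} (quadrature error $O(h^k)$, using the $W^{k,\infty}_y$-regularity of $\psi_\ell$ from \eqref{psiaf}--\eqref{psidf}) plus a term of order $\sum_j w_j |v(t,x_j(t))-\nu_j(t)|$; the hypothesis $\psi_g \ge \underline{\psi_g} > 0$ is not needed here but the a priori bounds $\overline{\rho}$ are, to keep all constants uniform. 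For (a), the mismatch $|\mathcal{A}_v(t,X_v(t,x_i^0)) - A_{\nu,w}(t,x_i(t))|$ is handled through the Lipschitz regularity of $a$ and $\psi_a$ together with the bound on $|X_v(t,x_i^0) - x_i(t)|$, which itself is obtained by a sub-Gr\"onwall argument comparing the characteristic ODE with the particle ODE (this is the discrete analogue of Lemma \ref{ChardX2}); note the divergence terms $\divv\mathcal{A}$ and $\divv A_{\nu,w}$ require one more derivative of $a$ and $\psi_a$, which is why $W^{k+1,\infty}$ regularity is assumed.

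Assembling these pieces, one obtains a differential inequality of the form
\begin{equation*}
    \frac{d}{dt}\sum_{i\in\mathcal{J}_h} |\overline{v}_i(t) - \nu_i(t)|\, w_i(t) \;\leqslant\; C_T\, h^{k-1}\|v^0\|_{W^{\mu,1}(\R^d)} \;+\; C_T \sum_{i\in\mathcal{J}_h} |\overline{v}_i(t) - \nu_i(t)|\, w_i(t),
\end{equation*}
where the weights $w_i$ are themselves comparable to $h^d$ uniformly in $t$ by \eqref{wt}, so differentiating the weighted sum produces only an extra $\divv$-type term already absorbed into the constant. Gr\"onwall's lemma then yields $\sum_i |\overline{v}_i(t)-\nu_i(t)| w_i(t) \le C_T h^{k-1}\|v^0\|_{W^{\mu,1}}$ on $[0,T]$. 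Finally, one replaces $\overline{v}_i(t) = v(t,X_v(t,x_i^0))$ by $v(t,x_i(t))$ at the cost of $\|v(t,\cdot)\|_{\mathrm{Lip}}\, |X_v(t,x_i^0)-x_i(t)|$, summed against $w_i$; since $|X_v(t,x_i^0)-x_i(t)|$ was already shown to be $O(h^{k-1})$ (or better) in step (a) and $v(t,\cdot)$ is Lipschitz by the regularity theorems, this contributes another term of the claimed order. The main obstacle, and the step requiring the most care, is step (a): establishing that the particle positions $x_i(t)$ stay within $O(h^{k-1})$ of the true characteristics $X_v(t,x_i^0)$, because this comparison is itself coupled to the error in the non-local terms (the velocities depend on $I_a v$ vs.\ the discrete $I_a$), so one must run a \emph{simultaneous} Gr\"onwall estimate on $\sup_i |x_i(t)-X_v(t,x_i^0)|$ and on $\sum_i |v(t,x_i(t))-\nu_i(t)| w_i(t)$, closing the loop between position errors and weight errors.
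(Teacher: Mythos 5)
Your proposal is correct in outline but follows a genuinely different route from the paper. You compare $\nu_i(t)$ with $\overline v_i(t)=v(t,X_v(t,x_i^0))$, i.e.\ with the exact solution transported along the \emph{true} characteristics, which forces you to control the position error $|x_i(t)-X_v(t,x_i^0)|$ (your step (a)), to run a simultaneous Gr\"onwall estimate coupling position and weight errors, and finally to transfer back from $\overline v_i$ to $v(t,x_i(t))$ using a Lipschitz bound on $v$. The paper instead defines the error directly as $e_i(t)=v(t,x_i(t))-\nu_i(t)$, with $v$ evaluated along the \emph{numerical} particle trajectories: differentiating $v(t,x_i(t))$ by the chain rule with $\dot x_i=A_{\nu,w}(t,x_i)$ and substituting the PDE produces the single extra term $\bigl(A_{\nu,w}(t,x_i(t))-\mathcal A_v(t,x_i(t))\bigr)\cdot\nabla v(t,x_i(t))$, in which both velocities are evaluated at the \emph{same} point; this difference is then bounded by the quadrature error of Lemma \ref{AT} plus the running error $\sum_j|e_j|w_j$, exactly like the mismatches in $R$, $\divv a$ and $m$. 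The position comparison you identify as ``the main obstacle'' therefore never arises, and a single scalar Gr\"onwall inequality closes the argument (the paper additionally uses the regularisation $\beta_\varepsilon$ of the absolute value and sums over arbitrary finite subsets $\mathcal K\subset\mathcal J_h$ to handle the infinite index set rigorously). Your route can be made to work, but note two points of care it requires that the paper's route avoids: (i) the coupled Gr\"onwall must track $\sup_i|x_i(t)-X_v(t,x_i^0)|$ and the weighted $\ell^1$ error at once, since each feeds the other through $I_a$; and (ii) in your final transfer step the naive bound $\sum_i w_i\,\mathrm{Lip}(v)\,|x_i-X_v(t,x_i^0)|$ involves $\sum_i w_i=+\infty$ over the full lattice, so you must restrict to the indices where $v$ or $v^0$ is supported (or, in the non-compactly-supported case $\partial_I a=0$, replace the Lipschitz constant by an $L^1$ bound on $\nabla v$ via a mean-value/Riemann-sum argument). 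Both approaches ultimately deliver the rate $h^{\kappa}$ through Lemma \ref{AT} and the a priori bounds \eqref{BWm1}, so what your version buys is conceptual familiarity (it is the classical exact-flow comparison for particle methods) at the price of an extra coupled estimate; what the paper's version buys is economy.
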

\begin{proof}
Consider $\beta_{\varepsilon}$ as in \eqref{renorm}.  We define $e=\{e_i(\cdot)\}_{i\in\mathcal{J}}$ where for all $i\in \mathcal{J}$ and $t\in [0,T]$, 
\begin{align*}
    e_i(t)&:=v(t,x_i(t))-\nu_i(t),\\
    e_{\varepsilon,i}(t)&:=\beta_{\varepsilon}(e_i(t))w_i(t),
\end{align*}
and compute
\begin{equation}
    \dot e_{\varepsilon,i}(t)=\beta'_{\varepsilon}(e_i(t))\dot e_i(t)w_i(t)+\beta_{\varepsilon}(e_i(t)) \dot w_i(t).\label{eprime}
\end{equation}
We recall that
\begin{align*}
    \dot e_i(t)=&\biggl(G_{\nu,w}(t,x_i(t))-\mathcal{G}_{v}(t,x_i(t))\biggr)\nu_i(t)+\biggl(A_{\nu,w}(t,x_i(t))-\mathcal{A}_v(t,x_i(t))\biggr)\nabla v(t,x_i(t))\\
    &-\mathcal{G}_{v}(t,x_i(t))e_i(t)+\Delta\mathcal{M}(t,x_i(t)),
\end{align*}
where
\begin{align*}
    G_{\nu,w}(t,x_i(t))&:=\divv A_{\nu,w}(t,x_i(t))-R(t,x_i(t),I_g(t,x_i(t),\nu,w)),\\
    \mathcal{G}_{v}(t,x_i(t))&:=
    \divv \mathcal{A}_v(t,x_i(t))+R(t,x_i(t),(I_gv)(t,x_i(t))),\\
   \Delta\mathcal{M}(t,x_i(t))&:=\int\limits_{\R^d}{m(t,x_i(t),y,(I_dv)(t,x_i))v(t,y)dy}-\sum\limits_{j\in\mathcal{J}_h}w_j(t)\nu_j(t)m(t,x_i(t),x_j(t),I_d(t,x_i(t),\nu,w)).
\end{align*}
The functions $a$ and $R$ being Lipschitz, there exists a constant $C$ depending on the parameters of the problem and the value $\overline{\rho}$, such that
\begin{align*}
    \biggl|G_{\nu,w}(t,x_i(t))-\mathcal{G}_{v}(t,x_i(t))\biggr|\leqslant& C\biggl(|(I_av)(t,x_i(t))-I_a(t,x_i(t),\nu,w)|\\
    &+|\partial_x(I_av)(t,x_i(t))-\partial_xI_a(t,x_i(t),\nu,w)|
    \\&+|(I_gv)(t,x_i(t))-I_g(t,x_i(t),\nu,w)|\biggr).
\end{align*}
Notice that
\begin{align*}
    |(I_av)(t,x_i(t))-I_a(t,x_i(t),\nu,w)|=&\biggl|\int\limits_{\R^d}\psi_a(t,x_i(t),y)v(y)dy-\sum\limits_{j\in\mathcal{J}_h}\psi_a(t,x_i(t),x_j(t))\nu_j(t)w_j(t)\biggr|\\
    \leqslant &\biggl|\int\limits_{\R^d}\psi_a(t,x_i(t),y)v(y)dy-\sum\limits_{j\in\mathcal{J}_h}\psi_a(t,x_i(t),x_j(t))v(t,x_j(t))w_j(t)\biggr|\\
    &+\biggl|\sum\limits_{j\in\mathcal{J}_h}\psi_a(t,x_i(t),x_j(t))\biggl(v(t,x_j(t)-\nu_j(t))\biggr)w_j(t)\biggr|\\
    \leqslant &C\biggl(h^{\kappa}\|v\|_{W^{\kappa,1}(\R^d)}+\sum\limits_{j\in\mathcal{J}_h}|e_j(t)|w_j(t)\biggr),
\end{align*}
where in the last line we have used Lemma \ref{AT} and the $W^{\kappa,1}(\R^d)$ regularity of $v$. Similar results are true for $|\partial_x(I_av)(t,x_i(t))-\partial_xI_a(t,x_i(t),\nu,w)|$ and $|(I_gv)(t,x_i(t))-I_g(t,x_i(t),\nu,w)|$. In conclusion, thanks to \eqref{BWm1}, there exists a constant $C_T$, only depending on $T$, the parameters of the problem and the value $\overline{\rho}$, such that
\begin{align}
    \biggl|G_{\nu,w}(t,x_i(t))-\mathcal{G}_{v}(t,x_i(t))\biggr|\leqslant C_T\biggl(h^{\kappa}\|v^0\|_{W^{\kappa,1}(\R^d)}+\sum\limits_{j\in\mathcal{J}_h}|e_j(t)|w_j(t)\biggr).\label{Prox1}
\end{align}
Again, using the Lipschitz regularity of $a$, we see that
\begin{align}
    \biggl|A_{\nu,w}(t,x_i(t))-\mathcal{A}_v(t,x_i(t))\biggr|\leqslant & C|(I_av)(t,x_i(t))-I_a(t,x_i(t),\nu,w)|\nonumber\\
    \leqslant &C_T\biggl(h^{\kappa}\|v^0\|_{W^{\kappa,1}(\R^d)}+\sum\limits_{j\in\mathcal{J}_h}|e_j(t)|w_j(t)\biggr).\label{Prox2}
\end{align}
The boundedness of $a$ and $R$ implies the existence of a constant $\overline{G}$ such that for all $i\in \mathcal{J}$ and $t\in [0,T]$, 
\begin{equation}
    |\mathcal{G}_{v}(t,x_i(t))|\leqslant \overline{G}.\label{Prox3}
\end{equation}
We recall from the previous section that 
\[
\Delta\mathcal{M}(t,x_i(t))=0,\ \forall i\not\in \mathcal{J}^m_h,
\]
where the set of indexes $\mathcal{J}^m_h$ has a finite number of elements, which depends on $T$. On the other hand, for those $i\in\mathcal{J}^m_h $, we have
\begin{align*}
&|\Delta\mathcal{M}(t,x_i(t))|\\
    \leqslant &\biggl|\int\limits_{\R^d}{m(t,x_i(t),y,(I_dv)(t,x_i(t)))v(t,y)dy}-\sum\limits_{j\in\mathcal{J}_h}w_j(t)v(t,x_j(t))m(t,x_i(t),x_j(t),(I_dv)(t,x_i(t)))\biggr|\\
    &+\sum\limits_{j\in\mathcal{J}_h}w_j(t)v(t,x_j(t))\biggl|m(t,x_i(t),x_j(t),(I_dv)(t,x_i(t)))-m(t,x_i(t),x_j(t),I_d(t,x_i(t),\nu,w))\biggr|\\
    &+\sum\limits_{j\in\mathcal{J}_h}w_j(t)|e_j(t)|m(t,x_i(t),x_j(t),(I_dv)(t,x_i(t)))\\
    \leqslant &C\biggl(h^{\kappa}\|v\|_{W^{\kappa,1}(\R^d)}+\mu|(I_dv)(t,x_i(t))-I_d(t,x_i(t),\nu,w)|\sum\limits_{j\in\mathcal{J}_h}w_j(t)v(t,x_j(t))+\overline{M}\sum\limits_{j\in\mathcal{J}_h}|e_j(t)|w_j(t)\biggr),
\end{align*}
where we have used again Lemma \ref{AT}, the $W^{\kappa,1}(\R^d)$ regularity of $m(t,x,y,I)v(t,y)$ with respect to the $y$ variable, and the Lipschitz regularity of $m$. Furthermore, Lemma \ref{AT} gives us the bound
\[
\sum\limits_{j\in\mathcal{J}_h}w_j(t)v(t,x_j(t))\leqslant \|v\|_{L^{1}(\R^d)}+Ch^{\kappa}\|v\|_{W^{\kappa,1}(\R^d)},
\]
which together with manipulations similar to those made for $|(I_av)(t,x_i(t))-I_a(t,x_i(t),\nu,w)|$, and the bound \eqref{BWm1}, gives 
\begin{equation}
    |\Delta\mathcal{M}(t,x_i(t))|\leqslant C_T\biggl(h^{\kappa}\|v^0\|_{W^{\kappa,1}(\R^d)}+\sum\limits_{j\in\mathcal{J}_h}|e_j(t)|w_j(t)\biggr).\label{Prox4}
\end{equation}
We denote as $\mathcal{K}$ an arbitrary finite subset of $\mathcal{J}_h$. If we add \eqref{eprime} for all values of $i\in\mathcal{K}$, and use bounds \eqref{Prox1} through \eqref{Prox4}, together with the equation for $w_i$, we get
\begin{align*}
    \left(\sum\limits_{i\in\mathcal{K}}e_{\varepsilon,i}(t)\right)'\leqslant &C_TB(t)\biggl(h^{\kappa}\|v^0\|_{W^{\kappa,1}(\R^d)}+\sum\limits_{j\in\mathcal{J}_h}|e_j(t)|w_j(t)\biggr),
\end{align*}
where
\[
B(t):=\sum\limits_{i\in\mathcal{K}}(\nu_i(t)+|\nabla v(t,x_i(t))|)w_i(t)+\overline{G}+|\mathcal{K}\cap \mathcal{J}^m_h|h^de^{\tilde{a}T}+\tilde{a}.
\]
Given that
\[
\sum\limits_{i\in\mathcal{K}}\nu_i(t)w_i(t)\leqslant \overline{\rho},
\]
and
\[
\sum\limits_{i\in\mathcal{K}}w_j(t)|\nabla v(t,x_j(t))|\leqslant \|\nabla v\|_{L^{1}(\R^d)}+Ch^{\kappa-1}\|\nabla v\|_{W^{\kappa-1,1}(\R^d)},
\]
thanks to \eqref{BWm1}, we conclude that there exists a constant $B_T$, independent of the choice of $\mathcal{K}$ and $h$, such that $B(t)\leqslant B_T$. Consequently, for all values of $t\in[0,T]$, $h$ small enough and any finite subset of $\mathcal{J}_h$,  we have the relation
\[
\left(\sum\limits_{i\in\mathcal{K}}e_{\varepsilon,i}(t)\right)'\leqslant C_T\biggl(h^{\kappa}+\sum\limits_{j\in\mathcal{J}_h}|e_j(t)|w_j(t)\biggr).
\]
Integrating between $0$ and $t$, taking the limit when $\varepsilon$ goes to zero, and using Gr\"onwall's lemma, we obtain that there exists a constant $C_T$, independent of $\mathcal{K}$ such that
\[
\sum\limits_{i\in\mathcal{K}}|e_{i}(t)|w_i(t)\leqslant C_T h^{\kappa}\|v^0\|_{W^{\kappa,1}(\R^d)}.
\]
Being $C_T$ independent of $\mathcal{K}$ and $h$, \eqref{ConvRes} is immediate.
\end{proof}
In other words, we proved in Proposition \ref{Prox} that the piece-wise constant functions that take values $v(t,x_k(t))$ and $v_k(t)$ respectively over the intervals $\Omega_k(t)$ are close in $L^1(\R^d)$.\\
\begin{proof}[Proof of Theorem \ref{Theo6}.]
According to the triangle inequality, \begin{align}
\lVert v-v_\e^h\rVert_{L^1(\R^d)}\leqslant \lV v-{\Pi}_\e^h(t)v\rV_{L^1(\R^d)} + \lV{\Pi}_\e^h(t)v-v_\e^h\rV_{L^1(\R^d)}, 
\label{triangle inequality}
\end{align}
it only remains to bound both terms on the right hand side.
\begin{itemize}
\item[{\emph i)}] 
According to Proposition \ref{u-pi u} with $p=1$, and bound \eqref{BWm1} 
\begin{align*}
\lV v-{\Pi}_\e^h(t)v\rV_{L^1(\R^d)}\leqslant C(\e^r+\big(\frac{\e}{h}\big)^{\kappa})\lV v\rV_{\mu, p}\leqslant C_T(\e^r+\big(\frac{\e}{h}\big)^{\kappa})\lV v^0\rV_{\mu, p}.
\end{align*}
\item[{\emph ii)}]On the other hand, one computes 
\begin{align*}
\lV{\Pi}_\e^h(t)v-v_\e^h\rV_{L^1(\R^d)}&=\int_{\R^d}{\big\lvert  \sum\limits_{i\in\mathcal{J}_h}{w_i(t)\varphi_\e(x-x_i(t))\big(v(t,x_i(t))-\nu_i(t)\big)}  \big\rvert dx} \\
&\leqslant \sum\limits_{i\in\mathcal{J}_h}{\biggl(w_i(t) \lvert v(t,x_i(t))-\nu_i(t) \rvert\int_{\R^d}{\lvert \varphi_\e(x-x_i(t))\rvert dx}\biggr)}.
\end{align*}
According to the definition of $\varphi_\e$, with the change of variable $x'=\frac{x-x_k(t)}{\varepsilon}$, we note that 
\[\int_{\R^d}{\lvert \varphi_\e(x-x_k(t))\rvert dx}= \int_{\R^d}{\lvert \varphi(x)\rvert dx}<+\infty, \]
by hypothesis on $\varphi$. 
We have then, according to Proposition \ref{v-v_k}, that 
\[\lV{\Pi}_\e^h(t)v-v_\e^h\rV_{L^1(\R^d)}\leqslant C_T h^{\kappa} \lV v^0\rV_{\kappa, 1}\leqslant  C h^{\kappa} \lV v^0\rV_{\mu, 1},\]
which concludes the proof of Theorem \ref{Theo6}. 
\end{itemize}
\end{proof}
\subsection{Asymptotic preserving properties}
\label{conv infinite time}
The study of the asymptotic behaviour of the solution for adaptive dynamics models, such as \eqref{Pv}, is one of the main interests often treated in the literature (see \cite{barles2009concentration,bonnefon2015concentration,calsina2013asymptotics,cooney2022long,coville2013convergence,desvillettes2008selection,friedman2009asymptotic,guilberteau2023long,jabin2011selection,lorenzi2020asymptotic}). For this reason, the design of numerical methods which preserve the asymptotic behaviour, or at least, the identification of the problems for which the asymptotics are preserved under a certain numerical scheme, is a priority. In other words, given that $v(t,\cdot)$ converges to a measure $\mu$ when $t$ goes to infinity, we expect to identify the conditions under which $\lim\limits_{h \to 0} \lim\limits_{t \to +\infty} v^h_{\e(h)}(t,\cdot) =  \mu$, that is, ensuring the commutativity of diagram \eqref{diagram}:
\begin{equation}
\begin{matrix}
    v(t,\cdot)&\xyrightarrow[t\rightarrow\infty]{}&\mu\\
    &&\\
   \hspace{-0.6cm}{\scriptstyle  h\rightarrow 0}\xuparrow{0.5cm} &&\hspace{0.6cm}\xuparrow{0.5cm}{\scriptstyle  h\rightarrow 0}\\
    &&\\
    v^h_{\varepsilon(h)(t)}&\xyrightarrow[t\rightarrow\infty]{}&\mu_h
\end{matrix}\label{diagram}
\end{equation}
In what follows, we formally define the concept of an asymptotic preserving approximation, and give examples and counter-examples of this concept.\\

We recall that, according to the Riesz representation theorem,  the space of finite Radon measures can be identified with the topological dual space of $\mathcal{C}_c(\R^d)$. Hence, we say that a sequence of finite Radon measures $\{\mu_n\}_{n\in \mathbb{N}}$ converges weakly to a finite Radon measure $\mu$ (denoted $\mu_n \underset{n\to +\infty}{\rightharpoonup} \mu$) if for all $\phi\in \mathcal{C}_c(\R^d)$, 

\[\int_{\R^d}{\phi(x)d\mu_n(x)}\underset{n\to +\infty}{\longrightarrow}  \int_{\R^d}{\phi(x)d\mu(x)}. \]

This leads us to introduce the following definition:

\begin{definition}
We say that the particle solution $v^{h}_{\varepsilon}$ defined in \eqref{veh} is an \emph{asymptotic preserving approximation} of $v$, the solution to \eqref{Pv}, if for all $\e:(0,1]\to \R_+^*$ which converges to $0$ when $h$ goes to $0$,  and all $\phi \in \mathcal{C}_c(\R^d)$,
\[\underset{t\to +\infty}{\limsup}\left\lvert \int_{\R^d}{\phi(x) v_{\e(h)}^h(t,x)dx}-\int_{\R^d}{\phi(x) v(t,x)dx} \right\rvert \underset{h\to 0}{\longrightarrow}0.  \]
\end{definition}

The following lemma ensures that, in the previous definition, $v_{\e(h)}^h$, introduced in \eqref{veh}, can be replaced by $v^h$, introduced in \eqref{vh}. 

\begin{lem}
The function $v_\e^h$ is an asymptotic preserving approximation of $v$ if and only if 
\[\underset{t\to +\infty}{\limsup}\left\lvert \int_{\R^d}{\phi(x) dv^h(t,x)}-\int_{\R^d}{\phi(x) v(t,x)dx} \right\rvert \underset{h\to 0}{\longrightarrow}0. \]
\label{lem condition asymptotic preserving}
\end{lem}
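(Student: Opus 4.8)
The plan is to show that for every fixed $\phi \in \mathcal{C}_c(\R^d)$ and every admissible scaling $\e: (0,1] \to \R_+^*$ with $\e(h) \to 0$, the difference
\[
\left\lvert \int_{\R^d} \phi(x)\, v_{\e(h)}^h(t,x)\, dx - \int_{\R^d} \phi(x)\, dv^h(t,x) \right\rvert
\]
can be bounded by a quantity that tends to $0$ as $h \to 0$, uniformly in $t \in [0,+\infty)$. Once this uniform-in-time bound is established, the equivalence claimed in the lemma follows immediately from the triangle inequality: writing $\int \phi\, v^h_{\e(h)} - \int \phi\, v = \big(\int \phi\, v^h_{\e(h)} - \int \phi\, dv^h\big) + \big(\int \phi\, dv^h - \int \phi\, v\big)$, the first bracket is controlled uniformly in $t$ by an $o_{h\to 0}(1)$ term, so taking $\limsup_{t\to+\infty}$ and then $h \to 0$ on both sides gives the "if and only if".

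The key computation is the following. Using the definitions \eqref{vh} and \eqref{veh}, and the fact that $\int_{\R^d} \varphi_\e(x - x_i(t))\, dx = 1$ (by \eqref{cut off function  integrable} and the change of variables $x' = (x-x_i(t))/\e$), one writes
\begin{align*}
\int_{\R^d} \phi(x)\, v_{\e}^h(t,x)\, dx - \int_{\R^d} \phi(x)\, dv^h(t,x)
&= \sum_{i\in\mathcal{J}_h} \nu_i(t) w_i(t) \int_{\R^d} \varphi_\e(x - x_i(t))\,\bigl(\phi(x) - \phi(x_i(t))\bigr)\, dx.
\end{align*}
Since $\phi \in \mathcal{C}_c(\R^d)$, it is uniformly continuous; denote by $\omega_\phi$ its modulus of continuity. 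On the support of $\varphi_\e(\cdot - x_i(t))$ we have $\lvert x - x_i(t)\rvert \lesssim \e$ (more precisely, $\le \e\, \mathrm{diam}(\mathrm{supp}\,\varphi)$ since $\varphi$ has compact support), so $\lvert \phi(x) - \phi(x_i(t))\rvert \le \omega_\phi(C\e)$ there. Hence the absolute value of the displayed difference is at most
\[
\omega_\phi(C\e(h)) \sum_{i\in\mathcal{J}_h} \nu_i(t) w_i(t) \,\lVert \varphi \rVert_{L^1(\R^d)} \le \omega_\phi(C\e(h))\, \lVert \varphi \rVert_{L^1(\R^d)}\, \overline{\rho},
\]
where the last bound uses $\|\nu w\|_{1,h} \le \overline{\rho}$ from \eqref{vhbound} (recall $\nu_i, w_i \ge 0$). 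The right-hand side is independent of $t$ and tends to $0$ as $h \to 0$ because $\e(h) \to 0$ and $\omega_\phi(0^+) = 0$.

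The main (and essentially only) subtlety is ensuring the estimate is genuinely \textbf{uniform in $t$}: this is exactly what the uniform mass bound \eqref{vhbound} provides, so the argument goes through for \emph{any} choice of $\e(h) \to 0$ without needing to know anything about the long-time dynamics of the particles. Everything else is a direct application of uniform continuity of the test function and the normalisation and support properties of the cut-off $\varphi$. No compactness of $\mathrm{supp}\,\phi$ beyond uniform continuity is needed, and the $\mathcal{C}^1$ regularity of $\varphi$ is not even required here — only $\varphi \in \mathcal{C}_c(\R^d) \cap L^1$ with unit integral.
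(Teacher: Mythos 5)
Your proposal is correct and follows essentially the same route as the paper's proof: the same decomposition using $\int\varphi_{\e}(x-x_i(t))\,dx=1$, the same use of uniform continuity of $\phi$ on the ($\e$-scaled) compact support of $\varphi$, and the same appeal to the uniform-in-time mass bound $\sum_i\nu_i(t)w_i(t)\leqslant\overline{\rho}$ from Theorem \ref{theo bounds particles} to make the estimate uniform in $t$. Your formulation via the modulus of continuity $\omega_\phi$ and the explicit $\lVert\varphi\rVert_{L^1}$ factor is, if anything, slightly more careful than the paper's $\eta$-based write-up.
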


\begin{proof}
Let us prove that for all $\e:(0,1]\to \R_+^*$ which converges to $0$ as $h$ goes to $0$, and all $\phi \in \mathcal{C}_c(\R^d)$,  
\[ \underset{t\to +\infty}{\limsup} \left\lvert \int_{\R^d}{\phi(x)v^h_{\e(h)}(t,x)dx}-\int_{\R^d}{\phi(x)dv^h(t,x)} \right\rvert \underset{h\to 0}{\longrightarrow}0.  \]

Let $h>0$. According to the definitions of $v^h$ and $v^h_{\e}$, and since $\int_{\R^d}{\varphi_{\e(h)}(x-x_i(t))dx}=1$ for all $i\in \mathcal{J}_h$ we get, for all $t\geqslant 0$, 
\begin{align*}
\left\lvert \int_{\R^d}{\phi(x)v^h_{\e(h)}(t,x)dx}-\int_{\R^d}{\phi(x)dv^h(t,x)} \right\rvert
=& \left\lvert  \int_{\R^d}{\sum\limits_{i\in\mathcal{J}_h}{\nu_i(t)w_i(t)\big(\phi(x)-\phi(x_i(t))\big)\varphi_{\e(h)}(x-x_i(t))}dx}\right\rvert\\
&\leqslant \sum\limits_{i\in\mathcal{J}_h}{\lvert \nu_i(t)w_i(t)\rvert  \int_{\R^d}{\big\lvert \big(\phi(x)-\phi(x_i(t))\big) \varphi_{\e(h)}(x-x_i(t))\big\rvert dx}}.
\end{align*}
With the change of variable `$y=\frac{x-x_i(t)}{\e(h)}$', we get, for all $i\in \mathcal{J}_h$, 
\[\int_{\R^d}{\big\lvert \big(\phi(x)-\phi(x_i(t))\big) \varphi_{\e(h)}(x-x_i(t))\big\rvert dx}= \int_{K}{\big\lvert \big(\phi(\e(h) y+x_i(t))-\phi(x_i(t))\big) \varphi(y) \big\rvert dy},  \]
where $K$ is the support of $\varphi$.
Let $\eta>0$. Since $\phi$ is continuous with a compact support, and thus uniformly continuous, then $\lvert \phi(\e(h)x+x_i(t))-\phi(x_i(t)) \vert \leqslant \eta$ for all $i\in \mathcal{J}_h$, $x\in K$, $t\geqslant 0$ and any $h$ small enough. 
Therefore, for any $h$ small enough, 
\[\left\lvert \int_{\R^d}{\phi(x)v^h_{\e(h)}(t,x)dx}-\int_{\R^d}{\phi(x)dv^h(t,x)} \right\rvert\leqslant \eta \sum\limits_{i\in\mathcal{J}_h}{\nu_i(t)w_i(t)},\]
which concludes the proof, since there exists $\overline{\rho}>0$ such that $0\leqslant \sum\limits_{i\in\mathcal{J}_h}{\nu_i(t)w_i(t)}\leqslant \overline{\rho}$ for all $h>0$, $t\geqslant 0$,  as proved in Theorem \ref{theo bounds particles}. 
\end{proof}

The problem of determining if $v_\e^h$ is an asymptotic preserving approximation of $v$ is generally a difficult question. In what follows, we deal with cases where we are able to determine the asymptotic behaviour of both $v$ and $v^h$, and we check if the necessary and sufficient condition from Lemma \ref{lem condition asymptotic preserving} holds. From now on, we assume that $a$ is local and not time dependent, $\textit{i.e.}$ $a(t,x,I)=a(x)$ and that that the functions $m$, $R$, $\psi_g$ and $\psi_d$ are not time-dependent. We assume as well that the function $(x,y,I)\mapsto m(x,y,I)$ is not only uniformly compactly supported as a function of the $x$ variable, but relative to the $y$ variable as well. That is, there exist two compact sets $K_x$ and $K_{y}$ such that
$\sup\limits_{y,I}m(x,y,I)=0$ for all $y$ outside of $K_x$ and $\sup\limits_{x,I}m(x,y,I)=0$ for all $x$ outside of $K_y$. We denote $K_{xy}:=K_x\cup K_y$. Finally, we assume $\psi_g$ and $\psi_d$ to be compactly supported as functions of the $y$ variable. 
\subsubsection{Necessary conditions of convergence towards a Radon measure}
With the help of necessary conditions, we would be able to rule out those cases where $v$ does not converge towards certain types of Radon measures, which are the object of our interest. We start by giving a general result, involving the necessary conditions of convergence towards any Radon measure.  

\begin{lem}
Let us assume that $v(t, \cdot) \underset{t\to +\infty}{\rightharpoonup} \mu$ in the weak sense in the space of finite Radon measures. Then, for $\gamma \in\{g, d\}$, 
\[I_\alpha(t,x)\underset{t\to +\infty}{\longrightarrow} \int_{\R^d}{\psi_\alpha(x,y)d\mu(y)}=:\overline{I_\alpha}(x)\quad \forall x\in \R^d, \]
and for all $\phi \in \mathcal{C}_0^1(\R^d)$, 

\[\int_{\R^d}{\bigg(a(x)\cdot\nabla \phi(x)+R\left(x,\overline{I_g}(x)\right)\phi(x)\bigg)d\mu(x)}+ \int_{\R^d}{\left(\int_{\R^d}{m\left(x,y, \overline{I_d}(x)\right)}\phi(x)dx\right)}d\mu(y)=0. \]
\label{lem necessary condition conv measure}
\end{lem}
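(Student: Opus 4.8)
The plan is to test the evolution equation \eqref{Pv} against a fixed, time-independent function $\phi$, integrate over $\R^d$, and let $t\to\infty$, using the elementary fact that a convergent $\mathcal{C}^1$ function whose derivative also converges must have derivative tending to $0$. The first conclusion is the ingredient that makes the passage to the limit in the non-linear terms possible. For fixed $x$ and $\alpha\in\{g,d\}$, the map $y\mapsto\psi_\alpha(x,y)$ belongs to $\mathcal{C}_c(\R^d)$ (continuity in $y$ from $\psi_\alpha\in\mathcal{C}(\R^d_y,\mathcal{C}^\kappa(\R^d_x))$, and compact support in $y$ is part of the standing assumptions of this subsection), so the definition of $v(t,\cdot)\rightharpoonup\mu$ gives at once $I_\alpha(t,x)=\int_{\R^d}\psi_\alpha(x,y)v(t,y)\,dy\longrightarrow\int_{\R^d}\psi_\alpha(x,y)\,d\mu(y)=\overline{I_\alpha}(x)$. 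To upgrade this to uniform convergence on any compact $K\subset\R^d$ (which is what I actually need below), I would note that $\{\psi_\alpha(x,\cdot):x\in K\}$ is a compact subset of $\mathcal{C}_c(\R^d)$ — it is the continuous image of $K$ under $x\mapsto\psi_\alpha(x,\cdot)\in W^{1,\infty}(\R^d_y)\hookrightarrow\mathcal{C}(\R^d_y)$, and is uniformly compactly supported — and then invoke the standard fact that a family of finite measures with uniformly bounded total variation (here $\|v(t,\cdot)\|_{L^1(\R^d)}\leqslant\overline\rho$ for all $t\geqslant0$) which converges weakly-$*$ converges uniformly on compact subsets of the predual $\mathcal{C}_c(\R^d)$; the proof is the usual finite-net $\eta/3$ argument. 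This yields $\sup_{x\in K}|I_\alpha(t,x)-\overline{I_\alpha}(x)|\to0$.

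Next I would fix $\phi\in\mathcal{C}^1_c(\R^d)$ and set $F_\phi(t):=\int_{\R^d}v(t,x)\phi(x)\,dx$. Integrating \eqref{Pv} against $\phi$ over $\R^d$ and integrating the advection term by parts shows $F_\phi\in\mathcal{C}^1([0,\infty))$ with
\[
F_\phi'(t)=\int_{\R^d}\big(a(x)\cdot\nabla\phi(x)+R(x,I_gv(t,x))\phi(x)\big)v(t,x)\,dx+\int_{\R^d}\Big(\int_{\R^d}m(x,y,I_dv(t,x))\phi(x)\,dx\Big)v(t,y)\,dy.
\]
I then claim $F_\phi'(t)$ converges, as $t\to\infty$, to the expression appearing in the statement. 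The term $\int a\cdot\nabla\phi\,v(t,\cdot)\,dx$ converges to $\int a\cdot\nabla\phi\,d\mu$ since $a\cdot\nabla\phi\in\mathcal{C}_c(\R^d)$. For the growth term, the uniform convergence $I_gv(t,\cdot)\to\overline{I_g}$ on $\mathrm{supp}\,\phi$ from the first step, together with the local Lipschitz continuity of $R$ in $I$ and the boundedness of the ranges of $I_gv(t,\cdot)$ and $\overline{I_g}$ (both controlled by $\|\psi_g\|_{L^\infty}\overline\rho$), shows that $x\mapsto R(x,I_gv(t,x))\phi(x)$ converges uniformly on $\mathrm{supp}\,\phi$ to $\overline g(x):=R(x,\overline{I_g}(x))\phi(x)\in\mathcal{C}_c(\R^d)$; writing the difference as $\int(g_t-\overline g)\,v(t,\cdot)\,dx+\big(\int\overline g\,v(t,\cdot)\,dx-\int\overline g\,d\mu\big)$ and using $\|v(t,\cdot)\|_{L^1}\leqslant\overline\rho$ on the first piece and $v(t,\cdot)\rightharpoonup\mu$ on the second gives convergence to $\int R(x,\overline{I_g}(x))\phi(x)\,d\mu(x)$. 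The mutation term is handled the same way: $h_t(y):=\int m(x,y,I_dv(t,x))\phi(x)\,dx$ is supported in the fixed compact $K_y$, converges uniformly in $y$ to $\overline h(y):=\int m(x,y,\overline{I_d}(x))\phi(x)\,dx$ by uniform convergence of $I_dv(t,\cdot)$ on $\mathrm{supp}\,\phi$ and Lipschitzness of $m$ in $I$, so the same splitting yields convergence to $\int\big(\int m(x,y,\overline{I_d}(x))\phi(x)\,dx\big)d\mu(y)$.

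To conclude, $F_\phi(t)\to\int_{\R^d}\phi\,d\mu$ (weak convergence) and $F_\phi'(t)\to\ell$, where $\ell$ is the right-hand side of the claimed identity; if $\ell\neq0$ then $F_\phi$ would eventually be strictly monotone with derivative bounded away from $0$, hence unbounded, contradicting the convergence of $F_\phi$. Thus $\ell=0$, which is exactly the asserted identity for $\phi\in\mathcal{C}^1_c(\R^d)$; the case $\phi\in\mathcal{C}^1_0(\R^d)$ follows by approximation, the only point of care being that the advection tail is controlled because $v\in W^{k,1}(\R^d)$ (so no boundary term appears in the integration by parts) and $\nabla\phi$ decays at infinity. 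The main obstacle is precisely the uniform-on-compacts convergence of the non-local quantities $I_gv(t,\cdot)$ and $I_dv(t,\cdot)$: mere pointwise-in-$x$ convergence coming from $v(t,\cdot)\rightharpoonup\mu$ is not enough, because these quantities are fed into the nonlinearities $R$ and $m$ while one must still pass to the limit against the weakly converging family $v(t,\cdot)$; this is what forces the compactness argument for the families $\{\psi_\alpha(x,\cdot)\}_{x\in K}$ in $\mathcal{C}_c(\R^d)$.
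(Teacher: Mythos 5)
Your proof is correct and follows essentially the same route as the paper's: test the equation against a fixed $\phi$, pass to the limit in $\frac{d}{dt}\int\phi\, v(t,\cdot)\,dx$, and conclude that a convergent function with convergent derivative must have derivative tending to zero. The only difference is that you make explicit the uniform-on-compacts convergence of $I_g v(t,\cdot)$ and $I_d v(t,\cdot)$ needed to justify passing to the limit in the nonlinear terms against the weakly converging family $v(t,\cdot)$ — a step the paper's proof asserts without detail — which is a welcome addition rather than a deviation.
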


\begin{proof}
Let us assume that $v(t, \cdot) \underset{t\to +\infty}{\rightharpoonup} \mu$.  By definition of the weak convergence in the space of finite Radon measure, for any $\phi \in \mathcal{C}_c^1(\R^d)$, 
\[\int_{\R^d}{\phi(x)v(t,x)dx}\underset{t\to +\infty}{\rightarrow}\int_{\R^d}{\phi(x)d\mu(x)}.\]
The first identity is thus a direct consequence of the definition of the weak convergence, applied with $\phi= \psi_\gamma(x, \cdot)$, $\gamma \in\{g, d\}$. 

Let $\phi \in \mathcal{C}_c(\R^d)$. One computes

\begin{align*}
\frac{d}{dt}\int_{\R^d}{\phi(x)v(t,x)dx}=& -\int_{\R^d}{\phi(x) \nabla \cdot \left(a(x)v(t,x)\right)} + \int_{\R^d}{R(x, I_g(t,x))\phi(x)v(t,x)dx}\\
&+ \int_{\R^d}{\left(\int_{\R^d}{m(x,y,I_d(t,x))v(t,y)}\right)\phi(x)dx}\\
=&\int_{\R^d}{\bigg( a(x)\cdot\nabla \phi(x)+R(x,I_g(t,x))\phi(x)\bigg)v(t,x)dx}\\
&+ \int_{\R^d}{\bigg(\int_{\R^d}{m(x,y,I_d(t,x))\phi(x)dx}\bigg)v(t,y)dy}\\
\underset{t \to +\infty}{\longrightarrow}& \int_{\R^d}{\bigg(a(x)\cdot \nabla \phi(x)+R\left(x,\overline{I_g}(x)\right)\phi(x)\bigg)d\mu(x)}\\
&+ \int_{\R^d}{\left(\int_{\R^d}{m\left(x,y, \overline{I_d}(x)\right)\phi(x)dx}\right)d\mu(y)}.
\end{align*}
Thus, $t\mapsto \int_{\R^d}{\phi(x)v(t,x)dx}$ is a convergent function with a convergent derivative, which ensures that the limit of its derivative is zero, which concludes the proof. 
\end{proof}

The following proposition provides a necessary condition for the convergence to a sum of Dirac  masses. 

\begin{prop}
Let us assume that $v(t,\cdot) \underset{t\to +\infty}{\rightharpoonup}\sum\limits_{i=1}^N{C_i\delta_{x_i}}$, with $x_1,\ldots, x_N\in \R^d$, $C_1,\ldots, C_N>0$. Then, for $\alpha=g,d$, $I_\alpha(t, x)\underset{t\to +\infty}{\rightarrow} \sum\limits_{i=1}^N{C_i\psi_\alpha(x, x_i)}=:\overline{I_\alpha}( x)$. Moreover, for all $i\in \{1,\ldots,N\}$, $a(x_i)=0$, $R\left(x_i, \overline{I_g}(x_i)\right)=0$, and for all $x\in \R^d,$ $m(x, x_i, \overline{I_d}(x))=0$. 
\label{Conditions convergence dirac}
\end{prop}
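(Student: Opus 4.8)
The plan is to extract everything from Lemma~\ref{lem necessary condition conv measure} by testing the limiting identity against functions concentrated near the atoms $x_1,\dots,x_N$.

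First I would apply Lemma~\ref{lem necessary condition conv measure} with $\mu=\sum_{i=1}^N C_i\delta_{x_i}$. The convergence $I_\alpha(t,x)\to\int_{\R^d}\psi_\alpha(x,y)\,d\mu(y)=\sum_{i=1}^N C_i\psi_\alpha(x,x_i)=\overline{I_\alpha}(x)$ for $\alpha\in\{g,d\}$ is then immediate. Moreover, writing out the second conclusion of the lemma with this $\mu$ gives, for every $\phi\in\mathcal{C}_c^1(\R^d)$,
\begin{equation}
\sum_{i=1}^N C_i\Bigl(a(x_i)\cdot\nabla\phi(x_i)+R\bigl(x_i,\overline{I_g}(x_i)\bigr)\phi(x_i)\Bigr)+\int_{\R^d}\Bigl(\sum_{j=1}^N C_j\,m\bigl(x,x_j,\overline{I_d}(x)\bigr)\Bigr)\phi(x)\,dx=0.
\label{plan:identity}
\end{equation}
Note that the function $g(x):=\sum_{j=1}^N C_j\,m(x,x_j,\overline{I_d}(x))$ is continuous, bounded and compactly supported (since $m$ is compactly supported in its first variable and $\overline{I_d}$ is continuous), so the integral in \eqref{plan:identity} is an absolutely continuous contribution, in contrast to the point evaluations in the first sum.

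Next I would localise. Fix $i_0\in\{1,\dots,N\}$ and, since the $x_i$ are distinct, choose $\delta>0$ with $2\delta<\min_{i\ne i_0}|x_i-x_{i_0}|$ and a cut-off $\chi_\delta\in\mathcal{C}_c^\infty(\R^d)$ with $\chi_\delta\equiv1$ on $B(x_{i_0},\delta)$ and $\operatorname{supp}\chi_\delta\subset B(x_{i_0},2\delta)$. Testing \eqref{plan:identity} with $\phi=\chi_\delta$ kills every term of the first sum except $C_{i_0}R(x_{i_0},\overline{I_g}(x_{i_0}))$ (because $\chi_\delta$ and $\nabla\chi_\delta$ vanish near every $x_i$, $i\ne i_0$, and $\nabla\chi_\delta(x_{i_0})=0$), while the integral term is $O(\delta^d)$; letting $\delta\to0$ gives $R(x_{i_0},\overline{I_g}(x_{i_0}))=0$. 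Testing instead with $\phi(x)=\bigl((x-x_{i_0})\cdot e\bigr)\chi_\delta(x)$ for an arbitrary $e\in\R^d$ leaves only $C_{i_0}\,a(x_{i_0})\cdot e$ in the first sum, while the integral term is now $O(\delta^{d+1})$; letting $\delta\to0$ and varying $e$ gives $a(x_{i_0})=0$. Since $C_{i_0}>0$, both identities hold for every $i_0$.

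Finally, having shown $a(x_i)=0$ and $R(x_i,\overline{I_g}(x_i))=0$ for all $i$, the entire discrete sum in \eqref{plan:identity} vanishes, leaving $\int_{\R^d}g(x)\phi(x)\,dx=0$ for all $\phi\in\mathcal{C}_c^1(\R^d)$. By the fundamental lemma of the calculus of variations $g\equiv0$, i.e. $\sum_{j=1}^N C_j\,m(x,x_j,\overline{I_d}(x))=0$ for all $x\in\R^d$; since $m\ge0$ and each $C_j>0$, every summand is a non-negative function whose sum vanishes identically, whence $m(x,x_i,\overline{I_d}(x))=0$ for all $x\in\R^d$ and all $i$. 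There is no deep obstacle here: the argument is essentially bookkeeping with test functions, the only care needed being to track which terms survive each choice of $\phi$ and the vanishing rates ($O(\delta^d)$ versus $O(\delta^{d+1})$) of the absolutely continuous term. The one genuinely useful observation is that, once the advection and growth coefficients are known to vanish at the atoms, the positivity of $m$ upgrades ``the sum vanishes'' to ``each term vanishes''.
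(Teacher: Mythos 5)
Your proof is correct and follows essentially the same route as the paper: both rest entirely on Lemma \ref{lem necessary condition conv measure} and on testing the resulting identity against suitably chosen test functions. The only difference is the order of elimination --- the paper first kills the mutation term using non-negative test functions supported away from the atoms and then reads off $R\left(x_i, \overline{I_g}(x_i)\right)=0$ and $a(x_i)=0$ from interpolating test functions prescribing values and gradients at the atoms, whereas you extract $R$ and $a$ first via localized cutoffs together with the $O(\delta^d)$ versus $O(\delta^{d+1})$ vanishing rates of the absolutely continuous term, and then deduce $m\equiv 0$; both orders work.
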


\begin{proof}
According to the previous lemma, for all $\phi\in \mathcal{C}_c^1(\R^d)$, 
\[\sum\limits_{i=1}^N{C_i a(x_i)\cdot\nabla \phi(x_i)} + \sum\limits_{i=1}^N{C_i R\left(x_i, \overline{I_g}(x_i)\right)\phi(x_i)}+ \sum\limits_{i=1}^N{C_i\int_{\R^d}{m\left(x, x_i, \overline{I_d}(x)\right)\phi(x)dx}}=0.\]

Let $\varepsilon>0$. For any non-negative function  $\phi_\varepsilon\in \mathcal{C}_c^1(\R^d)$ with a support on $ \R^d\backslash \bigcup_{i=1}^N{B(x_i, \varepsilon)} $, we have that
\[\int_{\R^d}{\sum\limits_{i=1}^N{C_i m\left(x, x_i, \overline{I_d}(x)\right) \phi_\varepsilon(x)}dx}=0,\]
which proves that $x\mapsto \sum\limits_{i=1}^N{C_i m\left(x, x_i, \overline{I_d}(x)\right)}$ is $0$ on $ \R^d\backslash \bigcup_{i=1}^N{B(x_i, \varepsilon)} $. Since $m$ is a non-negative function, for all $i\in \{1,... N\}$,  $x \mapsto m\left(x, x_i, \overline{I_d}(x)\right)$ is $0$ as well on $ \R^d\backslash \bigcup_{i=1}^N{B(x_i, \varepsilon)}$, and therefore on $\R^d$, since the result holds for any $\varepsilon>0$. Hence, 
\[\sum\limits_{i=1}^N{C_i a(x_i)\cdot\nabla \phi(x_i)} + \sum\limits_{i=1}^N{C_i R\left(x_i, \overline{I_g}(x_i)\right)\phi(x_i)}=0,\] 
for any $\phi \in \mathcal{C}_c^1(\R^d)$. By choosing $\phi^1_j$ such that 
\[
\phi^1_j(x_i)=\delta_{ij}\mbox{ and } \nabla\phi^1_j(x_i)=0,\ i,j\in \{1,... N\},
\]
where $\delta_{ij}$ represents the Kronecker delta, one proves that $R\left(x_i, \overline{I_g}(x_i)\right)=0$ for all $i\in \{1,... N\}$.\\
Finally, by choosing $\phi^2_{ij}$ such that
\[
\nabla\phi^2_{jl}(x_i)=\delta_{ij} e_l,\ i,j\in \{1,... N\},\ l\in\{1,... d\},
\]
where $\{e_l\}_{l=1}^d$ represents the euclidean basis in $\R^d$, one proves that $a(x_i)=0$, for any $i\in \{1,\ldots,N\}$.  
\end{proof}
\subsubsection{Limit identification and asymptotic preserving approximations}
In some cases, it is possible to guarantee the existence of a limit for $v^h$ and identify it.\\
Assume that there exists $\hat{x}\in \R^d$ an asymptotically stable equilibrium for the ODE `$\dot x =a(x)$' and that there exists $C, \delta >0$  such that
\begin{equation}
\forall y \in \mbox{supp}(n^0), t\geq 0, \quad \lVert X(t,y) -\hat{x}\rVert \leq Ce^{-\delta t}
\label{hypotheses exponential convergence}.
\end{equation}
Moreover, let us assume that there exist positive values $D$, $I^m$ and $I^M$ such that 
\begin{equation}
   R(x, I^m)\geqslant 0 ,\quad  R(x, I^M)\leqslant 0\mbox{ and }\partial_IR(x,I)\leqslant -D,\quad\forall x\in \textrm{supp}(v^0)\label{hypotheses R}.
\end{equation}
Then, we can compute the limit of $v^h$ when $t$ goes to $+\infty$, whatever the value of $m$, as stated in the following proposition.

\begin{prop}\label{conv vh unique equilibrium}
Let us assume that $\textrm{supp}\left(v^0\right)$ is a compact set such that hypotheses \eqref{hypotheses exponential convergence} and \eqref{hypotheses R} hold. 
Then, $v^h$ converges to $\hat{\rho}_h \: \delta_{\hat{x}}$ in the weak sense in the space of Radon measures, where $\hat{\rho}_h$ is the unique solution of \[R(\hat{x}, \psi_g(\hat{x},\hat{x})\hat{\rho}_h)=0.\] 
\end{prop}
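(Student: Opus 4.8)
The plan is to exploit the special structure of the particle system \eqref{PM} under the standing assumptions ($a$ local and time-independent, $m$, $R$, $\psi_g$, $\psi_d$ time-independent). Since $a(t,x,I)=a(x)$, the equation for the positions $\dot x_i = a(x_i)$ decouples completely from the weights and from each other: each $x_i(t)$ solves the same autonomous ODE $\dot x = a(x)$ starting from $x_i^0 \in \mathrm{supp}(v^0)$. By hypothesis \eqref{hypotheses exponential convergence}, $\|x_i(t)-\hat x\| \le Ce^{-\delta t}$ for every $i$, so all particles collapse onto $\hat x$ exponentially fast. Likewise, since $\mathrm{div}\,a$ is evaluated along $x_i(t)\to\hat x$, the weights $w_i(t)=w_i^0\exp(\int_0^t \mathrm{div}\,a(x_i(s))ds)$ converge; but for the weak limit only the total mass $\rho_h(t):=\sum_{i\in\mathcal{J}_h}\nu_i(t)w_i(t)$ and the concentration of the $x_i$ matter. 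Indeed, for any $\phi\in\mathcal{C}_c(\R^d)$,
\[
\left| \int_{\R^d}\phi\, dv^h(t) - \phi(\hat x)\rho_h(t)\right| \le \sum_{i\in\mathcal{J}_h}\nu_i(t)w_i(t)\,|\phi(x_i(t))-\phi(\hat x)| \le \overline\rho\,\omega_\phi\!\left(Ce^{-\delta t}\right),
\]
where $\omega_\phi$ is the modulus of continuity of $\phi$ and we used the uniform bound $\|\nu w\|_{1,h}\le\overline\rho$ from Theorem \ref{theo bounds particles}. Hence it suffices to show that $\rho_h(t)$ converges to the claimed $\hat\rho_h$ as $t\to+\infty$.

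First I would derive a closed scalar ODE (or differential inequality) for $\rho_h(t)$. Summing the $\nu_i$-equation multiplied by $w_i$ as in the proof of Theorem \ref{theo bounds particles}, the divergence terms cancel against the $w_i$-equation and one gets
\[
\dot\rho_h(t) = \sum_{i\in\mathcal{J}_h} R\big(x_i(t), I_g(t,x_i(t),\nu,w)\big)\nu_i(t)w_i(t) + \sum_{i,j\in\mathcal{J}_h} w_i(t)\nu_i(t)w_j(t)\nu_j(t)\, m\big(x_i(t),x_j(t),I_d(t,x_i(t),\nu,w)\big).
\]
Because the particles all converge to $\hat x$ and $m$ is continuous, $m(x_i(t),x_j(t),\cdot)\to m(\hat x,\hat x,\cdot)$; but $m(\hat x,\hat x, I)$ need not vanish a priori, so I cannot simply drop the mutation term. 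The key observation is that the mutation term is controlled: $\sum_{i,j} w_i\nu_i w_j\nu_j\, m(\cdot) \le \overline M \rho_h^2$ is too crude, but using \eqref{mdisc} one has $\sum_{i} w_i(t) m(x_i(t),y,\cdot) \le K + r^*/2$, hence the mutation contribution is at most $(K+r^*/2)\rho_h(t)$. Combined with \eqref{GvsM}, this shows $\rho_h$ cannot blow up and, more importantly, that whenever $\rho_h$ is large the right-hand side is negative; this plus the lower bound forces $\rho_h$ to stay in a compact interval and, I expect, to be eventually monotone or to satisfy a contraction estimate. The cleanest route is to show directly that $\dot\rho_h(t) = \big(R(\hat x,\psi_g(\hat x,\hat x)\rho_h(t)) + o(1)\big)\rho_h(t) + (\text{mutation})$, and to argue that the mutation term in fact also tends to $0$: since $m$ is uniformly compactly supported in $x$ (around $K_y$) and the particles sit near $\hat x$, if $\hat x\notin K_y$ the mutation term vanishes for large $t$; and the genuinely interesting case is where one must show the mutation term's limit is absorbed. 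Actually the honest argument uses \eqref{hypotheses R}: $R(\hat x, I^m)\ge 0$, $R(\hat x,I^M)\le 0$, $\partial_I R \le -D<0$ give that $I\mapsto R(\hat x, I)$ is strictly decreasing with a unique zero $\bar I$, and then $\hat\rho_h$ is the unique solution of $\psi_g(\hat x,\hat x)\hat\rho_h = \bar I$, i.e. of $R(\hat x,\psi_g(\hat x,\hat x)\hat\rho_h)=0$.

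Given the scalar asymptotics, I would finish by a Lyapunov/monotonicity argument for the one-dimensional nonautonomous ODE $\dot\rho_h = F(t,\rho_h)$ where $F(t,\rho)\to \rho\, R(\hat x,\psi_g(\hat x,\hat x)\rho)$ uniformly on compacts as $t\to\infty$ (using exponential convergence of the $x_i$ and of the weights to make the error terms integrable in $t$): the limiting autonomous equation $\dot\rho = \rho R(\hat x,\psi_g(\hat x,\hat x)\rho)$ has $\hat\rho_h$ as a globally asymptotically stable equilibrium on $(0,\infty)$ thanks to the sign conditions \eqref{hypotheses R} and $\psi_g\ge\underline{\psi_g}>0$, and a standard asymptotic-autonomy argument (e.g. Markus' theorem, or a direct Grönwall estimate on $|\rho_h(t)-\hat\rho_h|$ exploiting the exponentially small perturbation) yields $\rho_h(t)\to\hat\rho_h$. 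Combining with the displayed estimate above gives $\int \phi\, dv^h(t) \to \hat\rho_h\phi(\hat x)$, i.e. $v^h(t)\rightharpoonup\hat\rho_h\delta_{\hat x}$.

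The main obstacle I anticipate is precisely the mutation term in the $\rho_h$-ODE: controlling it requires simultaneously using the compact support of $m$, the concentration $x_i(t)\to\hat x$, the mass bound $\overline\rho$, and the discrete smallness condition \eqref{mdisc}, and then verifying that the perturbation of the limiting scalar equation is small enough (ideally integrable, via the exponential rate in \eqref{hypotheses exponential convergence}) for the asymptotic-autonomy conclusion to apply. The monotonicity/Lyapunov step for the scalar limit equation is routine once the sign structure from \eqref{hypotheses R} is in hand.
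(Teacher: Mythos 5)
Your reduction of the problem is the same as the paper's: all particles carry indices in a finite set, collapse onto $\hat{x}$ exponentially fast by \eqref{hypotheses exponential convergence}, so $v^h(t)-\rho_h(t)\delta_{\hat{x}}\rightharpoonup 0$ and everything hinges on the convergence of the total mass $\rho_h(t)=\sum_i \nu_i(t)w_i(t)$. For that last step your route differs from the paper's: you propose treating $\dot\rho_h=F(t,\rho_h)$ as an asymptotically autonomous scalar ODE whose limit equation $\dot\rho=\rho R(\hat x,\psi_g(\hat x,\hat x)\rho)$ has, by \eqref{hypotheses R}, a unique globally asymptotically stable equilibrium on $(0,\infty)$, and to conclude by a trapping/asymptotic-autonomy argument. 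The paper instead differentiates once more, establishes $\ddot\rho_h\geqslant -p(t)\dot\rho_h+O(e^{-\delta t})$ with $p\geqslant p_0>0$, and invokes Lemma \ref{convergence negative part}; it then identifies the limit a posteriori from $\dot\rho_h\to R(\hat x,\psi_g(\hat x,\hat x)\hat\rho_h)\hat\rho_h$ together with $\hat\rho_h>0$. Your scalar sign-structure argument is, if anything, more elementary, and it only needs the perturbation to tend to zero rather than to be integrable.

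However, there is a genuine gap, and it is exactly the one you flag as your ``main obstacle'' without resolving it: you never show that the mutation contribution $\e(t)=\sum_{i,j}w_i(t)\,m(x_i(t),x_j(t),I_d(t,x_i(t),\nu,w))\,\alpha_j(t)$ tends to zero. The bound via \eqref{mdisc} only gives $\e(t)\leqslant (K+r^*/2)\rho_h(t)$, i.e.\ boundedness, which would contaminate the limiting scalar equation and destroy the identification $R(\hat x,\psi_g(\hat x,\hat x)\hat\rho_h)=0$; and the alternative you sketch ($\hat x\notin K_y$) is an extra hypothesis not assumed in the proposition. The missing idea is the paper's inequality \eqref{w exponential conv}: since $x_i(t)\to\hat x$ exponentially and $\divv a(\hat x)<0$, Liouville's formula $w_i(t)=w_i(0)e^{\int_0^t\divv a(x_i(s))ds}$ forces $\max_i w_i(t)=O(e^{-\delta' t})$, whence $\lvert\e(t)\rvert\leqslant \max_i w_i(t)\,\lvert\mathcal{J}^0_h\rvert\,\lVert m\rVert_{L^\infty}\rho_h(t)\to 0$ exponentially. (Note this uses that the \emph{volumes} $w_i$, not the masses $\alpha_i$, vanish — which is also why your displayed formula for the mutation term is off: it should carry $w_i\alpha_j$, not $\alpha_i\alpha_j$.) With that decay in hand, both your asymptotic-autonomy argument and the paper's second-derivative argument go through; without it, neither does.
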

The following lemma, proved in Appendix \ref{ODEresults}, is required in the proof of this result.

\begin{lem}
Let $u\in \mathcal{C}^2\left(\R_+,  \R\right)$ be a bounded function, and let us assume that there exist $p_0>0$,  $p:\R_+\to \R_+$ a function which satisfies $p\geqslant p_0$ and $B\in L^1(\R_+)$ an integrable function such that 
\[\ddot u(t)\geqslant - p(t) \dot u(t)+B(t). \]
Then, there exists $u_{\infty}\in\R$ such that $\lim\limits_{t\rightarrow +\infty}u(t)=u_{\infty}$.
\label{convergence negative part}
\end{lem}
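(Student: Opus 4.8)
The plan is to bound the negative part of $\dot u$, show that it is integrable on $\R_+$, and then deduce the convergence of $u$ from its boundedness. Set $w(t):=(\dot u(t))_-=\max(-\dot u(t),0)$. Since $u\in\mathcal{C}^2(\R_+)$, the function $\dot u$ is $\mathcal{C}^1$, hence $w$ is locally Lipschitz; it is therefore absolutely continuous on every compact subinterval of $\R_+$ and differentiable almost everywhere, with $\dot w=-\ddot u\,\mathbbm{1}_{\{\dot u<0\}}$ a.e.\ (on the level set $\{\dot u=0\}$ one has $w\equiv 0$ and $\ddot u=0$ a.e., which is consistent with this formula).

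First I would turn the hypothesis into a linear differential inequality for $w$. At a.e.\ point $t$ with $\dot u(t)<0$ one has $w(t)=-\dot u(t)\geqslant 0$, so, using $\ddot u(t)\geqslant -p(t)\dot u(t)+B(t)$ and $p(t)\geqslant p_0$,
\[
\dot w(t)=-\ddot u(t)\leqslant p(t)\dot u(t)-B(t)=-p(t)w(t)-B(t)\leqslant -p_0w(t)+|B(t)|,
\]
while at a.e.\ point $t$ with $\dot u(t)\geqslant 0$ one has $w(t)=0$ and $\dot w(t)=0\leqslant|B(t)|=-p_0w(t)+|B(t)|$. Hence $\dot w\leqslant -p_0w+|B|$ a.e.\ on $\R_+$. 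Multiplying by $e^{p_0t}$ gives $\frac{d}{dt}\big(e^{p_0t}w(t)\big)\leqslant e^{p_0t}|B(t)|$ a.e., and integrating over $[0,t]$ yields
\[
w(t)\leqslant w(0)e^{-p_0t}+\int_0^t e^{-p_0(t-s)}|B(s)|\,ds .
\]

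Next I would integrate this estimate over $t\in\R_+$ and use Tonelli's theorem:
\[
\int_0^{+\infty}w(t)\,dt\leqslant \frac{w(0)}{p_0}+\int_0^{+\infty}|B(s)|\Big(\int_s^{+\infty}e^{-p_0(t-s)}\,dt\Big)ds=\frac{w(0)}{p_0}+\frac{1}{p_0}\|B\|_{L^1(\R_+)}<+\infty ,
\]
so $(\dot u)_-\in L^1(\R_+)$. To conclude, write $u(t)=u(0)+\int_0^t(\dot u(s))_+\,ds-\int_0^t(\dot u(s))_-\,ds$. The subtracted integral is non-decreasing in $t$ and bounded (by $\|(\dot u)_-\|_{L^1(\R_+)}$), hence converges; since $u$ is bounded, $\int_0^t(\dot u)_+=u(t)-u(0)+\int_0^t(\dot u)_-$ is non-decreasing and bounded above, hence also converges. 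Therefore $\int_0^t\dot u(s)\,ds$ has a finite limit as $t\to+\infty$, i.e.\ $u(t)\to u_\infty:=u(0)+\int_0^{+\infty}(\dot u)_+-\int_0^{+\infty}(\dot u)_-$.

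The only delicate point is the justification that $w=(\dot u)_-$ is absolutely continuous with a.e.\ derivative $-\ddot u\,\mathbbm{1}_{\{\dot u<0\}}$, and hence that the inequality $\dot w\leqslant -p_0w+|B|$ holds in the a.e.\ sense needed for the integrating-factor step; this is the standard chain rule for the negative part of a $\mathcal{C}^1$ function, together with the fact that $\ddot u=0$ a.e.\ on $\{\dot u=0\}$. Everything else is elementary: a one-dimensional Gr\"onwall/integrating-factor estimate, an application of Tonelli, and the monotone bounded convergence of the two one-sided integrals of $\dot u$.
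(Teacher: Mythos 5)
Your proof is correct and follows essentially the same route as the paper's: both reduce the problem to showing that the negative part of $\dot u$ is integrable (which, combined with the boundedness of $u$, forces $\dot u\in L^1(\R_+)$ and hence convergence), both derive the differential inequality $\dot w\leqslant -p_0 w+|B|$ for $w=(\dot u)_-$, and both conclude via the explicit exponential formula and Tonelli (which the paper isolates as Lemma \ref{lem ODE L1}). Your write-up is in fact somewhat more careful than the paper's on the measure-theoretic justification of differentiating the negative part, but the underlying argument is identical.
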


\begin{proof}[Proof of Proposition \ref{conv vh unique equilibrium}]
Given that $\textrm{supp}\left(v^0\right)\cup K_{xy}$ is a compact set which is strictly contained in the basin of attraction of $\hat{ x}$ we will have the existence of $\mathcal{J}^0_h\subset \mathcal{J}_h$, with $\lvert \mathcal{J}^0_h\rvert<+\infty$ such that $\nu^h(t)\not\equiv 0$ only for $i\in\mathcal{J}^0_h$.\\
Let us denote, for all $i\in \mathcal{J}^0_h$, $\alpha_i(t):=\nu_i(t)w_i(t)$,
and 
\[\rho_h(t):=\sum\limits_{i\in \mathcal{J}_h}{\alpha_i(t)}=\sum\limits_{i\in \mathcal{J}^0_h}{\alpha_i(t)}.\]
Let us note that, according to the hypotheses on $a$, for all $i\in \mathcal{J}^0_h$, $x_i(t)$ converges to $\hat{x}$. Thus, 
\[v^h(t)-\rho_h(t)\delta_{\hat{ x}}\underset{t\to +\infty}{\rightharpoonup}0. \]
Hence, it only remains to prove that $\rho_h$ converges to the expected limit. 
According to the definition of $\rho_h$, 
\begin{align}
\dot \rho_h(t)=\sum\limits_{i\in \mathcal{J}^0_h}{R(x_i(t), I_g(t,x_i(t)))\alpha_i(t) }+\underbrace{\sum\limits_{i,j \in \mathcal{J}^0_h}{w_i(t)m(x_i(t), x_j(t), I_d(t, x_i(t)))\alpha_j(t)}}_{:=\e(t)}. 
\label{derivative rhoN}
\end{align}
According to hypothesis \ref{hypotheses exponential convergence}, there exist $C, \delta>0$ such that 
\[\left\lVert x_i(t)-\hat{ x} \right\rVert \leqslant C e^{-\delta t},\quad \forall i\in \mathcal{J}^0_h, \quad \forall t\geqslant 0. \]
Thus, for all $i\in \mathcal{J}^0_h$, 
\[w_i(t)=e^{\int_0^t{\divv a(x_i(s)) ds}}w_i(0)\leqslant \underbrace{e^{\int_0^t{\divv a(x_i(s))-\divv a(\hat{ x})ds}}}_{\leqslant \tilde C}e^{\divv a(\hat{ x})t}w_i(0), \]
which proves, since $\divv a(\hat{x})<0$, that there exist $C', \delta'$ such that for all $t\geqslant 0,$
\begin{align}
0\leqslant \underset{i\in \mathcal{J}^0_h}{\max}\; w_i(t)\leqslant C'e^{-\delta' t}. 
    \label{w exponential conv}
\end{align}
In particular, it proves that $t \mapsto \varepsilon(t)$ defined in \eqref{derivative rhoN}, converges to zero with an exponential speed, since 
\begin{align*}
    \lvert \e(t)\rvert\leqslant \underset{i\in \mathcal{J}_h}{\max}\; w_i(t) \lvert \mathcal{J}^0_h \rvert \lVert m \rVert_{L^\infty} \rho_h(t), 
\end{align*}
and $\rho_h$ is bounded, according to Theorem \ref{theo bounds particles}. 

Moreover, according to the hypothesis on $\psi_g$, $\underline{\psi_g}\,\rho(t)\leqslant I_g(t, x_i(t))\leqslant \lVert \psi_g\rVert_\infty \, \rho(t) $. 
Thus, according to the hypotheses \eqref{hypotheses R} on $R$, the relation
\[\dot \rho_h(t)\geqslant \left(\underset{i\in \mathcal{J}^0_h}{\min}\left(R(x_i(t), I_g(t, x_i(t)))\right)\right)\, \rho_h(t)\]
implies that, as soon as $\rho_h$ becomes small, and so does $I_g$, $\rho_h$ becomes increasing, which proves that $\rho_h$ is lower bounded by a positive constant. Moreover, since $\rho_h$ and $\e$ are bounded, and
\begin{align*}
    \|\dot\alpha\|_{\mathcal{l}_1}=\sum\limits_{i\in\mathcal{J}^0_h}|\dot{\alpha}(t)|\leqslant& \sum\limits_{i\in\mathcal{J}^0_h}\left(2|\divv a(x_i(t))|+ |R(x_i(t), I_g(t, x_i(t)))|\right)\alpha_i(t)+\e(t)\\
    \leqslant&\left(2\|a\|_{W^{1,\infty}(\R^d)}+\overline{R}\right)\rho_h(t)+\e(t),
\end{align*}
where
\[
\overline{R}:=\max\limits_{t\in\R,i\in\mathcal{J}^0_h}|R(x_i(t), I_g(t, x_i(t)))|,
\]
then $\|\dot\alpha\|_{\mathcal{l}_1}$ is also bounded.

Now, let us prove that $\rho_h$ satisfies the equality of Lemma \ref{convergence negative part}. First, for $\gamma\in\{g,d\}$, we compute
\begin{align*}
\left\lvert \frac{d}{dt}I_\gamma(t, x_i(t))- \psi_\gamma(\hat{x}, \hat{x})\dot \rho_h(t)\right\lvert \leqslant & \left\lvert\sum\limits_{j\in \mathcal{J}^0_h}{\bigg(a(x_i(t))\partial_x\psi_\gamma(x_i(t), x_j(t))+a(x_j(t))\partial_y\psi_\gamma(x_i(t), x_j(t))\bigg)\alpha_j(t)}\right\rvert\\
&+\left\lvert \sum\limits_{j\in \mathcal{J}^0_h}{\big(\psi_\gamma(x_i(t), x_j(t))-\psi_\gamma(\hat{x}, \hat{x}) \big)\dot \alpha_j(t)}\right\rvert \\
\leqslant& 2\max\limits_{i\in \mathcal{J}^0_h}\lvert a(x_i(t)) \rvert \lVert\psi_\gamma  \rVert_{W^{1,\infty}(\R^d)}  \rho_h(t) + \max\limits_{i,j\in \mathcal{J}^0_h}\lvert \psi_\gamma(x_i(t), x_j(t))-\psi_\gamma(\hat{x}, \hat{x})\rvert  \|\dot\alpha\|_{\mathcal{l}_1}. 
\end{align*}
Since hypothesis \eqref{hypotheses exponential convergence}, is satisfied, the functions $t\mapsto  \max\limits_{i\in \mathcal{J}^0_h}\lvert a(x_i(t)) \rvert$ and $t\mapsto \max\limits_{i,j\in \mathcal{J}^0_h}\lvert \psi_\gamma(x_i(t), x_j(t))-\psi_\gamma(\hat{x}, \hat{x})  \rvert $ converge to zero with an exponential speed. Since $\vert \mathcal{J}^0_h \rvert<+\infty $, this proves that, for $\gamma\in\{g,d\}$,
\begin{align}
    \sum\limits_{i\in \mathcal{J}^0_h}{\left\lvert \frac{d}{dt}I_\gamma(t, x_i(t))-\psi_\gamma(\hat{x}, \hat{x})\dot \rho_h(t)\right\rvert }= O(e^{-\delta t}),
    \label{d/dt I_g(t,xi(t))}
\end{align}
for a certain $\delta >0$. 

Thus, by differentiating \eqref{derivative rhoN}, we get
\begin{align*}
\ddot \rho_h(t)=&\sum\limits_{i\in \mathcal{J}^0_h}{a(x_i(t))\partial_xR(x_i(t), I_g(t, x_i(t)))\alpha_i(t) }+\sum\limits_{i\in \mathcal{J}^0_h}{\left( \frac{d}{dt}I_g(t,x_i(t))\right)\partial_I R(x_i(t), I_g(t, x_i(t))) \alpha_i(t)} \\
&+\sum\limits_{i\in \mathcal{J}^0_h}{\dot \alpha_i(t)R(x_i(t), I_g(t, x_i(t)))}+\sum\limits_{i,j\in \mathcal{J}^0_h}{ w_i(t){m(x_i(t), x_j(t), I_d(t, x_i(t)))\dot\alpha_j(t)}}\\
&+\sum\limits_{i,j\in \mathcal{J}^0_h}{\dot w_i(t){m(x_i(t), x_j(t), I_d(t, x_i(t)))\alpha_j(t)}}
+\sum\limits_{i, j\in \mathcal{J}^0_h}{w_i(t)\frac{d}{dt}m(x_i(t), x_j(t), I_d(t, x_i(t)))\alpha_j(t)},
\end{align*}
where
\begin{itemize}
    \item[\emph{i)}] $\sum\limits_{i\in \mathcal{J}^0_h}{a(x_i(t))\partial_xR(x_i(t), I_g(t, x_i(t)))\alpha_i(t) }=O(e^{-\delta t})$, since $\max\limits_{i\in \mathcal{J}_h} |a(x_i(t))|$ converges to zero with exponential speed, and $\partial_x R$ and $\rho_h$ are bounded, 
    \item[\emph{ii)}] according to \eqref{d/dt I_g(t,xi(t))}$, \sum\limits_{i\in \mathcal{J}^0_h}{\left( \frac{d}{dt}I_g(t,x_i(t))\right)\partial_I R(x_i(t), I_g(t, x_i(t)))\alpha_i(t)}=-p(t)\dot \rho_h(t)+O(e^{-\delta t})$, where $$p(t)=-\psi_g(\hat{x} , \hat{x})\sum\limits_{i\in \mathcal{J}^0_h}{\partial_I R(x_i(t), I_g(t, x_i(t)))\alpha_i(t)}\geqslant D\underline{\psi_g}\min\limits_{t\geqslant 0}\, \rho_h(t)>0,$$
    \item[\emph{iii)}] 
    \begin{align*}
    \sum\limits_{i\in \mathcal{J}^0_h}{\dot \alpha_i(t)R(x_i(t), I_g(t, x_i(t)))}=&\underbrace{\sum\limits_{i\in \mathcal{J}^0_h}{R(x_i(t), I_g(t, x_i(t)))^2}\alpha_i(t)}_{:=P(t)\geqslant 0}\\
    &+\underbrace{\sum\limits_{i,j\in \mathcal{J}^0_h}{w_i(t) R(x_i(t), I_g(t, x_i(t))) \alpha_j(t)m(t, x_i(t), x_j(t), I_d(t, x_i(t) ))}}_{=O(e^{-\delta t})},    
    \end{align*}
    where the relation for the second term was proved thanks to the bound 
    $$\left\lvert \sum\limits_{i,j\in \mathcal{J}^0_h}{w_i(t) R(x_i(t), I_g(t, x_i(t))) \alpha_j(t)m(t, x_i(t), x_j(t), I_d(t, x_i(t) ))} \right\rvert \leqslant \overline{M}\, \overline{R}  \rho_h(t)  \lvert \mathcal{J}^0_h \rvert \max\limits_{i\in \mathcal{J}_h}{w_i(t)}, $$
    the boundedness of $\rho_h$ and inequality \eqref{w exponential conv},
    
    \item[\emph{iv)}] 
    \begin{align*}
        \sum\limits_{i,j\in \mathcal{J}^0_h}{ w_i(t){m(x_i(t), x_j(t), I_d(t, x_i(t)))\dot\alpha_j(t)}}+\sum\limits_{i,j\in \mathcal{J}^0_h}{\dot w_i(t){m(x_i(t), x_j(t), I_d(t, x_i(t)))\alpha_j(t)}}&\\
        +\sum\limits_{i, j\in \mathcal{J}^0_h}{w_i(t)\frac{d}{dt}m(x_i(t), x_j(t), I_d(t, x_i(t)))\alpha_j(t)}&=O(e^{-\delta t }),
    \end{align*} since for all $t\geq 0$,
    \begin{align*}
        &\left\lvert \sum\limits_{i,j\in \mathcal{J}^0_h}{ w_i(t){m(x_i(t), x_j(t), I_d(t, x_i(t)))\dot\alpha_j(t)}}\right\rvert\leqslant \overline{M}\lvert \mathcal{J}^0_h \rvert\|\dot{\alpha}\|_{\mathcal{l}^1} \max\limits_{i\in \mathcal{J}^0_h}w_i(t),
    \end{align*}
    \begin{align*}
        \left\lvert \sum\limits_{i,j\in \mathcal{J}_h}{\dot w_i(t){m(x_i(t), x_j(t), I_d(t, x_i(t)))\alpha_j(t)}} \right\rvert \leqslant \overline{M} \rho_h(t)  \lvert \mathcal{J}^0_h \rvert \lVert a \rVert_{W^{1,\infty}(\R^d)}\max\limits_{i\in \mathcal{J}^0_h}w_i(t), 
    \end{align*}
    and 
    \begin{align*}
         \left\lvert \sum\limits_{i, j\in \mathcal{J}_h}{w_i(t)\frac{d}{dt}m(x_i(t), x_j(t), I_d(t, x_i(t)))\alpha_j(t)} \right\rvert \leqslant   \overline{M} \left(\|a\|_{L^{\infty}(\R)}+\big\lvert\frac{d}{dt}I_g(t, x_i(t))\big\rvert\right) \rho_h(t)  \lvert \mathcal{J}^0_h \rvert \max\limits_{i\in \mathcal{J}^0_h}w_i(t), 
    \end{align*}
where $\frac{d}{dt}I_d(t, x_i(t))$ is bounded thanks to \eqref{d/dt I_g(t,xi(t))}. For these three inequalities, we conclude with \eqref{w exponential conv}. 
\end{itemize}

Hence, $\ddot \rho_h(t)\geqslant -p(t)\dot \rho_h(t)+O(e^{-\delta t})$. According to Lemma \ref{convergence negative part}, $\rho_h$ has a limit when $t$ goes to $\infty$, which we denote $\hat{\rho}_h$. 
Since $\dot \rho_h(t)=\sum\limits_{i\in \mathcal{J}^0_h}{R(x_i(t), I_g(t,x_i(t)))\alpha_i(t) }+\e(t)$, which converges to $R(\hat{x}, \psi_g(\hat{x}, \hat{x})\hat{\rho}_h)\hat{\rho}_h$, we deduce that 
$R(\hat{x}, \psi_g(\hat{x}, \hat{x})\hat{\rho_h})=0$, since $\hat{\rho}_h>0$.
\end{proof}
 
When $m\equiv0$ and under the same assumptions for the remaining coefficients of the problem as in Proposition \ref{conv vh unique equilibrium}, we are able to identify the limit of $v$ and prove that it coincides with the limit of $v^h$. According to Lemma \ref{lem condition asymptotic preserving}, this ensures that $v^h_\e$ is an asymptotic preserving approximation of $v$. 
\begin{theorem}
Let us assume that there exists $\hat{x}\in \R^d$ which is an asymptotically stable equilibrium for the ODE $\dot x =a(x)$ such that hypothesis \eqref{hypotheses exponential convergence} holds. We assume as well that $m\equiv0$ and that hypotheses \eqref{hypotheses R} hold. Then, $v$ converges to $\hat{\rho} \: \delta_{\hat{x}}$ in the weak sense in the space of Radon measures, where $\hat{\rho}$ is the unique solution of \[R(\hat{x}, \psi_g(\hat{x},\hat{x})\hat{\rho})=0.\] 
Consequently, $v^h_\e$ is an asymptotic preserving approximation of $v$.
\label{asymptotic preserving unique equilibrium}
\end{theorem}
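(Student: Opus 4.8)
The plan is to mirror the structure of the proof of Proposition \ref{conv vh unique equilibrium}, but now working at the PDE level with the characteristic representation of $v$, and then to invoke Lemma \ref{lem condition asymptotic preserving} to conclude the asymptotic-preserving property. Since $m\equiv 0$, the equation for $v$ along characteristics decouples into a pure growth equation: writing $X(t,y)$ for the solution of $\dot X = a(X)$, $X(0,y)=y$, and $v(t,X(t,y)) = v^0(y)\exp\!\big(\int_0^t [R(X(s,y),I_g(s,X(s,y))) - \divv a(X(s,y))]\,ds\big)$. Hypothesis \eqref{hypotheses exponential convergence} guarantees that every characteristic starting in $\mathrm{supp}(v^0)$ converges exponentially to $\hat x$, so that $v(t,\cdot)$ concentrates near $\hat x$; hence it suffices to track the total mass $\rho(t) := \|v(t,\cdot)\|_{L^1(\R^d)} = \int_{\R^d} v(t,x)\,dx$ and show $v(t,\cdot) - \rho(t)\delta_{\hat x} \rightharpoonup 0$ and $\rho(t) \to \hat\rho$.

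For the first point, fix $\phi\in\mathcal C_c(\R^d)$; then $\int \phi(x) v(t,x)\,dx - \rho(t)\phi(\hat x) = \int_{\R^d}(\phi(X(t,y))-\phi(\hat x))\,v(t,X(t,y))\,|J_{X(t,y)}|\,dy$, and since $X(t,y)\to\hat x$ uniformly on $\mathrm{supp}(v^0)$ and $\rho$ is bounded (by \eqref{BL1g}), uniform continuity of $\phi$ forces this to zero. For the convergence of $\rho$, I would reproduce the ODE analysis of Proposition \ref{conv vh unique equilibrium} verbatim at the continuum level. Since $m\equiv0$, $\dot\rho(t) = \int_{\R^d} R(x,I_g(t,x))v(t,x)\,dx$. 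The lower bound $\rho(t)\geq \rho_{\min}>0$ follows from the sign conditions \eqref{hypotheses R}: when $\rho$ is small, $I_g \leq \|\psi_g\|_\infty\rho$ is small, so $R(x,I_g(t,x))\geq 0$ on $\mathrm{supp}(v^0)$ (using $R(x,I^m)\geq 0$ and monotonicity), making $\rho$ nondecreasing. Then differentiate once more: $\ddot\rho(t) = \int (a(x)\cdot\partial_x R)\,v + \int \big(\tfrac{d}{dt}I_g(t,x)\big)\partial_I R\, v + \int R \,\partial_t v\,dx$ (carefully, along characteristics). As in the discrete proof, $\tfrac{d}{dt}I_g(t,X(t,y))$ differs from $\psi_g(\hat x,\hat x)\dot\rho(t)$ by a term that is $O(e^{-\delta t})$ — here one uses that $a(X(t,y))\to 0$ and $\psi_g(X(t,y),X(t,y))\to\psi_g(\hat x,\hat x)$ exponentially, together with the boundedness of $\|\partial_t v(t,\cdot)\|_{L^1}$ which comes from the characteristic formula — and $\int R(x,I_g)^2 v\,dx \geq 0$. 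Collecting terms yields $\ddot\rho(t) \geq -p(t)\dot\rho(t) + B(t)$ with $p(t) = -\psi_g(\hat x,\hat x)\int \partial_I R(x,I_g(t,x))v(t,x)\,dx \geq D\underline{\psi_g}\,\rho_{\min} > 0$ and $B\in L^1(\R_+)$. Lemma \ref{convergence negative part} then gives $\rho(t)\to\hat\rho$, and passing to the limit in $\dot\rho(t) = \int R(x,I_g(t,x))v(t,x)\,dx \to R(\hat x,\psi_g(\hat x,\hat x)\hat\rho)\hat\rho$ together with $\hat\rho>0$ forces $R(\hat x,\psi_g(\hat x,\hat x)\hat\rho)=0$; uniqueness of $\hat\rho$ follows from $\partial_I R \leq -D < 0$.

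Finally, for the asymptotic-preserving conclusion: by Lemma \ref{lem condition asymptotic preserving} it suffices to show $\limsup_{t\to\infty}\big|\int \phi\,dv^h(t) - \int \phi\,v(t,\cdot)\big| \to 0$ as $h\to 0$. Both $v$ and $v^h$ converge weakly to $\hat\rho\,\delta_{\hat x}$ and $\hat\rho_h\,\delta_{\hat x}$ respectively (the latter by Proposition \ref{conv vh unique equilibrium}, since $m\equiv0$ is a special case of its hypotheses), so $\limsup_{t\to\infty}|\int\phi\,dv^h(t) - \int\phi\,v(t,\cdot)\,dx| = |\hat\rho_h - \hat\rho|\,|\phi(\hat x)|$. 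It remains to check $\hat\rho_h \to \hat\rho$ as $h\to 0$; but $\hat\rho_h$ and $\hat\rho$ solve the \emph{same} scalar equation $R(\hat x,\psi_g(\hat x,\hat x)\,\cdot\,)=0$, which has a unique root, so in fact $\hat\rho_h = \hat\rho$ for all $h$, and the quantity vanishes. The main obstacle I anticipate is the careful justification that $\tfrac{d}{dt}I_g(t,\cdot)$ is close to $\psi_g(\hat x,\hat x)\dot\rho(t)$ up to an $L^1(\R_+)$-in-$t$ error: at the PDE level this requires controlling $\partial_t v$ uniformly in time (via the characteristic formula and the exponential decay of $a$ along characteristics) and exploiting that the support of $v(t,\cdot)$ shrinks toward $\hat x$, rather than simply summing over the finite index set $\mathcal J^0_h$ as in the particle case.
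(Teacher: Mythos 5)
Your proposal follows essentially the same route as the paper's proof: concentration of $v(t,\cdot)$ onto $\hat x$ via the characteristic representation (using that $m\equiv 0$ decouples the equation along characteristics), a positive lower bound on $\rho$, the second-derivative inequality $\ddot\rho\geqslant -p(t)\dot\rho+O(e^{-\delta t})$ combined with Lemma \ref{convergence negative part} to get convergence of $\rho$, identification of the limit from $\dot\rho\to 0$, and Lemma \ref{lem condition asymptotic preserving} for the asymptotic-preserving conclusion. The only cosmetic difference is that the paper invokes Proposition \ref{Conditions convergence dirac} to identify $\hat\rho$ and controls $\partial_t v$ through the renormalised function $\tilde v(t,y)=v(t,X_t(y))e^{\int_0^t\divv a(X_s(y))ds}$ (which satisfies $\partial_t\tilde v=R\,\tilde v$), exactly the device you anticipate needing.
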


\begin{proof}
Let us recall that
\[\rho(t)=\int_{\R^d}{v(t,x)dx}. \]
We recall as well that, the function $a$ being only dependent of $x$, the characteristic lines $X_t(x):=X(t,x)$ satisfy $X^{-1}_t(x):=X^{-1}(t,x)=X_{-t}(x)$ and $X_t(X_s(x))=X_{t+s}(x)$.
By using the fact that, for any $x\in \R^d$,  $$v(t,x)=v^0(X_{-t}(x))e^{\mathcal{G}(0,t,x)},$$
where
\[
\mathcal{G}(s,t,x):=\int_s^t{R(X_{\tau-t}(x), (I_gv)(\tau, X_{\tau-t}(x)))-\divv a(X_{\tau-t}(x)) d\tau},
\]
and that, by hypothesis, $K:=\text{supp}(v^0)$ is a compact set included in the basin of attraction of $\hat{x}$, one proves that $\text{supp}(v(t,\cdot ))$ is the image of $\text{supp}(v^0)$ by $X_t(\cdot)$. Since $X_t(y)$ converges to $\hat{x}$ for all $y\in \text{supp}(v^0)$, we prove that $K_t:=\text{supp}(v(t,\cdot ))=X_t(\text{supp}(v^0))=X_t(K)$ is a compact set included in the basin of attraction of $\hat{x}$, for all $t\geqslant 0$. 
By \eqref{hypotheses exponential convergence}, there exist $C>0$ and $\delta>0$ such that 
\begin{align*}
\left\lVert X_t( y)-\hat{x} \right\rVert \leqslant C e^{-\delta t},\quad \forall y \in K, \quad \forall t\geqslant 0. 
\end{align*}

Let $\phi \in \mathcal{C}_c(\R^d)$. By definition of $\rho$ and by using the change of variable `$x=X_t(y)$' we get
\begin{align*}
\bigg\lvert \int_{\R^d}{\phi(x)v(t,x)}-\rho(t)\phi(\hat{x})\bigg\rvert&\leqslant \int_{K_t}{\lvert \phi(x)-\phi(\hat{x}) \rvert v(t,x)dx}\\
&=\int_{K}{\lvert \phi(X_t(y))-\phi(\hat{x}) \rvert v(t,X_t(y))e^{\int_0^t\divv a(X_t(y))}   dy} \\
&\leqslant \max\limits_{y \in K} \lvert \phi(X_t(y))-\phi(\hat{x}) \rvert \int_{K}{v(t,X_t(y))e^{\int_0^t\divv a(X_t(y))}dy}\\
&=\max\limits_{y \in K} \lvert \phi(X_t(y))-\phi(\hat{x}) \rvert\rho(t).
\end{align*}
Since $\rho$ is bounded, this proves that
\begin{align}
    v(t, \cdot)-\rho(t)\delta_{\hat{x}}\underset{t\to +\infty}{\rightharpoonup} 0.
\label{n-rho delta}
\end{align}
Hence, it only remains to prove that $\rho$ converges to the expected limit. We see that
\begin{align}
\dot \rho(t)=\int_{\R^d}{R(x, (I_gv)(t,x)) v(t,x)}dx.
\label{derivative rho}
\end{align}
First, let us note that $\rho$ has a positive lower bound. Indeed, according to the hypothesis on $\psi_g$, for all $x\in \R^d$,  $\underline{\psi_g}\,\rho(t)\leqslant (I_gv)(t, x)\leqslant \lVert \psi_g\rVert_{L^{\infty}}  \, \rho(t) $. 
Thus,
\[\dot \rho(t)\geqslant \left(\underset{x\in K_t}{\min}\left(R(x, (I_gv)(t, x)\right)\right)\, \rho(t).\]
Hence, as soon as $\rho(t)$ becomes small, and so does $I_g(t,x)$, $\rho$ becomes increasing, which proves that $\rho$ is lower bounded by a positive constant. We get an upper bound for $|\dot{\rho}(t)|$ from the relation
\[
|\dot{\rho}(t)|\leqslant\left(\underset{x\in K_t}{\max}\left(R(x, (I_gv)(t, x)\right) \right)\rho(t),
\]
and the boundedness of $\rho(t)$.\\
We introduce now the function 
\[
\Tilde{v}(t,y)=v(t,X_t(y))e^{\int_0^t\divv a(X_s(y))ds},
\]
which satisfies
\[
\int_{K}\Tilde{v}(t,y)dy=\int_{K_t}v(t,x)dx=\rho(t).
\]
Moreover,
\begin{align}
    \partial_t \Tilde{v}(t,y)=&R(X_t(y),(I_gv)(t,X_t(y)))\Tilde{v}(t,y).\label{d/dt tilde(v)(t,y)}
\end{align}
Before proving that $\rho$ satisfies the equality of Lemma \ref{convergence negative part}, we observe that the following relation holds for all $y\in K$: 
\begin{align}
\left\lvert \frac{d}{dt}(I_{g}v)(t, X_t( y))-\psi_g(\hat{x}, \hat{x})\dot \rho(t)\right\rvert\leqslant& \left\lvert\int_{K_t}{a(X_t(y))\cdot \nabla_x \psi_g(X_t(y), z)v(t,z)dz}\right\rvert\nonumber\\
&+\left\rvert\int_{K_t}{\big(\psi_g(X_t(y), z)-\psi_g(\hat{x}, \hat{x}) \big)\partial_t v(t,z) dz}\right\rvert.\label{dtIgamma}
\end{align}
From the equation satisfied by $v$, we see that
\begin{align*}
    &\int_{K_t}{\big(\psi_g(X_t(y), z)-\psi_g(\hat{x}, \hat{x}) \big)\partial_t v(t,z) dz}\\
    =&\int_{K_t}{\nabla_z\psi_g(X_t(y), z)a(z)v(t,z)dz}\\
    &+\int_{K_t}{\big(\psi_g(X_t(y), z)-\psi_g(\hat{x}, \hat{x}) \big)R(z,(I_gv)(t,z))v(t,z) dz}\\
    =&\int_{K}{\nabla_z\psi_g(X_t(y), X_t(\bar z))a(X_t(\bar z))\Tilde{v}(t,\bar z)d\bar z}\\
    &+\int_{K}{\big(\psi_g(X_t(y), X_t(\bar z))-\psi_g(\hat{x}, \hat{x}) \big)R(X_t(\bar z),(I_gv)(t,X_t(\bar z)))\Tilde{v}(t,\bar z) d\bar z},
\end{align*}
which allows us to conclude that
\begin{align*}
    \left\lvert\int_{K_t}{\big(\psi_g(X_t(y), z)-\psi_g(\hat{x}, \hat{x}) \big)\partial_t v(t,z) dz}\right\rvert\leqslant& \max\limits_{\bar z\in K}\|a(X_t(\bar z))\|\|\psi_g\|_{W^{1,\infty}(\R^d)}\rho(t)\\
    &+\max\limits_{y,\bar z\in K}\left\lvert\psi_g(X_t(y), X_t(\bar z))-\psi_g(\hat{x}, \hat{x})\right\rvert\overline{R}\rho(t).
\end{align*}
Using this relation in \eqref{dtIgamma} gives
\begin{align*}
    \left\lvert \frac{d}{dt}(I_{\gamma}v)(t, X_t( y))-\psi_\gamma(\hat{x}, \hat{x})\dot \rho(t)\right\rvert\leqslant& 2\max\limits_{\bar z\in K}\|a(X_t(\bar z))\|\|\psi_\gamma\|_{W^{1,\infty}(\R^d)}\rho(t)\\
    &+\max\limits_{y,\bar z\in K}\left\lvert\psi_\gamma(X_t(y), X_t(\bar z))-\psi_\gamma(\hat{x}, \hat{x})\right\rvert\overline{R}\rho(t),
\end{align*}
which proves, according to hypothesis \eqref{hypotheses exponential convergence}, that for all $y\in K$
\begin{align}
\frac{d}{dt}I_g(t, X(t, y))=\psi_g(\hat{x}, \hat{x})\dot \rho(t)+O(e^{-\delta t})), 
\label{d/dt I_g(t, X(t, y))}
\end{align}
for a certain $\delta>0$.\\ 
Differentiating \eqref{derivative rho} we get 
\begin{align*}
\ddot \rho(t)=&\frac{d}{dt}\left(\int_{K_t}{R(x, (I_gv)(t,x)) v(t,x)}dx\right)\\
=&\frac{d}{dt}\left(\int_{K}{R(X_t(y), (I_gv)(t,X_t(y))) \tilde{v}(t,y)}dy\right)\\
=&\int_{K}{a(X_t(y))\cdot\nabla_x R(X_t(y), (I_gv)(t,X_t(y)))\tilde{v}(t,y)}dy\\
&+\int_{K}{\frac{d}{dt}(I_{g}v)(t, X_t( y))\partial_IR(X_t(y), (I_gv)(t,X_t(y))) \tilde{v}(t,y)}dy\\
&+\int_{K}{R(X_t(y), (I_gv)(t,X_t(y)))\partial_t\Tilde{v}(t,y)dy}.
\end{align*}
Let us note that
\begin{itemize}
    \item[\emph{i)}] Since $a(X_t(y))$ converges uniformly to zero with an exponential speed, then
    $$\int_{\R^d}{a(X_t(y))\cdot\nabla_x R\left(X_t(y), (I_gv)(t, X_t(y))\right)\tilde{v}(t, y))dy}=O(e^{-\delta t}), $$
    thanks to the boundedness of $\nabla_x R$ and $\rho(t)$.
    \item[\emph{ii)}] According to \eqref{d/dt I_g(t, X(t, y))},
    $$\int_{K}{\frac{d}{dt}(I_{g}v)(t, X_t( y))\partial_IR(X_t(y), (I_gv)(t,X_t(y))) \tilde{v}(t,y)}dy=-p(t)\dot \rho(t) + O(e^{-\delta t}),$$ 
    with 
    \begin{align*}
        p(t)&:=-\psi_g(\hat{x}, \hat{x}) \int_{K}{\partial_IR(X_t(y), (I_gv)(t,X_t(y))) \tilde{v}(t,y)}dy\geqslant \psi_g(\hat{x}, \hat{x}) D\min\limits_{t\geqslant 0}\rho(t)>0.
    \end{align*}
    \item[\emph{iii)}] Directly from \eqref{d/dt tilde(v)(t,y)}, 
    \begin{align*}
        \int_{K}{R(X_t(y), (I_gv)(t,X_t(y)))\partial_t\Tilde{v}(t,y)dy}
        &=\int_{K}{\Big(R(X_t(y), (I_gv)(t,X_t(y)))\Big)^2  \tilde{v}(t, y)dy}\\
        &=:P(t)\geqslant 0.
        \end{align*}
\end{itemize}
Thus, $\ddot \rho(t)\geqslant -p(t)\dot \rho(t)+O(e^{-\delta t})$, hence, $\rho$ converges, thanks to Lemma \ref{convergence negative part}. 

Recalling \eqref{n-rho delta}, $v(t, \cdot)$ thus converges to $\hat{\rho} \delta_{\hat{x}}$, where $\hat{ \rho}$ is the limit of $\rho$. We conclude, according to Proposition \ref{Conditions convergence dirac}, that $\hat{ \rho}$ satisfies the expected equality. \\
Having proved that $v$ and $v^h$ share the same limit is enough then to conclude, thanks to Lemma \ref{lem condition asymptotic preserving}, that $v_\e^h$ is an asymptotic preserving approximation.
\end{proof}

If $m$ is not $0$, under very specific hypotheses over its support, we can extend the result of Theorem \ref{asymptotic preserving unique equilibrium}. The explanation behind this is simple: as long as the population is composed of traits that are not prone to mutatations, it will evolve as in the case where mutations are not possible at all.
\begin{theorem}
Let us assume that there exists $\hat{x} \in \R^d$ which is an asymptotically stable equilibrium for the ODE $\dot x =a(x)$ such that \eqref{hypotheses exponential convergence} holds. Moreover, let us assume that $\textrm{supp}\left(v^0\right)\cup K_{x}$ is a compact set such that there exist $C', \delta'>0$ such that
\begin{equation*}
\forall y \in  \textrm{supp}\left(v^0\right)\cup K_{x}, \quad t\geq 0, \quad \lVert X(t,y) -\hat{x}\rVert \leq C'e^{-\delta' t}, 
\end{equation*}
that $\bigcup\limits_{s\geqslant 0}\left(X_s(\text{supp}(v^0)\cup K_x)\right)\cap K_y=\emptyset$ and that hypothesis \eqref{hypotheses R} holds. Then, $v$ converges to $\hat{\rho} \: \delta_{\hat{x}}$ in the weak sense in the space of Radon measures, where $\hat{\rho}$ is the unique solution of \[R(\hat{x}, \psi_g(\hat{x},\hat{x})\hat{\rho})=0.\] 
Consequently, $v^h_\e$ is an asymptotic preserving approximation of $v$.
\label{asymptotic preserving unique equilibrium1}
\end{theorem}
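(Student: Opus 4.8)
The plan is to use the geometric hypothesis $\bigcup_{s\geq 0}\bigl(X_s(\text{supp}(v^0)\cup K_x)\bigr)\cap K_y=\emptyset$ to show that, for this initial datum, \emph{both} the exact solution $v$ and the particle solution $v^h$ are unaffected by $m$: each coincides with its counterpart for the problem with $m\equiv 0$. The statement then follows at once by combining Theorem \ref{asymptotic preserving unique equilibrium}, Proposition \ref{conv vh unique equilibrium} and Lemma \ref{lem condition asymptotic preserving}; the effort is concentrated in checking the two ``inertness of $m$'' facts.

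First, for $v$: since $a(t,x,I)=a(x)$ is local and autonomous, the characteristics reduce to the flow $X_t$ of $\dot x=a(x)$, and the mild-solution representation from the proof of Theorem \ref{T1} (valid here for the weak solution of Theorem \ref{T3} by the approximation argument) reads
\[
v(t,x)=v^0(X_{-t}(x))\,e^{\mathcal{G}(0,t,x)}+\int_0^t\!\int_{\R^d} m\bigl(s,X_{s-t}(x),z,(I_dv)(s,X_{s-t}(x))\bigr)v(s,z)\,dz\; e^{\mathcal{G}(s,t,x)}\,ds .
\]
The first term is supported in $X_t(\text{supp}(v^0))$, while the integrand of the second is nonzero only if $X_{s-t}(x)\in K_x$, i.e.\ $x\in X_{t-s}(K_x)$; hence $\text{supp}(v(t,\cdot))\subseteq X_t(\text{supp}(v^0))\cup\bigcup_{0\leq u\leq t}X_u(K_x)\subseteq\bigcup_{s\geq 0}X_s(\text{supp}(v^0)\cup K_x)$, which is disjoint from $K_y$ by hypothesis. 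Consequently $\int_{\R^d}m(t,x,y,(I_dv)(t,x))v(t,y)\,dy\equiv 0$ (its integrand forces $y\in K_y\cap\text{supp}(v(t,\cdot))=\emptyset$), so $v$ is the unique weak solution of \eqref{Pv} with $m\equiv 0$, and Theorem \ref{asymptotic preserving unique equilibrium} gives $v(t,\cdot)\rightharpoonup\hat\rho\,\delta_{\hat x}$ with $R(\hat x,\psi_g(\hat x,\hat x)\hat\rho)=0$.

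Next, for $v^h$: here $\dot x_i=a(x_i)$ forces $x_i(t)=X_t(x^0_i)$, so for the finitely many indices $i\in\mathcal{J}^0_h:=\{i:v^0(x^0_i)\neq 0\}$ one has $x_i(t)\in X_t(\text{supp}(v^0))$, which never meets $K_y$. From the representation \eqref{vk}: for $i\notin\mathcal{J}^0_h$ the first term vanishes, and the contributions to the mutation sum coming from $j\in\mathcal{J}^0_h$ vanish since $x_j(t)\notin K_y$; moreover $\nu_i\equiv 0$ outright if $i\notin\mathcal{J}^m_h$ (then $x_i(t)\notin K_x$). Writing $\Theta(t):=\sum_{i\in\mathcal{J}^m_h\setminus\mathcal{J}^0_h}\nu_i(t)w_i(t)$ (a finite sum, since $|\mathcal{J}^m_h|<\infty$), the bounds \eqref{wt}, \eqref{bigMM} and the uniform bound on the $\mathcal{G}_i$ established in the proof of Theorem \ref{theo bounds particles} give $\Theta(t)\leq C_h\int_0^t\Theta(s)\,ds$, hence $\Theta\equiv 0$ by Gr\"onwall and $\nu_i\equiv 0$ for every $i\notin\mathcal{J}^0_h$. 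Finally, in the equation for $\dot\nu_i$ with $i\in\mathcal{J}^0_h$ the mutation sum only involves $m(t,x_i(t),x_j(t),\cdot)$ with $j\in\mathcal{J}^0_h$, hence $x_j(t)\notin K_y$, hence it vanishes; thus $(x_i,w_i,\nu_i)_{i\in\mathcal{J}^0_h}$ is exactly the particle solution of \eqref{PM} associated to $v^0$ with $m\equiv 0$. Applying Proposition \ref{conv vh unique equilibrium} in the case $m\equiv 0$ — whose hypotheses then reduce to $\text{supp}(v^0)$ being a compact subset of the basin of $\hat x$ along which \eqref{hypotheses exponential convergence} holds, part of our assumptions — yields $v^h(t,\cdot)\rightharpoonup\hat\rho_h\,\delta_{\hat x}$ with $R(\hat x,\psi_g(\hat x,\hat x)\hat\rho_h)=0$. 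The main obstacle is precisely this particle-side inertness: unlike on the PDE side, where $m$ simply drops out of the mild representation, one must rule out the \emph{spontaneous activation} of particles $i\notin\mathcal{J}^0_h$, whose trajectories are not controlled by the hypotheses and could a priori visit $K_y$; the Gr\"onwall estimate on $\Theta$ is what closes this gap.

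Finally, by \eqref{hypotheses R} and $\psi_g(\hat x,\hat x)\geq\underline{\psi_g}>0$ the map $\rho\mapsto R(\hat x,\psi_g(\hat x,\hat x)\rho)$ is strictly decreasing, so it has a unique zero; therefore $\hat\rho_h=\hat\rho$ and $v$, $v^h$ converge to the same Radon measure $\hat\rho\,\delta_{\hat x}$. For every $\phi\in\mathcal{C}_c(\R^d)$ and every fixed $h$, both $\int_{\R^d}\phi\,dv^h(t,\cdot)$ and $\int_{\R^d}\phi\,v(t,\cdot)\,dx$ tend to $\hat\rho\,\phi(\hat x)$ as $t\to+\infty$, so $\limsup_{t\to+\infty}\bigl|\int_{\R^d}\phi\,dv^h(t,\cdot)-\int_{\R^d}\phi\,v(t,\cdot)\,dx\bigr|=0$, which in particular tends to $0$ as $h\to 0$; by Lemma \ref{lem condition asymptotic preserving}, $v^h_\e$ is an asymptotic preserving approximation of $v$.
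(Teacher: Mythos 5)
Your proof is correct, and its first half is exactly the paper's argument: the mild representation of $v$ along the flow $X_t$ shows $\mathrm{supp}(v(t,\cdot))\subseteq\bigcup_{s\geqslant 0}X_s(\mathrm{supp}(v^0)\cup K_x)$, which is disjoint from $K_y$, so the mutation integral vanishes identically and one is reduced to the case $m\equiv 0$ of Theorem \ref{asymptotic preserving unique equilibrium}. Where you diverge is on the particle side. The paper does not redo anything there: it simply ``replicates the proof for the case $m\equiv 0$'', the point being that Proposition \ref{conv vh unique equilibrium} already identifies the limit $\hat\rho_h\,\delta_{\hat x}$ of $v^h$ for \emph{arbitrary} $m$, with $\hat\rho_h$ solving the same equation $R(\hat x,\psi_g(\hat x,\hat x)\hat\rho_h)=0$ as $\hat\rho$; uniqueness of the root (from $\partial_I R\leqslant -D$) then gives $\hat\rho_h=\hat\rho$ and Lemma \ref{lem condition asymptotic preserving} concludes. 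You instead prove that the particle system itself decouples from $m$: particles with $\nu_i(0)=0$ are never activated, via the Gr\"onwall estimate on $\Theta(t)=\sum_{i\in\mathcal{J}^m_h\setminus\mathcal{J}^0_h}\nu_i(t)w_i(t)$, and the surviving particles $i\in\mathcal{J}^0_h$ see a vanishing mutation sum because $x_j(t)\notin K_y$. This extra step is not needed to reach the conclusion, but it is sound and buys something: the proof of Proposition \ref{conv vh unique equilibrium} tacitly uses that $\mathrm{supp}(v^0)\cup K_{xy}$ lies in the basin of attraction of $\hat x$, whereas Theorem \ref{asymptotic preserving unique equilibrium1} only controls $\mathrm{supp}(v^0)\cup K_x$; your argument shows that the potentially uncontrolled particles near $K_y$ carry zero mass for all time, so the invocation of the proposition (in its $m\equiv 0$ form) is fully justified under the stated hypotheses. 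In that sense your version is slightly more self-contained than the paper's.
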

\begin{proof}
By using the fact that, for any $x\in \R^d$,  $$v(t,x)=v^0(X_{-t}(x))e^{\mathcal{G}(0,t,x)}+\int_0^t\int_{\R^d}m(X_{s-t}(x), z, (I_dv)(s, X_{s-t}(x)))v(s, z)dze^{\mathcal{G}(s,t,x)}ds,$$
where
\[
\mathcal{G}(s,t,x):=\int_s^t{R(X_{\tau-t}(x), (I_gv)(\tau, X_{\tau-t}(x)))-\divv a(X_{\tau-t}(x)) d\tau},
\]
we observe that $\text{supp}(v(t,x))\subset X_t(\text{supp}(v^0))\cup\bigcup\limits_{0\leqslant s\leqslant t}X_s(K_x)\subset\bigcup\limits_{s\geqslant 0}\left(X_s(\text{supp}(v^0)\cup K_x)\right)$, therefore
\begin{align*}
    v(t,x)=&v^0(X_{-t}(x))e^{\mathcal{G}(0,t,x)}+\int_0^t\int_{\R^d}m(X_{s-t}(x), z, (I_dv)(s, X_{s-t}(x)))v(s, z)dze^{\mathcal{G}(s,t,x)}ds\\
    =&v^0(X_{-t}(x))e^{\mathcal{G}(0,t,x)}+\int_0^t\int_{\text{supp}(v(t,x))\cap K_y}m(X_{s-t}(x), z, (I_dv)(s, X_{s-t}(x)))v(s, z)dze^{\mathcal{G}(s,t,x)}ds\\
    =&v^0(X_{-t}(x))e^{\mathcal{G}(0,t,x)}.
\end{align*}
Therefore, we can replicate the proof for the case $m\equiv 0$.
\end{proof}

The result of Theorem \ref{asymptotic preserving unique equilibrium} does not generalize when $\text{supp} \left(v^0\right)$ is not strictly contained in the basin of attraction of $x_s$, as shown in the following result:
\begin{prop}
Let us consider the one-dimensional PDE
\begin{align}
    \begin{cases}
    \partial_t v(t,x)+\nabla_x \cdot \left(a(x)v(t,x)\right)=(r(x)-\rho(t))v(t,x),\\
    \rho(t)=\int_{\R}{v(t,x)dx},\\
    n(0,\cdot)=n^0(\cdot),
    \end{cases}
    \label{PDE adv-growth}
\end{align}
which is a particular case of \eqref{Pv}, and let us assume that there exist $x_u<x_s$ such that $a(x_u)=a(x_s)=0$, $a'(x_u)>0$, $a'(x_s)<0$, $\mathrm{supp}(n^0)\subset [x_u, x_s]$, $n^0(x_u)=0$, and that there exists $\alpha>0$ such that ${n^0}'(x)=\underset{x\to x_u^+}{O}((x-x_u)^\alpha)$ and that $r(x_u)-(1+\alpha)f'(x_u)>r(x_s)$. Then, $v_\e^h$ is not an asymptotic preserving approximation of $v$. \label{non asymptotic preserving}
\end{prop}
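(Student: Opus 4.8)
The plan is to exhibit one test function whose asymptotic‑preservation defect stays bounded away from $0$ as $h\to 0$. By Lemma \ref{lem condition asymptotic preserving} it suffices to work with $v^h$ instead of $v_\e^h$. First I would observe that $x_u$ and $x_s$ are fixed points of $\dot x=a(x)$, so $[x_u,x_s]$ is invariant under the flow; since $\mathrm{supp}(n^0)\subset[x_u,x_s]$ and all particles start at points $x_i^0\in\R$ (only those with $x_i^0\in\mathrm{supp}(n^0)$ matter, as $m\equiv 0$ forces $\nu_i\equiv 0$ whenever $\nu_i(0)=0$), both $\mathrm{supp}(v(t,\cdot))$ and $\mathrm{supp}(v^h(t))$ stay in $[x_u,x_s]$ for all $t\geqslant 0$. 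I then fix $\phi\in\mathcal{C}_c(\R)$ with $\phi\equiv 1$ on $[x_u,x_s]$, so that $\int_\R\phi\,v(t,\cdot)\,dx=\rho(t)$ and $\int_\R\phi\,dv^h(t)=\rho_h(t):=\sum_{i\in\mathcal{J}_h}\nu_i(t)w_i(t)$.

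\textbf{Step 1 (limit of the particle solution).} For a fixed small $h>0$ only finitely many particles carry mass, and each of their initial positions lies in $(x_u,x_s]$ at a fixed positive distance from $x_u$; since $x_s$ is asymptotically stable and $(x_u,x_s)$ lies in its basin of attraction, these finitely many positions converge to $x_s$ exponentially fast. Running the argument of Proposition \ref{conv vh unique equilibrium} for this finite family (the proof there uses only the exponential convergence of the mass‑carrying particles, and here $m\equiv 0$) gives $v^h(t)\rightharpoonup\hat\rho_h\,\delta_{x_s}$ as $t\to+\infty$, where $\hat\rho_h$ solves $R(x_s,\psi_g(x_s,x_s)\hat\rho_h)=0$. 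Since $R(x,I)=r(x)-I$ and $\psi_g\equiv 1$, this yields $\hat\rho_h=r(x_s)$ for \emph{every} $h$, hence $\rho_h(t)\to r(x_s)$ as $t\to+\infty$.

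\textbf{Step 2 (the PDE solution does not reach the same limit).} The aim here is to prove $\rho(t)\not\to r(x_s)$. Along characteristics $v(t,X_t(y))=n^0(y)\exp\!\big(\int_0^t[r(X_s(y))-\rho(s)]\,ds\big)$, and using Liouville's formula for the change of variables, one derives the closed relation $\rho(t)=Q(t)\big/\big(1+\int_0^tQ(s)\,ds\big)$ with $Q(t):=\int n^0(y)\exp\!\big(\int_0^t r(X_s(y))\,ds\big)\,dy$; in particular $\rho(t)\to r(x_s)$ would force $\tfrac1t\log Q(t)\leqslant r(x_s)+o(1)$. To contradict this I would bound $Q$ from below by keeping only the characteristic tube issued from an interval $(x_u,x_u+\eta_t)$ with $\eta_t\sim e^{-a'(x_u)t}$, chosen so that those trajectories remain, throughout $[0,t]$, in a small neighbourhood of $x_u$ where $r\geqslant r(x_u)-\varepsilon$; linearising $\dot x\approx a'(x_u)(x-x_u)$ near $x_u$ and using the vanishing behaviour of $n^0$ at $x_u$ encoded by the hypothesis on ${n^0}'$, a direct computation gives $Q(t)\geqslant c\,e^{(r(x_u)-(1+\alpha)a'(x_u)-\varepsilon)t}$. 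The hypothesis $r(x_u)-(1+\alpha)a'(x_u)>r(x_s)$ then produces $\lambda>r(x_s)$ with $\tfrac1t\log Q(t)\geqslant\lambda$ for $\varepsilon$ small, whence $\liminf_{t\to+\infty}\rho(t)\geqslant\lambda>r(x_s)$. (Equivalently, this step can be taken from the long‑time analysis of \eqref{PDE adv-growth} in \cite{guilberteau2023long}, where precisely this comparison separates concentration at $x_s$ from the other regimes.)

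\textbf{Step 3 (conclusion) and the main difficulty.} For every fixed small $h$,
\[
\limsup_{t\to+\infty}\Big|\int_\R\phi\,dv^h(t)-\int_\R\phi\,v(t,\cdot)\,dx\Big|=\limsup_{t\to+\infty}|\rho_h(t)-\rho(t)|\geqslant\limsup_{t\to+\infty}|\rho(t)-r(x_s)|>0,
\]
using $\rho_h(t)\to r(x_s)$ from Step 1 and $\rho(t)\not\to r(x_s)$ from Step 2; the right‑hand side does not depend on $h$, so this quantity does not tend to $0$ as $h\to 0$, and Lemma \ref{lem condition asymptotic preserving} then shows $v_\e^h$ is not an asymptotic‑preserving approximation of $v$. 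The hard part is Step 2: converting the slow dynamics near the unstable equilibrium $x_u$, together with the precise order of vanishing of $n^0$ there, into a growth lower bound on $Q$ that strictly beats $e^{r(x_s)t}$ — this is exactly where the hypothesis $r(x_u)-(1+\alpha)a'(x_u)>r(x_s)$ enters, and where the careful bookkeeping of exponents in the characteristic integrals (the linearised escape rate $a'(x_u)$, the local growth $r(x_u)$, and the weight $n^0$) must be carried out.
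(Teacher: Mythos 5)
Your proposal is correct in structure and, for the particle half (Step 1) and the conclusion (Step 3), essentially coincides with the paper's argument: the paper likewise discards the particles with $\nu_i(0)=0$, restricts to the finitely many with $x_i^0\in(x_u,x_s]$, notes they all converge to $x_s$, and invokes the proof of Proposition \ref{conv vh unique equilibrium} to get $v^h\rightharpoonup r(x_s)\delta_{x_s}$. The difference is Step 2: the paper does not re-derive the long-time behaviour of the PDE at all, but simply cites \cite{guilberteau2023long}, where it is proved under exactly these hypotheses that $v(t,\cdot)$ converges to a (nontrivial) function in $L^1$; since an absolutely continuous limit cannot equal the atomic measure $r(x_s)\delta_{x_s}$, no computation of $\lim_{t\to\infty}\rho(t)$ is needed. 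Your self-contained route via $\rho=Q/(1+\int_0^tQ)$ and a lower bound on $Q$ is the mechanism underlying the cited result and buys a proof that does not outsource the hard step, but as sketched it has two weak points you should be aware of. First, the hypothesis ${n^0}'(x)=O((x-x_u)^\alpha)$ together with $n^0(x_u)=0$ yields only an \emph{upper} bound $n^0(x)=O((x-x_u)^{1+\alpha})$ near $x_u$, whereas your bound $Q(t)\geqslant c\,e^{(r(x_u)-(1+\alpha)a'(x_u)-\varepsilon)t}$ requires a matching \emph{lower} bound on $n^0$ near $x_u$ (if $n^0$ vanished identically near $x_u$, the solution would concentrate at $x_s$ and the scheme would in fact be asymptotic preserving); this should be read as an exact order of vanishing, an imprecision inherited from the statement. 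Second, the exponent bookkeeping in the escape-time substitution (whether the order of vanishing ascribed to $n^0$ versus ${n^0}'$ produces $(1+\alpha)a'(x_u)$ or $(2+\alpha)a'(x_u)$) is precisely the delicate point you flag and must be checked against the threshold in the hypothesis. If one prefers to avoid that computation entirely, the paper's shortcut — cite the $L^1$ convergence and distinguish the two limits by a test function rather than by their total masses — sidesteps both issues.
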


\begin{proof}
The long-time behaviour of the solution of \eqref{PDE adv-growth} has been studied in detail in \cite{guilberteau2023long}, and it has been proved, under the hypotheses of Proposition \ref{non asymptotic preserving}, that $v$ converges to a function in $L^1$. 
Let us now compute the limit of $v^h$. Since $n^0(x_u)=0$, we can assume, without loss of generality, that for all $i\in \mathcal{J}_h$, $x_i^0\in (0,1]$. Thus, since $a>0$ on $(x_u, x_s)$, for all $t\geqslant 0$, $x_i(t)$ converges to $x_s$. As seen in the proof of Proposition \ref{conv vh unique equilibrium}, $v^h$ therefore converges to $r(x_s)\delta_{x_s}$, and $v_\e^h$ is therefore not a asymptotic preserving approximation of $v$. 
\end{proof}

\section{Simulations}
In this section, we present some simulations obtained with the particle method developed throughout the article. In Figure \ref{fig: Friedman}, we deal with the non-local advection equation presented in \cite{friedman2009asymptotic}, which writes
\begin{align}
\partial_t v(t,x)+\nabla_x \left( a(t, I_1v(t,x), I_2 v(t,x) \right)=0,\quad x\in \R^d, t\geq 0,
\label{equation Friedman}
\end{align}
with $I_jv(t,x)=\int_{\R^2}{x_j v(t,x)dx}$, for $j\in \{1,2\}$. Note that this equation is not exactly a particular case of \eqref{eq intro}, since there are two non-local terms involved for advection, but the particle method can straightforwardly be adapted to this case. As in this paper, we show that, depending on the parameters, the solution of this PDE can converge to a single Dirac mass, to a sum of two Dirac masses, or to the sum of four Dirac masses. The parameters used for the simulations are the same as the one detailed in Figures $9$, $10$ and $11$ of \cite{friedman2009asymptotic}. 

Figure \ref{fig: advection-growth} illustrates different scenarios for the equation
\begin{align}
\begin{cases}
\partial_t v(t,x) +\nabla_x \left(a(x)v(t,x)\right)=\left(r(x)-\rho(t)\right)n(t,x),\\
\rho(t)=\int_{\R}{v(t,x)dx},\\
n(0,x)=n^0(x),
\end{cases}
\label{equation advection-selection}
\end{align}
with $a(x)=x(1-x)$, and an initial solution supported in $[0,1]$. This equation has been studied in \cite{guilberteau2023long} where it has been proved that its solution can either converge to a function in $L^1$, (which depends on the initial condition), or to a Dirac mass on $1$, depending on the functions $r$ and $n^0$.

\begin{figure}[H]
\begin{subfigure}{\textwidth}
\includegraphics[width=0.33\textwidth]{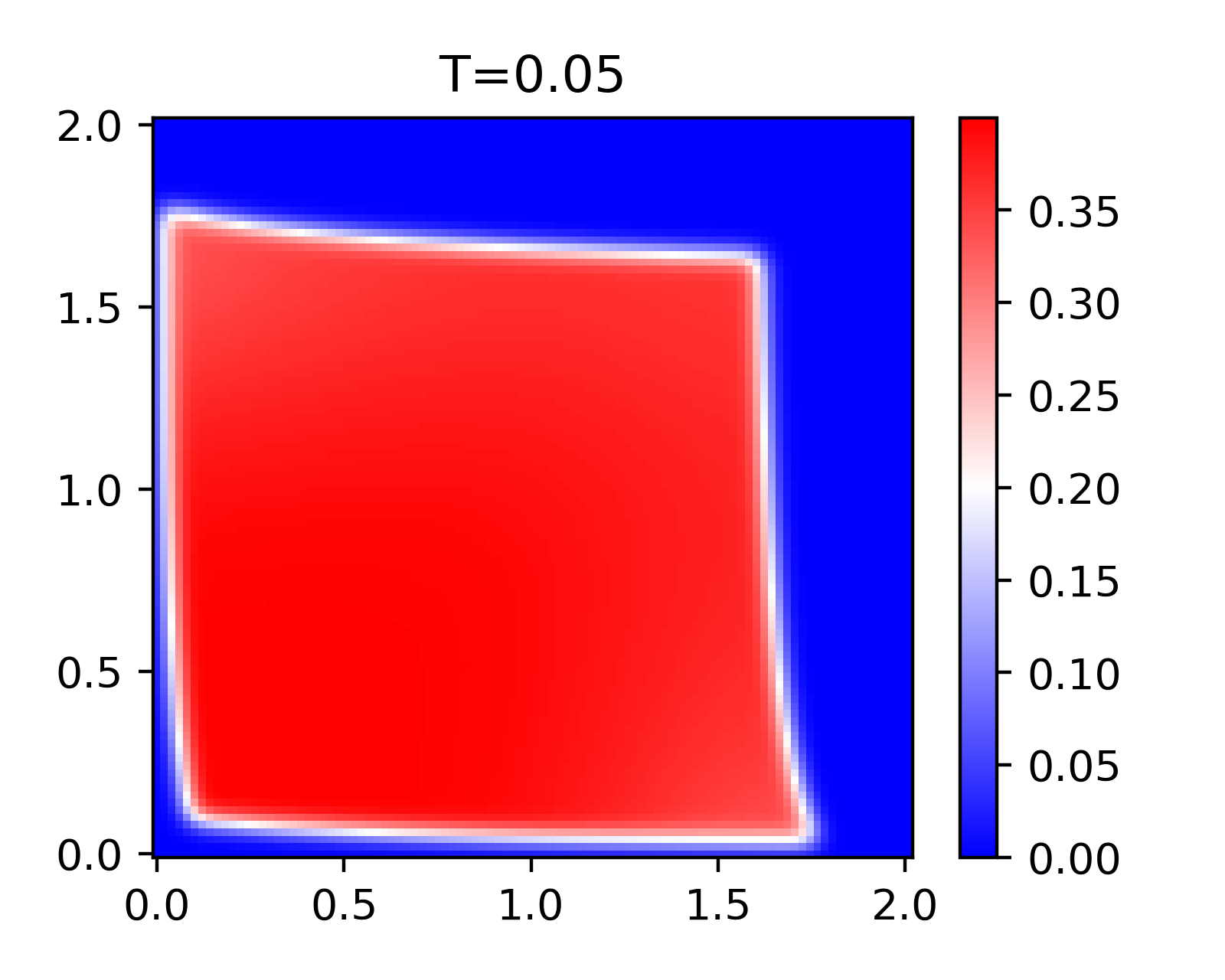}
\includegraphics[width=0.33\textwidth]{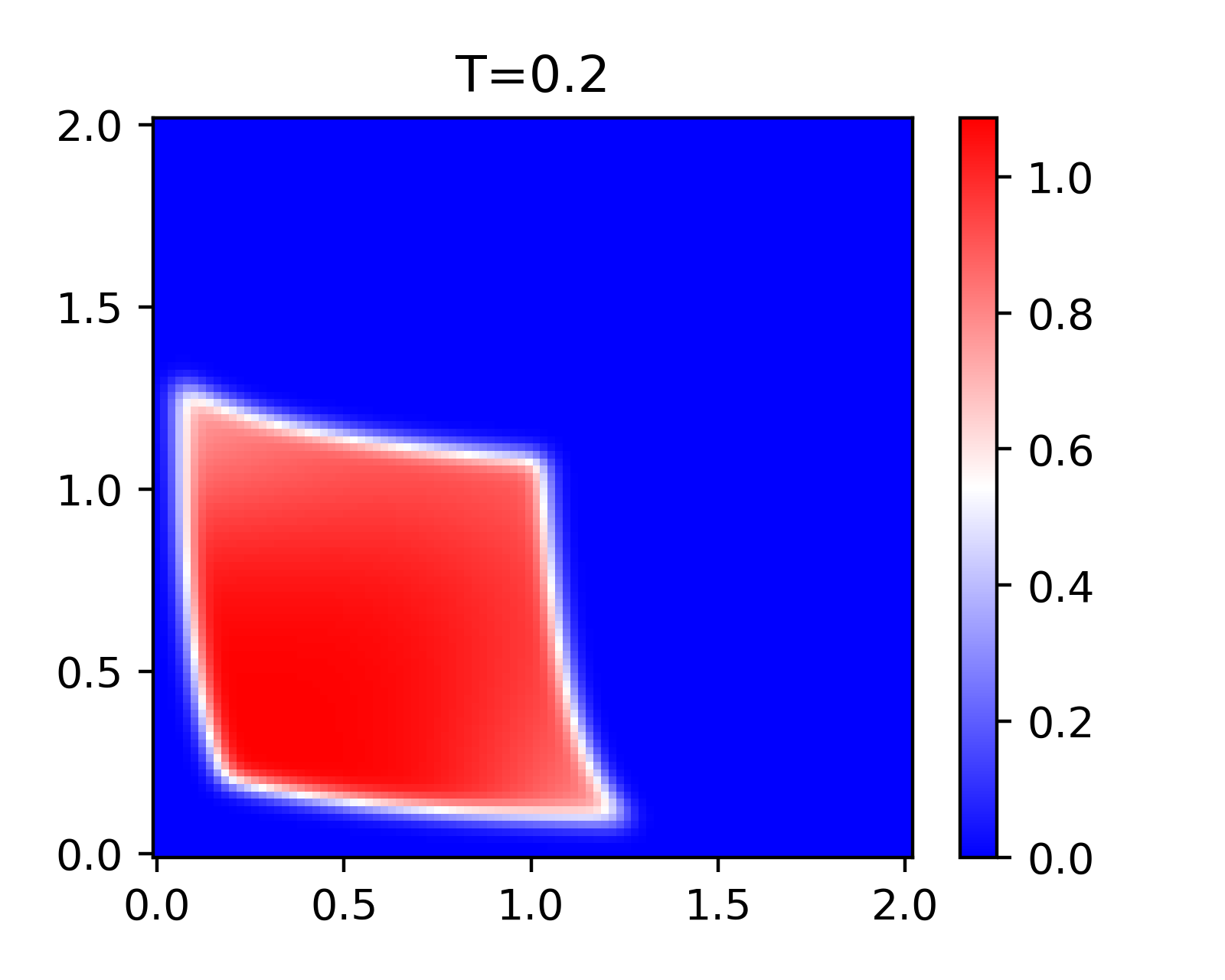}
\includegraphics[width=0.33\textwidth]{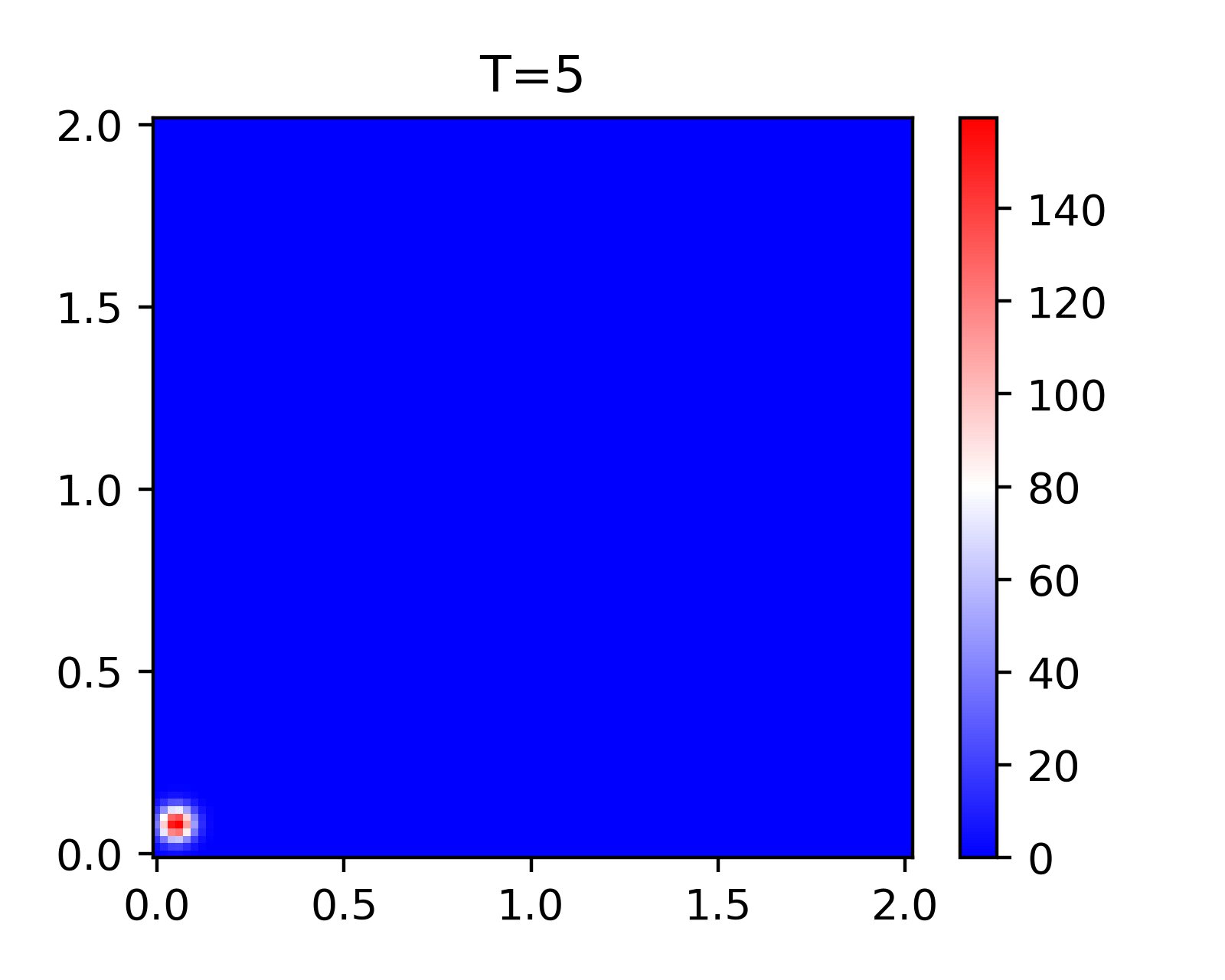}
\label{fig: monostable}
\caption{Monostability}
\end{subfigure}

\begin{subfigure}{\textwidth}
\includegraphics[width=0.33\textwidth]{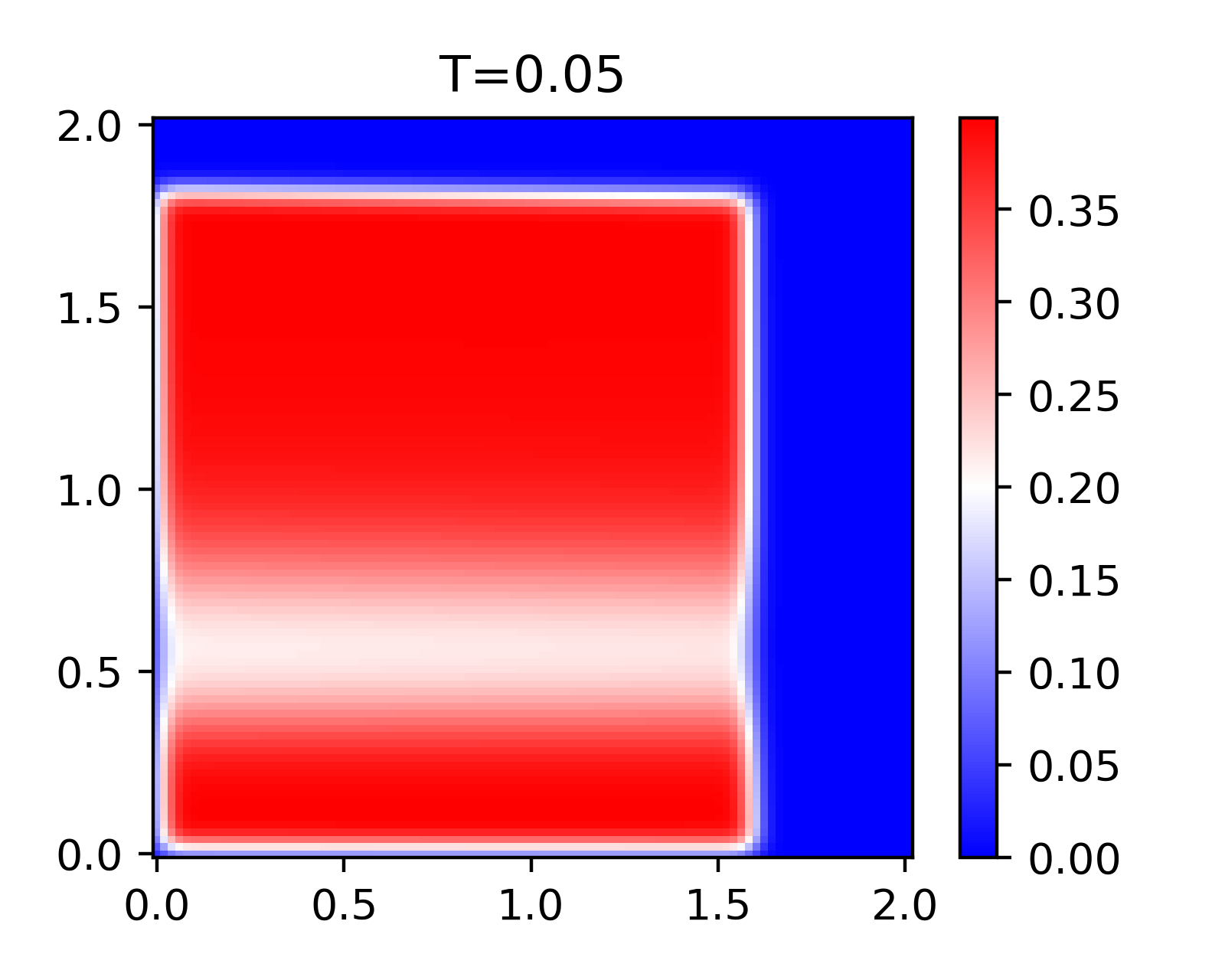}
\includegraphics[width=0.33\textwidth]{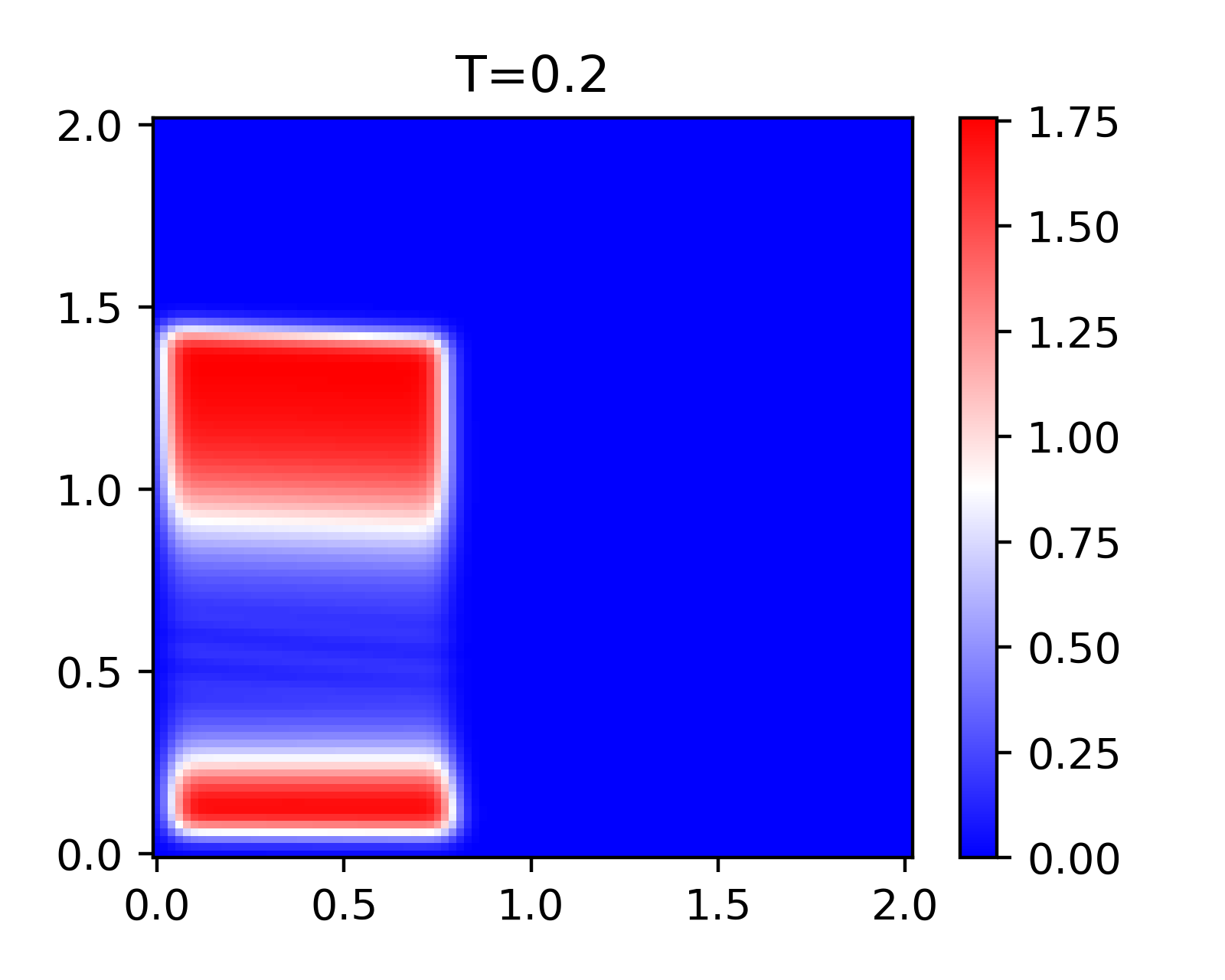}
\includegraphics[width=0.33\textwidth]{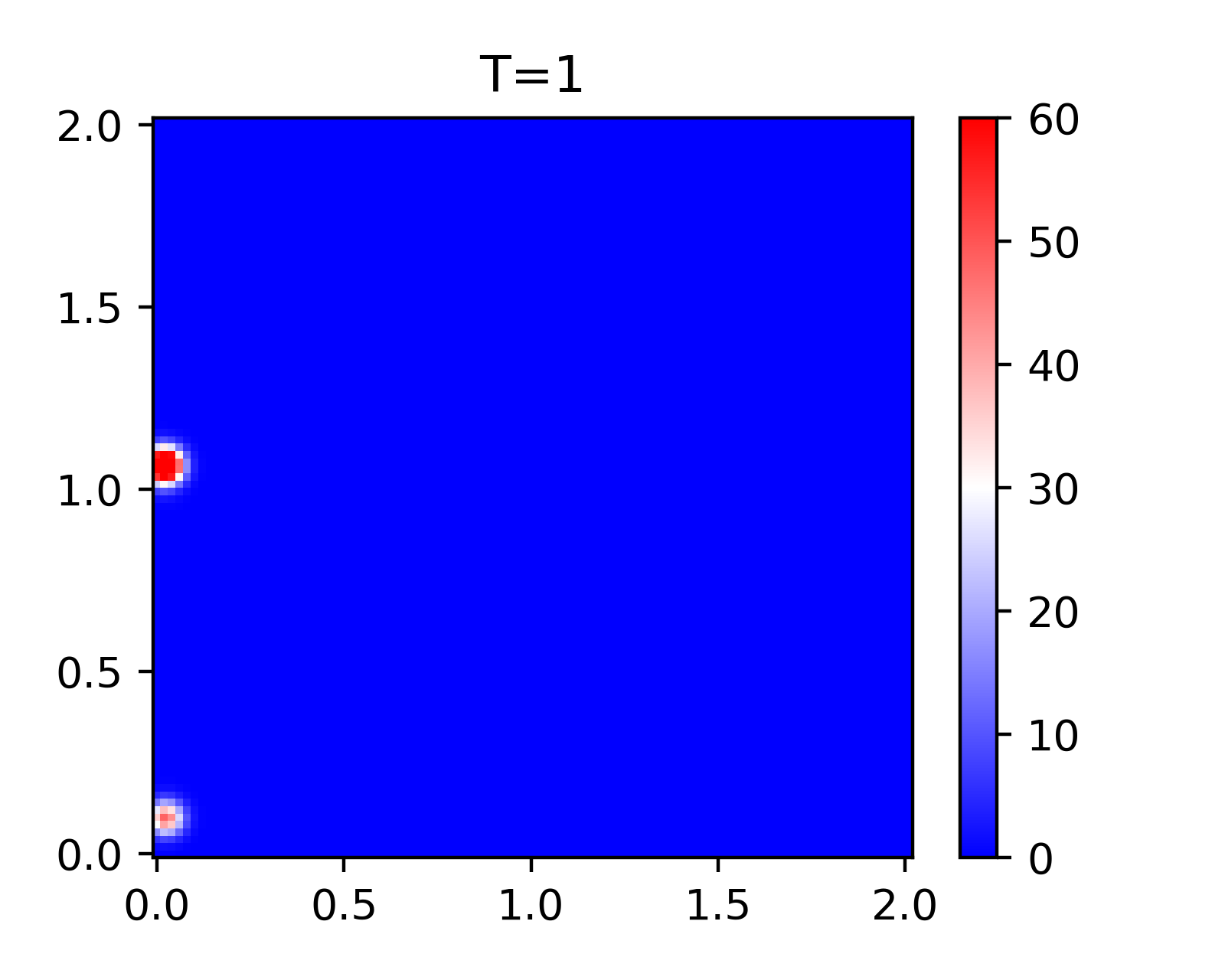}
\label{fig: bistable}
\caption{Bistability}
\end{subfigure}

\begin{subfigure}{\textwidth}
\includegraphics[width=0.33\textwidth]{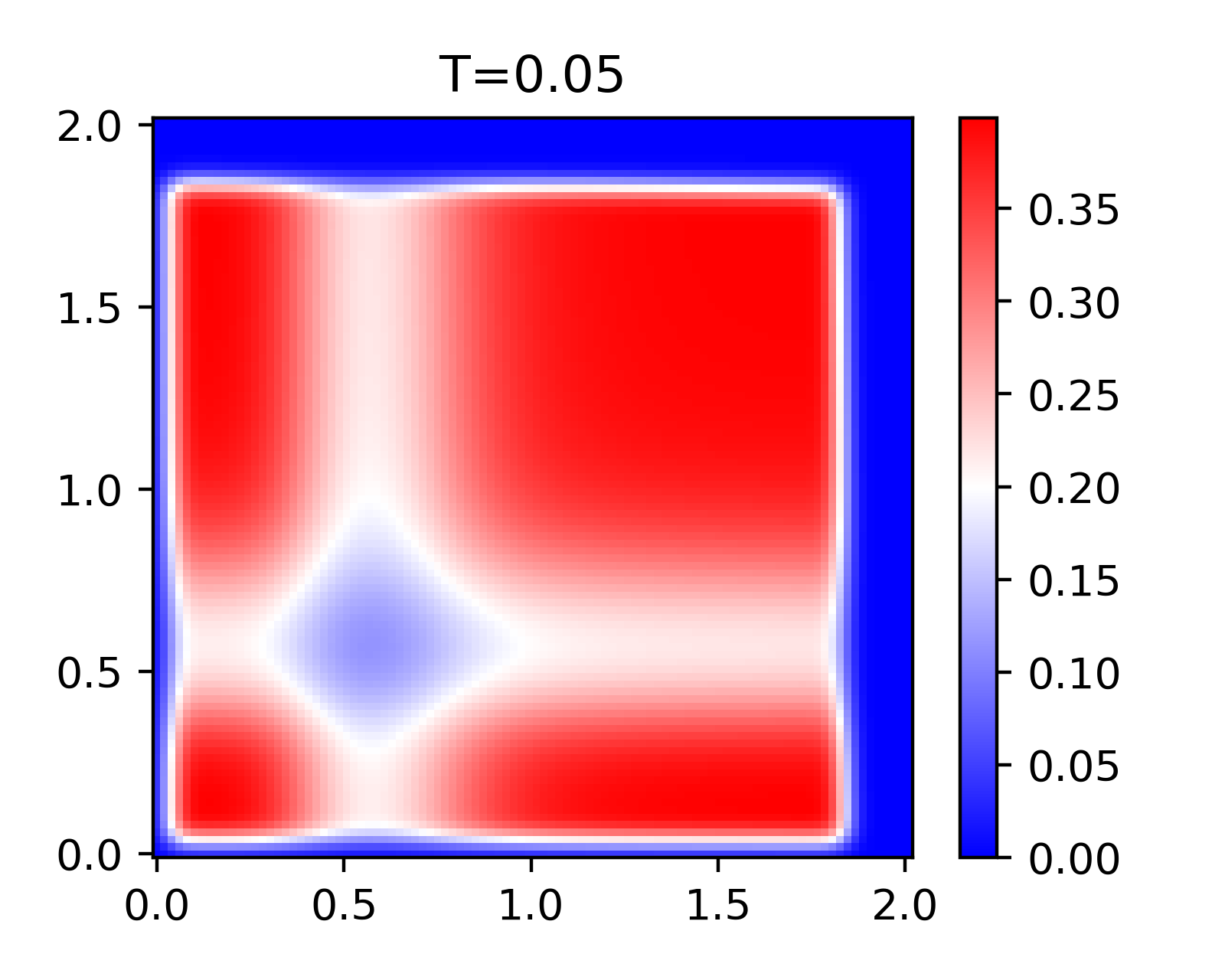}
\includegraphics[width=0.33\textwidth]{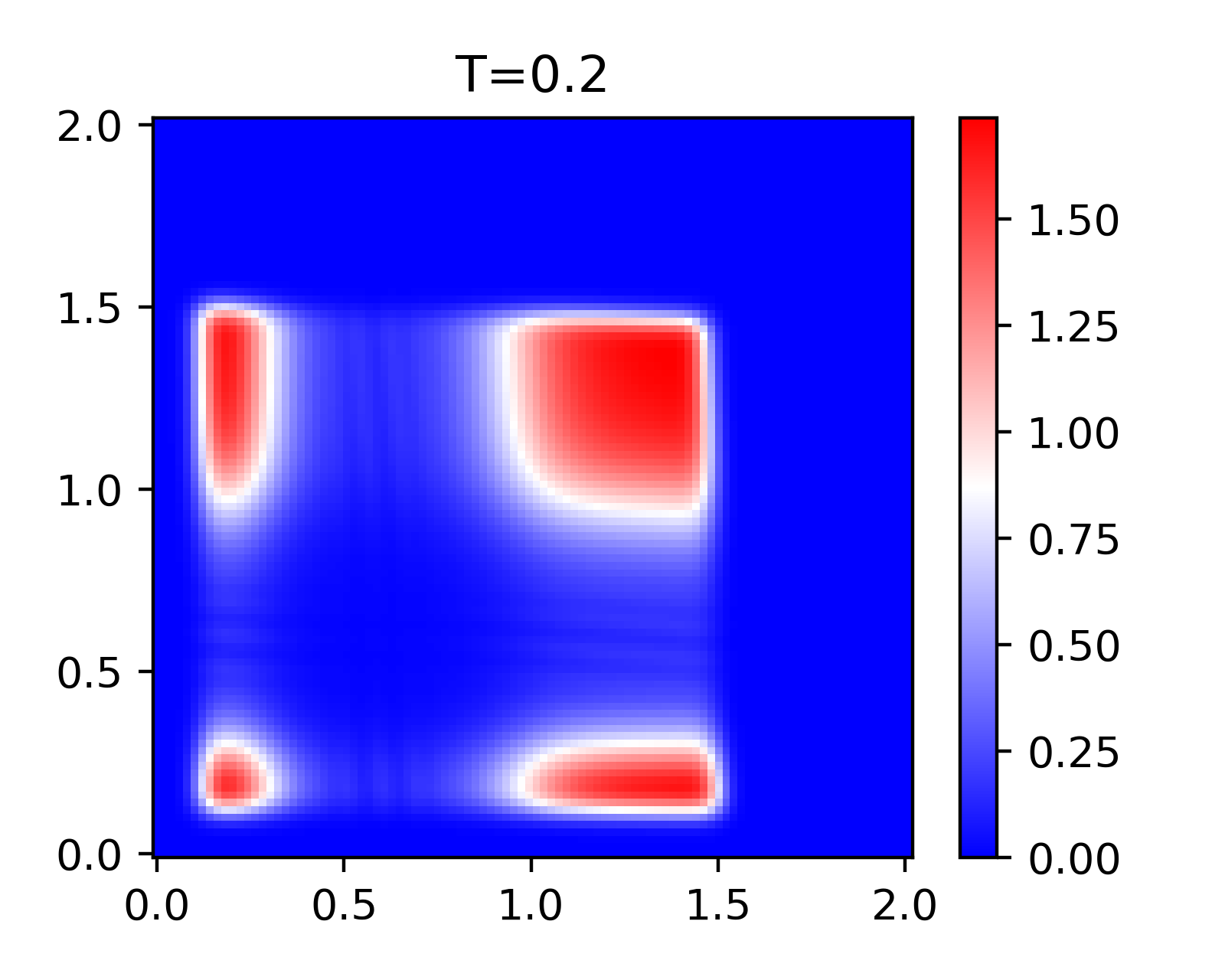}
\includegraphics[width=0.33\textwidth]{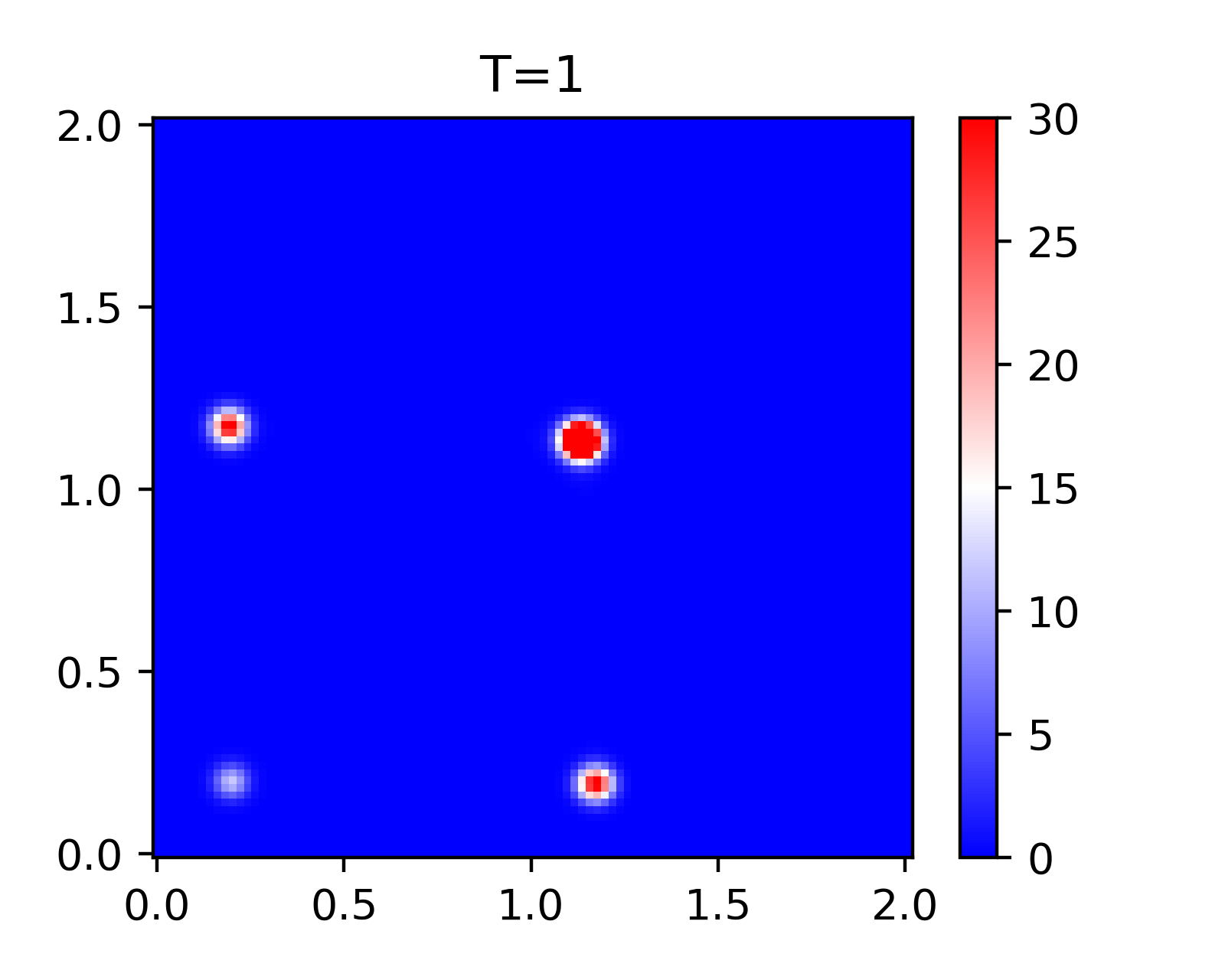}
\label{fig: quadristable}
\caption{Quadstability}
\end{subfigure}
\caption{The three possible regimes of convergence for equation \eqref{equation Friedman}, obtained with the particle method. The lines $(a)$, $(b)$ and $(c) $ respectively show the convergence to a single Dirac mass, two Dirac masses and four Dirac masses, and have been obtained by choosing the parameters of Figures 9, 10 and 11 of \cite{friedman2009asymptotic}. In the three cases, we have chosen $N=100$, $h=2./100$, $\e=h^{0.8}$, and the cut off function $\varphi$ is a Gaussian.}
\label{fig: Friedman}
\end{figure}

\begin{figure}[H]
\centering
\begin{subfigure}{\textwidth}
\includegraphics[width=0.33\textwidth]{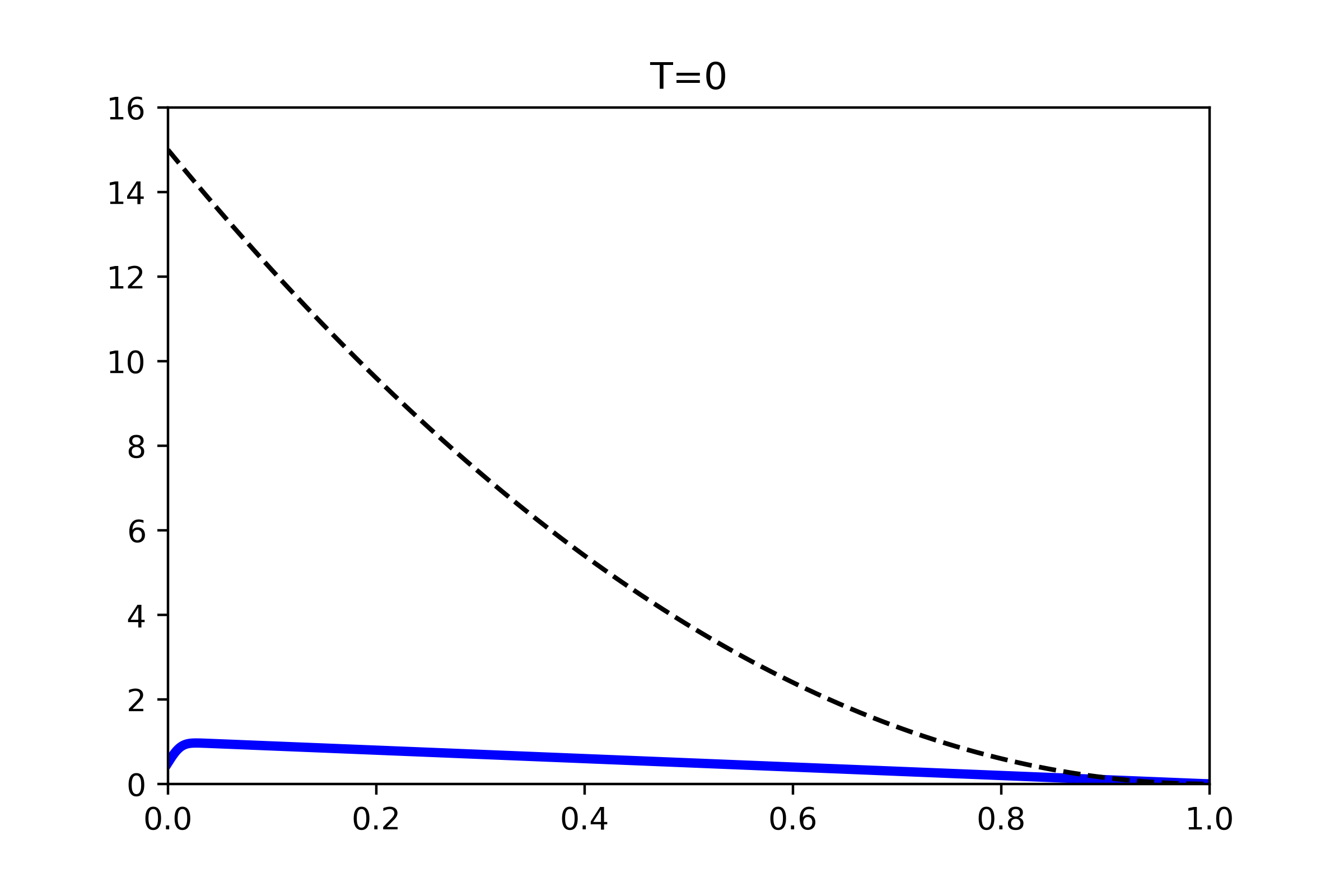}
\includegraphics[width=0.33\textwidth]{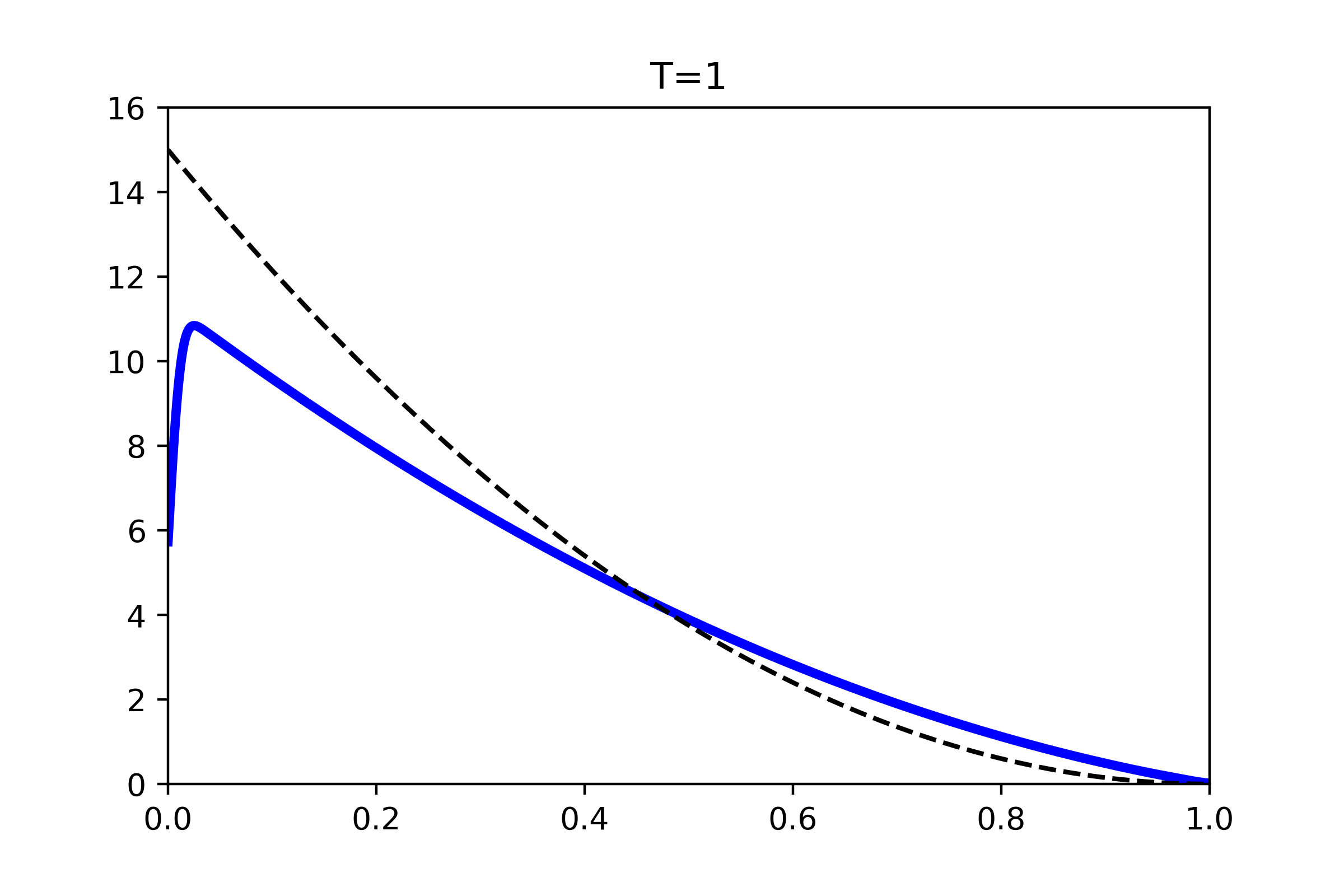}
\includegraphics[width=0.33\textwidth]{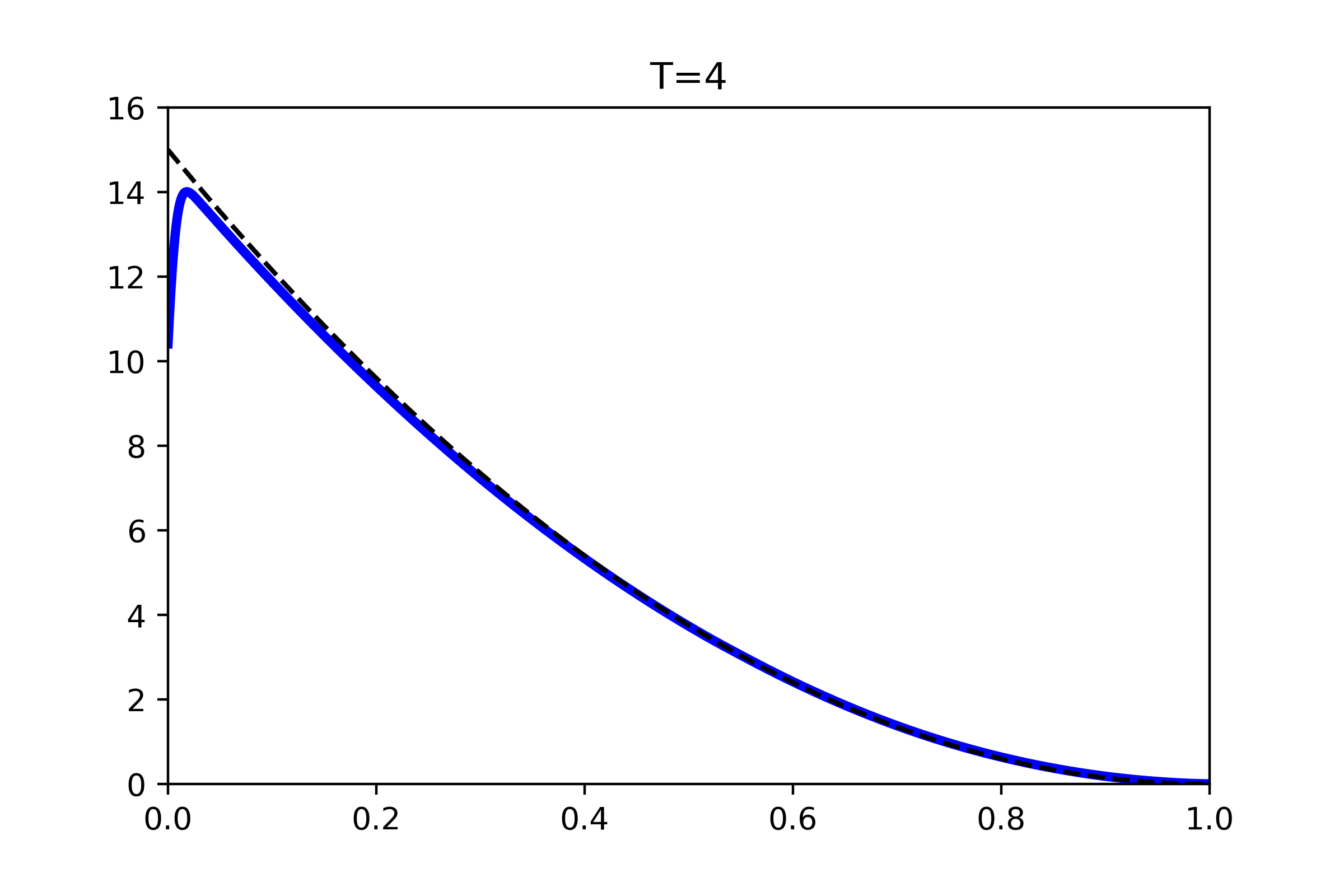}
\caption{Solution of \eqref{equation advection-selection} at different time steps, with $n^0(x)=1-x, r(x)=6-4x$.}
\label{alpha 0}
\end{subfigure}
\begin{subfigure}{\textwidth}
\includegraphics[width=0.33\textwidth]{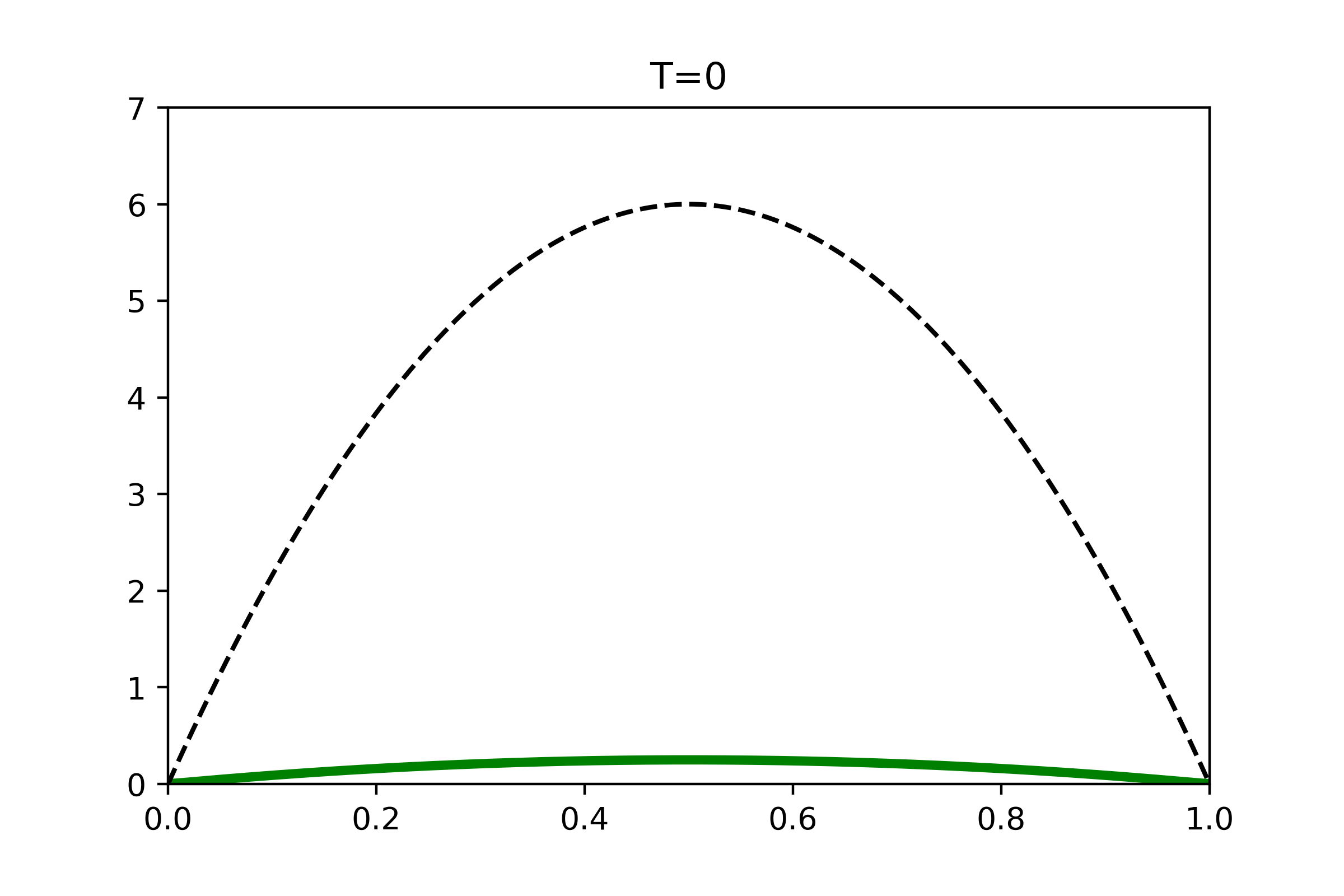}
\includegraphics[width=0.33\textwidth]{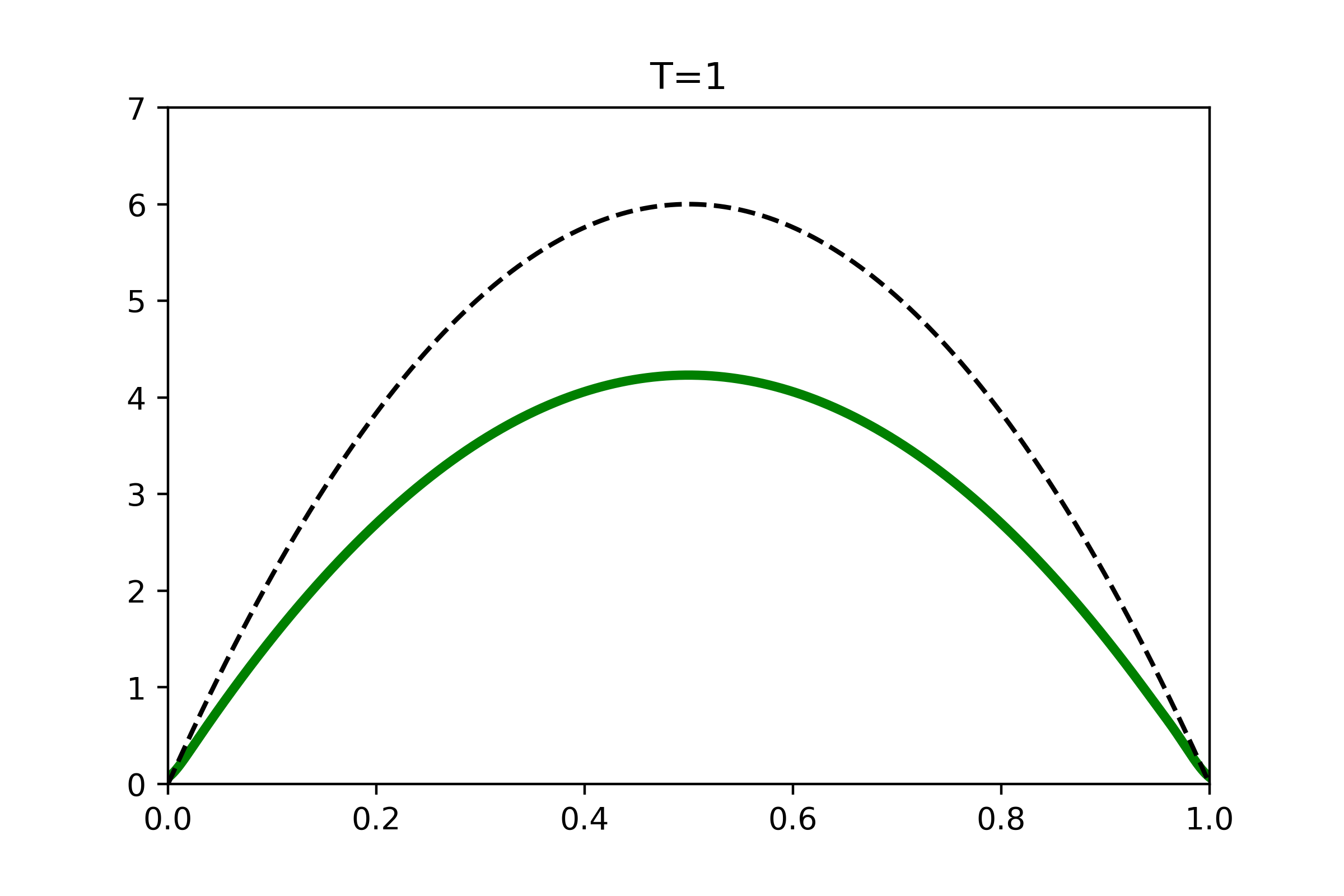}
\includegraphics[width=0.33\textwidth]{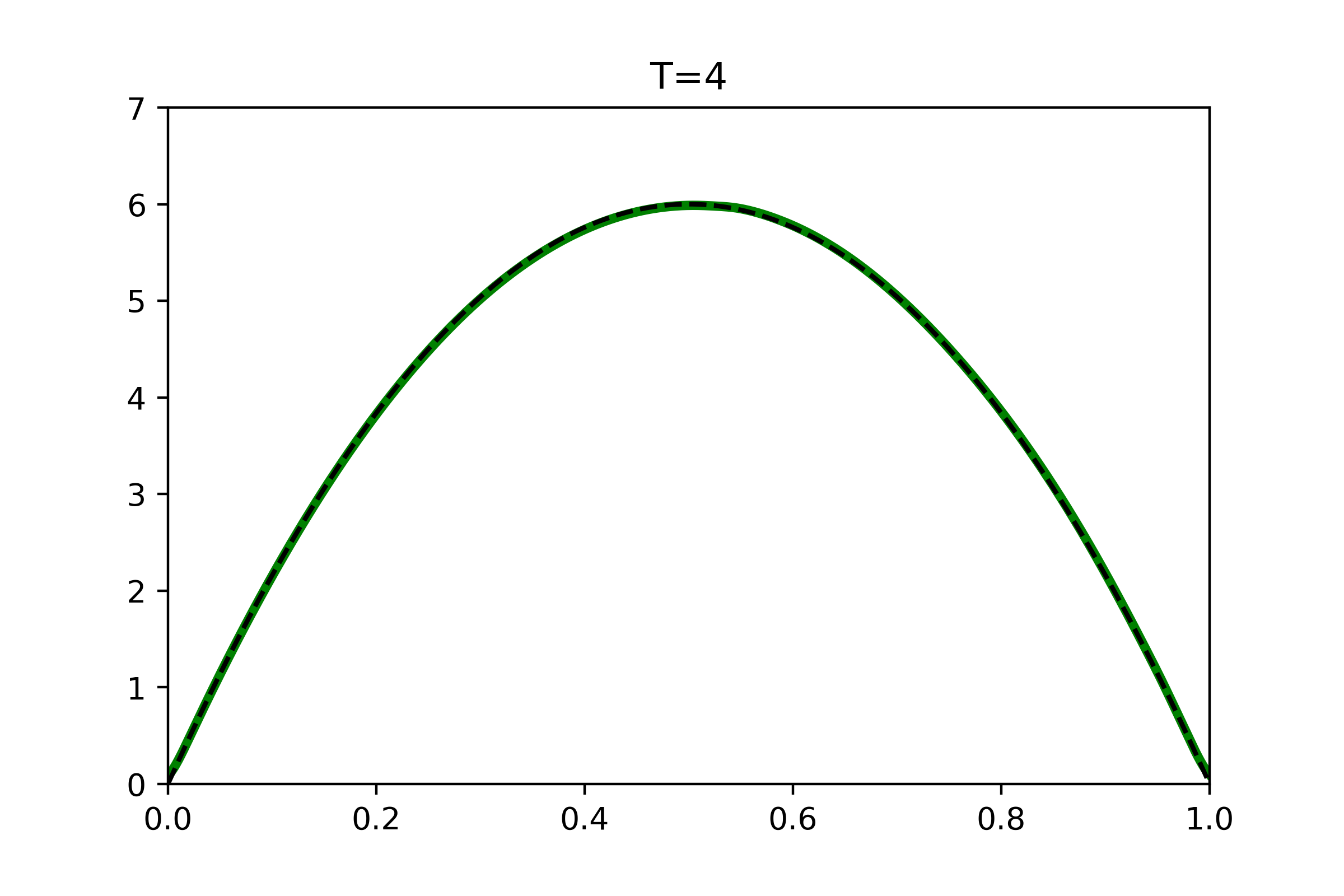}
\caption{Solution of \eqref{equation advection-selection} at different time steps, with $n^0(x)=x(1-x), r(x)=6-4x$.}
\label{alpha 1}
\end{subfigure}
\begin{subfigure}{\textwidth}
\includegraphics[width=0.33\textwidth]{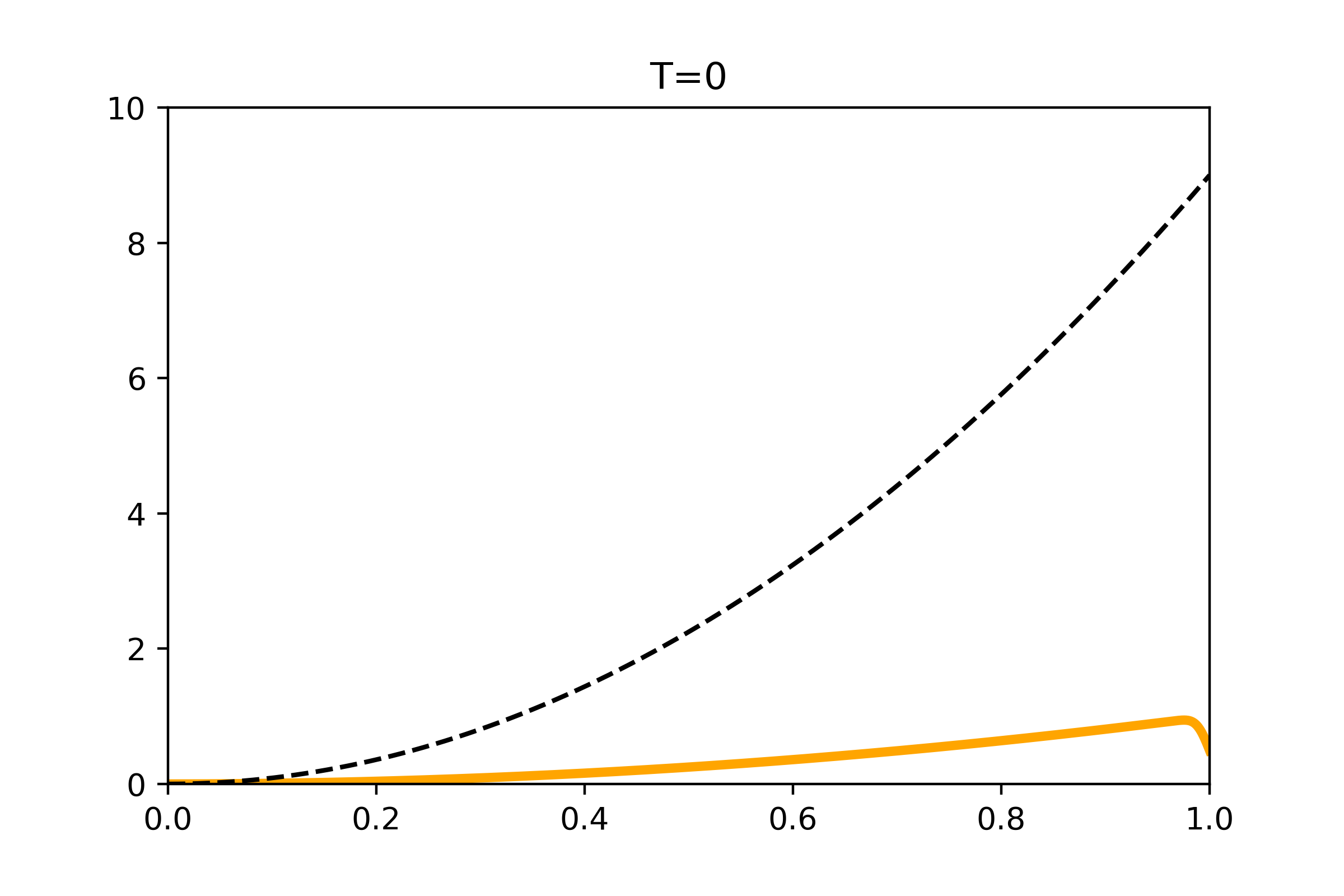}
\includegraphics[width=0.33\textwidth]{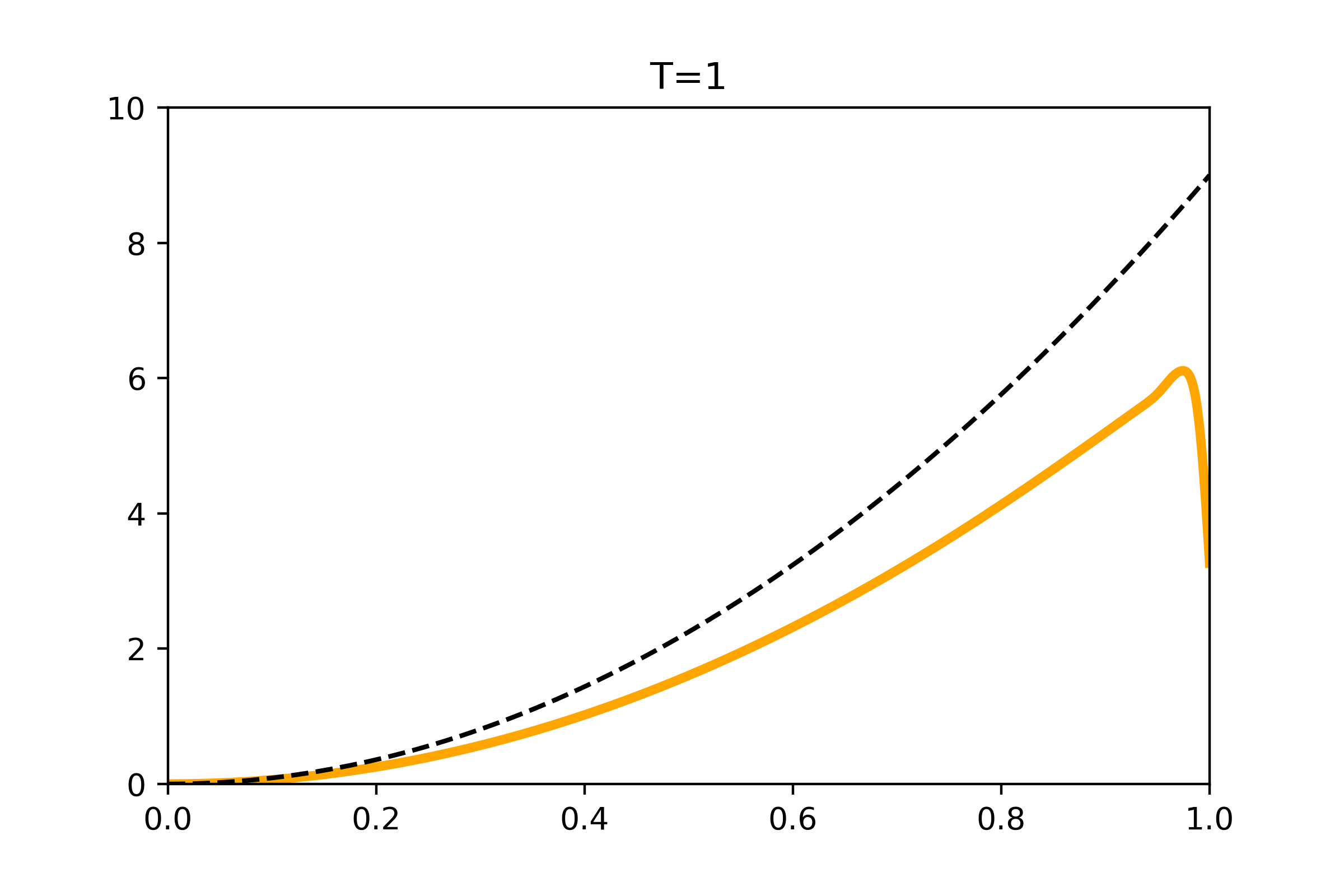}
\includegraphics[width=0.33\textwidth]{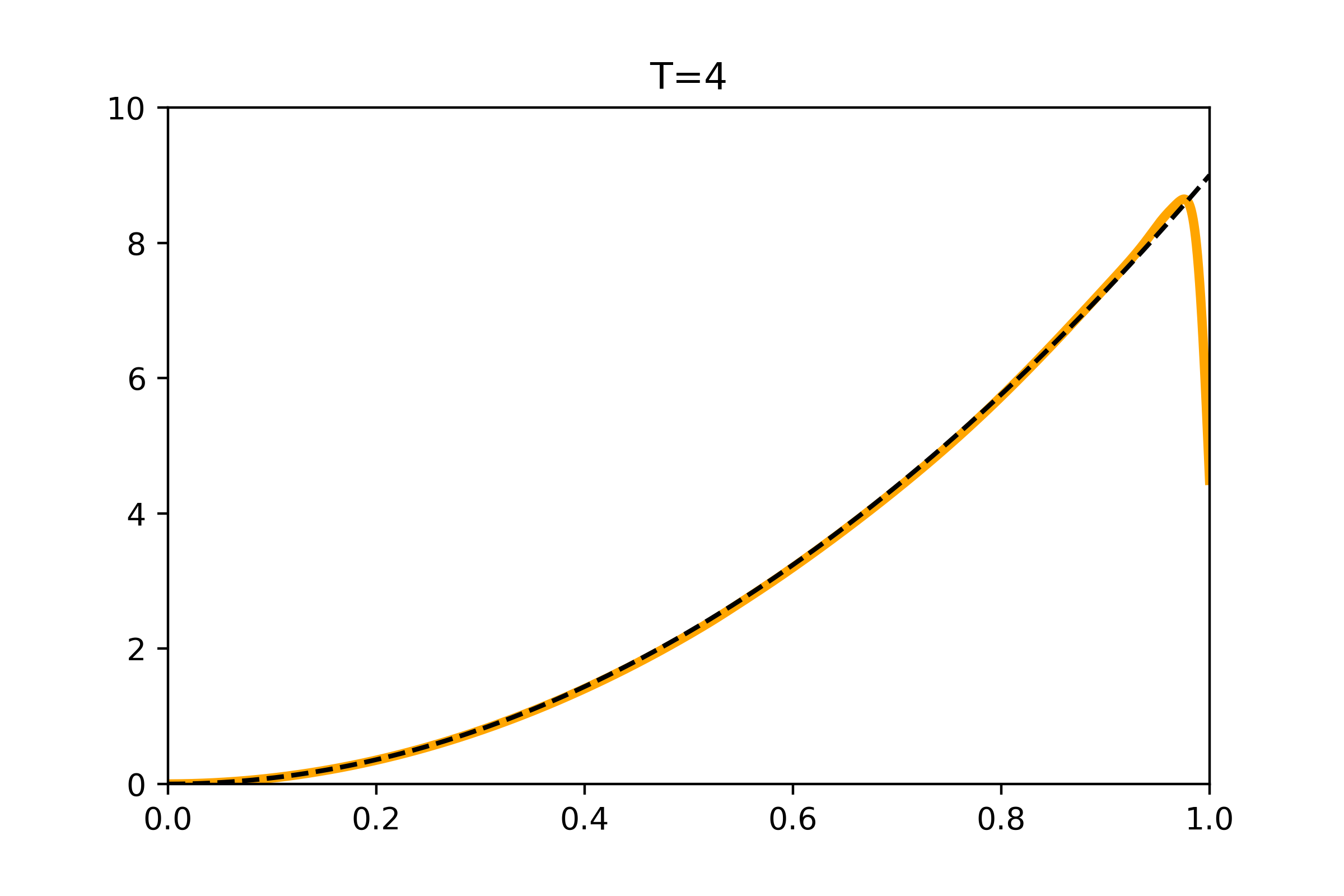}
\caption{Solution of \eqref{equation advection-selection} at different time steps, with $n^0(x)=x^2, r(x)=6-4x$.}
\label{alpha 2}
\end{subfigure}
\begin{subfigure}{\textwidth}
\includegraphics[width=0.33\textwidth]{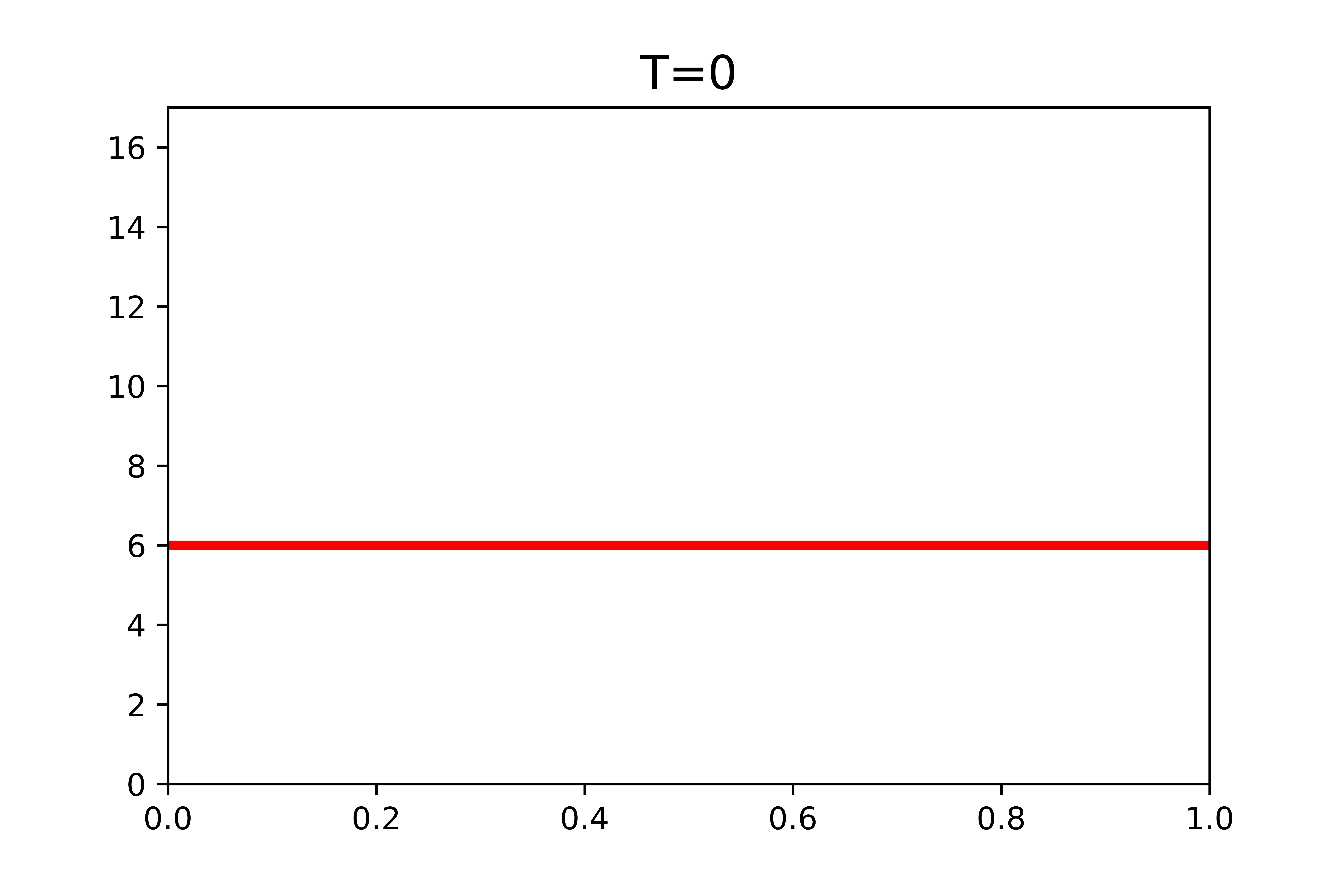}
\includegraphics[width=0.33\textwidth]{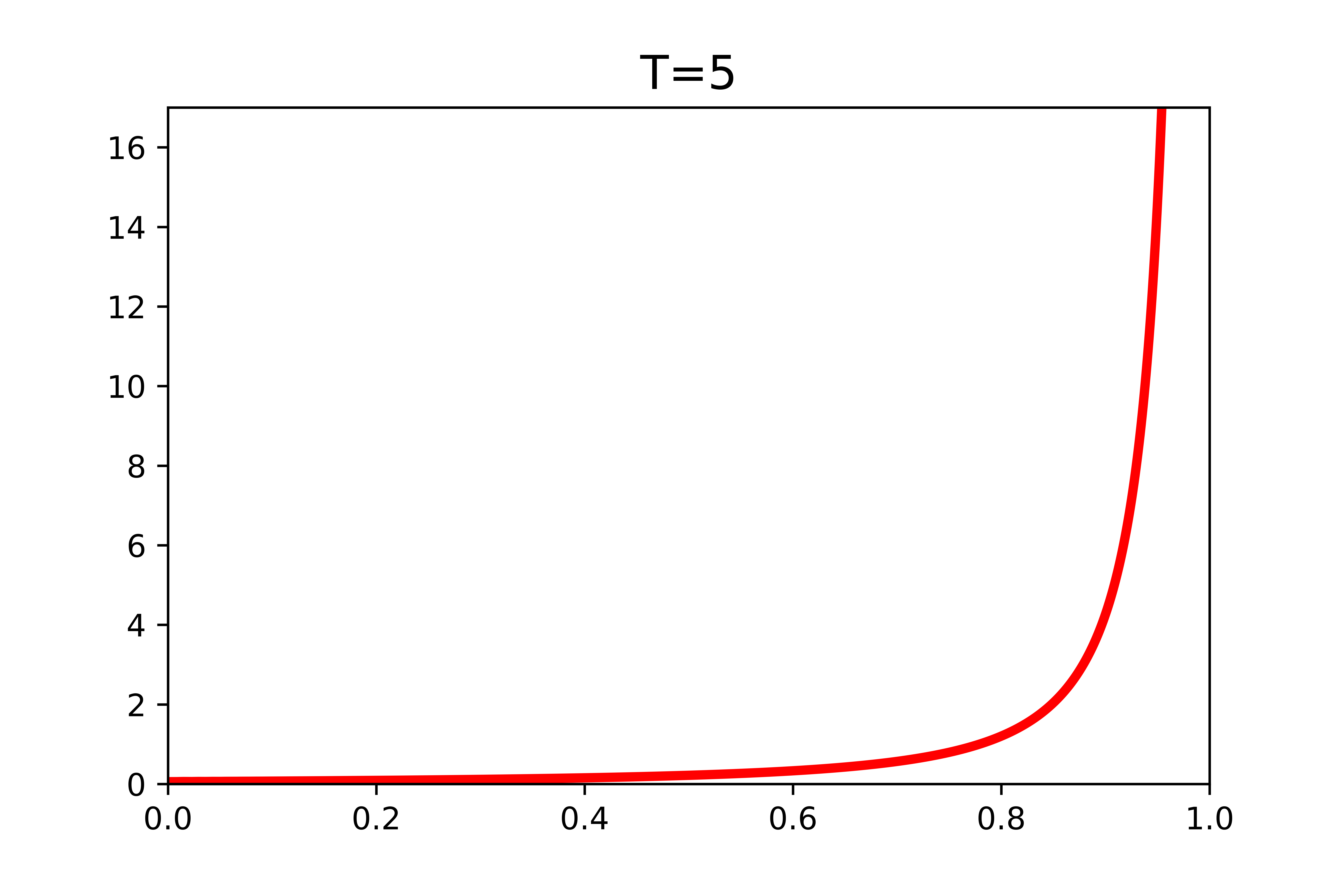}
\includegraphics[width=0.33\textwidth]{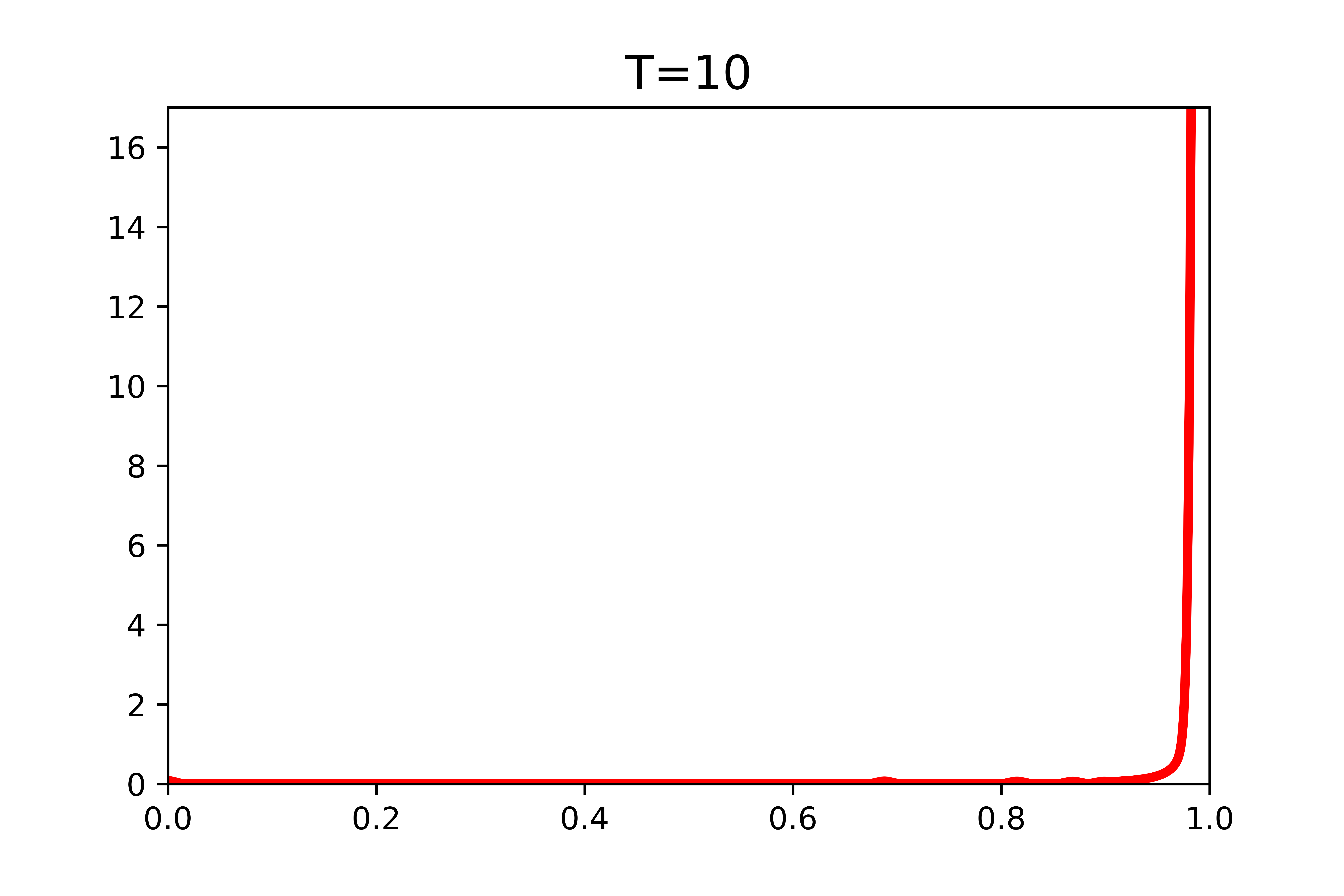}
\caption{Solution of \eqref{equation advection-selection} at different time steps, with $n^0(x)=6, r(x)=6-0.5x$.}
\label{Dirac}
\end{subfigure}
\caption{Different possible regimes of convergence for the solution of \eqref{equation advection-selection}. The first three lines (green, blue and orange curves), show the convergence to a function in $L^1$, which can be explicitly computed (see \cite{guilberteau2023long}), and is represented by a black dashed line. Note that the limit function is different when the initial condition changes. The last  line (red curves) shows the convergence to a Dirac mass in $1$. In the four cases, we have chosen $a(x)=x(1-x)$, $N=5000$, $h=\frac{1}{N}$, $\e=\sqrt{h}$ and the cut-off function $\varphi$ is a Gaussian.} 
\label{fig: advection-growth}
\end{figure}

\appendix
\section{Proof of the results over the characteristics}\label{AnnA}
In order to prove results which involve the use of absolute values, we introduce a smooth re-normalizing sequence of functions. Consider a sequence of smooth positive functions $\beta_{\varepsilon}$ satisfying $\beta_{\varepsilon}(0)=0$, $\beta_{\varepsilon}(s)>0$ for all $s\neq 0$, $\beta_{\varepsilon}(s)\leqslant |s|$, $\beta_{\varepsilon}(s)\rightarrow |s|$ almost everywhere, $|\dot \beta_{\varepsilon}(s)|\leqslant 1$  and $s \dot \beta_{\varepsilon}(s)\rightarrow |s|$ almost everywhere. For example we may choose
\begin{equation}
 \beta_{\varepsilon}(s)=\left\{
\begin{matrix}
-s-\varepsilon(1-\frac{2}{\pi})&\mbox{ if }&s\leqslant -\varepsilon,\\
&&\\
\frac{2\varepsilon}{\pi}\left(1-\cos(\frac{\pi}{2\varepsilon}s)\right)&\mbox{ if }& -\varepsilon<s<\varepsilon,\\
&&\\
s-\varepsilon(1-\frac{2}{\pi})&\mbox{ if }&s\geqslant \varepsilon.
\end{matrix}
\right.\label{renorm}   
\end{equation}

\begin{proof}[Proof of  Lemma \ref{ChardX2}.]
We introduce the notation
\[
\Delta X_j(t):=X^j_{u_1}(t,y_1)-X^j_{u_2}(t,y_2).
\]
For all $t\in [0, T]$, the function
\[
U_{\varepsilon}(t):=\sum\limits_{j=1}^d\beta_{\varepsilon}(\Delta X_j(t))
\]
satisfies then the relation
\begin{align*}
    \dot U_{\varepsilon}(t)=&\sum\limits_{j=1}^d \dot \beta_{\varepsilon}(\Delta X_j(t))\left(a_j(t,X_{u_1}(t,y_1),(I_au_1)(t,X_{u_1}(t,y_1)))-a_j(t,X_{u_2}(t,y_2),(I_au_2)(t,X_{u_2}(t,y_2)))\right)\\
    \leqslant& \sum\limits_{j=1}^d\left(\|a_j\|_{W^{1,\infty}_x}\sum_{i=1}^d|\Delta X_i(t)|+\|a_j\|_{W^{1,\infty}_I}|(I_au_1)(t,X_{u_1}(t,y_1))-(I_au_2)(t,X_{u_2}(t,y_2))|\right)\\
    \leqslant& d \|a\|_{W^{1,\infty}_{x,I}}\left(\sum_{i=1}^d|\Delta X_i(t)|+|(I_au_1)(t,X_{u_1}(t,y_1))-(I_au_2)(t,X_{u_2}(t,y_2))|\right)\\
    \leqslant& d \|a\|_{W^{1,\infty}_{x,I}}\left((1+\|\psi_a\|_{W^{1,\infty}_{x} L^{\infty}_y  }\|u_1\|)\sum_{i=1}^d|\Delta X_i(t)|+\|\psi_a\|_{L^{\infty}}\|u_1-u_2\|_{L^1(\R^d})\right).
\end{align*}
Integrating between $0$ and $t$ we get
\begin{align*}
    &U_{\varepsilon}(t,y)-U_{\varepsilon}(0,y)\\
    &\leqslant d \|a\|_{L^{\infty}_tW^{1,\infty}_{x,I}}\left((1+\|\psi_a\|_{L^{\infty}_{t,y}W^{1,\infty}_{x}   }\|u_1\|)\int_0^t\sum_{i=1}^d|\Delta X_i(t)|ds+\|\psi_a\|_{L^{\infty}}\int_0^t\|u_1-u_2\|_{L^1(\R^d)}ds\right).
\end{align*}
Taking the limit when $\varepsilon$ goes to $0$ and applying Gr\"onwall's lemma we get the desired result.
\end{proof}

\begin{proof}[Proof of Lemma \ref{ChardX}.]
We explicitly give the proof for $k=1$. The proof for higher values of $k$ follows the same ideas.\\
Thanks to the hypothesis over $a$, the function $X_u$ is one time differentiable with respect to $y$, and directly from \eqref{CharL} we get the system of equations
\begin{align}
\begin{cases}
\dot{\partial_{y_i}X_u(t,y)}=J_a(t,X_u(t,y)) \partial_{y_i}X_u(t,y),\quad t\in [0,T],\\
\partial_{y_i}X_u(0,y)=e_i,
\end{cases}\label{dyCharL}
\end{align}
for all values of $i\in\{1,\ldots,d\}$, where 
\begin{equation}
    \left[J_a(t,x)\right]_{ij}:=\partial_{x_i} a_j(t,x,(I_au)(t,x))+\partial_Ia_j(t,x,(I_au)(t,x))\int_{\R^d}\partial_{x_i}\psi_a(t,x,y)u(t,y)dy,\label{Ja}
\end{equation}
is the Jacobian matrix of the function $a(t,x,(I_au)(t,x))$ and the $e_i$ represent the canonical basis of $\R^d$.\\
The function 
\[
V_{\varepsilon}(t,y):=\sum\limits_{i,j=1}^n\beta_{\varepsilon}(\partial_{y_i}X^j_u(t,y))
\]
satisfies then
\[
\dot V_{\varepsilon}(t,y)=\sum\limits_{i,j=1}^n \dot \beta_{\varepsilon}(\partial_{y_i}X^j_u(t,y))\sum\limits_{k=1}^d\left[J_a(t,X_u(t,y))\right]_{kj} \partial_{y_i}X^k_u(t,y)
\]
and consequently
\[
\dot V_{\varepsilon}(t,y)\leqslant d\|a\|_{W^{1,\infty}_{x,I}}(1+\|\psi_a\|_{W^{1,\infty}_xL^{\infty}_y}\|u\|)\sum\limits_{i,j=1}^n|\partial_{y_i}X^j_u(t,y)|.
\]
Integrating between $0$ and $t$, we obtain the relation
\[
  V_{\varepsilon}(t,y)-V_{\varepsilon}(0,y)\leqslant d\,\tilde{\alpha}_1\int_0^t\sum\limits_{i,j=1}^n|\partial_{y_i}X^j_u(s,y)|ds
\]
with $\tilde{\alpha}_1:=\sup\limits_{t\in [0,T]}\|a\|_{W^{1,\infty}_{x,I}}(1+\|\psi_a\|_{W^{1,\infty}_xL^{\infty}_y}\|u\|)$, which after taking the limit when $\varepsilon$ goes to $0$ leads to
\[
 \sum\limits_{i,j=1}^n|\partial_{y_i}X^j_u(t,y)|\leqslant d+d\tilde{\alpha}_1\int_0^t\sum\limits_{i,j=1}^n|\partial_{y_i}X^j_u(s,y)|ds.
\]
We obtain \eqref{bpX} thanks to Gr\"onwall's lemma.\\
In order to prove \eqref{ContpX} we adopt the notation
\[\Delta \partial_{y_k}X^j(t):=\partial_{y_k}X^j_{u_1}(t,y_1)-\partial_{y_k}X^j_{u_2}(t,y_2)\]
and define
\[
[J_{X_u}(t,y)]_{jk}:=\partial_{y_k}X^j_u(t,y),
\]
which satisfies the relation
\[
\dot{J_{X_u}}(t,y)=J_a(t,X_u(t,y))J_{X_u}(t,y),\ J_{X_u}(0,y)=I_d,
\]
with $J_a(t,x)$ as defined on \eqref{Ja}. Consequently, we have that
\[
D_{\varepsilon}(t):=\sum\limits_{j=1}^{d}\sum\limits_{k=1}^{d}\beta_{\varepsilon}\left(\Delta \partial_{y_k}X^j(t)\right)
\]
satisfies for all $t\in [0, T]$
\begin{align*}
    \dot{D_{\varepsilon}}(t)=&\sum\limits_{j=1}^{d}\sum\limits_{k=1}^{d} \dot \beta_{\varepsilon}\left(\Delta \partial_{y_k}X^j(t)\right)\sum\limits_{i=1}^d\left([J_{a}(t,X_{u_1}(t,y_1))]_{ij}\partial_{y_k}X^i_{u_1}(t,y_1)-[J_{a}(t,X_{u_2}(t,y_2))]_{ij}\partial_{y_k}X^i_{u_2}(t,y_2)\right)\\
    \leqslant& \sum\limits_{j=1}^{d}\sum\limits_{k=1}^{d}\sum\limits_{i=1}^d\left|[J_{a}(t,X_{u_1}(t,y_1))]_{ij}\partial_{y_k}X^i_{u_1}(t,y_1)-[J_{a}(t,X_{u_2}(t,y_2))]_{ij}\partial_{y_k}X^i_{u_2}(t,y_2)\right|\\
    \leqslant& \sum\limits_{j=1}^{d}\sum\limits_{k=1}^{d}\sum\limits_{i=1}^d\left|[J_{a}(t,X_{u_1}(t,y_1))]_{ij}\right|\left|\Delta \partial_{y_k}X^i(t)\right|\\
    &+\sum\limits_{j=1}^{d}\sum\limits_{k=1}^{d}\sum\limits_{i=1}^d\left|[J_{a}(t,X_{u_1}(t,y_1))]_{ij}-[J_{a}(t,X_{u_2}(t,y_2))]_{ij}\right|\left|\partial_{y_k}X^i_{u_2}(t,y_2)\right|.
\end{align*}
From the hypothesis over $a$ and $\psi_a$ we see that
\[
\left|[J_{a}(t,X_{u_1}(t,y_1))]_{ij}\right|\leqslant \|a\|_{W^{1,\infty}_{x,I}}(1+\|\psi_a\|_{W^{1,\infty}_{x}L^{\infty}_y}\|u_1\|).
\]
Furthermore, from the definition of $J_a(t,x)$ we conclude that there exists a constant $C$, depending only on $\|a\|_{W^{2,\infty}_{x,I}}$, $\|\psi_a\|_{W^{2,\infty}_{x}L^{\infty}_y}$ and $\|u_i\|$ such that
\begin{align*}
    \left|[J_{a}(t,X_{u_1}(t,y_1))]_{ij}-[J_{a}(t,X_{u_2}(t,y_2))]_{ij}\right|&\leqslant C(\sum\limits_{j=1}^{d}|X^j_{u_1}(t,y_1)-X^j_{u_2}(t,y_2)|+\|u_1-u_2\|_{L^1(\R^d)})\\
    &\leqslant C(|y_1-y_2|+\|u_1-u_2\|_1+\|u_1-u_2\|_{L^1(\R^d)}),
\end{align*}
where we have used the results from Lemma \ref{ChardX2} on the second line.\\ 
Putting all estimates together, we conclude that there exist constants $C_1$ and $C_2$ only depending on $\|a\|_{W^{2,\infty}_{x,I}}$, $\|\psi_a\|_{W^{2,\infty}_{x}L^{\infty}_y}$ and $\|u_i\|$, such that
\[
\dot{D_{\varepsilon}}(t)\leqslant C_1 \sum\limits_{j=1}^{d}\sum\limits_{k=1}^{d}|\partial_{y_k}X^j_{u_1}(t,y_1)-\partial_{y_k}X^j_{u_2}(t,y_2)|+C_2 (|y_1-y_2|+\|u_1-u_2\|_1+\|u_1-u_2\|_{L^1(\R^d)}).
\]
Integrating in time, using Gr\"onwall's lemma and taking the limit when $\varepsilon$ goes to zero, we obtain \eqref{ContpX}.
\end{proof}

\begin{proof}[Proof of Lemma \ref{ChardXinv}.]
We explicitly give the proof for $k=1$. The proof for higher values of $k$ follows the same ideas.\\
Differentiating once each component of the equality $X_u(t,X^{-1}_u(t,x))=x$ with respect to each of the variables $x_k$, we obtain the family of relations
\[\sum_{i=1}^d\partial_{y_i}X^j_u(t,X^{-1}_u(t,x))\partial_{x_k}\left(X^{-1}_{u}\right)^i(t,x)=\delta_{jk},\ j,k=1,\ldots,d,\]
where $\delta_{jk}$ represents the Kronecker's delta. Written in matrix form, this equality reads
\[
J_{X_u}(t,X^{-1}_u(t,x))J_{X^{-1}_u}(t,x)=I_d.
\]
It is known that the matrix $J_{X_u}(t,y)$ is invertible for all values of $x$, furthermore, its determinant is given by the expression 
\[
det(J_{X_u}(t,y))=e^{\int_0^t\nabla_x\cdot a(s,y,(I_au)(s,y))+\partial_Ia(s,y,(I_au)(s,y))\cdot\int\limits_{\R^d}\nabla_x\psi_a(s,y,z)u(s,z)dzds}\geqslant c_T>0
\] 
for all values of $t\in[0,T]$ and $y\in\R^d$.\\
We conclude by writing
\[
J_{X^{-1}_u}(t,x)=J^{-1}_{X_u}(t,X^{-1}_u(t,x)),
\]
and noticing that all of the components of $J^{-1}_{X_u}(t,X^{-1}_u(t,x))$ are a combination of sums and multiplications of the components of $J_{X_u}(t,X^{-1}_u(t,x))$, divided by $det(J_{X_u}(t,y))$. The bound \eqref{bpX} from Lemma \ref{ChardX}, together with the lower bound for the determinant of $J_{X_u}(t,y)$ gives the bound \eqref{bpXinv} over the components of $J_{X^{-1}_u}(t,x)$.
\end{proof}

\begin{proof}[Proof of Lemma \ref{ChardXinv2}.]
We explicitly give the proof for $k=1$. The proof for higher values of $k$ follows the same ideas.\\
Differentiating with respect to $t$ the relation $X_{u_i}(t,X^{-1}_{u_i}(t,x))=x$, for $i=1,2$, we see that
\[
a(t,x,(I_au_i)(t,x))+J_{X_{u_i}}(t,X^{-1}_{u_i}(t,x))\dot X^{-1}_{u_i}(t,x)=0,
\]
which gives 
\begin{equation}
   \dot X^{-1}_{u_i}(t,x)=-J^{-1}_{X_{u_i}}(t,X^{-1}_{u_i}(t,x))a(t,x,(I_au_i)(t,x)). 
\end{equation}
From now on we adopt the notations
\begin{align*}
A^i(t,x)&:=a(t,x,(I_au_i)(t,x)),\\
K^i(t,x)&:=-J^{-1}_{X_{u_i}}(t,X^{-1}_{u_i}(t,x)),\\
\Delta X^{-1}_j(t,x)&:=\left(X^{-1}_{u_1}\right)^{j}(t,x)-\left(X^{-1}_{u_2}\right)^{j}(t,x).
\end{align*}
The function
\[
W_{\varepsilon}(t,x):=\sum\limits_{j=1}^d\beta_{\varepsilon}(\Delta X^{-1}_j(t,x))
\]
satisfies the relation
\begin{align*}
    \dot W_{\varepsilon}(t,x)&=\sum\limits_{j=1}^d\sum\limits_{k=1}^d \dot \beta_{\varepsilon}(\Delta X^{-1}_j(t,x))\left(K^1_{jk}(t,x)A^1_k(t,x)-K^2_{jk}(t,x)A^2_k(t,x)\right)\\
    &\leqslant \sum\limits_{j=1}^d\sum\limits_{k=1}^d|K^1_{jk}(t,x)A^1_k(t,x)-K^2_{jk}(t,x)A^2_k(t,x)|.
\end{align*}
From Lemma \ref{ChardXinv} we know that all components of $K^i$ are uniformly bounded by a constant only depending on $T$ and $\|u_i\|$. We deduce from the hypothesis over $a$ that the components of $A^i$ are uniformly bounded by $\tilde{a}:=\|a\|_{L^{\infty}}$. Therefore
\[
 \dot W_{\varepsilon}(t,x)\leqslant d\tilde{C}(T,\|u_1\|)\sum\limits_{k=1}^d |A^1_k(t,x)-A^2_k(t,x)|+\tilde{a}\sum\limits_{j=1}^d\sum\limits_{k=1}^d|K^1_{jk}(t,x)-K^2_{jk}(t,x)|.
\]
The function $a$ being $L$-Lipschitz with respect to the $I$ variable, we have that, for all values of $k$
\[
|A^1_k(t,x)-A^2_k(t,x)|\leqslant L\|\psi_a\|_{L^{\infty}}\|u_1-u_2\|_{L^1(\R^d)}.\]
On the other hand, from the definition of $K^i$ and Lemma \ref{ChardX} we conclude that there exists $C$, depending on $T$, $\|a\|_{W^{2,\infty}_{x,I}}$, $\|\psi_a\|_{W^{2,\infty}_{x}L^{\infty}_y}$ and $\|u_i\|$ such that
\[
|K^1_{jk}(t,x)-K^2_{jk}(t,x)|\leqslant \left\{\begin{matrix}
    0,&\mbox{ if }&\partial_I a=0,\\
    &&\\
C\left(\sum\limits_{j=1}^{d}|\left(X^{-1}_{u_1}\right)^j(t,x)-\left(X^{-1}_{u_2}\right)^j(t,x)|+\|u_1-u_2\|_1\right),&\mbox{ if }&\partial_I a\neq 0.
\end{matrix}\right.
\]
Putting everything together, integrating between $0$ and $t$, taking the limit when $\varepsilon$ goes to $0$ and applying Gr\"onwall's lemma we get \eqref{ContXinv}.
\end{proof}
\section{Existence of solution for a system of ODEs with infinitely many unknowns and equations}\label{AnnB}
\begin{proof}[Proof of Lemma \ref{CLinf}.]
 For all $u\in X^T_h$, there exists a sequence of elements $u^{\delta}\in X^T_h $ such that:
 \begin{enumerate}
     \item[1)] $\mathcal{K}^{\delta}_h:=\{k\in\mathcal{J}_h: u^{\delta}_k\neq 0\}$ has a finite number of elements.
     \item[2)]
     \[
     \lim\limits_{\delta\rightarrow 0}\|u-u^{\delta}\|_{1,h}=0.
     \]
 \end{enumerate}
 We denote $K^{\delta}:=|\mathcal{K}^{\delta}_h|$ and notice that the system
 \begin{equation}
     \dot{x^{\delta}_k}(t)=A_{u^{\delta},w}(t,x^{\delta}_k),\ x^{\delta}_k(0)=x^0_k,\label{infapp}
 \end{equation}
 is composed of a coupled system of $K^{\delta}$ equations and unknowns (corresponding to those $k\in \mathcal{K}^{\delta}_h$), and an uncoupled infinite number of equations, corresponding to those $k\not\in \mathcal{K}^{\delta}_h$. Therefore, thanks to the classic Cauchy-Lipschitz theory, the system \eqref{infapp} has a unique solution $x^{\delta}_k\in\mathcal{C}^1([0,T])$, $k\in\mathcal{J}_h$.\\
 We claim that for all values of $k$, the sequence $x^{\delta_1}_k-x^{\delta_2}_k$ is a Cauchy sequence in $\mathcal{C}^1([0,T])$, therefore is has a limit that we will call $x_k(t)$, which is solution to \eqref{inftot}.\\
We first remark that $x^{\delta_1}-x^{\delta_2}\in Y^T_h$ due to the fact that $|x^{\delta}_k(t)-x^0_k|\leqslant \|a\|_{L^{\infty}}T$ for all values of $k$ and $\delta$.\\ 
 Consider now $\beta_{\varepsilon}$ as defined in \eqref{renorm}, then
 \begin{align*}
     \dot{\beta_{\varepsilon}}(x^{\delta_1}_k-x^{\delta_2}_k)&\leqslant |A_{u^{\delta_1},w}(t,x^{\delta_1}_k)-A_{u^{\delta_2},w}(t,x^{\delta_2}_k)|\\
     &\|a\|_{W^{1,\infty}_{x}}|x^{\delta_1}_k-x^{\delta_2}_k|+\|a\|_{W^{1,\infty}_{I}}|I_a(t,x^{\delta_1}_k,u^{\delta_1},w)-I_a(t,x^{\delta_1}_k,u^{\delta_2},w)|.
 \end{align*}
 Noticing that
 \begin{align*}
     |I_a(t,x^{\delta_1}_k,u^{\delta_1},w)-I_a(t,x^{\delta_1}_k,u^{\delta_2},w)|&=|\sum\limits_{j\in\mathcal{J}_h}\biggl(u^{\delta_1}_j(t)\psi_a(t,x^{\delta_1}_k,x^{\delta_1}_j(t))-u^{\delta_2}_j(t)\psi_a(t,x^{\delta_2}_k,x^{\delta_2}_j(t))\biggr)w_j(t)|\\
     &\leqslant (\|\psi_a\|_{L^{\infty}}\|u^{\delta_1}-u^{\delta_2}\|_{1,h}+\|u^{\delta_2}\|_{1,h}\|\psi_a\|_{W^{1,\infty}_{x,y}}\|x^{\delta_1}-x^{\delta_2}\|_{\infty,h})\|w\|_{\infty,h},
 \end{align*}
 we deduce the existence of two constants\footnote{Notice that in order to obtain the estimate over $I_a$, we used the hypothesis of differentiability over both variables on $\psi_a$}, $C_1$ and $C_2$, only depending on $a$, $\psi_a$, $u$ and $w$, such that
 \[
 \dot{\beta_{\varepsilon}}(x^{\delta_1}_k-x^{\delta_2}_k)\leqslant C_1\|x^{\delta_1}-x^{\delta_2}\|_{\infty,h}+C_2\|u^{\delta_1}-u^{\delta_2}\|_{1,h}.
 \]
 Integrating between $0$ and $t$, taking the maximum over $k$ and $t$ and using Gr\"onwall's lemma, we conclude that there exists a constant $C_T$, only depending on $T$ and the coefficients of the problem, such that
 \[
 \|x^{\delta_1}-x^{\delta_2}\|_{\infty,h}\leqslant C_T\|u^{\delta_1}-u^{\delta_2}\|_{1,h}.
 \]
 Proceeding in a similar way with the absolute value of $\dot{x}^{\delta_1}-\dot{x}^{\delta_2}$ we obtain that
 \[
 \|\dot{x}^{\delta_1}-\dot{x}^{\delta_2}\|_{\infty,h}\leqslant C_T\|u^{\delta_1}-u^{\delta_2}\|_{1,h}.
 \]
 Recalling that $u^{\delta}$ is a Cauchy sequence on $X^T_{h}$, then so it is $x^{\delta}_k$ on $\mathcal{C}^1([0,T])$, for each $k$.\\
 Let $x:=\{x_k\}_{k\in\mathcal{J}_h}$ be the limit of $x^{\delta}$ when $\delta$ goes to $0$. With a simple continuity argument we conclude that $x$ is a solution of \eqref{inftot} over $[0,T]$. The uniqueness can be obtained by assuming the existence of two solutions, deriving the equation satisfied by the difference and using Gr\"onwall's lemma to conclude that they have to be equal.
\end{proof}
\section{A result from approximation theory}\label{AppTheo}
As mentioned before, Lemma \ref{AT} is a direct corollary of Lemma 8 in \cite{mas1987particle}, that we recall here
\begin{lem}\label{L8fromRav}
Let $k>d$ an integer. Assume that 
\begin{align*}
    a\in (L^\infty(0,T;W^{k+1, \infty}(\R^d)))^d. 
\end{align*}
Then, there exists a constant $C>0$ such that for all functions $\varphi \in W^{k,p}(\R^d) $, $1\leqslant p\leqslant +\infty$, and $t\in [0,T]$, 

\begin{align*}
    \lVert \varphi-\underset{i\in\mathcal{J}_h}{\sum}{w_i(t)\varphi(x_i(t))\delta(\cdot -x_i(t))}\rV_{W^{-k, p}(\R^d)}\leqslant Ch^k\lVert \varphi\rVert_{W^{k,p}(\R^d)}.
\end{align*}
\label{negative Sobolev}
\end{lem}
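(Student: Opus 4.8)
The plan is to reduce the negative-Sobolev estimate to a quadrature error for a \emph{product}, transport that quadrature rule back to the initial uniform grid via the flow of $a$, and then extract the exponent $h^k$ from a Poisson-summation argument in which the hypothesis $k>d$ appears as the summability threshold. First I would use duality: since $W^{-k,p}(\R^d)$ is the dual of $W^{k,p'}(\R^d)$ with $1/p+1/p'=1$, I write
\[
\lV \varphi-\sum_{i\in\mathcal{J}_h} w_i(t)\varphi(x_i(t))\delta(\cdot-x_i(t))\rV_{W^{-k,p}}=\sup_{\lV\psi\rV_{W^{k,p'}}\leqslant 1}\Bigl| \int_{\R^d}\varphi\psi\,dx-\sum_{i\in\mathcal{J}_h}w_i(t)(\varphi\psi)(x_i(t))\Bigr|.
\]
Setting $\Phi:=\varphi\psi$, the task becomes the scalar quadrature bound $\bigl|\int_{\R^d}\Phi\,dx-\sum_i w_i(t)\Phi(x_i(t))\bigr|\leqslant C h^k\lV\Phi\rV_{W^{k,1}}$, after which a multiplicative Sobolev inequality $\lV\varphi\psi\rV_{W^{k,1}}\leqslant C\lV\varphi\rV_{W^{k,p}}\lV\psi\rV_{W^{k,p'}}$ (valid precisely because $k>d$) converts this into the claimed product of norms.

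Next I would exploit the Lagrangian structure. Writing the flow of $\dot X=a(t,X)$ as $X(t,y)$ with $X(0,y)=y$ and Jacobian $J(t,y):=\det\nabla_yX(t,y)$, one has $x_i(t)=X(t,x_i^0)$, while Liouville's formula applied to both $\dot w_i=(\divv a)(t,x_i)w_i$ and $\tfrac{d}{dt}J=(\divv a)(t,X)J$ yields $w_i(t)=w_i^0\,|J(t,x_i^0)|$. With the canonical quasi-uniform grid of \eqref{x0}--\eqref{w0}, namely $x_i^0=hi$ and $w_i^0=h^d$ for $i\in\mathbb{Z}^d$, the change of variables $x=X(t,y)$ gives
\[
\int_{\R^d}\Phi\,dx-\sum_{i}w_i(t)\Phi(x_i(t))=\int_{\R^d}G(y)\,dy-h^d\sum_{i\in\mathbb{Z}^d}G(hi),\qquad G:=|J(t,\cdot)|\,\Phi\circ X(t,\cdot).
\]
Thus the genuinely $t$-dependent particle quadrature is identified with the fixed \emph{uniform-grid} rectangle rule applied to $G$.

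The heart of the argument is then the uniform-grid estimate $\bigl|\int G-h^d\sum_i G(hi)\bigr|\leqslant Ch^k\lV G\rV_{W^{k,1}}$. I would obtain it from Poisson summation, $h^d\sum_{i\in\mathbb{Z}^d}G(hi)=\sum_{\xi\in(2\pi/h)\mathbb{Z}^d}\widehat G(\xi)$, so that the error equals $\sum_{\xi\neq0}\widehat G(\xi)$ over the dual lattice with $|\xi|\geqslant 2\pi/h$. Using $|\xi^\alpha\widehat G(\xi)|=|\widehat{D^\alpha G}(\xi)|\leqslant\lV D^\alpha G\rV_{L^1}$ one gets $|\widehat G(\xi)|\leqslant C\lV G\rV_{W^{k,1}}|\xi|^{-k}$, and $\sum_{n\in\mathbb{Z}^d\setminus\{0\}}|2\pi n/h|^{-k}=(h/2\pi)^k\sum_{n\neq0}|n|^{-k}$ converges \emph{iff} $k>d$, producing exactly the factor $h^k$; this is where the hypothesis is used. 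To finish I would transfer regularity through the flow: since $a\in L^\infty(0,T;W^{k+1,\infty})$, the flow satisfies $X(t,\cdot),X^{-1}(t,\cdot)\in W^{k+1,\infty}$ with $|J|$ and $|J|^{-1}$ bounded, whence the Leibniz rule for $|J|\cdot(\Phi\circ X)$ together with Faà di Bruno for $D^\gamma(\Phi\circ X)$ and the bound $\lV (D^\delta\Phi)\circ X\rV_{L^1}\leqslant\lV J^{-1}\rV_\infty\lV D^\delta\Phi\rV_{L^1}$ yields $\lV G\rV_{W^{k,1}}\leqslant C_T\lV\Phi\rV_{W^{k,1}}$.

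The main obstacle is the clean justification of the Poisson-summation step for merely $W^{k,1}$ data rather than Schwartz functions: one must check absolute convergence of $\sum_i G(hi)$ and of $\sum_\xi\widehat G(\xi)$, which follows from the embedding $W^{k,1}\hookrightarrow C_0$ for $k>d$ and the $|\xi|^{-k}$ decay, but deserves a density argument. The secondary technical points—the multiplicative inequality $\lV\varphi\psi\rV_{W^{k,1}}\leqslant C\lV\varphi\rV_{W^{k,p}}\lV\psi\rV_{W^{k,p'}}$, proved by distributing derivatives via Leibniz and interpolating each factor with Gagliardo--Nirenberg so that Hölder applies, and the Faà di Bruno bookkeeping for the flow transfer—are routine once $k>d$ guarantees the requisite embeddings. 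An alternative to Poisson summation, should one prefer to avoid Fourier analysis, is to note that the rectangle rule is exact on polynomials of degree $<k$ per cell, apply a Bramble--Hilbert estimate on each grid cell, and sum the local $L^1$ contributions; this reproduces the same $h^k\lV G\rV_{W^{k,1}}$ bound and the same dimensional restriction.
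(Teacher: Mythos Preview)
The paper does not prove this lemma; it merely recalls it as Lemma~8 of Mas-Gallic--Raviart \cite{mas1987particle} and uses it as a black box to derive Lemma~\ref{AT}. Your proposal therefore supplies a proof where the paper offers none, so there is nothing in the paper to compare against directly.

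That said, the route you outline---duality to reduce the negative-Sobolev norm to a scalar quadrature error for the product $\Phi=\varphi\psi$, pull-back along the flow to the uniform grid at $t=0$ via Liouville, and then Poisson summation (or, alternatively, a cell-wise Bramble--Hilbert argument) to extract the factor $h^k$---is essentially the classical argument from the cited reference and Raviart's lecture notes. Two small remarks on your write-up. First, the product estimate $\lV\varphi\psi\rV_{W^{k,1}}\leqslant C\lV\varphi\rV_{W^{k,p}}\lV\psi\rV_{W^{k,p'}}$ follows immediately from the Leibniz rule and H\"older with exponents $(p,p')$ applied term by term; no Gagliardo--Nirenberg interpolation is needed, and this step does not use $k>d$. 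The hypothesis $k>d$ enters only where you say it does, namely to make the dual-lattice sum $\sum_{n\neq 0}|n|^{-k}$ converge (equivalently, to make point evaluation continuous on $W^{k,1}$). Second, your caution about justifying Poisson summation for $W^{k,1}$ data is well placed; the density argument you sketch, together with the embedding $W^{k,1}\hookrightarrow C_0$ for $k>d$, is exactly what is required, and the Bramble--Hilbert alternative you mention avoids this issue entirely.
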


Given that for fixed functions $\nu\in X^T_h $ and $w\in Y^T_h$ we have the inclusion
\[
A_{\nu,w}:(t, x)\mapsto a(t,x,I_a(t,x,\nu,w))\in (L^\infty(0,T; W^{k+1, \infty}(\R^d)))^d,
\]
then Lemma \ref{L8fromRav} holds true as well for the values of $x_i$ obtained in Section \ref{TPM}.\\

\begin{proof}[Proof of Lemma \ref{AT}.]
We recall that $W^{-k, 1}(\R^d)$ is the dual space of $W^{k, \infty}(\R^d)$. Thus, for any $\psi\in W^{-k, 1}(\R^d)$ we have
\begin{align*}
    \lV \psi \rV_{-k, 1} = \underset{f\in W^{k, \infty}(\R^d)}{\sup}\frac{\lvert \langle \psi, f  \rangle \rvert }{\quad \lVert f \rVert_{k, \infty }}. 
\end{align*}
Since the function $f\equiv 1$ belongs to $W^{k, \infty}(\R^d)$, and has norm equal to $1$ in this space, we get for all $\varphi\in W^{k, p}(\R^d)$ 
\begin{align*}
    \bigg\lvert \int_{\R^d}{\varphi(x)dx}-\underset{i\in\mathcal{J}_h}{\sum}{w_i(t)\varphi(x_i(t))}\bigg\rvert &=\bigg\lvert \langle \varphi-\underset{i\in\mathcal{J}_h}{\sum}{w_i(t)\varphi(x_i(t))\delta(\cdot -x_i(t))}, 1 \rangle \bigg\rvert\\ &\leqslant  \lVert \varphi-\underset{i\in\mathcal{J}_h}{\sum}{w_i(t)\varphi(x_i(t))\delta(\cdot -x_i(t))}\rV_{-k, 1}. 
\end{align*}
We conclude by applying Lemma \ref{negative Sobolev} with $p=1$. 
\end{proof}
\section{Proofs of convergence results from ODE theory}\label{ODEresults}
This appendix is dedicated to the proofs of lemma \ref{convergence negative part}, used in subsection \ref{conv infinite time}. In order to prove this lemma, we use the following result: 

\begin{lem}
Let $\alpha>0$ and $B\in L^1(\mathbb{R}_+)$. Then, all the solutions of the ODE 
\[\dot u(t)=-\alpha u(t)+ B(t)\]
are in $L^1(\mathbb{R}^+).$
\label{lem ODE L1}
\end{lem}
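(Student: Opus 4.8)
\textbf{Proof plan for Lemma \ref{lem ODE L1}.}

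The statement is elementary: solutions of the linear scalar ODE $\dot u(t) = -\alpha u(t) + B(t)$ with $\alpha > 0$ and $B \in L^1(\R_+)$ belong to $L^1(\R_+)$. The plan is to write the solution explicitly via the variation-of-constants (Duhamel) formula and then estimate its $L^1$ norm directly by Fubini's theorem. First I would write, for any initial value $u(0) = u_0$,
\[
u(t) = e^{-\alpha t} u_0 + \int_0^t e^{-\alpha (t-s)} B(s)\, ds.
\]
The first term $e^{-\alpha t} u_0$ is in $L^1(\R_+)$ with norm $|u_0|/\alpha$, so it suffices to control the convolution term $w(t) := \int_0^t e^{-\alpha(t-s)} B(s)\, ds$.

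For the convolution term I would take absolute values and integrate in $t$ over $\R_+$, then apply the Tonelli--Fubini theorem to interchange the order of integration:
\[
\int_0^{+\infty} |w(t)|\, dt \leqslant \int_0^{+\infty} \int_0^t e^{-\alpha(t-s)} |B(s)|\, ds\, dt = \int_0^{+\infty} |B(s)| \left( \int_s^{+\infty} e^{-\alpha(t-s)}\, dt \right) ds = \frac{1}{\alpha} \int_0^{+\infty} |B(s)|\, ds = \frac{\|B\|_{L^1(\R_+)}}{\alpha}.
\]
Combining the two bounds gives $\|u\|_{L^1(\R_+)} \leqslant |u_0|/\alpha + \|B\|_{L^1(\R_+)}/\alpha < +\infty$, which is the claim.

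There is essentially no serious obstacle here; the only point requiring a word of care is the justification of the interchange of integrals, which is legitimate because the integrand $(s,t) \mapsto e^{-\alpha(t-s)} |B(s)| \mathbbm{1}_{\{0 \leqslant s \leqslant t\}}$ is non-negative and measurable, so Tonelli's theorem applies unconditionally. One should also note that $u$ is locally absolutely continuous (being the integral of an $L^1_{loc}$ function plus a smooth term), so the Duhamel formula is valid and $u$ is indeed the unique solution for each choice of $u_0$; since every solution arises this way for some $u_0$, the conclusion holds for all solutions.
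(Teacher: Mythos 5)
Your proof is correct and follows essentially the same route as the paper: both write the solution by the variation-of-constants formula and bound the $L^1$ norm of the convolution term by a Tonelli-type interchange of the double integral (the paper phrases it as the change of variables $y=s$, $z=t-s$, which amounts to the same computation). No gap to report.
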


\begin{proof}[Proof of lemma \ref{lem ODE L1}]
The solution of this ODE is explicitly given by 
\[u(t)=u(0)e^{-\alpha t}+\int_0^t{e^{-\alpha(t-s)}}B(s)ds.\]
Hence,
\begin{align*}
\int_0^{+\infty}{\lvert u(t)\rvert dt}&\leqslant \lvert u(0)\rvert \int_0^{+\infty}{e^{-\alpha t}}dt + \iint_{\mathbb{R}_+^2}{e^{-\alpha(t-s)}\lvert B(s)\rvert \mathbbm{1}_{\{s\leqslant t\}}dsdt}.
\end{align*}
With the change of variables $y=s, z=t-s$, we get 
\begin{align*}
\iint_{\mathbb{R}_+^2}{e^{-\alpha(t-s)}\lvert B(s)\rvert \mathbbm{1}_{\{s\leqslant t\}}dsdt}\leqslant\int_0^{+\infty}{\lvert B(y)\rvert  dy}\int_0^{+\infty}{e^{-\alpha z} dz},
\end{align*}
which concludes the proof. 

\end{proof}

\begin{proof}[Proof of Lemma \ref{convergence negative part}]
First, let us note that if $\dot u$ is a BV function, \textit{i.e.} if $\int_{0}^{+\infty}{\lvert \dot u(t)\rvert dt}<+\infty$, then $u$ is a Cauchy function, and thus converges. Let us denote $v:=\dot u$.  Since $u$ is assumed to be bounded, and $\lvert v\rvert =v+2{v}^-$, where ${v}^-$ denotes the negative part of $v$, it is enough to prove that ${v}^-\in L^1(\R^d)$. 
By hypothesis, 
\[\dot v(t)= -p(t)v(t)+P(t)+B(t), \]
which implies that
\[\dot {v^-}(t)\leqslant -p_0\: v^-(t)+B(t).\]
We conclude, according to lemma \ref{lem ODE L1}, that $v^-\in L^1(\R^d)$, which implies that $u$ converges. 
\end{proof}

\section{Acknowledgements}
The authors would like to acknowledge Camille Pouchol and Nastassia Pouradier for their invaluable guidance and insightful comments.\\
This research was supported by the European Union's Horizon 2020 research and innovation programme under the Marie Skłodowska-Curie grant agreements No 754362 and No 955708.

\bibliography{References.bib}
\bibliographystyle{unsrt}

\end{document}